\newtheorem{thm}{Theorem}[section]
\newtheorem{cor}[thm]{Corollary}
\newtheorem{dfn}[thm]{Definition}
\newtheorem{prop}[thm]{Proposition}
\newtheorem{lem}[thm]{Lemma}
\newtheorem{rem}[thm]{Remark}
\renewcommand{\qed}{\qquad\kern1pt 
 \vbox{\hrule height 0.6pt      
  \hbox{\vrule width 0.6pt 
   \vbox{\vskip 6pt}  
   \hskip 3pt
  \vrule width 1.3pt} 
 \hrule depth 1.3pt}     
\kern1pt}
\newcommand{\sect}[1]{\section{#1} \setcounter{equation}{0}}
\newcommand{\pt}{\partial}           
\newcommand{\R}{\mathbb R}
\newcommand{\Z}{\mathbb Z}
\newcommand{\N}{\nabla}
\newcommand{\gm}{\gamma}
\newcommand{\lam}{\lambda}
\newcommand{\dl}{\delta}
\newcommand{\Del}{\Delta}
\newcommand{\s}{\sigma}
\renewcommand{\r}{\rho}
\renewcommand{\t}{\tau}
\newcommand{\hr}{\hookrightarrow}
\newcommand{\wt}{\widetilde }
\newcommand{\wh}{\widehat }
\renewcommand{\div}{{\rm{div\,}}}
\newcommand{\dB}{\dot{B}}
\newcommand{\fB}{\widehat{\dot{B}}}
\newcommand{\cfB}{\widehat{\dot{B}}_{p,1}}
\newcommand{\ve}{\varepsilon}
\newcommand{\mG}{\mathcal{G}}
\newcommand{\al}{\alpha}
\newcommand{\supp}{\text{supp\;}}
\renewcommand{\L}{\mathcal{L}}
\newcommand{\les}{\lesssim\;}
\newcommand{\dsp}{\displaystyle}
\newcommand{\F}{\mathcal{F}}
\newcommand{\mN}{\mathcal{N}}
\newcommand{\Id}{\text{Id}}
\newcommand{\B}{\mathcal{B}}
\newcommand{\Dj}{\dot{\Delta}_j}
\begin{document}
\title{Analyticity and asymptotic behavior of solutions to the compressible Navier-Stokes-Korteweg equations 
with the zero sound speed in scaling critical spaces} 
\maketitle

\maketitle
\vskip5mm
{\small
\hskip7mm\noindent
{\normalsize \sf Takayuki Kobayashi}
\vskip1mm\hskip1.5cm
        Graduate School of Engineering Science, The University of Osaka \hfill\break\indent\hskip1.5cm
        Osaka 560-8531, Japan \hfill\break\indent\hskip1.5cm 
        kobayashi@sigmath.es.osaka-u.ac.jp}
\vskip5mm
{\small
\hskip7mm\noindent
{\normalsize \sf Ryosuke Nakasato}
\vskip1mm\hskip1.5cm
        Faculty of Engineering, Shinshu University \hfill\break\indent\hskip1.5cm
        Nagano 380-8553, Japan \hfill\break\indent\hskip1.5cm
        nakasato@shinshu-u.ac.jp}

\vspace*{2mm}
\begin{abstract} 
We consider the initial-value problem in the $d$-dimensional Euclidean space $\mathbb{R}^d$ $(d \ge 3)$ for the compressible 
Navier-Stokes-Korteweg equations under the zero sound speed case 
(namely, $P'(\rho_*)=0$, where $P=P(\rho)$ stands for the pressure). 
The system is well-known as the Diffuse Interface model describing the motion of 
a vaper-liquid mixture in a compressible viscous fluid. 
The purposes of this paper are to obtain the global-in-time solution around the constant equilibrium states 
$(\rho_*,0)$ $(\rho_*>0)$ satisfying the 
estimate on the analyticity as established by Foias-Temam (1989),  and investigate the $L^p$-$L^1$ type 
time-decay estimates in scaling critical settings based on Fourier-Herz spaces. 
In addition, we also derive the first order asymptotic formula with higher derivatives 
for solutions as the application of the analyticity. 
\end{abstract}


\baselineskip=5.6mm
\sect{Introduction}

\subsection{Compressible Navier-Stokes-Korteweg system}
This paper investigates the analiticity and asymptotic behavior of global solutions   
to the initial value problem of the following compressible   
Navier-Stokes-Korteweg system in the $d$-dimensional Euclidean space $\R^d$ ($d \ge 3$): 
\vskip-1.5mm
\begin{equation} \label{eqn;NSK} 
\left\{
\begin{split}
& \pt_t\r+\div (\r u)=0, 
&t>0, x \in \R^d, \\
&\pt_t (\r u) 
+\text{div}\left(\r u \otimes u \right)+\N P(\r)= \text{div} \left(\mathcal{T}(\r,u)+\mathcal{K}(\r)\right)
&t>0, x \in \R^d, \\
&(\r, u)|_{t=0}=(\r_0,u_0), &x \in \R^d,  
\end{split}
\right.
\end{equation}
\vskip1.5mm
\noindent
where $\r=\r(t,x):\R_+ \times \R^d \to \R_+$ and  
$u=u(t,x):\R_+ \times \R^d \to \R^d$ denote the density of the fluid and the velocity of the fluid, respectively. 
The pressure $P=P(\r)$ is assumed to be a {\it real analytic function} of $\r$ in a neighborhood of 
$\rho_*>0$.  
In addition, we assume that the pressure $P$ satisfies $P'(\rho_*)=0$, namely the sound speed 
$\gm:=\sqrt{P'(\rho_*)}$ is equal to $0$.  
The viscous stress tensor $\mathcal{T}(\r,u)$ with the viscosity coefficients $\lam=\lam(\r)$, $\mu=\mu(\r)$ is given by 
$$
\mathcal{T}(\r,u)=2 \mu(\r) D(u)+\lam(\r) \div u \,\text{Id}. 
$$ 
Here $\text{Id}$ denotes the identity matrix which is given by $\Id=(\dl_{ij})_{ij}$ 
with $\dl_{ij}$ designating the Kronecker delta and the deformation tensor $D(u)$ is defined by  
$$
D(u)=\frac{1}{2}(\N u+{}^t(\N u)) \;\;\text{with}\;\;(\N u)_{ij}=\pt_i u_j.  
$$
Throughout this paper,  we assume that $\mu$, $\lam$ are given constants satisfying the standard elliptically conditions $\mu>0$ and $\nu:=\lam+2\mu>0$. 
For  $\kappa=\kappa(\r)$, 
the Korteweg stress tensor $\mathcal{K}(\r)$ in the second equation of \eqref{eqn;NSK} is given by 
\begin{equation} \label{eqn;Korteweg}
\mathcal{K}(\r)=\frac{\kappa}{2}(\Delta \r^2-|\N \r|^2) \,\text{Id} -\kappa \N \r \otimes \N \r, 
\end{equation}
where $\N \r \otimes \N \r$ stands for the tensor product $(\pt_i \r \pt_j \r)_{ij}$.  In general, it is natural that $\kappa$ depends on $\r$. 
For simplicity, in this paper, we assume that $\kappa$ is a positive constant. 

In this paper, we focus on a solution for the density that is close to a constant equilibrium state  
$\rho_*>0$ at spatial infinity. Setting $m:=\r u$,  let us reformulate the problem \eqref{eqn;NSK} as followings:  
\begin{equation} \label{eqn;mNSK} 
\left\{
\begin{split}
& \pt_t\r+\div m=0, 
&t>0, x \in \R^d, \\
&\pt_t m 
+\text{div}\left(\r^{-1} m \otimes m \right)+\N P(\r)= \mathcal{L}\left(\r^{-1}m \right)+\text{div} \left(\mathcal{K}(\r)\right)
&t>0, x \in \R^d, \\
&(\r, m)|_{t=0}=(\r_0,m_0), &x \in \R^d, 
\end{split}
\right.
\end{equation}
where $\mathcal{L}$ is the Lam\'{e} operator defined by $\mu \Delta + (\lam+\mu) \N \div$. 
In what follows, we perform our analysis in the above momentum representation \eqref{eqn;mNSK}. 
The merit of the momentum representation is to maintain the divergence form, which the standard velocity formulation does not equip. 
Taking into account the nonlinear terms equipped with the divergence form, 
we may be possible to obtain better results than the previous studies on the time-decay and analyze the asymptotic behavior of solutions with the critical regularity.   

\subsection{Known Results} 
The compressible Navier-Stokes-Korteweg system \eqref{eqn;NSK} is well-known as the Diffuse Interface (DI) model describes the motion of 
a vaper-liquid mixture in a compressible viscous fluid. 
The primitive theory regarding the DI model was first proposed by Van der Waals \cite{Va}. Later, Korteweg \cite{Ko} introduced the stress tensor 
including the term $\N \r \otimes \N \r$. 
The rigorous derivation of the corresponding equations \eqref{eqn;NSK} was given by Dunn-Serrin \cite{Du-Se}. 

First of all, we would like to mention about the results on the case $\gm>0$.  
For the existence of a strong solution to the problem \eqref{eqn;NSK}, Hattori-Li \cite{Ha-Li,Ha-Li2} established in a inhomogeneous Sobolev framework.   
The one of the purpose of this paper is to obtain a global-in-time solution of the problem \eqref{eqn;NSK}  
in a scaling {\it critical} $L^p(\R^d)$ framework. In order to explain what we mean by critical regularity, 
we focus on a nearly scaling invariance property for the compressible Navier-Stokes-Korteweg system. 
For a solution $(\r,u)$ to the problem \eqref{eqn;NSK},  
the scaled functions $(\r_\al,u_\al)$ with the parameters $\al>0$ given by 
\begin{equation} \label{eqn;c1}
 \r_\al(t,x)= \r(\al^2t,\al x), \quad 
 u_\al(t,x)= \al u(\al^2 t, \al x),  
\end{equation}
also satisfy the same problem without the initial conditions provided that the pressure $P$ is changed into $\al^2 P$.   
By virtue of Fujita-Kato's principle \cite{Fj-Kt}, one can find that 
the invariant function class under the scaling \eqref{eqn;c1} is given by 
{\it critical function spaces}.  
The critical space for the initial data $(\r_0,u_0)$ in problem \eqref{eqn;NSK}, 
for instance, is given by the homogeneous Besov space   
$\dB^{d/p}_{p,\s}(\R^d) \times \dB^{-1+d/p}_{p,\s}(\R^d)$ 
(for the definition of Besov spaces, see Definition 2.15 in \cite{B-C-D}). 
Focusing on the above nearly scaling invariances, Danchin-Desjardins \cite{Da-De} established the result on the small data global existence 
of the problem \eqref{eqn;NSK} in the critical Besov space based on $L^2(\R^d)$. 
Recently, Charve-Danchin-Xu \cite{Ch-Da-Xu} obtained the global well-posedness and Gevrey analyticity for the global solution in more 
general critical $L^p(\R^d)$ framework. 

Regarding the asymptotic stability for the global solution to the problem \eqref{eqn;NSK} around the constant equilibrium state $(\r_*,0)$, 
there are numerous studies up to the present. 
In the case of the smooth initial data, the many authors have established the $L^2$-$L^1$ type time decay estimate, 
which is the same decay rate as the $L^2(\R^d)$-norm of the fundamental solution to the heat equation provided the initial data belongs to $L^1(\R^d)$ (cf. \cite{Ko-Ts,Ta-Wa-Xu,Ta-Zh,Wa-Ta,Zh-Ta}). 
In the scaling critical case, Chikami-Kobayashi \cite{Ch-Ko} showed that the $\dot{B}_{2,1}^s$-$\dot{B}_{2,\infty}^{-d/2}$ type time-decay estimate holds true 
for the global solution obtained by Danchin-Desjardins \cite{Da-De}. 
Later, Kawashima-Shibata-Xu \cite{Ka-Sh-Xu} extended the Chikami-Kobayashi's decay result to the critical $L^p(\R^d)$-Besov framework. 
Recently, in \cite{Ko-Na}, we establish the estimate on the linear approximation in critical Fourier-Besov spaces (see Definition \ref{dfn;FB}) and the decay estimate 
with diffusion wave in critical $L^2$-Besov spaces. 
In regard to the result on the low mach number limit for the 2D system in critical $\wh{L}^p$ framework, 
we refer to the recent work by Fujii-Li \cite{Fu-Li}. 

On the other hand, it is physically important to consider the system \eqref{eqn;NSK} in the case $\gm=0$ or $\gm <0$ because the system \eqref{eqn;NSK} was deduced by 
using Wan der Waals potential which contains the non-monotone pressure. For details, see for instance \cite{Ko-Ts}. 
In this paper, we focus on the case of $\gm=0$. In \cite{Da-De}, they constructed the local-in-time solution of \eqref{eqn;NSK} with $\gm=0$ in critical $L^2$-Besov spaces. 
Chikami-Kobayashi \cite{Ch-Ko} globally extended the local solution obtained by Danchin-Desjardins \cite{Da-De} under an additional low-frequency assumption 
and they also established the 
$\dot{B}_{2,1}^s$-$\dot{B}_{2,\infty}^{-d/2}$ type time-decay estimate. 
Later on, Zhang \cite{Zh} obtained global solution in generalized critical $L^2$-$L^p$ framework with $d \ge 4$.  
In the inhomogeneous Sobolev setting,  
Kobayashi-Murata \cite{Ko-Mu} and Kobayashi-Tsuda \cite{Ko-Ts} investigated the existence of global solution and time-decay of solutions. 
Recently, Song-Xu \cite{So-Xu, So-Xu2} established the Gevrey analyticity and time-decay estimate of global solutions to the problem 
\eqref{eqn;NSK} with $\nu^2 \ge 4 \kappa$ in critical $L^p$-Besov spaces.  

\subsection{Function spaces} 
Before stating the main statements of this paper, we introduce some 
notation and definitions. 
For $d \ge 1$ and $1\le p\le\infty$, let $L^p = L^p(\R^d)$ be the Lebesgue space. 
For any $f$ belonging to the {\it Schwartz class} $\mathcal{S}=\mathcal{S}(\R^d)$, 
the Fourier transform of $f$ denoted by $\wh{f}=\wh{f}(\xi)$ or $\mathcal{F}[f]=\mathcal{F}[f](\xi)$ is  
$$
 \mathcal{F}[f](\xi)(=\wh{f}(\xi)):= 
   \frac{1}{(2 \pi)^{d/2}}\int_{\R^d} e^{-i x \cdot \xi}f(x) \,dx.             
$$
Similarly, 
for any $g=g(\xi)$ belonging to $\mathcal{S}(\R^d_\xi)$, the Fourier {\it inverse} transform $\mathcal{F}^{-1}[g]=\mathcal{F}^{-1}[g](x)$ is then defined as 
$$
 \mathcal{F}^{-1}[g](x) := \frac{1}{(2 \pi)^{d/2}} \int_{\R^d} e^{i x \cdot \xi} g(\xi) \,d\xi.             
$$
Let $\{ \phi_j \}_{j \in \Z}$ be the Littlewood-Paley 
dyadic decomposition of unity, i.e., for a non-negative radially 
symmetric function $\phi \in \mathcal{S}$, we set (for the construction 
of $\{ \phi_j \}_{j \in \Z}$, see e.g., \cite{B-C-D}, \cite{S}) 
\begin{equation*}
\wh{\phi_j}(\xi) := \wh{\phi}(2^{-j}\xi) \;(j \in \Z), \;\;
\sum_{j \in \Z}\wh{\phi}_j(\xi) = 1 \; (\xi \neq 0) \;\;
\text{and} \;\;
\supp \wh{\phi}
\subset
\{\xi \in \R^{d};\frac{1}{2} \le |\xi| \le 2\}. 
\end{equation*}
\begin{dfn}[{\it Homogeneous Fourier-Besov spaces}\hspace{0mm}] Let $d \ge 1$, \label{dfn;FB}
$s \in \R$, $1 \le p,\s \le \infty$ 
and $\mathcal{S}'=\mathcal{S}'(\R^d)$ be the space of tempered distributions. 
We define the homogeneous Fourier-Besov space $\fB{_{p,\s}^s} = \fB{_{p,\s}^s}(\R^d)$ as follows:
\begin{align*}
  \fB{_{p,\s}^s}(\R^d)
  :=\{f\in\mathcal{S}';\widehat{f}\in L^1_{loc}(\R^d), \,\|f\|_{\fB{_{p,\s}^s}} < \infty\}, \quad
  \|f\|_{\fB{_{p,\s}^s}}:=\left\|\left\{2^{sj}\|\Dj f\|_{\wh{L}^p}\right\}_{j\in\Z}\right\|_{\ell^\s},
\end{align*} 
where $\Dj f:=\mathcal{F}^{-1}[\wh{\phi}_j \wh{f}\,]$ for some $f \in \mathcal{S}'$ and 
$\|f\|_{\wh{L}^p}:=\|\wh{f}\|_{L^{p'}_\xi}$.  
\end{dfn}

\vskip2mm
Taking into account the time variable, we give the definition of the space-time mixed spaces 
introduced by Chemin-Lerner \cite{Ch-Le}.
\vskip2mm
\begin{dfn}[{\it Chemin-Lerner spaces}\hspace{0mm}] 
Let $d \ge 1$, $s \in \R$, $1 \le p \le \infty$, $1 \le \s,r \le \infty$ and 
$I=[0,T]$ with $T \in \R_+$.  
We define the Chemin-Lerner space based on the Fourier-Besov space as follows:
\begin{align*}
  \wt{L^r(I;}\fB{_{p,\s}^s})
  :=\overline{C(I;\mathcal{S}_0)}^{\|\cdot\|_{\wt{L^r(I;}\fB{_{p,\s}^s})}}, \quad
  \|f\|_{\wt{L^r(I;}\fB{_{p,\s}^s})}:=\left\|\left\{2^{sj}\|\Dj f\|_{L^r(I;\wh{L}^{p})}\right\}_{j\in\Z}\right\|_{\ell^\s}, 
\end{align*}
where $\mathcal{S}_0=\mathcal{S}_0(\R^d)$ is the set of functions in the Schwartz class 
$\mathcal{S}(\R^d)$  
whose Fourier transform are supported away from $0$. 
\end{dfn}
\begin{rem} 
By Minkowski's inequality, we note that the following continuous embeddings hold between the 
Chemin-Lerner space $\wt{L^r(I;}\fB{_{p,\s}^s})$ and the Bochner space 
$L^r(I;\fB{_{p,\s}^s})$:
\begin{align*}
      \wt{L^r(I;}\fB{_{p,\s}^s}) 
       \hookrightarrow L^r(I;\fB{_{p,\s}^s}) \quad \text{if}\;\; r \ge \s, \quad
      L^r(I;\fB{_{p,\s}^s}) 
       \hookrightarrow \wt{L^r(I;}\fB{_{p,\s}^s}) \quad \text{if}\;\; r \le \s 
\end{align*}
(for the definition of Bochner spaces, see the end of this section). 
\end{rem}

\begin{dfn}[{\it The space $CL_{T}^{(p,\s)}$}] \label{dfn;CL_T}
For $T>0$ and $1 \le p, \s \le \infty$, we denote by $CL_{T}^{(p,\s)}$ the space of functions 
$(f,g)$ 
such that 
\begin{align*}
\|(f,g)\|_{CL_{T}^{(p,\s)}}:=\|(|\N|f,\,&g)\|_{\wt{L^\infty(I};\fB{_{p,\s}^{-3+\frac{d}{p}}})
                \cap \wt{\;L^1(I;}\fB{_{p,\s}^{-1+\frac{d}{p}}})} \\
&+\|(|\N|f,g)\|_{\wt{\;L^\infty(I};\fB{_{p,1}^{-1+\frac{d}{p}}})
                \cap L^1(I;\fB{_{p,1}^{1+\frac{d}{p}}})} < \infty. 
\end{align*}
\end{dfn}

\section{Main results} 

\subsection{Main result I: Analyticity and time-decay of solutions} 
In what follows, we set $a:=\r-\r_*$ for the constant equilibrium state $\r_*>0$. We rapidly obtain the following equations for the 
perturbation denoted by $(a, m)$:  
\begin{equation} \label{eqn;mNSK3} 
\left\{
\begin{split}
& \pt_ta+\div m=0, 
&t>0, x \in \R^d, \\
&\pt_t m-\frac{1}{\r_*}\mathcal{L}m-\kappa \N \Delta a = \mN(a,m)
&t>0, x \in \R^d, \\
&(a, m)|_{t=0}=(a_0,m_0), &x \in \R^d,   
\end{split}
\right.
\end{equation}
where 
$a_0=\r_0-\r_*$ and 
the nonlinear part $\mN(a,m)$ is given by 
\begin{align*} 
\mN(a,m)=-\frac{1}{\r_*} \div (m \otimes m)&+ \text{div} \left(\frac{a}{\r_*(a+\r_*)} m \otimes m\right) \\
&-P'(a+\r_*) \N a -\mathcal{L}\left( \frac{a}{\r_*(a+\r_*)} m \right)+ \div \mathcal{K}(a). 
\end{align*}

We shall state the main results of this paper for \eqref{eqn;mNSK3}. 

\begin{thm}[{\it Global existence and Analyticity of solution}\hspace{0mm}] \label{thm;GWP_FLp}
Let $d \ge 3$, $1 \le p <d$ and $1 \le \s \le \infty$. 
Suppose that the initial data $(a_0,m_0)$ satisfy 
$$
(a_0,m_0) 
\in \bigg(\fB{_{p,\s}^{-2+\frac{d}{p}}}\cap\fB{_{p,1}^{\frac{d}{p}}}\bigg)(\R^d) 
\times \bigg(\fB{_{p,\s}^{-3+\frac{d}{p}}}\cap\fB{^{-1+\frac{d}{p}}_{p,1}}\bigg)(\R^d). 
$$
There exists a positive constant $\ve_0 \ll 1$ such that if 
\begin{equation} \label{assump;initial_FB}
\|a_0\|_{\fB{_{p,\s}^{-2+\frac{d}{p}}}\cap\fB{_{p,1}^{\frac{d}{p}}}}
            +\|m_0\|_{\fB{_{p,\s}^{-3+\frac{d}{p}}}\cap\fB{_{p,1}^{-1+\frac{d}{p}}}}
\le \ve_0,  
\end{equation}
then the problem \eqref{eqn;mNSK} admits a unique global-in-time solution $(a,m)$ satisfying 
\begin{equation} \label{est;CL^p_T}
\|(a,m)\|_{CL_{T}^{(p,\s)}} 
\les \|(|\N|a_0,m_0)\|_{\fB{_{p,\s}^{-3+\frac{d}{p}}}\cap\fB{_{p,1}^{-1+\frac{d}{p}}}} 
\end{equation}
for all $T>0$. 
In particular, the solution $(a,m)$ fulfills $e^{\sqrt{c_0 t}|\N|}(a,m) \in CL_T^{(p,\s)}$ for all $T>0$, where 
$c_0>0$ is a constant 
and $e^{\sqrt{c_0 t}|\N|}f:=\F^{-1} e^{\sqrt{c_0 t}|\xi|} \F f$ for some 
$f \in \mathcal{S}'(\R^d)$.  
\end{thm}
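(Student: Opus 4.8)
\smallskip
\noindent\emph{Sketch of proof.}
Set $\Theta(t):=\sqrt{c_0t}$ for a small constant $c_0>0$. The plan is to run a contraction argument for the Duhamel form of \eqref{eqn;mNSK3} \emph{directly} in the Gevrey-weighted class $\mX_T:=\{(a,m):e^{\Theta(t)|\N|}(a,m)\in CL_T^{(p,\s)}\}$; since $e^{\Theta(t)|\xi|}\ge1$, the unweighted bound \eqref{est;CL^p_T} follows from the weighted estimate by discarding the weight, and the stated membership $e^{\sqrt{c_0t}|\N|}(a,m)\in CL_T^{(p,\s)}$ is exactly what the argument produces. Throughout I use that, by $\||\N|f\|_{\fB{_{p,\s}^{s}}}\sim\|f\|_{\fB{_{p,\s}^{s+1}}}$, the size condition \eqref{assump;initial_FB} is equivalent to smallness of $\|(|\N|a_0,m_0)\|_{\fB{_{p,\s}^{-3+\frac dp}}\cap\fB{_{p,1}^{-1+\frac dp}}}$, the quantity on the right of \eqref{est;CL^p_T}.

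\textbf{Step 1: the linear analysis (the decisive step).}
Decompose $m=\P m+\mathcal{Q}m$ with the Helmholtz projection $\P$ and $\mathcal{Q}:=\Id-\P$. The solenoidal part solves the heat equation $\pt_t\P m-\frac{\mu}{\r_*}\Del\P m=\P F$. For the potential part, passing to Fourier variables and using the \emph{scaled} unknown $(|\N|a,\mathcal{Q}m)$, the linearized system takes the form $\pt_tV=|\xi|^2 N V$ with $V=(\widehat{|\N| a},\widehat{\mathcal{Q}m})$ and $N$ a \emph{fixed} $2\times2$ matrix whose eigenvalues $\frac12\big(-\frac{\nu}{\r_*}\pm\sqrt{\frac{\nu^2}{\r_*^2}-4\kappa}\big)$ have negative real part by virtue of $\nu>0$ and $\kappa>0$, whatever the sign of the discriminant $\nu^2-4\kappa\r_*^2$ (in the borderline case $\nu^2=4\kappa\r_*^2$, $N$ has a Jordan block, but the resulting polynomial factor is absorbed by the exponential). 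Hence $|e^{|\xi|^2tN}|\les e^{-c_1|\xi|^2t}$ for a fixed $c_1>0$, and, combining with the solenoidal heat factor, the full linearized solution operator $\mG(t)$ in the variables $(|\N|a,m)$ obeys the symbol bound $|\wh{\mG}(t,\xi)|\les e^{-c_1t|\xi|^2}$. The analytic weight is then harmless: the elementary inequality $\sqrt{c_0}\,y-\frac{c_1}{2}y^2\le\frac{c_0}{2c_1}$ with $y=\sqrt t\,|\xi|$ gives $e^{\Theta(t)|\xi|}|\wh{\mG}(t,\xi)|\les e^{-\frac{c_1}{2}t|\xi|^2}$, while the subadditivity $\sqrt{c_0t}\le\sqrt{c_0(t-\t)}+\sqrt{c_0\t}$ gives $e^{\Theta(t)|\xi|}\le e^{\Theta(t-\t)|\xi|}e^{\Theta(\t)|\xi|}$. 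Consequently $e^{\Theta(t)|\N|}\mG(t)$ satisfies the very same parabolic maximal-regularity estimates in the Fourier--Besov scale as $\mG(t)$: for $s\in\R$,
\begin{equation*}
\big\|e^{\Theta(t)|\N|}\mG(\cdot)U_0\big\|_{\wt{L^\infty(I;}\fB{_{p,\s}^{s}})\cap\wt{L^1(I;}\fB{_{p,\s}^{s+2}})}\les\|U_0\|_{\fB{_{p,\s}^{s}}},
\end{equation*}
together with the analogous bound for the Duhamel integral $\int_0^t\mG(t-\t)F(\t)\,d\t$ with right-hand side $\|e^{\Theta(t)|\N|}F\|_{L^1(I;\fB{_{p,\s}^{s}})}$ (the weight distributing onto kernel and forcing via the displayed split), and the corresponding statements at the $(p,1)$ level with the shifted indices of Definition \ref{dfn;CL_T}.

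\textbf{Step 2: weighted nonlinear estimates and closure.}
The pointwise bound $e^{\Theta(t)|\xi|}\le e^{\Theta(t)|\xi-\eta|}e^{\Theta(t)|\eta|}$ makes the Gevrey weight super-multiplicative under Fourier convolution, so the Bony paraproduct decomposition, the product laws in $\fB{_{p,\s}^{s}}$ and $\fB{_{p,1}^{s}}$ (available at the regularities in play because $p<d$ forces $\frac dp>1$), and the composition estimates for $a\mapsto F(a)$ with $F$ real analytic near $0$, $F(0)=0$ (legitimate since $\fB{_{p,1}^{d/p}}$ is an algebra and $a$ stays small) all carry over verbatim to the weighted unknowns. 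Because $\gm=0$, the pressure term equals $-\big(P'(a+\r_*)-P'(\r_*)\big)\N a$ and is genuinely quadratic; likewise every summand of $\mN(a,m)$ is at least quadratic, the operator $\L$ in $-\L\big(\frac{a}{\r_*(a+\r_*)}m\big)$ and the extra divergence in $\div\mathcal{K}(a)$ being absorbed by the two-derivative gain of Step 1. Hence, for $\|(a,m)\|_{\mX_T}$ below a fixed threshold,
\begin{equation*}
\big\|e^{\Theta(t)|\N|}\mN(a,m)\big\|_{L^1(I;\fB{_{p,\s}^{-3+\frac{d}{p}}})\cap L^1(I;\fB{_{p,1}^{-1+\frac{d}{p}}})}\les\|(a,m)\|_{\mX_T}^{2},
\end{equation*}
uniformly in $T$. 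Combining Steps 1 and 2, the map sending $(\bar a,\bar m)$ to the solution of the linear problem with forcing $\mN(\bar a,\bar m)$ and data $(a_0,m_0)$ leaves invariant, and contracts on, a ball of radius $\sim\|(|\N|a_0,m_0)\|_{\fB{_{p,\s}^{-3+\frac dp}}\cap\fB{_{p,1}^{-1+\frac dp}}}$ in $\mX_T$, provided $\ve_0$ in \eqref{assump;initial_FB} is small. Its fixed point is the desired global solution; dropping the weight yields \eqref{est;CL^p_T}, and by construction $e^{\sqrt{c_0t}|\N|}(a,m)\in CL_T^{(p,\s)}$. Uniqueness in the solution class follows from a standard difference estimate in a space with one fewer derivative.

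\textbf{Main difficulty.}
The crux is Step 1: recognizing that in the variables $(|\N|a,m)$ the zero–sound–speed linearization has the purely parabolic symbol $e^{|\xi|^2tN}$ with $N$ a \emph{constant} matrix --- it is precisely the absence of the pressure term $\gm^2\N a$ that makes this reduction possible and renders the would-be mixed parabolic--hyperbolic low-frequency regime heat-like --- together with checking that $\mathrm{spec}(N)$ lies in the open left half-plane for all admissible $\mu,\lam,\kappa,\r_*$ (including the borderline $\nu^2=4\kappa\r_*^2$) and that the analytic weight $e^{\sqrt{c_0t}|\xi|}$ is dominated, uniformly in $(t,\xi)$, by the full smoothing $e^{-c_1t|\xi|^2}$. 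Once these algebraic facts and the elementary square-root inequalities are in place, the nonlinear part is a routine transcription of the unweighted critical Fourier--Besov scheme.
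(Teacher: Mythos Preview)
Your proposal is correct and rests on the same technical pillars as the paper: the constant-matrix parabolic symbol for $(|\N|a,m)$ in the zero-sound-speed case (the paper's Proposition~\ref{lem;pw_L}), the elementary absorption $e^{\sqrt{c_0t}|\xi|-\frac{c_0}{2}t|\xi|^2}\le C$ together with the subadditivity $\sqrt{t}\le\sqrt{t-\t}+\sqrt{\t}$, and the super-multiplicativity $e^{\Theta(t)|\xi|}\le e^{\Theta(t)|\xi-\eta|}e^{\Theta(t)|\eta|}$ that makes the weighted bilinear and composition laws go through (the paper packages this last point as the operator $\B_t$ of Lemma~\ref{lem;B_t} and the bilinear Lemmas~\ref{lem;bil_anal}--\ref{lem;bil_anal2}).

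The one genuine organizational difference is that the paper proceeds in two passes: it first runs the contraction in the \emph{unweighted} space $CL_T^{(p,\s)}$ via Proposition~\ref{prop;MR3} and Lemma~\ref{lem;nonlin_est} (invoking the abstract fixed-point Lemma~\ref{lem;Banach_fixed_pt} to handle the bilinear-plus-trilinear structure of $\mN$), and only afterwards establishes the Gevrey bound as an \emph{a priori} estimate on the already-constructed solution, closing via the inequality $\|(A,M)\|_{CL_T^{(p,\s)}}\les\text{data}+\|(A,M)\|_{CL_T^{(p,\s)}}^2+\|(A,M)\|_{CL_T^{(p,\s)}}^3$. Your route---running the contraction directly in the weighted class $\mX_T$---is slightly cleaner in that it avoids the implicit continuity/bootstrap step needed to close the paper's a priori estimate, and it yields existence and analyticity simultaneously; the paper's two-pass structure, on the other hand, isolates the unweighted existence result as a standalone statement. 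Either way the substantive work (the weighted product laws at regularity $-2+\frac{d}{p}$, which is precisely where the restriction $p<d$ enters via Lemma~\ref{lem;bil2}) is identical.
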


\begin{thm}[{\it $L^p$-$L^1$ type decay estimates}\hspace{0mm}] \label{thm;Lp-L1}
Let $p$ satisfy $1 \le p \le 2$ and $\s=1$.   
Suppose that the initial data $(a_0,m_0)$ 
satisfy the same assumption as in Theorem \ref{thm;GWP_FLp} and 
$(a,m) \in CL_T^{(p,1)}$ denote the corresponding global-in-time solution of the problem \eqref{eqn;mNSK}. 
If in addition, we assume that $m_0=\N \tilde{m}_0$ and $(a_0,\tilde{m}_0)$ satisfies 
\begin{align} \label{cond;add_small}
\mathcal{D}_{p,0}:= \sup_{j \in \Z} 2^{-\frac{d}{p'}j}\|(\Dj a_0, \Dj\tilde{m}_0)\|_{\wh{L}^p} < \infty, 
\end{align}
then the global solution $(a,m)$ satisfies the following decay estimates:  
\begin{align} \label{est;decay1}
&\||\N|^{s_1}a(t)\|_{\fB{_{p,1}^{0}}}=O(t^{-\frac{d}{2}(1-\frac{1}{p})-\frac{s_1}{2}}) \;(t \to \infty) 
\quad \text{for all } s_1>-\frac{d}{p'}, \\
&\||\N|^{s_2}m(t)\|_{\fB{_{p,1}^{0}}}=O(t^{-\frac{d}{2}(1-\frac{1}{p})-\frac{s_2+1}{2}}) \;(t \to \infty)  
\quad \text{for all } s_2>-1-\frac{d}{p'}.   \label{est;decay2}
\end{align}
\end{thm}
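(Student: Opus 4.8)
The plan is to derive the decay estimates \eqref{est;decay1}--\eqref{est;decay2} by combining three ingredients: (i) the global \emph{analytic} solution furnished by Theorem \ref{thm;GWP_FLp}, which already gives the Gevrey-type smoothing $e^{\sqrt{c_0t}|\N|}(a,m)\in CL_T^{(p,1)}$; (ii) a low-frequency linear decay estimate coming from the principal part of \eqref{eqn;mNSK3}; and (iii) the extra condition \eqref{cond;add_small}, which controls the initial data in a negative regularity Fourier--Herz class $\fB{_{p,\infty}^{-d/p'}}$ and propagates by the parabolic-type semigroup. I would begin by recording the spectral structure of the linearized operator: writing \eqref{eqn;mNSK3} in Fourier variables and diagonalizing, the zero-sound-speed case $\gm=0$ produces, in the low-frequency regime $|\xi|\lesssim1$, eigenvalues behaving like $-c|\xi|^2$ (the Lam\'e part) together with the Korteweg coupling $\kappa|\xi|^4$, so the solution operator $e^{t\mathcal{A}}$ enjoys the pointwise bound $|\widehat{e^{t\mathcal{A}}U_0}(\xi)|\lesssim e^{-c|\xi|^2t}|\widehat{U_0}(\xi)|$ for $|\xi|\le 1$, while at high frequencies one has exponential decay $e^{-c t}$. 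Here the momentum representation and the ansatz $m_0=\N\tilde m_0$ are what let me write the data with one extra derivative smoothness and gain the improved rate in \eqref{est;decay2}.

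Next I would set up the nonlinear bootstrap. Using Duhamel's formula $(a,m)(t)=e^{t\mathcal{A}}(a_0,m_0)+\int_0^t e^{(t-\tau)\mathcal{A}}\mN(a,m)(\tau)\,d\tau$, I split each dyadic block into low and high frequencies. On the high-frequency part I invoke the analyticity: from $e^{\sqrt{c_0t}|\N|}(a,m)\in CL_T^{(p,1)}$ together with the elementary inequality $2^{sj}\lesssim (\sqrt{c_0t}\,2^j)^{-N}e^{\sqrt{c_0t}2^j}$ summed over $j\ge0$, one converts the Gevrey norm into arbitrarily fast polynomial decay $t^{-N}$ for the high-frequency piece, so that part is harmless for any $s_1,s_2$. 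The low-frequency part is the genuine one: I estimate $\||\N|^{s_1}\Dj e^{t\mathcal{A}}(a_0,m_0)\|_{\wh L^p}$ using the heat-kernel bound and \eqref{cond;add_small}, which gives precisely the rate $t^{-\frac d2(1-\frac1p)-\frac{s_1}{2}}$ after summing the $\ell^1$ norm over $j\le0$ (the sum converges exactly under the stated threshold $s_1>-d/p'$, and similarly $s_2>-1-d/p'$ using the gradient structure of $m_0$). For the Duhamel integral I need a product estimate showing $\mN(a,m)$ is quadratic (or higher) in $(a,m)$ and inherits, via \eqref{est;CL^p_T} and interpolation with the Gevrey bound, a time-integrable decay with a rate strictly faster than the linear one, so that $\int_0^t (1+t-\tau)^{-\mu}(1+\tau)^{-\nu}\,d\tau$ closes with $\mu+\nu>1$; the divergence form of $\mN$ (one derivative always falling on a product $m\otimes m$ or on $\mathcal{K}(a)$) is essential for generating the extra half-power.

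Finally I would assemble the pieces: for fixed $s_1>-d/p'$ one bounds $\||\N|^{s_1}a(t)\|_{\fB{_{p,1}^0}}$ by the linear low-frequency contribution, the rapidly decaying high-frequency contribution, and the Duhamel term, concluding the $O(t^{-d(1-1/p)/2-s_1/2})$ bound; the argument for $m$ is identical, with the shift by one derivative built into the exponent. The main obstacle I anticipate is the nonlinear Duhamel estimate at low frequencies: one must show that the quadratic terms decay \emph{strictly faster} than the target rate uniformly in the derivative index $s_i$, which requires carefully tracking how many derivatives the divergence structure supplies versus how many are demanded by $|\N|^{s_i}$, and borrowing the missing smoothness from the analyticity bound rather than from the finite-regularity norm \eqref{est;CL^p_T}. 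A secondary technical point is justifying the bilinear estimates in the Fourier--Herz scale $\fB{_{p,1}^s}$ with the low/high frequency threshold, for which I would rely on the para-product decomposition and Bernstein-type inequalities in $\wh L^p$, exactly as in the a priori estimates underlying Theorem \ref{thm;GWP_FLp}.
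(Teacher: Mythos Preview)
Your proposal takes a genuinely different route from the paper's argument, and the difference is worth highlighting because the paper's approach is considerably shorter.

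You propose the classical scheme: low/high frequency splitting, linear heat-type decay at low frequencies from \eqref{cond;add_small}, rapid decay at high frequencies from analyticity, and a Duhamel bootstrap with a time-convolution integral $\int_0^t\langle t-\tau\rangle^{-\mu}\langle\tau\rangle^{-\nu}\,d\tau$ for the nonlinear part. The paper does none of this. Instead it first proves (Proposition~\ref{prop;negative}) a single \emph{uniform-in-time} bound on the analytic extension $(A,M)=e^{\sqrt{c_0t}|\N|}(a,m)$ in the \emph{negative}-index space $L^\infty_t(\fB{_{p,\infty}^{-1-d/p'}})\cap\wt{L^1_t}(\fB{_{p,\infty}^{1-d/p'}})$, via the same maximal-regularity estimate \eqref{est;GV} used for existence together with a product law $\|e^{\sqrt{c_0t}|\N|}fg\|_{\fB{_{p,\infty}^{-d/p'}}}\lesssim\|F\|_{\fB{_{p,1}^{d/p}}}\|G\|_{\fB{_{p,\infty}^{-d/p'}}}$ that closes against the already-established $CL_T^{(p,1)}$ control of $(A,M)$. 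Once this uniform bound is in hand, the decay is a three-line computation: write $\|\Dj(|\N|a,m)\|_{\wh L^p}\le e^{-c\sqrt{c_0t}2^j}\|\Dj(|\N|A,M)\|_{\wh L^p}$, peel off the $\fB{_{p,\infty}^{-1-d/p'}}$ norm, and sum $\sum_{j\in\Z}(\sqrt{t}\,2^j)^{s+1+d/p'}e^{-c\sqrt{c_0t}2^j}\lesssim 1$. No frequency splitting, no time-convolution, no bootstrap on the decay rate.

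The obstacle you correctly flag---getting the Duhamel term to decay strictly faster than the linear rate uniformly in $s_i$---is exactly what the paper's method circumvents. In your scheme you would still need, at low frequencies, a pointwise-in-$\tau$ bound of the form $\|\mN(a,m)(\tau)\|_{\fB{_{p,\infty}^{-1-d/p'}}}\lesssim\langle\tau\rangle^{-\beta}$ with $\beta$ large enough to make the convolution close for every $s_1>-d/p'$; obtaining this without already knowing the decay of $(a,m)$ (or without proving something equivalent to Proposition~\ref{prop;negative}) is where your outline is thin. Your strategy is not wrong in principle, but the paper's route replaces the whole bootstrap by a single Chemin--Lerner estimate in a negative Fourier--Besov space, which is both cleaner and yields the full range of $s_1,s_2$ at once.
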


\noindent
{\bf Comments on Theorems \ref{thm;GWP_FLp}, \ref{thm;Lp-L1}}  
\begin{enumerate}[1.]
\item ({\it Global well-posedness}) In the zero sound speed case, the fundamental solution $(a,m)$ of \eqref{eqn;mNSK3} satisfy the followings 
(as for the regorous formulation, you are able to see in Section \ref{sect;LA}):  
\begin{equation} \label{eqn;rough}
\mathcal{P}_\s m \simeq e^{\mu t \Delta} \mathcal{P}_\s m_0, 
\quad (|\N|a,\mathcal{P}_\s^\perp m) \simeq e^{\frac{\nu}{2}t \Delta} e^{\sqrt{\nu^2-4\kappa} t \Delta} (|\N|a_0, \mathcal{P}_\s^\perp m_0), 
\end{equation}
where $\mathcal{P}_\s$ is the Helmholtz projection which is defined by $\text{Id}-(-\Delta)^{-1}\N \div$ and $\mathcal{P}_\s^\perp$ is the orthogonal projection of 
$\mathcal{P}_\s$, namely $\mathcal{P}_\s^\perp=(-\Delta)^{-1}\N \div$. Focusing on the second relation in \eqref{eqn;rough}, 
we easily see that if $\nu^2<4 \kappa$, $(|\N|a,\mathcal{P}_\s^\perp m)$ 
is identified as the fundamental solution of the complex Ginzburg-Landau equations (see e.g., Doering-Gibbon-Levermore \cite{Do-Gi-Le}) with the same initial data 
$(|\N|a_0, \mathcal{P}_\s^\perp m_0)$ because of $e^{\sqrt{\nu^2-4\kappa} t \Delta}=e^{i\sqrt{4\kappa-\nu^2} t \Delta}$. 
In the Fourier side, it is straightforward that if $\nu^2<4\kappa$, 
$$
|(\wh{|\N|a},\wh{\mathcal{P}_\s^\perp m})| \simeq e^{-\frac{\nu}{2}t |\xi|^2} |e^{-i\sqrt{4\kappa-\nu^2}t|\xi|^2} (\wh{|\N|a_0}, \wh{\mathcal{P}_\s^\perp m_0})| 
\simeq |e^{-\frac{\nu}{2}t |\xi|^2} (\wh{|\N|a_0}, \wh{\mathcal{P}_\s^\perp m_0})|. 
$$
According to the above relations, 
namely the vanishment of the dispersive operator likes the free Schr\"odinder operator $e^{it\Delta}$, 
we can expect that the structure on the Fourier-Besov spaces allows us to construct the solution for the problem \eqref{eqn;mNSK3} 
in critical $L^p$ framework with $p \neq 2$. The same structure can be observed in the perturbed solution around the constant magnetic field $\bar{B} \in \R^3$ 
of Hall-MHD system (cf. \cite{Fu-Na}, \cite{Na}). 
To the best of author's knowledge, 
Theorem \ref{thm;GWP_FLp} is the first result concerning the existence of global solutions to \eqref{eqn;mNSK3} with $\nu^2<4\kappa$ 
in critical spaces based on $L^p(\R^d)$ framework with $p \neq 2$. 

\item ({\it Analyticity}) 
Starting with the pioneering work by Foias-Temam \cite{Fo-Te}, 
for the solution of the nonlinear equations in fluid mechanics 
which are classified as the parabolic type in various scaling critical spaces, 
the same estimate as treated in \cite{Fo-Te} was obtained by many authors 
(cf. \cite{Ba}, \cite{Ba-Bi-Ta}, \cite{Ch-Da-Xu}, \cite{Le}, \cite{So-Xu}).  
As the corollary of Theorem \ref{thm;GWP_FLp}, we easily obtain the analyticity of solution with respect to 
$x$-direction to employ Oliver-Titi's argument (see \cite[Theorem 6]{Ol-Ti}). 

In  Theorem \ref{thm;GWP_FLp}, the restriction $1 \le p <d$ is came from the nonlinear estimate which can be seen in Lemma \ref{lem;bil2}. 
This was also mentioned in Song-Xu \cite{So-Xu} and, in the case $\nu^2 \ge 4 \kappa$, they obtained the result on the analyticity in more general $L^q$-$L^p$ framework 
(see the definition of $E_T^{p,q}$ in their literature \cite{So-Xu}).  
Our result may be extended as the corresponding Fourier-Besov framework, however this paper deal only with the case $p=q$ in order to concentrate 
the derivation of the asymptotic formula as seen in the next section.

\item ({\it Decay estimates}) 
The assertion of Theorem \ref{thm;Lp-L1} means that the $L^p$-$L^1$ type decay estimate holds true for the global solution 
with the critical regularity obtained by Theorem \ref{thm;GWP_FLp}. As we stated before, the linearized solution 
behaves like the fundamental solution of the heat equation in the Fourier space. Hence, we can expect that 
the solution of \eqref{eqn;mNSK3} has the same decay rate as the heat kernel. 
In the proof of Theorem \ref{thm;Lp-L1} in Section \ref{sect;Decay}, applying the decay estimates for the linearized solution 
as similar method developed by Kozono-Ogawa-Taniuchi \cite{Ko-Og-Ta} and estimate on the analyticity in 
the negative Fourier-Besov spaces (see Proposition \ref{prop;negative}), we are possible to get the time-decay of 
$\wh{L}^p(\R^d)$-norm when the low frequency of initial data is assumed to be \eqref{cond;add_small} weaker than 
$L^1(\R^d)$. Here we notice that the following continuous embeddings hold true for any $d \ge 1$ and $1 \le p \le \infty$, 
$$
L^1(\R^d) \hr \wh{L}^1(\R^d) \simeq \fB{_{1,\infty}^0}(\R^d) \hr \fB{_{p,\infty}^{-\frac{d}{p'}}}(\R^d)
$$
(for the proofs, see Lemmas \ref{lem;sm} and \ref{lem;equi} below). 

In Theorem \ref{thm;Lp-L1}, the restrictions on $\s$ and $p$ are came from the nonlinear estimate for 
$\N (a^2 \wt{I}_P(a))$. As for the source, you are able to see in the proof of Proposition \ref{prop;negative}. 
\end{enumerate}

\subsection{Main result I\hspace{-0.5mm}I: Asymptotic behavior of solutions} \label{subsect;Thm2.3} 
In what follows, we assume $\r_*=1$ for simplicity.  
In order to state our main result on the asymptotic behavior as treated in Fujigaki-Miyakawa \cite{Fj-Mi}, let us introduce the functions concerning 
with the asymptotic profiles as follows: 
For $j$, $k=1,2,\cdots,d$, 
\begin{equation*}
G_1(t)=
\left\{
\begin{split}
&\frac{1}{2}(\nu+\sqrt{\nu^2-4\kappa})G_-(t)-\frac{1}{2}(\nu-\sqrt{\nu^2-4\kappa})G_+(t), 
&if\;\; \nu^2 \neq 4 \kappa, \\
&\Big(1-\frac{\nu}{2}t\Delta \Big)G_{\nu/2}(t), &if\;\; \nu^2 = 4 \kappa,
\end{split}
\right. 
\end{equation*}

\begin{equation*}
G_2(t)=
\left\{
\begin{split}
& G_-(t)-G_+(t) &if\;\;\nu^2 \neq 4 \kappa, \\
& -t \Delta G_{\nu/2}(t) &if\;\;\nu^2 = 4 \kappa, \\
\end{split}
\right. 
\end{equation*}

\begin{equation*}
\wt{G}^{(j,k)}_2(t)=
\left\{
\begin{split}
& \wt{G}^{(j,k)}_-(t)-\wt{G}^{(j,k)}_+(t) &if\;\;\nu^2 \neq 4 \kappa, \\
& -t \Delta \mathcal{R}_j \mathcal{R}_k G_{\nu/2}(t) &if\;\;\nu^2 = 4 \kappa, \\
\end{split}
\right. 
\end{equation*}
where $\mathcal{R}_j$ is the {\it Riesz operator} defined by $\mathcal{F}^{-1} i \xi_j |\xi|^{-1} \mathcal{F}$ 
for $j=1,2,\dots,d$ and 
\begin{gather*}
{\dsp G_{\pm}(t)=\F^{-1}\left[\frac{e^{\lam_{\pm}(\xi)t}}{\sqrt{\nu^2-4 \kappa}}\right]} \;\;with \;\;
\lam_\pm(\xi)=-\frac{\nu}{2}|\xi|^2\left(1 \pm \sqrt{1-\frac{4\kappa}{\nu^2}}\right), \\
\wt{G}^{(j,k)}_{\pm}(t)=\F^{-1}\left[\frac{e^{\lam_{\pm}(\xi)t}}{\sqrt{\nu^2-4 \kappa}} 
\cdot \frac{i \xi_j i \xi_k}{|\xi|^2}\right](=\mathcal{R}_j \mathcal{R}_k G_2(t)), \quad 
G_{\nu/2}(t)=\F^{-1}\Big[e^{-\frac{\nu}{2}|\xi|^2t}\Big]. 
\end{gather*}

\begin{thm}[{\it Asymptotic behavior}\hspace{0mm}\,] \label{thm;Asympt}
Let $p$ satisfy $1 < p \le 2$ and $\s=1$.   
Suppose that the initial data $(a_0,m_0)$ 
satisfy the same assumption as in Theorem \ref{thm;GWP_FLp} and 
$(\r,m)$ denote the corresponding global-in-time solution of the problem \eqref{eqn;mNSK}. 
If in addition, we assume that $m_0=\N \tilde{m}_0$ and $(a_0,\tilde{m}_0)$ satisfies 
$(a_0,\tilde{m}_0) \in L^1(\R^d)$, 
then the global solution $(a,m)$ satisfies the following decay estimates for all $s > -d/p'$, 

\begin{equation} \label{est;AE}
\begin{aligned}
\lim_{t \to \infty}
t^{\frac{d}{2}(1-\frac{1}{p})+\frac{s}{2}}
\bigg\|
 |\N|^{s} 
 \bigg[
  a(t)&-G_1(t)\int_{\R^d} a_0(y) dy-G_2(t)\int_{\R^d} \tilde{m}_0(y) dy \\
      &-G_2(t) 
      \int_0^\infty 
      \int_{\R^d} 
       \bigg(\sum_{n=2}^\infty \frac{P^{(n)}(1)}{n!}a^n\bigg)
       \,dyd\t \\
      &-\wt{G}_2^{(j,k)}(t)
      \int_0^\infty \int_{\R^d} \Big(\frac{m_j m_k}{1+a}+\wt{\mathcal{K}}^{(j,k)}(a)\Big)
      \,dyd\t 
\bigg] \bigg\|_{\wh{L}^p}=0,  
\end{aligned} 
\end{equation}
where $\wt{\mathcal{K}}^{(j,k)}(a)$ is the $(j,k)$-th component of a symmetric $d \times d$ matrix which is defined by 
\begin{equation*}
\wt{\mathcal{K}}^{(j,k)}(a)
=
\left\{
\begin{split}
&\;\kappa \pt_j a \pt_k a  &if\;\;j \neq k, \\
&\;\frac{\kappa}{2} |\N a|^2 + \kappa (\pt_j a)^2  &if\;\;j = k.  \\
\end{split}
\right. 
\end{equation*}
Here we employ the summation convention with respect to $j$, $k=1,2,\dots,d$ in \eqref{est;AE}.  
\end{thm}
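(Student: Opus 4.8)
\medskip
\noindent\textbf{Proof plan.}
The plan is to follow the Fujigaki--Miyakawa scheme \cite{Fj-Mi} (see also \cite{Ko-Og-Ta}), adapted to the Fourier--Besov setting and to the linear structure \eqref{eqn;rough}. Write $\{\mathcal S(t)\}_{t\ge0}$ for the propagator of the linearized system \eqref{eqn;mNSK3}, whose explicit Fourier symbols are recorded in Section \ref{sect;LA}; in particular its $a$--component $\mathcal S_a(t)$ acting on data $(a_0,\N\tilde m_0)$ has Fourier transform $\wh G_1(\xi,t)\wh a_0(\xi)+\wh G_2(\xi,t)\wh{\tilde m_0}(\xi)$, with $\wh G_1(0,t)=1$ and $\wh G_2(\xi,t)=O(|\xi|^2t)$ as $\xi\to0$. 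I would start from the Duhamel formula $(a,m)(t)=\mathcal S(t)(a_0,m_0)+\int_0^t\mathcal S(t-\tau)\mN(a,m)(\tau)\,d\tau$ and, using $m=\N\tilde m$, $P'(1)=0$ and collecting the two quadratic momentum terms, rewrite
\begin{equation*}
\mN(a,m)=-\div\!\Big(\frac{m\otimes m}{1+a}\Big)-\N\Phi_P(a)+\div\mathcal K(a)-\L\!\Big(\frac{a}{1+a}m\Big),\qquad \Phi_P(a):=\sum_{n=2}^\infty\frac{P^{(n)}(1)}{n!}a^n,
\end{equation*}
so that every term but the last is a divergence of a symmetric two--tensor; I would also record the identity $\int_{\R^d}\mathcal K(a)_{jk}\,dy=-\int_{\R^d}\wt{\mathcal K}^{(j,k)}(a)\,dy$, which holds because the $\frac{\kappa}{2}\Delta a^2\,\Id$ part of $\mathcal K(a)$ is a total derivative.

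For the linear piece I would write $\mathcal S_a(t)(a_0,\N\tilde m_0)-G_1(t)\int a_0-G_2(t)\int\tilde m_0$ as $\mathcal S_a(t)$ applied to data whose Fourier transform vanishes at the origin; since $(a_0,\tilde m_0)\in L^1$, the transforms $\wh a_0,\wh{\tilde m_0}$ are continuous with $\wh a_0(\xi)-\wh a_0(0)\to0$ as $\xi\to0$, and combining this with the Gaussian-type decay of $\wh G_1,\wh G_2$, the rescaling $\xi\mapsto\xi/\sqrt t$, and dominated convergence yields $t^{\frac{d}{2}(1-\frac{1}{p})+\frac{s}{2}}\big\||\N|^s(\cdots)\big\|_{\wh{L}^p}\to0$ for every $s>-d/p'$, the restriction being exactly what makes $|\xi|^s\wh G_1(\cdot,t)$ lie in $L^{p'}$ near the origin. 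As for the nonlinear piece, the term $\L(\frac{a}{1+a}m)$ carries two extra derivatives on the quadratic quantity $\frac{a}{1+a}m$, so by the same symbol analysis its Duhamel contribution decays strictly faster (by a factor $t^{-1/2}$) than the asserted rate and is absorbed into the remainder.

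For each of the remaining pieces, written as $\partial_k F_{jk}$ with $F=-\frac{m\otimes m}{1+a}-\Phi_P(a)\Id+\mathcal K(a)$, I would split $\int_0^t=\int_0^{t/2}+\int_{t/2}^t$; on $[0,t/2]$ I would successively replace $\mathcal S_a(t-\tau)\partial_k F_{jk}(\tau)$ by its leading profile $\big(\text{symbol }\frac{\xi_j\xi_k}{|\xi|^2}\big)\wh G_2(\xi,t)\wh F_{jk}(0,\tau)$ and extend the time integral to $[0,\infty)$, which is legitimate because $\int_0^\infty\!\int_{\R^d}|F(\tau)|\,dy\,d\tau<\infty$; this finiteness I would derive from the decay estimates of Theorem \ref{thm;Lp-L1} --- available since $(a_0,\tilde m_0)\in L^1\hr\fB{_{p,\infty}^{-d/p'}}$, so that \eqref{cond;add_small} holds --- applied to $m\otimes m$, to each power $a^n$ $(n\ge2)$ in $\Phi_P$ and to the Korteweg quadratics, together with the smoothing of $\mathcal S(t-\tau)$ and the uniform bound \eqref{est;CL^p_T} to treat the $\tau\to0$ end. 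Finally the trace contraction $\sum_j\wt G_2^{(j,j)}=-G_2$ (from $\sum_j\mathcal R_j^2=-\Id$) turns the trace part $-\Phi_P(a)\Id$ of $F$ into the $G_2$--profile and the remaining full-tensor parts into the $\wt G_2^{(j,k)}$--profile, producing exactly the terms displayed in \eqref{est;AE}; the degenerate case $\nu^2=4\kappa$ is handled identically, with $G_{\nu/2}$ in place of the $G_\pm$ combinations.

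The hard part will be the uniform control, over all $s>-d/p'$, of the remaining error terms, and especially of the tail $\int_{t/2}^t$: the naive parabolic bound $\big\||\N|^s\mathcal S(\sigma)g\big\|\lesssim\sigma^{-s/2}\|g\|$ is not integrable near $\sigma=0$ once $s\ge2$, and it is precisely the Gevrey analyticity $e^{\sqrt{c_0\tau}|\N|}(a,m)\in CL_T^{(p,\s)}$ of Theorem \ref{thm;GWP_FLp} --- not merely the decay --- that saves the estimate, since the weight $e^{-\sqrt{c_0 t}|\xi|}$ at $|\xi|\sim(t-\sigma)^{-1/2}$ defeats any negative power of $t-\sigma$; deriving the requisite Gevrey-weighted bounds on $\mN(a,m)$, which involves the infinite sum $\Phi_P$ of analytic functions, is the technical core of this step. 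Two further points need care: the convergence of $\int_0^\infty\!\int_{\R^d}\mN\,dy\,d\tau$ forces sufficiently fast decay of \emph{every} nonlinear term --- this is where the restriction $1<p\le2$ enters, exactly as in the proof of Proposition \ref{prop;negative} --- and, for negative $s$ down to $-d/p'$, the low frequencies of the remainder must be controlled, for which the negative-norm analyticity estimate of Proposition \ref{prop;negative} is used.
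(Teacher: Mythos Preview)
Your outline follows the paper's Fujigaki--Miyakawa scheme: Propositions \ref{prop;AP1}--\ref{prop;AP2} handle the linear piece via the rescaling $\xi\mapsto\eta/\sqrt t$ and dominated convergence exactly as you say, and each nonlinear Duhamel contribution is split into four pieces around $t/2$ (the terms $N_{1,1},\dots,N_{1,4}$ in Section \ref{sect;AP}). Two points of execution differ from the paper and are worth noting.

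First, the tail $\int_{t/2}^t$ is simpler than you anticipate. Your worry about a non-integrable factor $(t-\tau)^{-s/2}$, and your plan to kill it with the Gevrey weight $e^{-\sqrt{c_0 t}|\xi|}$ acting directly on the nonlinearity, is more machinery than needed: the relevant symbol $\mathcal G^{1,2}(t-\tau,\xi)\,i\xi$ is \emph{uniformly bounded} in $\xi$ and $t-\tau$, so the paper simply keeps all of $|\N|^s$ on the nonlinearity,
\[
\||\N|^s N_{1,4}(t)\|_{\wh L^p}\lesssim\int_{t/2}^t\|(a^2\wt I_P(a))(\tau)\|_{\wh{\dot B}{^s_{p,p'}}}\,d\tau,
\]
and then invokes the decay $\|a(\tau)\|_{\fB{_{p,1}^{d/p}}}\|a(\tau)\|_{\fB{_{p,p'}^s}}\lesssim\tau^{-d/2-d/(2p')-s/2}$ from Theorem \ref{thm;Lp-L1}, valid for every $s>-d/p'$. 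Analyticity enters only indirectly, as the engine behind those unrestricted-order decay rates in Section \ref{sect;Decay}; no Gevrey weight appears in the asymptotic proof itself.

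Second, you do not mention the separate $\wh L^1$-uniform estimate of Proposition \ref{prop;FL^1}, namely $\|(|\N|A,M)\|_{L^\infty_T(\fB{_{1,\infty}^{-1}})}\lesssim\|(a_0,\tilde m_0)\|_{\fB{_{1,\infty}^0}}$, which the paper establishes (via Lemma \ref{lem;bil5}) and then uses to feed the interpolation $\|a(\tau)\|_{L^2}^2\le\|a(\tau)\|_{\wh L^1}\|a(\tau)\|_{\wh L^\infty}$ in the $L^1_y$ control of the nonlinear integrands. This is where the full hypothesis $(a_0,\tilde m_0)\in L^1$ (rather than merely $\in\fB{_{p,\infty}^{-d/p'}}$) is actually consumed. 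Your plan to extract $\int_0^\infty\|F(\tau)\|_{L^1}\,d\tau<\infty$ from Theorem \ref{thm;Lp-L1} alone can be made to work via the embedding $\fB{_{p,1}^{d(1/p-1/2)}}\hookrightarrow L^2$ together with the $CL^{(p,1)}_T$ bound for small $\tau$, so this is not a genuine gap; but it is an ingredient you would have to spell out, and the paper's route through Proposition \ref{prop;FL^1} is cleaner.
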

\begin{rem}
Under the same assumption as in Theorem \ref{thm;Asympt}, we are able to obtain that 
for all $s > -1-d/p'$, 
\begin{align} 
&\begin{aligned} \label{est;AE2}
\lim_{t \to \infty}
t^{\frac{d}{2}(1-\frac{1}{p})+\frac{s+1}{2}}
\bigg\|
 |\N|^{s} 
 \bigg[
  \mathcal{P}_\s m_j(t) 
      \,-\,&\pt_k S_\mu^{(j,k)}(t)  \\
      \times& \int_0^\infty \int_{\R^d} \Big(\frac{m_j m_k}{1+a}+\wt{\mathcal{K}}^{(j,k)}(a)\Big)
      \,dyd\t 
\bigg] \bigg\|_{\wh{L}^p}=0,    
\end{aligned} \\  
&\begin{aligned} \label{est;AE3}
\lim_{t \to \infty}
t^{\frac{d}{2}(1-\frac{1}{p})+\frac{s+1}{2}}
\bigg\|
 |\N|^{s} 
 \bigg[
  &\mathcal{P}_\s^\perp m_j(t)-\pt_j G_2(t) \int_{\R^d} a_0(y) dy-\pt_j G_3(t)\int_{\R^d} \tilde{m}_0(y) dy \\
      &-\pt_j G_3(t)
      \int_0^\infty 
      \int_{\R^d} 
       \bigg(\sum_{n=2}^\infty \frac{P^{(n)}(1)}{n!}a^n \bigg)
       \,dyd\t \\
      &-\pt_k \wt{G}^{(j,k)}_3(t) 
      \int_0^\infty \int_{\R^d} \Big(\frac{m_j m_k}{1+a}+\wt{\mathcal{K}}^{(j,k)}(a)\Big)
      \,dyd\t 
\bigg] \bigg\|_{\wh{L}^p}=0,     
\end{aligned}
\end{align}
where, in \eqref{est;AE2} and \eqref{est;AE3}, we employ the summation convention with respect to $k=1,2,\dots,d$, $\mathcal{P}_\s v_j$, $\mathcal{P}_\s^\perp v_j$ are denoted by $v_j +(-\Delta)^{-1} \pt_j \div v$, 
$-(-\Delta)^{-1} \pt_j \div v$, respectively and 
\begin{equation*}
S_\mu^{(j,k)}(t):=\mathcal{F}^{-1}\left[e^{- \mu t |\xi|^2} 
\left(\dl_{jk}-\frac{i \xi_j i \xi_k}{|\xi|^2}\right) \right], 
\end{equation*}
\begin{equation*}
G_3(t)=
\left\{
\begin{split}
&\frac{1}{2}(\nu+\sqrt{\nu^2-4\kappa}) G_+(t)-\frac{1}{2}(\nu-\sqrt{\nu^2-4\kappa}) G_-(t), 
&if\;\; \nu^2 \neq 4 \kappa, \\
&\Big(1-\frac{\nu}{2}t\Delta \Big) G_{\nu/2}(t), &if\;\; \nu^2 = 4 \kappa,
\end{split}
\right. 
\end{equation*}
\begin{equation*}
\wt{G}_3^{(j,k)}(t)=
\left\{
\begin{split}
&\frac{1}{2}(\nu+\sqrt{\nu^2-4\kappa}) \wt{G}^{(j,k)}_+(t)-\frac{1}{2}(\nu-\sqrt{\nu^2-4\kappa})
 \wt{G}^{(j,k)}_-(t) , 
&if\;\; \nu^2 \neq 4 \kappa, \\
&\Big(1-\frac{\nu}{2}t\Delta \Big) \mathcal{R}_j \mathcal{R}_k G_{\nu/2}(t), &if\;\; \nu^2 = 4 \kappa. 
\end{split}
\right. 
\end{equation*}
\end{rem}

This paper is organized as follows: After some preparations on the elemental properties and estimates for the linearized solution on the 
Fourier-Besov spaces in Sections \ref{sect;prelim}, \ref{sect;LA}, we give the proof of Theorem \ref{thm;GWP_FLp} in Section \ref{sect;GWP}. 
In Section \ref{sect;Decay}, we give the proof of the uniform estimate in negative Fourier-Besov settings and $L^p$-$L^1$ type decay estimate as applying 
the estimate on the analyticity. Section \ref{sect;AP} is devoted to the proof of Theorem \ref{thm;Asympt}. 

\vskip2mm 
\noindent
{\bf Notation.}\;Throughout this paper, $C$ stands for a generic constant 
(the meaning of which depends on the context). 
Let $X$ be a Banach space, $I \subset \R$ be an interval,  
and $1\le r \le \infty$. One denotes by the Bochner space   
$L^r(I;X)$ the set of strongly measurable functions $f:I\to X$ such that $t \mapsto \|f(t)\|_X$ 
belongs to $L^r(I)$. For $f \in L^r(I;X)$, one defines $\|f\|_{L^r(I;X)}=\|\|f\|_{X}\|_{L^r(I)}$.   
For simplicity, we denote the Chemin--Lerner space $\wt{L^r(I;}\fB{_{p,\s}^s})$ 
by $\wt{L^r_T}(\fB{_{p,\s}^s})$. 
For $s \in \R$ and $f \in \mathcal{S}'$, $|\N|^s$ is designating the Riesz potential defined by $|\N|^s f:=\mathcal{F}^{-1}[|\xi|^s \wh{f}\,]$ and 
$\langle \N \rangle^s f:=\mathcal{F}^{-1}[\langle \xi\rangle^s \wh{f}]$ with $\langle\xi\rangle^s:=(1+|\xi|^2)^{s/2}$. 
For any normed space $Y$, we denote by 
$\|(f,g)\|_Y := (\|f\|_Y^2 + \|g\|_Y^2)^{1/2}$ with $f$, $g \in Y$. 
In what follows, $f_j:=\phi_j*f$, $S_{j}f:= \sum_{k \le j-1} f_k$, 
$\tilde{\phi}_j:=\sum_{|j-k|\le 1} \phi_k$ and $\tilde{f}_j:= \tilde{\phi}_j*f$. 
For $1 \le q<\infty$, $\ell^q(\Z)$ denotes the set of all sequences $\{a_j\}_{j \in \Z}$ of real numbers such that $\sum_{j \in \Z}|a_j|^q$ converges. 
In the case $q=\infty$, let $\ell^\infty(\Z)$ denote the set of all bounded sequences of real numbers.  

\sect{Preliminaries}\label{sect;prelim}

In the followings, we present the basic properties with regards to 
the Fourier--Besov spaces. First of all, let us now recall Bernstein's inequalities   
which allows us to obtain some embedding of spaces (for the proofs, see Lemma A.1 in \cite{Ch2}).   
\begin{lem}[{\it Bernstein-type lemma} \cite{Ch2}] \label{lem;Bern}
Let $d \ge 1$, $1 \le p \le q \le \infty$, $\lam$, $R >0$ and $0 < R_1 < R_2$.  
For any $s \in \R$, there exists some constant $C>0$ such that 
\begin{align*}
  &\||\N|^s f\|_{\wh{L}^q}
  \le C \lam^{s + d(\frac{1}{p}-\frac{1}{q})}\|f\|_{\wh{L}^p} \quad 
  if \;\; \supp \wh{f} \subset \{\xi \in \R^d;|\xi| \le \lam R\},\\
  &C^{-1} \lam^s \|f\|_{\wh{L}^p}
  \le \||\N|^s f\|_{\wh{L}^p}
  \le C \lam^s \|f\|_{\wh{L}^p} \quad 
  if \;\; \supp \wh{f} \subset \{\xi \in \R^d;\lam R_1 \le |x| \le \lam R_2\}, 
\end{align*}
where the Fourier--Lebesgue space $\wh{L}^p=\wh{L}^p(\R^d)$ is defined by
$$
  \wh{L}^p(\R^d)=\{f \in \mathcal{S}'; \wh{f} \in L^1_{loc}(\R^d),\|f\|_{\wh{L}^p} <\infty\} \quad \text{with} \quad
  \|f\|_{\wh{L}^p}=\|\wh{f}\|_{L^{p'}}. 
$$
\end{lem}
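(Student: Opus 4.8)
The plan is to pass to the Fourier side. By the very definition $\|f\|_{\wh{L}^p}=\|\wh f\|_{L^{p'}}$ one has $\||\N|^s f\|_{\wh{L}^q}=\|\,|\xi|^s\wh f\,\|_{L^{q'}}$, and in this representation both inequalities reduce to H\"older's inequality together with a change of scale, with no convolution estimate needed. Throughout, the standing hypothesis $\wh f\in L^1_{loc}$ and the compact support of $\wh f$ guarantee that $|\xi|^s\wh f$ is a genuine element of $L^{q'}$, so every norm below is finite.

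\textbf{The annulus estimate.} On $\{\lam R_1\le|\xi|\le\lam R_2\}$ one has, for every $s\in\R$, the two-sided pointwise bound $c(s,R_1,R_2)\,\lam^s\le|\xi|^s\le C(s,R_1,R_2)\,\lam^s$ (the upper and lower constants exchange roles according to the sign of $s$). Since $\wh f$ is supported there, multiplying by $|\xi|^s$ and taking $L^{p'}$ norms gives at once $C^{-1}\lam^s\|\wh f\|_{L^{p'}}\le\|\,|\xi|^s\wh f\,\|_{L^{p'}}\le C\lam^s\|\wh f\|_{L^{p'}}$, which is precisely the second inequality.

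\textbf{The ball estimate.} Here $p\le q$ forces $q'\le p'$, so H\"older's inequality applied to the compactly supported function $\wh f$ produces exactly the gain of integrability that the factor $\lam^{d(1/p-1/q)}$ records. When $s\ge0$ one simply bounds $|\xi|^s\le(\lam R)^s$ on $\supp\wh f$ and uses $\|\wh f\|_{L^{q'}(B(0,\lam R))}\le|B(0,\lam R)|^{\,1/q'-1/p'}\|\wh f\|_{L^{p'}}$; since $1/q'-1/p'=1/p-1/q$ this yields the constant $C\lam^{\,s+d(1/p-1/q)}$. For a general $s$, choose $\wh\chi\in C_c^\infty(\R^d)$ with $\wh\chi\equiv1$ on $\{|\xi|\le R\}$ and $\supp\wh\chi\subset\{|\xi|\le 2R\}$, so that $\wh f(\xi)=\wh\chi(\xi/\lam)\wh f(\xi)$; writing $|\xi|^s\wh f=\big(|\xi|^s\wh\chi(\xi/\lam)\big)\wh f$ and applying H\"older with the exponents $r$ and $p'$ determined by $1/r:=1/p-1/q\ge0$ gives
\[
  \|\,|\xi|^s\wh f\,\|_{L^{q'}}\le\|\,|\xi|^s\wh\chi(\xi/\lam)\,\|_{L^r}\,\|\wh f\|_{L^{p'}},
\]
and the dilation $\xi=\lam\eta$ turns $\|\,|\xi|^s\wh\chi(\xi/\lam)\,\|_{L^r}$ into $\lam^{\,s+d/r}\,\|\,|\eta|^s\wh\chi\,\|_{L^r}=C\lam^{\,s+d(1/p-1/q)}$.

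The computation is essentially mechanical; the only point deserving a word of care is the finiteness of $\|\,|\eta|^s\wh\chi\,\|_{L^r}$ near the origin when $s<0$, which holds precisely when $s>-d/r=-d(1/p-1/q)$. This is harmless for the intended applications: whenever the lemma is invoked on a Littlewood--Paley block $\Dj f$ the Fourier support is an \emph{annulus}, so the (stronger) annulus estimate applies instead and this borderline restriction never enters.
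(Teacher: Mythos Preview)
The paper does not actually prove this lemma: it is quoted from \cite{Ch2} (Lemma A.1 there) and used as a black box, so there is no ``paper's own proof'' to compare against. Your argument---passing to the Fourier side, using the pointwise equivalence $|\xi|^s\sim\lam^s$ on the annulus, and H\"older's inequality on the ball---is the standard direct proof and is correct.

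One comment on the caveat you raise at the end: you are right that for the ball estimate the bound $\|\,|\eta|^s\wh\chi\,\|_{L^r}<\infty$ fails when $s\le -d(1/p-1/q)$ (in particular, when $p=q$ and $s<0$ the first inequality is simply false in general, since $|\xi|^s\wh f$ need not lie in $L^{p'}$ if $\wh f(0)\neq0$). So the statement as written---``for any $s\in\R$'' governing both lines---is slightly imprecise for the first line; your observation that every application in the paper uses the annulus version on Littlewood--Paley blocks is the correct resolution, and the classical formulation (e.g.\ in \cite{B-C-D}) indeed restricts the ball estimate to $s\ge0$.
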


\begin{lem}[{\it Sobolev-type embedding \cite{Na}}\hspace{0mm}] \label{lem;sm}
Let $s \in \R$, $d \ge 1$, $1 \le p_1 \le p_2 \le \infty$ and $1 \le \s_1\le\s_2 \le \infty$.  
Then the following continuous embedding holds:
\begin{align*}
  \fB{_{p_1,\s_1}^s}(\R^d) \hr \fB{_{p_2,\s_2}^{s-d(\frac{1}{p_1}-\frac{1}{p_2})}}(\R^d). 
\end{align*}
\end{lem}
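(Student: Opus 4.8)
The plan is to reduce the claim to two elementary facts applied blockwise: the first Bernstein inequality of Lemma \ref{lem;Bern} (which produces the shift in the regularity index when $p_1\le p_2$) and the monotonicity $\ell^{\s_1}(\Z)\hr\ell^{\s_2}(\Z)$ for $\s_1\le\s_2$ (which produces the shift in the summability index). Set $\s:=s-d(\tfrac1{p_1}-\tfrac1{p_2})$ and fix $f\in\fB{_{p_1,\s_1}^s}$, so that $\wh f\in L^1_{loc}$ and $\|f\|_{\fB{_{p_1,\s_1}^s}}<\infty$.

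First I would record that for each $j\in\Z$ the block $\Dj f=\F^{-1}[\wh\phi_j\wh f]$ has Fourier support contained in the ball $\{\xi\in\R^d;\ |\xi|\le 2^{j+1}\}$. Since $p_1\le p_2$, on this compact set one has $L^{p_1'}\hr L^{p_2'}$, so $\Dj f\in\wh L^{p_2}$; applying the first inequality of Lemma \ref{lem;Bern} with $s=0$, exponents $(p,q)=(p_1,p_2)$, and $\lambda=2^{j}$, $R=2$, we get
\begin{equation*}
\|\Dj f\|_{\wh L^{p_2}}\le C\,2^{jd(\frac1{p_1}-\frac1{p_2})}\|\Dj f\|_{\wh L^{p_1}},
\end{equation*}
with $C$ independent of $j$ and $f$. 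Multiplying by $2^{j\s}=2^{j(s-d(\frac1{p_1}-\frac1{p_2}))}$ yields the pointwise-in-$j$ bound $2^{j\s}\|\Dj f\|_{\wh L^{p_2}}\le C\,2^{js}\|\Dj f\|_{\wh L^{p_1}}$.

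It then remains to take the $\ell^{\s_2}$-norm in $j$ of both sides. On the left this is precisely $\|f\|_{\fB{_{p_2,\s_2}^{\s}}}$. On the right, since $\s_1\le\s_2$ we have $\|\{a_j\}\|_{\ell^{\s_2}}\le\|\{a_j\}\|_{\ell^{\s_1}}$ for any sequence, so
\begin{equation*}
\big\|\{2^{j\s}\|\Dj f\|_{\wh L^{p_2}}\}_{j\in\Z}\big\|_{\ell^{\s_2}}
\le C\big\|\{2^{js}\|\Dj f\|_{\wh L^{p_1}}\}_{j\in\Z}\big\|_{\ell^{\s_2}}
\le C\big\|\{2^{js}\|\Dj f\|_{\wh L^{p_1}}\}_{j\in\Z}\big\|_{\ell^{\s_1}}
= C\|f\|_{\fB{_{p_1,\s_1}^s}}.
\end{equation*}
This shows $f\in\fB{_{p_2,\s_2}^{\s}}$ together with the norm inequality $\|f\|_{\fB{_{p_2,\s_2}^{\s}}}\les\|f\|_{\fB{_{p_1,\s_1}^s}}$, i.e. the asserted continuous embedding. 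There is no genuine obstacle here; the only points requiring a word of care are the preservation of the local integrability of $\wh f$ (immediate from $L^{p_1'}\hr L^{p_2'}$ on the compact annulus/ball) and checking that the constant in Lemma \ref{lem;Bern} is uniform in $j$, which is exactly the content of that lemma with $\lambda=2^j$.
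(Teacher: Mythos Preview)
Your argument is correct: the blockwise Bernstein inequality of Lemma~\ref{lem;Bern} gives the exponent shift in $p$, and the inclusion $\ell^{\s_1}\hookrightarrow\ell^{\s_2}$ handles the summability index, exactly as you wrote. The paper does not supply its own proof of this lemma but simply cites \cite{Na}; your proof is the standard one and is almost certainly what the cited reference does. One cosmetic remark: you introduce the symbol $\s$ for the target regularity index while $\s_1,\s_2$ already denote the summability parameters in the statement, so a different letter (say $\tilde s$) would be cleaner.
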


\begin{lem}[\cite{Ch2}, \cite{Ko-Yo}] \label{lem;equi}
Let $s \in \R$, $d \ge 1$ and $1 \le p \le \infty$. The spaces $\fB{_{p,p'}^s}(\R^d)$ and 
$\wh{\dot{H}}{_p^s}(\R^d)$ satisfy 
$
\fB{_{p,p'}^s}(\R^d) \simeq \wh{\dot{H}}{_p^s}(\R^d)
$ 
in the sence of norm equivalence. 
Here the Fourier-Sobolev space $\wh{\dot{H}}{_p^s}=\wh{\dot{H}}{_p^s}(\R^d)$ is defined by 
$$
  \wh{\dot{H}}{_p^s}(\R^d)
  =\{f \in \mathcal{S}'; \wh{f} \in L^1_{loc}(\R^d),\|f\|_{\wh{\dot{H}}{_p^s}}<\infty\} \quad \text{with} \quad
  \|f\|_{\wh{\dot{H}}{_p^s}}=\||\N|^sf\|_{\wh{L}^p}. 
$$
\end{lem}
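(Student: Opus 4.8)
\textbf{Proof plan for Lemma \ref{lem;equi} (the norm equivalence $\fB_{p,p'}^s \simeq \wh{\dot H}_p^s$).}
The plan is to transport the classical Besov--Sobolev identity $\dot B_{p,p}^{\,\sigma} \simeq \dot H_p^{\,\sigma}$ (in the Littlewood--Paley characterization) over to the Fourier side via the isometry $f \mapsto \wh f$ from $\wh L^p$ onto $L^{p'}$. Concretely, I would unwind the definitions: for $f \in \mathcal S'$ with $\wh f \in L^1_{loc}$,
\[
\|f\|_{\fB_{p,p'}^s}^{p'} = \sum_{j\in\Z} 2^{sp'j}\,\|\Dj f\|_{\wh L^p}^{p'}
= \sum_{j\in\Z} 2^{sp'j}\,\|\wh{\phi}_j\,\wh f\,\|_{L^{p'}}^{p'}
= \sum_{j\in\Z} \int_{\R^d} 2^{sp'j}\,|\wh{\phi}_j(\xi)|^{p'}\,|\wh f(\xi)|^{p'}\,d\xi,
\]
and then interchange sum and integral by Tonelli to get $\int_{\R^d} \Big(\sum_{j} 2^{sp'j}|\wh\phi_j(\xi)|^{p'}\Big)|\wh f(\xi)|^{p'}\,d\xi$. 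The quantity $w_s(\xi) := \sum_{j} 2^{sp'j}|\wh\phi_j(\xi)|^{p'}$ is the weight to be controlled.

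The key step is the pointwise two-sided bound $c\,|\xi|^{sp'} \le w_s(\xi) \le C\,|\xi|^{sp'}$ for $\xi \neq 0$, with constants depending only on $d,s,p,\phi$. For the upper bound: $\wh\phi_j$ is supported in the annulus $\{2^{j-1}\le|\xi|\le 2^{j+1}\}$, so for fixed $\xi$ only $O(1)$ (at most three) indices $j$ contribute, and on each such index $2^{j} \simeq |\xi|$, so $2^{sp'j} \simeq |\xi|^{sp'}$ and $|\wh\phi_j(\xi)| \le \|\wh\phi\|_{L^\infty}$; summing the bounded number of terms gives $w_s(\xi) \lesssim |\xi|^{sp'}$. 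For the lower bound: by the Littlewood--Paley partition of unity $\sum_j \wh\phi_j(\xi) = 1$ for $\xi \neq 0$, for any fixed $\xi$ there is an index $j_0$ (with $2^{j_0}\simeq |\xi|$) for which $\wh\phi_{j_0}(\xi) \ge c_0 > 0$ — this is a standard consequence of radial symmetry and the overlapping-annuli construction (at least one of the at most three nonzero bumps at $\xi$ is bounded below, since they sum to $1$); hence $w_s(\xi) \ge 2^{sp'j_0}|\wh\phi_{j_0}(\xi)|^{p'} \gtrsim |\xi|^{sp'}$. Combining, $w_s(\xi) \simeq |\xi|^{sp'}$, whence
\[
\|f\|_{\fB_{p,p'}^s}^{p'} \simeq \int_{\R^d} |\xi|^{sp'}\,|\wh f(\xi)|^{p'}\,d\xi
= \big\|\,|\xi|^s \wh f\,\big\|_{L^{p'}}^{p'}
= \big\|\,\mathcal F^{-1}\!\big(|\xi|^s \wh f\,\big)\big\|_{\wh L^p}^{p'}
= \|\,|\N|^s f\,\|_{\wh L^p}^{p'} = \|f\|_{\wh{\dot H}_p^s}^{p'},
\]
using the definition of $\wh L^p$ in the last two equalities. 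The case $p = \infty$ (so $p' = 1$) is handled the same way with $\ell^1$ in place of $\ell^{p'}$; the case $p=1$, $p'=\infty$ replaces the sum/integral by $\sup_j$ and $\|\cdot\|_{L^\infty}$, and the weight bound becomes $\sup_j 2^{sj}|\wh\phi_j(\xi)| \simeq |\xi|^s$, giving the same conclusion. Finally one checks the two function spaces consist of the same tempered distributions: finiteness of either norm already forces $\wh f \in L^1_{loc}$, and a null set of $f$ in one space iff in the other, so the norm equivalence is genuine (not merely on a dense subspace).

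\textbf{Main obstacle.} The only non-mechanical point is the uniform lower bound $w_s(\xi) \gtrsim |\xi|^{sp'}$, i.e. that $\max_j \wh\phi_j(\xi)$ does not degenerate to $0$ as $\xi$ ranges over $\R^d\setminus\{0\}$. This follows from the specific dyadic construction — $\wh\phi \ge 0$, radial, supported in $\{1/2 \le |\xi| \le 2\}$, with $\sum_j \wh\phi(2^{-j}\xi) \equiv 1$ — since by homogeneity it suffices to bound $\max_j \wh\phi_j$ below on the compact dyadic shell $\{1 \le |\xi| \le 2\}$, where the continuous, strictly positive (by the partition-of-unity identity) function $\xi \mapsto \max_{-1 \le j \le 1}\wh\phi_j(\xi)$ attains a positive minimum. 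Everything else is bookkeeping with Tonelli and the annular support of $\wh\phi_j$. This argument is essentially the one in \cite{Ch2}, \cite{Ko-Yo}; I would present it at roughly the level of detail above.
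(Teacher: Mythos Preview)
The paper does not give its own proof of this lemma: it is stated with citations to \cite{Ch2} and \cite{Ko-Yo} and then used as a black box. Your argument is correct and is essentially the standard one from those references---reducing everything to the pointwise weight comparison $\sum_j 2^{sp'j}|\wh\phi_j(\xi)|^{p'}\simeq |\xi|^{sp'}$ (respectively $\sup_j 2^{sj}|\wh\phi_j(\xi)|\simeq |\xi|^s$ when $p'=\infty$) via Tonelli and the finite-overlap / partition-of-unity properties of the dyadic cutoff. One small quibble: in your closing sentence, ``finiteness of either norm already forces $\wh f\in L^1_{loc}$'' is not quite the right remark, since $\wh f\in L^1_{loc}$ is part of the \emph{definition} of both spaces here; the genuine point is simply that once the norms are equivalent on the common domain $\{f\in\mathcal S':\wh f\in L^1_{loc}\}$, the two subsets cut out by finiteness of the norms coincide. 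Otherwise the proof is fine at the level of detail you give.
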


\subsection{Bilinear estimates and product estimates}
Let us present the various types of product estimates in 
Fourier-Besov spaces or Chemin-Lerner type spaces. 
In the following lemmas, we state the standard bilinear estimates without their proof 
(for the general statement and their proof, see e.g., Lemma 2.4 in \cite{Ma-Na-Og}).  
\begin{lem}[{\it Bilinear estimates}\hspace{0mm}] \label{lem;bil}
Let $s > 0$ and $1 \le p,\s \le \infty$. 
There exists some constant $C > 0$ such that the following estimate holds: 
\begin{equation} \label{est;prod_bl}
\|fg\|_{\fB{_{p,\s}^s}}
\le 
C\left(\|f\|_{\wh{L}^\infty}\|g\|_{\fB{^s_{p,\s}}} 
+ \|f\|_{\fB{^s_{p,\s}}}\|g\|_{\wh{L}^\infty}\right). 
\end{equation}
\end{lem}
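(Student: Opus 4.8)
The plan is to establish \eqref{est;prod_bl} by running Bony's paraproduct decomposition inside the Fourier--Besov scale; this is the Fourier-side counterpart of the classical argument (cf.\ Lemma~2.4 in \cite{Ma-Na-Og}), and it rests on two elementary observations. First, since $\widehat{uv}=c\,\wh{u}*\wh{v}$, Young's convolution inequality gives $\|\wh{u}*\wh{v}\|_{L^{p'}}\le\|\wh{u}\|_{L^1}\|\wh{v}\|_{L^{p'}}$, and together with $\|\wh{u}\|_{L^1}=\|u\|_{\wh{L}^\infty}$ this yields the scale-wise product bound $\|uv\|_{\wh{L}^p}\le C\|u\|_{\wh{L}^\infty}\|v\|_{\wh{L}^p}$ (and symmetrically with the roles of $u$ and $v$ exchanged). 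Second, the Littlewood--Paley truncations are uniformly bounded on $\wh{L}^\infty$, i.e.\ $\|S_{k-1}f\|_{\wh{L}^\infty}+\|\dot\Delta_k f\|_{\wh{L}^\infty}\le C\|f\|_{\wh{L}^\infty}$ for all $k$, because the associated Fourier multipliers are bounded by a constant independent of $k$.

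I would then write $fg=T_fg+T_gf+R(f,g)$, where $T_fg=\sum_k S_{k-1}f\,\dot\Delta_k g$, $T_gf=\sum_k S_{k-1}g\,\dot\Delta_k f$ (the low-frequency cutoff chosen so that each summand has frequencies $\sim 2^k$), and $R(f,g)=\sum_k \dot\Delta_k f\,\tilde g_k$. For $T_fg$, only finitely many $k$ contribute to $\Dj(T_fg)$, so by the two observations above
\[
\|\Dj(T_fg)\|_{\wh{L}^p}\;\les\sum_{|k-j|\le N_0}\|S_{k-1}f\|_{\wh{L}^\infty}\|\dot\Delta_k g\|_{\wh{L}^p}\;\les\;\|f\|_{\wh{L}^\infty}\sum_{|k-j|\le N_0}\|\dot\Delta_k g\|_{\wh{L}^p};
\]
multiplying by $2^{sj}\sim 2^{sk}$ and taking the $\ell^\s$-norm over $j$ (the inner sum being a finite shift) gives $\|T_fg\|_{\fB{_{p,\s}^s}}\les\|f\|_{\wh{L}^\infty}\|g\|_{\fB{_{p,\s}^s}}$, and symmetrically $\|T_gf\|_{\fB{_{p,\s}^s}}\les\|g\|_{\wh{L}^\infty}\|f\|_{\fB{_{p,\s}^s}}$. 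Note that these two estimates hold for every $s\in\R$.

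For the remainder, the spectrum of $\dot\Delta_k f\,\tilde g_k$ lies in a ball $\{|\xi|\les 2^k\}$, so $\Dj(R(f,g))=\sum_{k\ge j-N_1}\Dj(\dot\Delta_k f\,\tilde g_k)$ and hence
\[
2^{sj}\|\Dj(R(f,g))\|_{\wh{L}^p}\;\les\;\|f\|_{\wh{L}^\infty}\sum_{k\ge j-N_1}2^{s(j-k)}\,2^{sk}\|\tilde g_k\|_{\wh{L}^p}.
\]
This is where the hypothesis $s>0$ enters: $\sum_{k\ge j-N_1}2^{s(j-k)}=\sum_{\ell\le N_1}2^{s\ell}<\infty$, so the right-hand side is the discrete convolution (in $j$) of the $\ell^\s$-sequence $\{2^{sk}\|\tilde g_k\|_{\wh{L}^p}\}_k$ against a fixed $\ell^1$-sequence, and Young's inequality for series gives $\|R(f,g)\|_{\fB{_{p,\s}^s}}\les\|f\|_{\wh{L}^\infty}\|g\|_{\fB{_{p,\s}^s}}$ (one may equally place $g$ in $\wh{L}^\infty$). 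Summing the three contributions yields \eqref{est;prod_bl}.

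The step I expect to be the main obstacle is the remainder term $R(f,g)$: it is the only piece in which the frequencies of $f$ and $g$ may both be arbitrarily large while the output frequency remains small (a high--high interaction resonating at low frequency), so the sum over dyadic blocks does not truncate and must be resummed by hand; convergence is recovered --- and only recovered --- through the geometric factor $2^{s(j-k)}$, which is summable precisely because $s>0$ (for $s\le 0$ the estimate genuinely fails). Everything else is routine Littlewood--Paley bookkeeping combined with Young's convolution inequality on the Fourier side.
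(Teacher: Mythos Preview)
Your proof is correct and follows the standard paraproduct route. The paper does not actually prove Lemma~\ref{lem;bil} but defers to \cite{Ma-Na-Og}; nevertheless, your argument matches step for step the method the paper uses in the Appendix to prove the closely related Lemma~\ref{lem;bil_anal2} (the Chemin--Lerner/analyticity variant of the same estimate), including the handling of the remainder via the geometric factor $2^{s(j-k)}$ with $s>0$.
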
   

The following product estimate in Besov spaces is well-known as in \cite{Ab-Pa}. 
The similar estimate in Fourier-Besov spaces is established in \cite{Na}. 
\begin{lem}[{\it Product estimates} \cite{Na}] \label{lem;A-P}
Let $d \ge 1$ and $1 \le p,\s \le \infty$.  
If $s \in \R$ satisfying $|s|<\frac{d}{p}$ for $2 \le p$ and $-\frac{d}{p'}<s<\frac{d}{p}$ for $1\le p<2$, 
then there exists some constant $C>0$ such that 
\begin{equation} \label{est;prod_A-P2}
\|fg\|_{\fB{_{p,\s}^{s}}}
\le C\|f\|_{\fB{_{p,\s}^s}} 
     \|g\|_{\wh{L}^{\infty}\cap\fB{_{p,\infty}^\frac{d}{p}}}. 
\end{equation}
\end{lem}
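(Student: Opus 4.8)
The plan is to run Bony's paraproduct decomposition, adapted to the Fourier--Besov scale $\fB{_{p,\s}^s}$. Write $fg=T_fg+T_gf+R(f,g)$ with $T_fg=\sum_{j}S_{j-1}f\,\Dj g$, $T_gf=\sum_{j}S_{j-1}g\,\Dj f$ and $R(f,g)=\sum_{j}\Dj f\,\tilde{g}_j$, where $\tilde{g}_j:=\tilde{\phi}_j*g$. The working tools are: the $\wh{L}^p$--H\"older inequality $\|\varphi\psi\|_{\wh{L}^p}\le\|\varphi\|_{\wh{L}^{p_1}}\|\psi\|_{\wh{L}^{p_2}}$ whenever $\frac1p=\frac1{p_1}+\frac1{p_2}$ (which follows from Young's convolution inequality on the Fourier side, the index relation becoming $1+\frac1{p'}=\frac1{p_1'}+\frac1{p_2'}$); the Bernstein inequalities of Lemma~\ref{lem;Bern}; the elementary gain $\|\dot{\Delta}_{j'}h\|_{\wh{L}^p}\le C\,2^{j'd/p'}\|\wh{h}\|_{L^\infty}$, obtained by H\"older/Hausdorff--Young on the annulus $\{|\xi|\sim2^{j'}\}$ of measure $\sim2^{j'd}$; and Young's inequality for sequences $\ell^1*\ell^\s\hr\ell^\s$ for the summations in the Littlewood--Paley index. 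Throughout I abbreviate $c_j:=2^{sj}\|\Dj f\|_{\wh{L}^p}$, so that $\|\{c_j\}_j\|_{\ell^\s}=\|f\|_{\fB{_{p,\s}^s}}$.

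The two paraproducts are routine and carry the cancellation that makes the powers of $2^j$ match. Since $S_{j-1}g\,\Dj f$ is spectrally localized in an annulus $\{|\xi|\sim2^j\}$ and $|\wh{S_{j-1}g}|\le|\wh{g}|$ pointwise (a partial sum of a nonnegative partition of unity), H\"older gives $\|S_{j-1}g\,\Dj f\|_{\wh{L}^p}\les\|g\|_{\wh{L}^\infty}\|\Dj f\|_{\wh{L}^p}$; multiplying by $2^{sj}$ and taking $\ell^\s$ yields $\|T_gf\|_{\fB{_{p,\s}^s}}\les\|g\|_{\wh{L}^\infty}\|f\|_{\fB{_{p,\s}^s}}$ with no restriction on $s$. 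For $T_fg$, again the generic block is localized in $\{|\xi|\sim2^j\}$; bounding the low-frequency factor by Bernstein, $\|S_{j-1}f\|_{\wh{L}^\infty}\les\sum_{k\le j-2}2^{kd/p}\|\dot{\Delta}_kf\|_{\wh{L}^p}=\sum_{k\le j-2}2^{k(d/p-s)}c_k$, which, as $s<d/p$, is $\les2^{j(d/p-s)}$ times an $\ell^\s$-sequence. Together with $\|\Dj g\|_{\wh{L}^p}\les2^{-jd/p}\|g\|_{\fB{_{p,\infty}^{d/p}}}$ all powers of $2^j$ cancel, and summation in $\ell^\s$ gives $\|T_fg\|_{\fB{_{p,\s}^s}}\les\|f\|_{\fB{_{p,\s}^s}}\|g\|_{\fB{_{p,\infty}^{d/p}}}$ provided $s<d/p$ --- this is the origin of the upper restriction $s<d/p$.

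The remainder $R(f,g)$ is the delicate term and is where the lower restriction on $s$ enters. The blocks $u_j:=\Dj f\,\tilde{g}_j$ are spectrally supported in a \emph{ball} $\{|\xi|\les2^j\}$, so $\dot{\Delta}_{j'}R(f,g)=\sum_{j\ge j'-N_0}\dot{\Delta}_{j'}u_j$, and a crude estimate of $\|u_j\|_{\wh{L}^p}$ only gives a convergent $j$-sum for $s>0$. To reach $s\le0$ one must harvest the small support of $\dot{\Delta}_{j'}$. For $1\le p\le2$ I would use the gain above, $\|\dot{\Delta}_{j'}u_j\|_{\wh{L}^p}\le C\,2^{j'd/p'}\|\wh{u_j}\|_{L^\infty}$, combined with $\|\wh{u_j}\|_{L^\infty}\le\|\Dj f\|_{\wh{L}^p}\,\|\wh{\tilde{g}_j}\|_{L^p}$ (Young with exponents $p',p$) and, since $p\le p'$, $\|\wh{\tilde{g}_j}\|_{L^p}\le C\,2^{jd(1/p-1/p')}\|\wh{\tilde{g}_j}\|_{L^{p'}}=C\,2^{jd(1/p-1/p')}\|\tilde{g}_j\|_{\wh{L}^p}\les2^{-jd/p'}\|g\|_{\fB{_{p,\infty}^{d/p}}}$; this produces $2^{sj'}\|\dot{\Delta}_{j'}R(f,g)\|_{\wh{L}^p}\les\|g\|_{\fB{_{p,\infty}^{d/p}}}\sum_{\ell\ge-N_0}2^{-\ell(s+d/p')}c_{j'+\ell}$, an $\ell^1$-convolution of $\{c_j\}$ exactly when $s+d/p'>0$, i.e. $s>-d/p'$. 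For $2\le p$ the Hausdorff--Young route is not available, and I would instead use the Bernstein embedding $\wh{L}^{p/2}\hr\wh{L}^p$ on the block (meaningful precisely because $p/2\ge1$): $\|\dot{\Delta}_{j'}u_j\|_{\wh{L}^p}\les2^{j'd/p}\|u_j\|_{\wh{L}^{p/2}}\le2^{j'd/p}\|\Dj f\|_{\wh{L}^p}\|\tilde{g}_j\|_{\wh{L}^p}\les2^{j'd/p}2^{-jd/p}\|\Dj f\|_{\wh{L}^p}\|g\|_{\fB{_{p,\infty}^{d/p}}}$, so that $2^{sj'}\|\dot{\Delta}_{j'}R(f,g)\|_{\wh{L}^p}\les\|g\|_{\fB{_{p,\infty}^{d/p}}}\sum_{\ell\ge-N_0}2^{-\ell(s+d/p)}c_{j'+\ell}$, an $\ell^1$-convolution when $s+d/p>0$, i.e. $s>-d/p$, hence $|s|<d/p$ together with $s<d/p$.

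Collecting the three contributions and using $\|g\|_{\wh{L}^\infty}+\|g\|_{\fB{_{p,\infty}^{d/p}}}\approx\|g\|_{\wh{L}^\infty\cap\fB{_{p,\infty}^{d/p}}}$ establishes \eqref{est;prod_A-P2} under the stated conditions on $s$. The only genuine obstacle is the remainder for non-positive $s$: one has to see that the sole source of decay is the measure of the support of the low-frequency projection $\dot{\Delta}_{j'}$, and the precise admissible range of $s$ is dictated by which $\wh{L}^p$--H\"older/Bernstein manipulation is available --- Hausdorff--Young on the thin annulus when $p\le2$ and the embedding $\wh{L}^{p/2}\hr\wh{L}^p$ when $p\ge2$ --- which is exactly the source of the $1\le p<2$ versus $2\le p$ dichotomy in the hypotheses.
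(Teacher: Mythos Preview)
Your proof is correct and follows the standard Bony paraproduct strategy that the paper itself uses throughout. The paper does not give an independent proof of this particular lemma (it is cited from \cite{Na}), but the paraproduct building blocks you derive are exactly Lemmas~\ref{lem;offd1} and~\ref{lem;diagonal1} of the Appendix specialized to $p_1=p_2=p$, $s_1=s$, $s_2=d/p$: the off--diagonal piece $T_fg$ requires $s<d/p$, the diagonal piece $R(f,g)$ requires $s+d/p+d\min(0,1-2/p)>0$, which is precisely $s>-d/p$ for $p\ge2$ and $s>-d/p'$ for $p<2$, and the remaining paraproduct $T_gf$ is handled via the $\wh{L}^\infty$ norm of $g$ exactly as in the paper's proof of Lemma~\ref{lem;bil_anal2}. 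Your case split in the remainder (Hausdorff--Young gain for $p\le2$, the $\wh{L}^{p/2}\hr\wh{L}^p$ Bernstein embedding for $p\ge2$) is the same mechanism encoded in Lemma~\ref{lem;diagonal1} through the term $d\min(0,1-1/p_1-1/p_2)$.
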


\begin{lem}[{\it Product estimates I\hspace{-0.4mm}I} \cite{Na}] \label{lem;prod_est}
Let $s \in \R$, $1 \le p,\s \le \infty$, $I=(0,T)$ with $T \in \R_+$, 
$1 \le r,r_i \le \infty$ $(i=1,2,3,4)$ satisfying 
$\frac{1}{r}=\frac{1}{r_1}+\frac{1}{r_2}=\frac{1}{r_3}+\frac{1}{r_4}$.  
\begin{enumerate}
\item If $s>0$, there exists some constant $C>0$ such that 
\begin{equation} \label{est;prod_1-1}
\|fg\|_{\wt{L^r(I};\fB{_{p,\s}^{s}})}
\le C\left(\|f\|_{\wt{L^{r_1}(I};\fB{_{p,\s}^s})} 
           \|g\|_{L^{r_2}(I;\wh{L}^{\infty})}
         +\|f\|_{L^{r_3}(I;\wh{L}^\infty)} 
          \|g\|_{\wt{L^{r_4}(I};\fB{_{p,\s}^s})}\right). 
\end{equation}
\item 
If $s \in \R$ satisfying $|s|<\frac{d}{p}$ for $2 \le p$ and $-\frac{d}{p'}<s<\frac{d}{p}$ for $1\le p<2$, 
then there exists some constant $C>0$ such that 
\begin{equation} \label{est;prod_1-2}
\|fg\|_{\wt{L^r(I};\fB{_{p,\s}^{s}})}
\le C\|f\|_{\wt{L^{r_1}(I};\fB{_{p,\s}^s})} 
     \|g\|_{ L^{r_2}(I;\wh{L}^{\infty}) \cap \wt{L^{r_2}(I};\fB{_{p,\infty}^\frac{d}{p}})}. 
\end{equation}
\end{enumerate}
\end{lem}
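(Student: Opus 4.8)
\emph{Proof strategy.} The plan is to prove both inequalities by Bony's paraproduct decomposition performed on the Fourier side, inserting H\"older's inequality in time at the level of individual dyadic blocks so as to respect the ordering ``$\ell^\s$ of $L^r$-in-$t$'' built into the Chemin--Lerner norm. The only pointwise-in-$t$ facts needed are: on the Fourier side a product is a convolution, $\widehat{fg}=(2\pi)^{-d/2}\,\wh{f}\ast\wh{g}$, so Young's inequality yields the elementary bound $\|fg\|_{\wh{L}^p}\lesssim\|f\|_{\wh{L}^\infty}\|g\|_{\wh{L}^p}$ (together with its H\"older / measure-theoretic refinements when a factor has compactly supported spectrum), and the Bernstein estimates of Lemma~\ref{lem;Bern}. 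Write $fg=T_fg+T_gf+R(f,g)$ with $T_fg=\sum_j(S_jf)\,f_j$ replaced appropriately, i.e. $T_fg=\sum_j(S_jf)\,g_j$, $T_gf=\sum_j(S_jg)\,f_j$ and $R(f,g)=\sum_{|j-k|\le1}f_j\,g_k$, recalling that $\supp\widehat{(S_jf)g_j}$ lies in an annulus $\{|\xi|\sim2^j\}$ while $\supp\widehat{f_jg_k}$ (with $|j-k|\le1$) lies in a ball $\{|\xi|\lesssim2^j\}$.

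\emph{The case $s>0$ (estimate \eqref{est;prod_1-1}).} In $\dot\Delta_q(T_fg)$ only the indices $|j-q|\le N_0$ contribute; using $\|S_jf\|_{\wh{L}^\infty}\le\|f\|_{\wh{L}^\infty}$ (as $0\le\sum_{k\le j-1}\wh\phi_k\le1$), the elementary product bound, and H\"older in time with $\tfrac1r=\tfrac1{r_3}+\tfrac1{r_4}$, then multiplying by $2^{sq}$ and taking $\ell^\s$ in $q$ (the diagonal kernel being uniformly $\ell^1$) gives the contribution $\|f\|_{L^{r_3}(I;\wh{L}^\infty)}\|g\|_{\wt{L^{r_4}_T}(\fB{_{p,\s}^{s}})}$; symmetrically $T_gf$ gives $\|g\|_{L^{r_2}(I;\wh{L}^\infty)}\|f\|_{\wt{L^{r_1}_T}(\fB{_{p,\s}^{s}})}$. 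For $R(f,g)$ the spectral support forces $\dot\Delta_qR=\sum_{j\ge q-N_0}\dot\Delta_q(f_j\tilde g_j)$; from $\|\dot\Delta_q(f_j\tilde g_j)\|_{\wh{L}^p}\lesssim\|f_j\|_{\wh{L}^p}\|\tilde g_j\|_{\wh{L}^\infty}\le\|f_j\|_{\wh{L}^p}\|g\|_{\wh{L}^\infty}$, H\"older in time with $\tfrac1r=\tfrac1{r_1}+\tfrac1{r_2}$, and the fact that $\{2^{s(q-j)}\mathbf 1_{j\ge q-N_0}\}\in\ell^1$ \emph{because $s>0$}, the $\ell^\s$-summation in $q$ yields $\|g\|_{L^{r_2}(I;\wh{L}^\infty)}\|f\|_{\wt{L^{r_1}_T}(\fB{_{p,\s}^{s}})}$. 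Adding the three contributions proves \eqref{est;prod_1-1}.

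\emph{The case $|s|<d/p$ ($2\le p$) resp. $-d/p'<s<d/p$ ($1\le p<2$) (estimate \eqref{est;prod_1-2}).} Here the factor $2^{s(q-j)}$ in the remainder is no longer $\ell^1$ when $s\le0$, and this is the only point at which the restrictions enter. The paraproduct $T_gf$ is harmless for every $s\in\R$ (it uses only $g\in\wh{L}^\infty$). For $T_fg$ one lets $g$ carry just $d/p$ derivatives, $\|g_j\|_{\wh{L}^p}\lesssim2^{-jd/p}\|g\|_{\fB{_{p,\infty}^{d/p}}}$, and controls the low-frequency part of $f$ by Bernstein: $\|S_jf\|_{\wh{L}^\infty}\le\sum_{k\le j-1}\|f_k\|_{\wh{L}^\infty}\lesssim\sum_{k\le j-1}2^{k(d/p-s)}\bigl(2^{ks}\|f_k\|_{\wh{L}^p}\bigr)$, a convolution of an $\ell^\s$-sequence with the kernel $\{2^{\ell(d/p-s)}\mathbf 1_{\ell<0}\}$, which lies in $\ell^1$ precisely when $s<d/p$; this yields the upper restriction and the bound $\|f\|_{\wt{L^{r_1}_T}(\fB{_{p,\s}^{s}})}\|g\|_{\wt{L^{r_2}_T}(\fB{_{p,\infty}^{d/p}})}$. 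The crux is $R(f,g)=\sum_{|j-k|\le1}f_jg_k$: one must retain the localization of $\dot\Delta_q(f_jg_k)$ in the shell $\{|\xi|\sim2^q\}$ with $q\le j+N_0$. Combining $\widehat{f_jg_k}=c\,\widehat{f_j}\ast\widehat{g_k}$ with Young, a measure estimate on that shell, and a Bernstein estimate for $g$, one extracts the genuine gain
$$
\|\dot\Delta_q(f_jg_k)\|_{\wh{L}^p}\lesssim 2^{(q-j)\theta_p}\,\|f_j\|_{\wh{L}^p}\,\bigl(2^{kd/p}\|g_k\|_{\wh{L}^p}\bigr),\qquad \theta_p:=\min\{d/p,\;d/p'\},
$$
where $\theta_p=d/p$ when $2\le p$ (one may use the convolution bound $\|f_jg_k\|_{\wh{L}^{p/2}}\lesssim\|f_j\|_{\wh{L}^p}\|g_k\|_{\wh{L}^p}$, valid since $p'\le2$, and recover $\wh{L}^p$ by Bernstein) and $\theta_p=d/p'$ when $1\le p<2$ (one must instead pass through $\wh{L}^\infty$ by H\"older, which costs the worse exponent). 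Since $2^{kd/p}\|g_k\|_{\wh{L}^p}\le\|g\|_{\fB{_{p,\infty}^{d/p}}}$ and $|j-k|\le1$, the remainder sum becomes a convolution against $\{2^{(s+\theta_p)(q-j)}\mathbf 1_{j\ge q-N_0}\}$, which is $\ell^1$ exactly when $s>-\theta_p$; this is the source of the lower restriction ($s>-d/p$ for $p\ge2$, $s>-d/p'$ for $p<2$). Performing the time-H\"older block by block ($g$ enters through $\wt{L^{r_2}_T}(\fB{_{p,\infty}^{d/p}})$, whose $j$-index carries only an $\ell^\infty$, which passes harmlessly through the convolution) and the $\ell^\s$-summation in $q$, and adding $T_gf$, $T_fg$, $R$, gives \eqref{est;prod_1-2}.

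\emph{Main obstacle.} The delicate step is the remainder estimate in the second part: extracting the sharp gain $2^{(q-j)\theta_p}$ and correctly identifying the $p\ge2$ versus $1\le p<2$ dichotomy, since this single estimate dictates every restriction appearing in the statement. Everything else is a routine bookkeeping of Littlewood--Paley sums and H\"older's inequality in time, parallel to the time-independent estimates of Lemmas~\ref{lem;bil} and \ref{lem;A-P}.
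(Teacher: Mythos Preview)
Your argument is correct and follows essentially the same route the paper takes (by citation to \cite{Na}, \cite{Ma-Na-Og}, and through the building blocks in the Appendix, Lemmas~\ref{lem;offd1}--\ref{lem;diagonal1}): Bony's paraproduct decomposition on the Fourier side, H\"older in time at the level of each dyadic block, and then Young/Bernstein for the remainder with the gain $2^{(q-j)\theta_p}$, $\theta_p=\min(d/p,d/p')$, which produces exactly the stated restrictions on $s$. The only cosmetic difference is that the paper packages the two paraproduct pieces and the diagonal piece as separate abstract lemmas (Lemmas~\ref{lem;offd1} and \ref{lem;diagonal1}) rather than writing them inline, but the underlying estimates are identical.
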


\begin{rem}
The proof of Lemma \ref{lem;prod_est} (1) is same as the one of Lemma 2.4 in \cite{Ma-Na-Og}. 
By \eqref{est;prod_bl}, \eqref{est;prod_1-1} and embeddings 
$\fB{_{p,1}^{d/p}}(\R^d) \hr \fB{_{\infty,1}^0}(\R^d) \simeq \wh{L}^\infty(\R^d)$, 
we obtain 
\begin{align} \label{est;Banach_ring}
\|fg\|_{\cfB^{\frac{d}{p}}}
\les \|f\|_{\cfB^\frac{d}{p}}\|g\|_{\cfB^\frac{d}{p}}, \quad 
\|fg\|_{\wt{L^\infty(I};\cfB^\frac{d}{p})}
\les \|f\|_{\wt{L^\infty(I};\cfB^\frac{d}{p})}\|g\|_{\wt{L^\infty(I};\cfB^\frac{d}{p})}. 
\end{align}
\end{rem}

The following estimate in Besov spaces is recently obtained by Song-Xu \cite{So-Xu}. 
In this paper, we establish the corresponding bilinear estimate in the Fourier-Besov framework. 
As for the proof, you are able to see in \S \ref{sect;appendix}. 
\begin{lem}[{\it Bilinear estimates I\hspace{-0.4mm}I}\hspace{1mm}\hspace{0mm}] \label{lem;bil2}
Let $d \ge 3$, $1 \le p <d$, $1 \le \s \le \infty$, $T>0$ and $1 \le r,r_i \le \infty$ $(i=1,2,3,4)$ satisfying 
$\frac{1}{r}=\frac{1}{r_1}+\frac{1}{r_2}=\frac{1}{r_3}+\frac{1}{r_4}$. 
There exists some positive constant $C>0$ such that 
\begin{align*}
\|f g\|_{\wt{L^r_T(}\fB{_{p,\s}^{-2+\frac{d}{p}}})}
\le C\left(\|f\|_{\wt{L^{r_1}_T(}\fB{_{p,\s}^{-2+\frac{d}{p}}})}
\|g\|_{\wt{L^{r_2}_T(}\fB{_{p,\infty}^{\frac{d}{p}}})}
+\|f\|_{\wt{L^{r_3}_T(}\fB{_{p,\infty}^{\frac{d}{p}}})}
\|g\|_{\wt{L^{r_4}_T(}\fB{_{p,\s}^{-2+\frac{d}{p}}})} \right). 
\end{align*}
\end{lem}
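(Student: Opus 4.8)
The plan is to reduce the estimate to the Littlewood--Paley level and control the three interaction types (low--high, high--low, high--high) using Bony's paraproduct decomposition adapted to Fourier-Besov spaces. Writing $fg = T_f g + T_g f + R(f,g)$ with $T_f g = \sum_j S_{j-1} f \, \Dj g$ and $R(f,g) = \sum_{|j-k|\le 1} \Dj f\, \dot\Delta_k g$, the key point is that the target index $-2+\frac{d}{p}$ is \emph{negative} (since $p<d$ forces $\frac{d}{p}>1$ only when $p<d$, but $-2+\frac{d}{p}$ can be of either sign — the delicate range is when it is negative, i.e. $p>d/2$), so the usual restriction $s>0$ in Lemma~\ref{lem;bil} does not apply and one must instead exploit the high regularity index $\frac{d}{p}$ carried by the second factor in each term. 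I would first record the scalar frequency-localized bounds: $\|S_{j-1}f\|_{\wh L^\infty} \lesssim \sum_{k\le j-2} 2^{k d/p} \|\dot\Delta_k f\|_{\wh L^p} \cdot 2^{-kd/p}\cdot(\text{weight})$, which is summable precisely because $\fB{_{p,\infty}^{d/p}}$ embeds into $\wh L^\infty$ (this is the embedding $\fB{_{p,1}^{d/p}}\hookrightarrow\wh L^\infty$ quoted in the Remark, used here in the $\ell^\infty$-in-$j$ form after absorbing one derivative), and the analogous bound $\|\dot\Delta_j(\text{paraproduct piece})\|_{\wh L^p}$ via the Fourier-support/Bernstein Lemma~\ref{lem;Bern}.

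Concretely, for the low--high term $T_f g$ with $f$ low-frequency I would put $f$ in $\fB{_{p,\infty}^{d/p}}$ (hence in $\wh L^\infty$) and $g$ in $\fB{_{p,\s}^{-2+d/p}}$, giving the second term on the right-hand side of the claimed inequality after taking $L^{r_3},L^{r_4}$ in time and applying Hölder in $t$; symmetrically $T_g f$ yields the first term. The genuinely new term is the remainder $R(f,g)$: here $\dot\Delta_j R(f,g)$ only involves $\dot\Delta_k f\,\dot\Delta_{k'}g$ with $k,k'\ge j - N_0$, so one sums over $k\ge j$ and must control $\sum_{k\ge j} 2^{(-2+d/p)j}\,2^{-k d/p}\,2^{-k(-2+d/p)}\cdot 2^{kd/p}\|\dot\Delta_k f\|\cdot 2^{k(-2+d/p)}\|\dot\Delta_k g\|$ — i.e. one needs the $j$-exponent to stay below the $k$-exponent after the Bernstein gain $2^{jd(1/p-1/\infty)}$ from embedding $\wh L^{p/2}\hookrightarrow\wh L^p$ in the $\wh L^p$-norm of a product. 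The requirement $\frac{d}{p} + \big(-2+\frac{d}{p}\big) + \frac{d}{p} > 0$ up to the Bernstein shift, equivalently a condition amounting to $d-2\frac{d}{p}<\frac{d}{p}$, i.e. $p<d$ — wait, recomputing: the $R$ term needs $2^{j(-2+d/p)}\cdot 2^{jd/p}\cdot 2^{-k(-2+d/p+d/p+d/p)}$ summable in $k\ge j$, i.e. exponent $-2+3d/p>0$, which is automatic; the actual constraint $p<d$ enters because we need $-2+\frac{d}{p}<\frac{d}{p}$ (trivial) and, more to the point, that $g\in\fB{_{p,\infty}^{d/p}}$ with $\frac{d}{p}>$ something — this is where I expect the bookkeeping to be subtle and where I would be most careful to track exactly which inequality forces $d\ge 3$ and $p<d$.

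For the time-dependent (Chemin--Lerner) version there is no extra difficulty: all the above are linear operations in the frequency-localized pieces, so one inserts $L^{r_i}(I)$ norms inside the $\ell^\s$ sum and applies Hölder's inequality in $t$ with $\frac1r=\frac1{r_1}+\frac1{r_2}=\frac1{r_3}+\frac1{r_4}$ exactly as in Lemma~\ref{lem;prod_est}. The main obstacle, as indicated above, is the remainder term $R(f,g)$ in the regime where the target index $-2+\frac dp$ is negative: one cannot use the Banach-algebra-type estimate and must instead borrow regularity from the $\fB{_{p,\infty}^{d/p}}$ factor and pay with a Bernstein loss in the $\wh L^p$-norm of the product, and it is precisely the convergence of that geometric series in $k$ that pins down the hypothesis $1\le p<d$ (and, through the need $d/p>1$ so that $\fB{_{p,\infty}^{d/p}}$ is a reasonable multiplier space compatible with the $\wh L^\infty$ endpoint, the restriction $d\ge 3$). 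I would organize the write-up as: (i) Bony decomposition; (ii) the two paraproduct terms via $\fB{_{p,\infty}^{d/p}}\hookrightarrow\wh L^\infty$ and Lemma~\ref{lem;Bern}; (iii) the remainder term with the Bernstein-gain argument and explicit summation; (iv) Hölder in time to pass to the Chemin--Lerner spaces; then (v) take $\ell^\s$ and conclude, deferring the fully spelled-out constants to the appendix reference $\S\ref{sect;appendix}$.
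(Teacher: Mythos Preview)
Your overall strategy---Bony's decomposition $fg=T_fg+T_gf+R(f,g)$ followed by H\"older in time to pass to Chemin--Lerner norms---is exactly the paper's approach, which packages the three pieces as Lemmas~\ref{lem;offd1} and~\ref{lem;diagonal1} applied with $(s_1,s_2)=(-2+\tfrac dp,\tfrac dp)$ and $p_1=p_2=p$. However, the execution of both pieces is off.

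For the paraproducts you have the allocation backwards and invoke a false embedding: $\fB_{p,\infty}^{d/p}$ does \emph{not} embed into $\wh L^\infty$ (only $\fB_{p,1}^{d/p}$ does), so $\|S_{k-1}f\|_{\wh L^\infty}$ cannot be controlled by $\|f\|_{\fB_{p,\infty}^{d/p}}$---the sum $\sum_{\ell\le k-2}2^{d\ell/p}\|f_\ell\|_{\wh L^p}$ diverges. The correct move (Lemma~\ref{lem;offd1}) places the \emph{low-frequency} factor in the \emph{low-regularity} space: for $T_fg=\sum_k S_{k-1}f\,g_k$ one writes $\|S_{k-1}f\|_{\wh L^\infty}\les\sum_{\ell\le k-2}2^{(\frac dp-s_1)\ell}\,2^{s_1\ell}\|f_\ell\|_{\wh L^p}$ with $s_1=-2+\tfrac dp<\tfrac dp$, and the geometric factor $2^{(s_1-d/p)(k-\ell)}$ then yields the first term $\|f\|_{\fB_{p,\s}^{-2+d/p}}\|g\|_{\fB_{p,\infty}^{d/p}}$; symmetrically $T_gf$ gives the second.

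For the remainder your bookkeeping never reaches the right condition. The clean criterion (Lemma~\ref{lem;diagonal1} with $p_1=p_2=p$) is
\[
s_1+s_2+d\min\Bigl(0,\,1-\tfrac2p\Bigr)>0,
\]
which with $(s_1,s_2)=(-2+\tfrac dp,\tfrac dp)$ becomes $-2+\tfrac{2d}p>0$ for $p\ge 2$ (i.e.\ $p<d$) and $-2+d>0$ for $1\le p<2$ (i.e.\ $d\ge 3$); this is precisely where the hypotheses enter. Your exponent $-2+\tfrac{3d}p$ is incorrect: the Bernstein shift from $\wh L^{p/2}$ to $\wh L^p$ is only available for $p\ge 2$ and contributes $2^{jd/p}$, while for $p<2$ one must route through $\wh L^1$ via $\|\wh\phi_j\|_{L^{p'}}\sim 2^{jd/p'}$---the two cases together produce the $\min(0,1-2/p)$ above, not a single uniform gain.
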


\begin{lem}[{\it The estimate for} $\B_t(f,g)$ \cite{Na3}] \label{lem;B_t}
For any $1 \le p$, $p_1$, $p_2 \le \infty$ with $\frac{1}{p}=\frac{1}{p_1}+\frac{1}{p_2}$, 
there exists some constant $C>0$ such that 
the following estimate holds true for all $t \ge 0$: 
$$
\|\B_t(f,g)\|_{\widehat{L}^p} 
\le C \|f\|_{\widehat{L}^{p_1}}\|g\|_{\widehat{L}^{p_2}}, 
$$
where $\B_t(f,g)=\B_t(f,g)(t,x):=e^{\sqrt{c_0 t}|\N|}(e^{-\sqrt{c_0 t}|\N|}f e^{-\sqrt{c_0 t}|\N|}g)(x)$ 
with some constant $c_0>0$.  
\end{lem}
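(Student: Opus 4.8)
The plan is to pass to the Fourier side and exploit the subadditivity of $\xi \mapsto \sqrt{c_0 t}\,|\xi|$ under the triangle inequality, which renders the analytic weight $e^{\sqrt{c_0 t}|\xi|}$ harmless under multiplication. Set $F:=e^{-\sqrt{c_0 t}|\N|}f$ and $G:=e^{-\sqrt{c_0 t}|\N|}g$, so that $\widehat{F}(\xi)=e^{-\sqrt{c_0 t}|\xi|}\widehat{f}(\xi)$ and $\widehat{G}(\eta)=e^{-\sqrt{c_0 t}|\eta|}\widehat{g}(\eta)$. Since the Fourier transform turns products into normalized convolutions, $\widehat{FG}=(2\pi)^{-d/2}\,\widehat{F}*\widehat{G}$, and therefore
\begin{align*}
\widehat{\B_t(f,g)}(\xi)
= e^{\sqrt{c_0 t}|\xi|}\,\widehat{FG}(\xi)
= (2\pi)^{-d/2}\int_{\R^d} e^{\sqrt{c_0 t}(|\xi|-|\xi-\eta|-|\eta|)}\,\widehat{f}(\xi-\eta)\,\widehat{g}(\eta)\,d\eta.
\end{align*}

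The first key step is the pointwise bound $|\xi|\le|\xi-\eta|+|\eta|$, which gives $e^{\sqrt{c_0 t}(|\xi|-|\xi-\eta|-|\eta|)}\le 1$ for every $t\ge 0$ and all $\xi,\eta\in\R^d$; in particular the weight never degenerates as $t\to\infty$, which is the whole point of the estimate. Hence $\bigl|\widehat{\B_t(f,g)}(\xi)\bigr|\le(2\pi)^{-d/2}\bigl(|\widehat{f}|*|\widehat{g}|\bigr)(\xi)$. The second step is Young's convolution inequality: the hypothesis $\tfrac1p=\tfrac1{p_1}+\tfrac1{p_2}$ is equivalent, after passing to conjugate exponents, to $\tfrac1{p_1'}+\tfrac1{p_2'}=1+\tfrac1{p'}$, so
\begin{align*}
\|\B_t(f,g)\|_{\widehat{L}^p}
=\bigl\|\widehat{\B_t(f,g)}\bigr\|_{L^{p'}}
\le (2\pi)^{-d/2}\bigl\||\widehat{f}|*|\widehat{g}|\bigr\|_{L^{p'}}
\le (2\pi)^{-d/2}\,\|\widehat{f}\|_{L^{p_1'}}\|\widehat{g}\|_{L^{p_2'}}
=(2\pi)^{-d/2}\,\|f\|_{\widehat{L}^{p_1}}\|g\|_{\widehat{L}^{p_2}},
\end{align*}
which is the claim with $C=(2\pi)^{-d/2}$, uniformly in $t\ge 0$.

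There is essentially no obstacle here: the only points requiring care are the normalization constant in the product-to-convolution identity and the observation that the gain comes purely from the triangle inequality rather than from any smallness in $t$. Under the stated hypotheses $\widehat{f}\in L^{p_1'}$, $\widehat{g}\in L^{p_2'}$ the convolution $|\widehat{f}|*|\widehat{g}|$ is a genuine function and the computation above is directly justified; if one only knew $\widehat{f},\widehat{g}\in L^1_{loc}$, one would first truncate in frequency (for instance, replace $f,g$ by $\Dj f,\Dj g$), run the same argument, and pass to the limit. This lemma is the Fourier-space expression of the fact that $e^{\sqrt{c_0 t}|\N|}$ respects products on $\widehat{L}^p$-type scales, which is precisely the mechanism behind the Gevrey bound in Theorem \ref{thm;GWP_FLp}.
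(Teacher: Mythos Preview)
Your proof is correct and is the standard argument: the triangle inequality $|\xi|\le|\xi-\eta|+|\eta|$ kills the analytic weight, and Young's inequality with $\tfrac1{p_1'}+\tfrac1{p_2'}=1+\tfrac1{p'}$ closes the estimate. The paper does not supply its own proof of this lemma---it is quoted from \cite{Na3}---but your argument is exactly the classical one underlying such Gevrey product bounds, so there is nothing to compare.
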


\subsection{Smoothing properties in Chemin-Lerner spaces}
Let us consider the following inhomogeneous heat equation with a diffusive coefficient $\nu > 0$:
\begin{equation} \label{eqn;heat}
\left\{
\begin{split}
  &\pt_t u - \nu \Delta u = f, &t>0, \,x \in \R^d, \\
  &u|_{t=0} = u_0, &x \in \R^d.  
\end{split}
\right.
\end{equation}

\begin{lem}[{\it Smoothing estimates for} \eqref{eqn;heat} \cite{Ma-Na-Og}, \cite{Na}] \label{lem;MR}
Let $s \in \R$, $d \ge 1$, $1 \le p,\s \le \infty$, $1 \le r_1 \le r \le \infty$ and $I:=(0,T)$ with $T \in \R_+$. 
Suppose that $u_0 \in \fB{^s_{p,\s}}$ and $f \in \wt{\,L^{r_1}(I};\wh{\dB}{_{p,\s}^{s-2+2/r_1}})$. 
The inhomogeneous heat equation \eqref{eqn;heat} has a unique solution $u$ and 
there exists some constant $C = C(r)>0$ such that 
\begin{equation*} 
\nu^\frac{1}{r} \|u\|_{\wt{L^r(I;}\wh{\dB}{_{p,\s}^{s+\frac{2}{r}}})}
\le C \left(\|u_0\|_{\wh{\dB}{_{p,\s}^s}} 
      + \nu^{-1+\frac{1}{r_1}} \|f\|_{\wt{L^{r_1}(I};\wh{\dB}{_{p,\s}^{s-2+\frac{2}{r_1}}})} \right). 
\end{equation*} 
\end{lem}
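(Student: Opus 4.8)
\textbf{Proof strategy for Lemma \ref{lem;MR} (smoothing estimates for the heat equation).}

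The plan is to reduce the estimate to the action of the heat semigroup on a single Littlewood--Paley block and then restore the $\ell^\s$ sum at the very end, which is exactly why the Chemin--Lerner norm (rather than the ordinary Bochner norm) is the natural object here. First I would apply $\Dj$ to \eqref{eqn;heat}; since $\Dj$ commutes with $\pt_t$ and $\Delta$, the function $u_j:=\Dj u$ solves $\pt_t u_j-\nu\Delta u_j=f_j$ with data $u_{0,j}$, and Duhamel's formula gives
\begin{equation*}
\wh{u_j}(t,\xi)=e^{-\nu t|\xi|^2}\wh{u_{0,j}}(\xi)+\int_0^t e^{-\nu(t-\t)|\xi|^2}\wh{f_j}(\t,\xi)\,d\t.
\end{equation*}
On the Fourier side the $\wh{L}^p$ norm is just the $L^{p'}_\xi$ norm of $\wh{u_j}$, so I would estimate $\|u_j(t)\|_{\wh{L}^p}$ directly. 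The key pointwise fact is that on $\supp\wh{\phi}_j\subset\{|\xi|\sim 2^j\}$ one has $e^{-\nu t|\xi|^2}\le e^{-c\nu t 2^{2j}}$ for a fixed $c>0$; multiplying by $|\xi|^{s+2/r}\sim 2^{(s+2/r)j}$ and using Bernstein (Lemma \ref{lem;Bern}) to pass powers of $|\xi|$ through $\Dj$, the homogeneous part contributes $2^{(s+2/r)j}e^{-c\nu t2^{2j}}\|u_{0,j}\|_{\wh{L}^p}$, whose $L^r_t(0,T)$ norm is bounded by $C\nu^{-1/r}2^{sj}\|u_{0,j}\|_{\wh{L}^p}$ after the change of variables $\s=\nu t2^{2j}$ (this is where the factor $\nu^{1/r}$ on the left and the gain of $2/r$ derivatives come from). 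The Duhamel part is a time convolution: writing it as $\big(g_j * h_j\big)(t)$ with $g_j(\t)=2^{(s+2/r)j}e^{-c\nu\t2^{2j}}$ and $h_j(\t)=\|f_j(\t)\|_{\wh{\dB}{}\text{-block}}$-type factor, I would apply Young's inequality in time with exponents $\tfrac1r+1=\tfrac1{r_1}+\tfrac1\rho$ where $\tfrac1\rho=1-\tfrac1{r_1}+\tfrac1r$; computing $\|g_j\|_{L^\rho_t}$ again by scaling produces the power $\nu^{-1+1/r_1}$ and the shift $s-2+2/r_1$ in the regularity index of $f$.

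With the single-block bound $\nu^{1/r}2^{(s+2/r)j}\|u_j\|_{L^r(I;\wh{L}^p)}\le C\big(2^{sj}\|u_{0,j}\|_{\wh{L}^p}+\nu^{-1+1/r_1}2^{(s-2+2/r_1)j}\|f_j\|_{L^{r_1}(I;\wh{L}^p)}\big)$ in hand, I would take the $\ell^\s(\Z)$ norm in $j$ of both sides; by the triangle inequality in $\ell^\s$ this yields precisely
\begin{equation*}
\nu^{1/r}\|u\|_{\wt{L^r(I;}\wh{\dB}{_{p,\s}^{s+2/r}})}\le C\big(\|u_0\|_{\wh{\dB}{_{p,\s}^s}}+\nu^{-1+1/r_1}\|f\|_{\wt{L^{r_1}(I};\wh{\dB}{_{p,\s}^{s-2+2/r_1}})}\big).
\end{equation*}
Uniqueness follows from the a priori bound applied to the difference of two solutions with zero data, or simply from the explicit Duhamel representation. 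The main technical obstacle is bookkeeping the three scaling computations (the homogeneous $L^r_t$ estimate, the choice of the Young exponent $\rho$, and the $L^\rho_t$ norm of the kernel) so that the powers of $\nu$ and the regularity shifts line up exactly as stated; none of these is deep, but getting the exponent $\rho$ and the resulting $\nu^{-1+1/r_1}$ right — including the edge cases $r_1=1$ and $r=\infty$, where the Young inequality degenerates to an $L^1$–$L^\infty$ pairing — requires care. Everything else is a routine consequence of Bernstein's inequality and the fact that the Fourier--Lebesgue norm linearizes the problem on each dyadic shell.
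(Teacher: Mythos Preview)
Your argument is correct and is exactly the standard proof; the paper does not reprove Lemma~\ref{lem;MR} (it is quoted from \cite{Ma-Na-Og}, \cite{Na}), but the very same block-by-block computation you describe---$e^{-\nu t|\xi|^2}\lesssim e^{-c\nu t2^{2j}}$ on $\supp\wh{\phi}_j$, the explicit $L^r_t$ norm $\|e^{-c\nu 2^{2j}\cdot}\|_{L^r}\lesssim(\nu 2^{2j})^{-1/r}$, Young's inequality in time with exponent $\frac{1}{\gm}=1+\frac{1}{r}-\frac{1}{r_1}$, and the final $\ell^\s$ summation---is carried out verbatim in the paper at \eqref{est;AN2}--\eqref{est;hom} for the analyticity variant. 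Your bookkeeping of the $\nu$-powers and regularity shifts is right, including the edge cases.
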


\subsection{Culculus facts}
The following two inequalities play crucial role in the proof of decay estimates 
in Theorem \ref{thm;Lp-L1} (for the proofs, see e.g., \cite{Ch-Da}). 
\begin{lem} \label{lem;ce}
For any $a$, $b > 0$ with $\max (a,b)>1$, there exists a positive constant $C$ such that 
$$
\int_0^t
\langle t - \t \rangle^{-a} \langle \t \rangle^{-b}
d\t
\le C \langle t \rangle^{- \min(a,b)} \quad \text{for all } t \ge 0. 
$$
\end{lem}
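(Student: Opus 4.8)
The plan is to treat the integral as a time-weighted convolution: first reduce the pair of exponents to a single worst case by symmetry, then split the integration at the midpoint $t/2$ and bound each half by putting the singular weight where it is small. To begin, the change of variables $\t\mapsto t-\t$ shows that $\int_0^t\langle t-\t\rangle^{-a}\langle\t\rangle^{-b}\,d\t$ is unchanged when $a$ and $b$ are swapped, so I would assume without loss of generality that $a\ge b$; then $\min(a,b)=b$ while $a=\max(a,b)>1$ by hypothesis, and it suffices to prove $\int_0^t\langle t-\t\rangle^{-a}\langle\t\rangle^{-b}\,d\t\lesssim\langle t\rangle^{-b}$ for all $t\ge 0$. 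For $0\le t\le 1$ this is immediate since $\langle\cdot\rangle$ is bounded above and below on that range, so I may restrict to $t\ge 1$.

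Next I would write $\int_0^t=\int_0^{t/2}+\int_{t/2}^t=:I_1+I_2$. On $I_1$ one has $t-\t\ge t/2$, hence $\langle t-\t\rangle\gtrsim\langle t\rangle$, so that
\[
I_1\lesssim\langle t\rangle^{-a}\int_0^{t/2}\langle\t\rangle^{-b}\,d\t .
\]
The inner integral is bounded by a constant if $b>1$, by $C\log\langle t\rangle$ if $b=1$, and by $C\langle t\rangle^{1-b}$ if $0<b<1$; in each case the prefactor $\langle t\rangle^{-a}$ reduces the product to $\lesssim\langle t\rangle^{-b}$, using $a\ge b$ when $b>1$, the elementary bound $\log\langle t\rangle\lesssim\langle t\rangle^{a-1}$ (valid because $a>1$) when $b=1$, and $1-a-b\le -b$ (i.e.\ $a\ge 1$) when $0<b<1$. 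On $I_2$ one has $\t\ge t/2$, hence $\langle\t\rangle\gtrsim\langle t\rangle$, and after the substitution $\s=t-\t$,
\[
I_2\lesssim\langle t\rangle^{-b}\int_0^{t/2}\langle\s\rangle^{-a}\,d\s\lesssim\langle t\rangle^{-b},
\]
where the last inequality uses the convergence of $\int_0^\infty\langle\s\rangle^{-a}\,d\s$, i.e.\ $a>1$. Adding the two contributions yields $\int_0^t\langle t-\t\rangle^{-a}\langle\t\rangle^{-b}\,d\t\lesssim\langle t\rangle^{-b}=\langle t\rangle^{-\min(a,b)}$, as claimed.

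I do not expect any real obstacle here: the argument is entirely elementary. The only step that needs care is the estimate of $I_1$ in the regime $b\le 1$, where $\int_0^{t/2}\langle\t\rangle^{-b}\,d\t$ is itself unbounded in $t$; this is precisely where the hypothesis $\max(a,b)>1$ is used, since it guarantees that the decay $\langle t\rangle^{-a}$ in front absorbs that growth and delivers exactly the rate $\langle t\rangle^{-\min(a,b)}$.
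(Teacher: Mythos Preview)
Your argument is correct and is the standard proof of this convolution lemma. The paper itself does not supply a proof but simply refers the reader to \cite{Ch-Da}; your split at $t/2$ after the symmetry reduction $a\ge b$ is precisely the classical route, and the case distinction on $b$ for $I_1$ (where the hypothesis $\max(a,b)>1$ is needed to absorb the possible growth of $\int_0^{t/2}\langle\t\rangle^{-b}\,d\t$) is handled cleanly.
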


\begin{lem}[{\it Page 73 in} \cite{B-C-D}] \label{lem;cineq}
Let $\delta_0>0$. For all $\s>0$, then there exists a constant $C_\s>0$ depending only on $\s$ such that 
$$
\sup_{t\ge 0}\sum_{j\in\Z}\big(t^\frac{1}{2}2^j\big)^\s e^{-t\delta_02^{2j}} \le C_\s. 
$$
\end{lem}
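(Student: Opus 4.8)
\textbf{Proof plan for Lemma \ref{lem;cineq}.} The plan is to reduce the claim to a scale-invariance statement about a single profile function and then control the resulting dyadic sum by a convergent geometric series whose leading term does not see the scale. First, the case $t=0$ is trivial: since $\s>0$, every summand $(t^{1/2}2^j)^\s e^{-t\delta_0 2^{2j}}$ vanishes, so the sum is $0$. Hence it suffices to treat $t>0$. Setting $a:=t^{1/2}>0$ and $g(x):=x^\s e^{-\delta_0 x^2}$ for $x>0$, the quantity to be estimated is $\sum_{j\in\Z}g(a2^j)$, and the assertion is exactly that $\sup_{a>0}\sum_{j\in\Z}g(a2^j)<\infty$ with a bound that is independent of $a$ (it may of course depend on the fixed parameter $\delta_0$ and on $\s$).

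Next I would record the only two pointwise facts about $g$ that enter. On the one hand, $g(x)\le x^\s$ for all $x>0$, since $e^{-\delta_0 x^2}\le 1$. On the other hand, the function $x\mapsto x^{2\s}e^{-\delta_0 x^2}$ is continuous on $(0,\infty)$ and tends to $0$ both as $x\to0^+$ and as $x\to\infty$, hence is bounded by some constant $M=M(\s,\delta_0)$; this yields $g(x)\le M x^{-\s}$ for all $x>0$. So $g$ is controlled by $x^\s$ near $0$ and by $x^{-\s}$ near $\infty$.

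Then the main step is to split the dyadic sum at the place where the grid $\{a2^j\}_{j\in\Z}$ passes through $1$: let $j_0\in\Z$ be the largest integer with $a2^{j_0}\le1$ (so also $a2^{j_0+1}>1$). For $j\le j_0$ one estimates $g(a2^j)\le(a2^j)^\s\le 2^{(j-j_0)\s}$ and sums the geometric series to get $\sum_{j\le j_0}g(a2^j)\le(1-2^{-\s})^{-1}$. For $j\ge j_0+1$ one estimates $g(a2^j)\le M(a2^j)^{-\s}\le M\,2^{-(j-j_0-1)\s}$, using $a2^{j_0+1}>1$, and sums to get $\sum_{j\ge j_0+1}g(a2^j)\le M(1-2^{-\s})^{-1}$. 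Adding the two halves gives $\sum_{j\in\Z}g(a2^j)\le(1+M)(1-2^{-\s})^{-1}=:C_\s$, uniformly in $a>0$, which is the desired bound.

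There is no genuine obstacle in this lemma; the single point requiring care is the uniformity in $t$, and that is secured precisely by splitting at the index where $a2^j=t^{1/2}2^j$ crosses $1$, after which each half is dominated by a convergent geometric series whose leading term is $\le1$ regardless of the value of $t$.
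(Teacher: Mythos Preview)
Your argument is correct: the dyadic split at the index where $t^{1/2}2^j$ crosses $1$, together with the two elementary bounds $g(x)\le x^\s$ and $g(x)\le M x^{-\s}$, yields two geometric series with ratio $2^{-\s}$ and gives a bound uniform in $t$. The paper does not supply its own proof of this lemma but simply cites \cite[p.~73]{B-C-D}, and the argument there is exactly this standard scale-invariant splitting, so your approach coincides with the intended one (note only that the resulting constant inevitably depends on $\delta_0$ as well as on $\s$, despite the lemma's phrasing).
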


\sect{Linear Analysis} \label{sect;LA}

In what follows, we assume $\r_*=1$ for simplisity and $a:=\r-1$. We rapidly obtain the following equations for the 
perturbation denoted by $(a, m)$:  
\begin{equation} \label{eqn;mNSK2} 
\left\{
\begin{split}
& \pt_ta+\div m=0, 
&t>0, x \in \R^d, \\
&\pt_t m -\mathcal{L}m-\kappa \N \Delta a = \mN(a,m)
&t>0, x \in \R^d, \\
&(a, m)|_{t=0}=(a_0,m_0), &x \in \R^d,   
\end{split}
\right.
\end{equation}
where the nonlinear part $\mN(a,m)$ is given by 
\begin{gather} 
\mN(a,m)=\text{div} \left((I(a)-1) m \otimes m\right)-I_P(a)\N a 
- \mathcal{L}(I(a) m)+ \div \mathcal{K}(a), \label{eqn;nonlin} \\ 
I(a):=\frac{a}{1+a}, \quad 
I_P(a):=P'(1+a).      \label{eqn;I}
\end{gather}

In this section, we present the pointwise behavior and regularizing estimates for the solution to the following linearized system 
at the Fourier side:  
\begin{equation} \label{eqn;L} 
\left\{
\begin{split}
& \pt_t \wh{a}+i \xi \cdot \wh{m}=\wh{f}, 
&t>0, x \in \R^d, \\
&\pt_t \wh{m} +\mu |\xi|^2 \wh{m}+(\lam+\mu)\xi (\xi \cdot \wh{m})+i \kappa \xi |\xi|^2 \wh{a} = \wh{g}
&t>0, x \in \R^d, \\
&(\wh{a}, \wh{m})|_{t=0}=(\wh{a}_0,\wh{m}_0), &x \in \R^d.   
\end{split}
\right.
\end{equation}

\begin{prop}[{\it Pointwise estimate in the Fourier space} \cite{Ch-Ko}, \cite{Na2}\hspace{0mm}] \label{lem;pw_L}
Let $(a,m)$ be a solution to the problem \eqref{eqn;L}. 
There exists some positive constant $C>0$ and $c_0>0$ 
such that   
\begin{align} \label{est;ptw}
\big|\big(|\xi|\wh{a},\wh{m}\big)(t,\xi)\big| 
\le Ce^{-tc_0|\xi|^2}\big|\big(|\xi|\wh{a},\wh{m}\big)(0,\xi)\big| 
+C\int_0^t e^{-(t-\t)c_0|\xi|^2}\big|\big(|\xi|\wh{f},\wh{g}\big)\big|\,d\t. 
\end{align}
\end{prop}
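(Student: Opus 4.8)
The plan is to diagonalize the linear system in Fourier variables by splitting the momentum $\wh m$ into its divergence-free and curl-free parts, reducing \eqref{eqn;L} to a scalar heat equation for the solenoidal component and a $2\times2$ constant-coefficient ODE system (in $t$, with $\xi$ a parameter) for the pair $(|\xi|\wh a,\wh{m}^{\parallel})$, where $\wh{m}^{\parallel}:=|\xi|^{-2}\xi(\xi\cdot\wh m)$ is the potential part. Applying the Helmholtz projection $\mathcal{P}_\s$ to the second equation of \eqref{eqn;L} kills the terms $(\lam+\mu)\xi(\xi\cdot\wh m)$ and $i\kappa\xi|\xi|^2\wh a$, leaving $\pt_t\mathcal{P}_\s\wh m+\mu|\xi|^2\mathcal{P}_\s\wh m=\mathcal{P}_\s\wh g$; Duhamel gives $\mathcal{P}_\s\wh m(t)=e^{-\mu t|\xi|^2}\mathcal{P}_\s\wh m_0+\int_0^t e^{-\mu(t-\t)|\xi|^2}\mathcal{P}_\s\wh g\,d\t$, so the solenoidal part already obeys \eqref{est;ptw} with $c_0\le\mu$.

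For the potential part, write $\alpha:=|\xi|\wh a$, $\beta:=-i\tfrac{\xi}{|\xi|}\cdot\wh m$ (a scalar), so that from the first equation $\pt_t\alpha=-|\xi|\,i\xi\cdot\wh m/|\xi|\cdot\ldots$ — more precisely one checks $\pt_t\alpha=|\xi|^2\beta+|\xi|\wh f$ and, projecting the second equation onto $\xi/|\xi|$, $\pt_t\beta=-\nu|\xi|^2\beta-\kappa|\xi|^2\alpha+\wh g^{\parallel}$, where $\nu=\lam+2\mu$. This is the autonomous system $\pt_t\binom{\alpha}{\beta}=|\xi|^2 A\binom{\alpha}{\beta}+\text{(forcing)}$ with $A=\begin{pmatrix}0&1\\-\kappa&-\nu\end{pmatrix}$, whose eigenvalues are $\lam_\pm=\tfrac12(-\nu\pm\sqrt{\nu^2-4\kappa})$ — exactly the $\lam_\pm(\xi)/|\xi|^2$ of the Introduction. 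In all three regimes ($\nu^2>4\kappa$: two negative reals; $\nu^2=4\kappa$: a double root $-\nu/2$; $\nu^2<4\kappa$: complex conjugates with real part $-\nu/2$) one has $\Re\lam_\pm\le-c_0$ for a fixed $c_0>0$ (take $c_0=\min(\mu,\tfrac{\nu}{2},\kappa/\nu)$ or similar), hence $\|e^{|\xi|^2 tA}\|\le C e^{-c_0 t|\xi|^2}$ uniformly in $\xi$ — the constant $C$ absorbs the possible linear-in-$t|\xi|^2$ factor from the Jordan block, which is bounded by exponential decay with a slightly smaller $c_0$. Duhamel's formula for the ODE then yields $|(\alpha,\beta)(t)|\le C e^{-c_0 t|\xi|^2}|(\alpha,\beta)(0)|+C\int_0^t e^{-c_0(t-\t)|\xi|^2}|(|\xi|\wh f,\wh g^{\parallel})(\t)|\,d\t$.

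Combining the two parts, using $|\wh m|^2=|\mathcal{P}_\s\wh m|^2+|\wh m^{\parallel}|^2$ and $|\beta|=|\wh m^{\parallel}|$, and bounding $|\wh f|,|\wh g^{\parallel}|,|\mathcal{P}_\s\wh f|\le|\wh f|$ etc. by the full vectors, gives \eqref{est;ptw}. The only genuinely delicate point is the uniform-in-$\xi$ bound $\|e^{|\xi|^2 tA}\|\le Ce^{-c_0 t|\xi|^2}$ across the threshold $\nu^2=4\kappa$: near the double eigenvalue the spectral projections blow up, so one should not diagonalize but instead either (i) compute $e^{|\xi|^2tA}$ explicitly via the $2\times2$ Cayley--Hamilton formula $e^{|\xi|^2tA}=e^{\lam_-|\xi|^2 t}I+\tfrac{e^{\lam_+|\xi|^2t}-e^{\lam_-|\xi|^2t}}{\lam_+-\lam_-}(A-\lam_-I)$ and observe the difference quotient stays bounded by $C t|\xi|^2 e^{-\tfrac{\nu}{2}t|\xi|^2}\le Ce^{-c_0 t|\xi|^2}$, or (ii) simply cite that this pointwise estimate is already established in Chikami--Kobayashi \cite{Ch-Ko} and Nakasato \cite{Na2}, as the statement of the proposition indicates. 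I would present (i) briefly for completeness and remark that it matches the resolvent analysis underlying \eqref{eqn;rough}.
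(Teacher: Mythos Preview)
Your argument is correct. The paper does not supply its own proof of this proposition; it simply cites \cite{Ch-Ko} and \cite{Na2}, and later records the explicit Green matrix $\mathcal{G}^{i,j}(t,\xi)$ in \eqref{eqn;neq}--\eqref{eqn;neq2}, which is precisely the Cayley--Hamilton computation you outline in option (i). So your Helmholtz splitting plus $2\times2$ constant-matrix ODE analysis is exactly the route behind those formulas, and your derivation of the system for $(\alpha,\beta)$ with matrix $A=\begin{pmatrix}0&1\\-\kappa&-\nu\end{pmatrix}$ checks out line by line.

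Two small points worth tidying. First, your remark about ``uniform-in-$\xi$'' near the threshold is slightly misphrased: the matrix $A$ is independent of $\xi$, so for \emph{fixed} $\nu,\kappa$ the bound $\|e^{|\xi|^2tA}\|\le Ce^{-c_0t|\xi|^2}$ is automatic once you know the spectrum; the Jordan-block subtlety only arises in the single case $\nu^2=4\kappa$, where $(1+t|\xi|^2)e^{-\frac{\nu}{2}t|\xi|^2}\le C e^{-c_0t|\xi|^2}$ for any $c_0<\nu/2$, and no uniformity across parameter regimes is required by the statement. Second, in the closing sentence you write ``$|\mathcal{P}_\s\wh f|\le|\wh f|$'' but the solenoidal equation involves $\mathcal{P}_\s\wh g$, not $\wh f$; this is a typo, not a gap.
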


\begin{prop}[{\it Regularizing estimates for \eqref{eqn;L}} \cite{Ch-Ko}, \cite{Na2}\hspace{0mm}] \label{prop;MR3}
Let $s \in \R$, $1 \le p,\s \le \infty$, $1 \le r_1 \le r \le \infty$ and $I=(0,T)$ with $T \in \R_+$. 
The inhomogeneous problem \eqref{eqn;L} has a unique solution $(\r,u,B)$ and 
there exists some constant $C >0$ such that 
\begin{equation*} 
\|(|\N|a,m)\|_{\wt{L^r(I;}\wh{\dB}{_{p,\s}^{s+\frac{2}{r}}})}
\le C \left(\|(|\N|a_0,m_0)\|_{\wh{\dB}{_{p,\s}^s}} 
      +\|(|\N|f,g)\|_{\wt{L^{r_1}(I;}\wh{\dB}{_{p,\s}^{s-2+\frac{2}{r_1}}})} \right). 
\end{equation*} 
\end{prop}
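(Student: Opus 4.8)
The strategy is to reduce the linear system \eqref{eqn;L} to a pair of decoupled scalar equations by applying the Helmholtz decomposition, so that each piece is governed by an equation of heat type to which Lemma \ref{lem;MR} applies. First I would split $m = \mathcal{P}_\s m + \mathcal{P}_\s^\perp m$. Taking $\mathcal{P}_\s$ of the momentum equation kills the pressure-type and Korteweg terms (both gradients) and the $\N\div$ part of the Lam\'e operator, leaving $\pt_t \mathcal{P}_\s m - \mu\Delta \mathcal{P}_\s m = \mathcal{P}_\s g$, a genuine heat equation with coefficient $\mu>0$. Lemma \ref{lem;MR}, together with the boundedness of $\mathcal{P}_\s$ on $\wh{L}^p$ (it is a Fourier multiplier with bounded symbol), gives the desired bound for the $\mathcal{P}_\s m$ component directly.

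The essential work is the compressible part $(a,\mathcal{P}_\s^\perp m)$, equivalently, working with the scalar potential, the pair $(|\N|a, \div m)$. In Fourier variables this is the $2\times 2$ system with matrix $\begin{pmatrix} 0 & -|\xi| \\ \kappa|\xi|^3 & -\nu|\xi|^2\end{pmatrix}$ acting on $(|\xi|\wh a, \widehat{\div m}/|\xi|)$ or a similar normalization; its eigenvalues are $\lambda_\pm(\xi) = -\tfrac{\nu}{2}|\xi|^2\big(1\pm\sqrt{1-4\kappa/\nu^2}\big)$, and in all three regimes ($\nu^2>4\kappa$, $\nu^2=4\kappa$, $\nu^2<4\kappa$) one has $\Re\lambda_\pm(\xi) \le -c_0|\xi|^2$ for some $c_0>0$ — this is precisely the pointwise content already recorded in Proposition \ref{lem;pw_L}. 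So rather than rederive the semigroup bounds, I would invoke \eqref{est;ptw}: the solution to \eqref{eqn;L} satisfies, for every frequency, the pointwise Duhamel-type estimate dominating $|(|\xi|\wh a,\wh m)(t,\xi)|$ by the heat-kernel expression $e^{-tc_0|\xi|^2}|(|\xi|\wh a_0,\wh m_0)| + \int_0^t e^{-(t-\t)c_0|\xi|^2}|(|\xi|\wh f,\wh g)|\,d\t$.

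From \eqref{est;ptw} the proposition follows by a Littlewood--Paley and Chemin--Lerner bookkeeping that mirrors the proof of Lemma \ref{lem;MR}. I would localize in frequency: apply $\Dj$, take $\wh{L}^p$ norms (i.e. $L^{p'}_\xi$ norms) of both sides of \eqref{est;ptw}, use that on the support of $\wh\phi_j$ one has $e^{-tc_0|\xi|^2} \simeq e^{-tc_02^{2j}}$, pull the time factor out, apply $\|\cdot\|_{L^r(I)}$ in $t$ (using Young's inequality in $t$ for the Duhamel integral, which is a convolution in time against $e^{-\cdot\,c_02^{2j}}$ and produces the gain $2^{-2j(1-1/r_1+1/r)}$ — wait, more precisely $\|e^{-\cdot c_0 2^{2j}}\|_{L^{\rho}} \simeq 2^{-2j/\rho}$ with $1+1/r = 1/r_1 + 1/\rho$), then multiply by $2^{j(s+2/r)}$ and take $\ell^\s$ in $j$. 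The powers of $2^j$ match up exactly so that the left side carries weight $s+2/r$ and the two right-hand terms carry $s$ and $s-2+2/r_1$ respectively, yielding the stated inequality. The main obstacle, such as it is, is purely organizational: keeping the Chemin--Lerner time-integrability exponents straight through the Duhamel term and confirming that the constant $c_0$ from \eqref{est;ptw} is uniform in $j$ (it is, being frequency-independent). I should also remark that the claim "the inhomogeneous problem \eqref{eqn;L} has a unique solution $(\rho,u,B)$" is a typo for $(a,m)$, and existence/uniqueness is standard once the a priori estimate is in hand (e.g.\ by the same frequency-localized Duhamel formula, which is an explicit representation of the solution).
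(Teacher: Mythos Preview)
Your proposal is correct and follows essentially the same route as the paper: although Proposition~\ref{prop;MR3} is stated with a citation and no standalone proof, the paper carries out exactly the computation you describe in \S\ref{sect;GWP} (equations \eqref{est;AN2}--\eqref{est;GV}) for the Gevrey-weighted version, namely localize \eqref{est;ptw} by $\wh\phi_j$, take the $L^{p'}_\xi$-norm, apply Young's inequality in time against $e^{-\tilde c_0 2^{2j}\cdot}$ via \eqref{est;FC}, then multiply by $2^{(s+2/r)j}$ and take $\ell^\s$. The only minor difference is that your preliminary Helmholtz splitting of $m$ is unnecessary, since the pointwise bound \eqref{est;ptw} already covers the full vector $(|\xi|\wh a,\wh m)$ at once; dropping that detour your argument coincides with the paper's.
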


In the case of $\nu^2 \neq 4 \kappa$, by solving the linearized equation obtained by the homogeneous problem, 
namely \eqref{eqn;L} with $f$, $g\equiv 0$ (see, e.g., \S\,3 in \cite{Ko-Ts}), we see that 
the solution $(a,m)$ of the homogeneous problem can be written down explicitly as followings: 
\begin{align*}
&\wh{a}(t,\xi) 
=\frac{\lam_+(\xi)e^{\lam_-(\xi)t}-\lam_-(\xi)e^{\lam_+(\xi)t}}{\lam_+(\xi)-\lam_-(\xi)} \wh{a}_0(\xi)
  -i\,{}^t\xi \frac{e^{\lam_+(\xi)t}-e^{\lam_-(\xi)t}}{\lam_+(\xi)-\lam_-(\xi)} \wh{m}_0(\xi), \\
&\wh{m}(t,\xi)
= e^{-\mu|\xi|^2t}\wh{m}_0(\xi)-i\kappa\xi|\xi|^2\frac{e^{\lam_+(\xi)t}-e^{\lam_-(\xi)t}}{\lam_+(\xi)-\lam_-(\xi)}\wh{a}_0(\xi) \\
&\qquad \qquad \qquad \qquad \qquad  +\left(\frac{\lam_+(\xi)e^{\lam_+(\xi)t}-\lam_-(\xi)e^{\lam_-(\xi)t}}{\lam_+(\xi)-\lam_-(\xi)}-e^{-\mu|\xi|^2t}\right) 
  \frac{\xi(\xi\cdot\wh{m}_0(\xi))}{|\xi|^2}, 
\end{align*}
where $\lam_\pm(\xi)=-\frac{\nu}{2}|\xi|^2(1 \pm \sqrt{1-\frac{4\kappa}{\nu^2}})$ are the solution of the corresponding characteristic equation 
$\lam^2+\nu|\xi|^2\lam+\kappa |\xi|^4=0$.  
Now, we set $\mathcal{G}^{i,j}=\mG^{i,j}(t,\xi)$ $(i,j=1,2)$ as followings: 
\begin{equation}  \label{eqn;neq}
\begin{aligned}
&\mG^{1,1}(t,\xi):=\frac{\lam_+(\xi)e^{\lam_-(\xi)t}-\lam_-(\xi)e^{\lam_+(\xi)t}}{\lam_+(\xi)-\lam_-(\xi)}, \quad 
\mG^{1,2}(t,\xi):=i\,{}^t\xi \frac{e^{\lam_+(\xi)t}-e^{\lam_-(\xi)t}}{\lam_+(\xi)-\lam_-(\xi)}, \\
&\mG^{2,1}(t,\xi):={-i\kappa \xi|\xi|^2\frac{e^{\lam_+(\xi)t}-e^{\lam_-(\xi)t}}{\lam_+(\xi)-\lam_-(\xi)}}, \\
&\mG^{2,2}(t,\xi):={\frac{\lam_+(\xi)e^{\lam_+(\xi)t}-\lam_-(\xi)e^{\lam_-(\xi)t}}{\lam_+(\xi)-\lam_-(\xi)} \cdot \frac{\xi \otimes \xi}{|\xi|^2}
                             +e^{-\mu|\xi|^2t}\left(\text{Id}-\frac{\xi \otimes \xi}{|\xi|^2}\right)},   
\end{aligned}
\end{equation}
where ${}^t \xi$ is the transpose of $\xi$. 
Hereafter, we also set $G^{i,j}(t,x):=\F^{-1}[\mG^{i,j}(t,\cdot)]$ $(i,j=1,2)$ and 
\begin{align*}
\mG(t,\xi)
:=
\left[
\begin{array}{@{\,}cc@{\,}}  
\mG^{1,1}(t,\xi) &\mG^{1,2}(t,\xi) \\
\mG^{2,1}(t,\xi) &\mG^{2,2}(t,\xi)
\end{array}
\right], \quad 
G(t,x):=
\left[
\begin{array}{@{\,}cc@{\,}}  
G^{1,1}(t,x) &G^{1,2}(t,x) \\
G^{2,1}(t,x) &G^{2,2}(t,x)
\end{array}
\right].  
\end{align*}

On the other hand, in the case of $\nu^2 = 4\kappa$, $\mathcal{G}^{i,j}(t,\xi)$ are given by followings instead of \eqref{eqn;neq}: 
\begin{equation}  \label{eqn;neq2}
\begin{aligned}
&\mG^{1,1}(t,\xi):=\left(1+\frac{\nu}{2} t|\xi|^2 \right)e^{-\frac{\nu}{2}t|\xi|^2}, \quad 
\mG^{1,2}(t,\xi):=- t e^{-\frac{\nu}{2}t|\xi|^2} i\,{}^t\xi, \\
&\mG^{2,1}(t,\xi):=-i\kappa \xi t |\xi|^2  e^{-\frac{\nu}{2}t|\xi|^2}, \quad 
\mG^{2,2}(t,\xi):=\left(1-\frac{\nu}{2} t|\xi|^2 \right) e^{-\frac{\nu}{2}t|\xi|^2}.    
\end{aligned}
\end{equation}

Let $U(t):={}^t(a,m)$ be solution to \eqref{eqn;mNSK2} with $U_0:={}^t(a_0,m_0)$. 
Since the solution of \eqref{eqn;L} is given by Green matrix $G$, we see that $U(t)$ is represented by 
\begin{equation} \label{eqn;IE}
\begin{aligned}
U(t)=&G(t,\cdot)*U_0
+\int_0^t 
G(t-\t,\cdot)* 
\left(
\begin{array}{@{\,}c@{\,}}
0\\
\mathcal{N}(a,m)
\end{array}
\right)
d\t \\ 
=& 
\left(
\begin{array}{@{\,}c@{\,}}
G^{1,1}(t,\cdot)*a_0+G^{1,2}(t,\cdot)*m_0+\dsp \int_0^t G^{1,2}(t-\t,\cdot)*\mathcal{N}(a,m)\,d\t\\
\dsp G^{2,1}(t,\cdot)*a_0+G^{2,2}(t,\cdot)*m_0+\int_0^t G^{2,2}(t-\t,\cdot)*\mathcal{N}(a,m)\,d\t
\end{array}
\right). 
\end{aligned}
\end{equation}


\sect{Existence of the global solution and analiticity} \label{sect;GWP}

\subsection{Existence of global solutions} 

In this section, we give the proof of Theorem \ref{thm;GWP_FLp}.  
In order to prove Theorem \ref{thm;GWP_FLp}, we apply Banach's fixed point argument on 
the complete metric space $CL_T^{(p,\s)}$ as given in Definition \ref{dfn;CL_T}. 
Let us define the solution mapping $\Phi=\Phi[(b,n)]$ as follows:  
Given initial data such that $(a_0,m_0)$ satisfy the assumptions as in Theorem \ref{thm;GWP_FLp}, 
the solution mapping $\Phi$ is defined by 
\begin{equation}
\Phi: (b,n) \longmapsto (a,m)
\end{equation} 
with $(a,m)$ the solution to 
\begin{equation} \label{eqn;AE} 
\left\{
\begin{split}
&\;\pt_t a+\div m=0, \\
&\;\pt_t m-\L m -\kappa\N \Del a =\mathcal{N}(b,n), \\
&\;(a, m)|_{t=0}=(a_0,m_0), 
\end{split}
\right.
\end{equation}
where $\mathcal{N}(b,n)=\mathcal{N}(a,m)|_{(a,m)=(b,n)}$ are defined in \eqref{eqn;nonlin}. 

\begin{lem} \label{lem;nonlin_est}
Let $d \ge 3$, $1 \le p < d$, $1 \le \s \le \infty$ and $T>0$. Let $\mathcal{N}$ be defined in \eqref{eqn;nonlin} and 
$P$ be a real analytic function in a neighborhood of $1$ such that $P'(1)=0$. 
There exists some positive constant $C>0$ such that 
\begin{equation} \label{est;nonlin_all}
\begin{aligned}
\|\mathcal{N}(b,n)\|_{\wt{L^1_t(}\fB{_{p,\s}^{-3+\frac{d}{p}}}) \cap L^1_t(\fB{_{p,1}^{-1+\frac{d}{p}}})}
\le C(\|(b,n)\|_{CL^{(p,\s)}_T}^2+\|(b,n)\|_{CL^{(p,\s)}_T}^3)
\end{aligned}
\end{equation}
for all $(b,n) \in CL^{(p,\s)}_T$ and $0 \le t \le T$. 
\end{lem}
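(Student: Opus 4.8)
The plan is to decompose $\mathcal{N}(b,n)$ into its constituent nonlinearities—the convective term $\div((I(b)-1)\,n\otimes n)$, the pressure term $I_P(b)\N b$, the viscous correction $\mathcal{L}(I(b)n)$, and the Korteweg term $\div\mathcal{K}(b)$—and estimate each piece separately in the two target norms $\wt{L^1_t}(\fB{_{p,\s}^{-3+d/p}})$ and $L^1_t(\fB{_{p,1}^{-1+d/p}})$. Throughout I would exploit the scaling bookkeeping built into the definition of $\|(b,n)\|_{CL^{(p,\s)}_T}$: it controls $|\N|b$ and $n$ simultaneously in the low-regularity space $\wt{L^\infty_t}(\fB{_{p,\s}^{-3+d/p}})\cap\wt{L^1_t}(\fB{_{p,\s}^{-1+d/p}})$ and in the high-regularity space $\wt{L^\infty_t}(\fB{_{p,1}^{-1+d/p}})\cap L^1_t(\fB{_{p,1}^{1+d/p}})$, and interpolation between these endpoints supplies intermediate space-time norms for $|\N|b$ and $n$ at any $\theta$-averaged regularity. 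A key preliminary observation is that $P$ being real analytic near $1$ with $P'(1)=0$ lets us write $I_P(b)=P'(1+b)=b\,\wt{I}_P(b)$ where $\wt{I}_P$ is again a (bounded, analytic) function of $b$, so the pressure term carries an extra factor of $b$; similarly $I(b)-1=-\tfrac{1}{1+b}$ and $I(b)=\tfrac{b}{1+b}$ are analytic in $b$. Composition estimates in Fourier–Besov spaces (of the type in Lemma~\ref{lem;A-P} and the Banach-ring property \eqref{est;Banach_ring}) then bound $\|I(b)\|$, $\|I_P(b)\|$, etc. in $\wh{L}^\infty\cap\fB{_{p,1}^{d/p}}$-type norms by $\|b\|$ in the same norm plus higher-order terms, as long as $\|b\|_{\wh{L}^\infty\cap\fB{_{p,1}^{d/p}}}$ stays small—which is guaranteed since $CL^{(p,\s)}_T$ controls $b$ in $\wt{L^\infty_t}(\fB{_{p,1}^{-1+d/p}})\cap L^1_t(\fB{_{p,1}^{1+d/p}})$ and hence, by interpolation, in $\wt{L^2_t}(\fB{_{p,1}^{d/p}})\hr\wt{L^2_t}(\wh{L}^\infty)$, or directly in $\wt{L^\infty_t}(\fB{_{p,1}^{d/p}})$ if one also uses the $|\N|b$ bound.

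The mechanical core is a sequence of bilinear and trilinear estimates. For the convective term, since it sits under a divergence we gain one derivative: $\|\div((I(b)-1)n\otimes n)\|_{\fB{_{p,\s}^{-3+d/p}}}\lesssim\|(I(b)-1)\,n\otimes n\|_{\fB{_{p,\s}^{-2+d/p}}}$, and then Lemma~\ref{lem;bil2} (the new bilinear estimate at regularity $-2+d/p$, valid precisely for $1\le p<d$) handles the product $n\otimes n$, pairing one factor of $n$ in $\fB{_{p,\s}^{-2+d/p}}$ with the other in $\fB{_{p,\infty}^{d/p}}$; the coefficient $I(b)-1$ is absorbed using the Banach-ring estimate \eqref{est;Banach_ring} together with smallness. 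Splitting the time integrability as $\tfrac11=\tfrac1\infty+\tfrac11$ or $\tfrac12+\tfrac12$ according to which factor one places in $\wt{L^\infty}$ versus $\wt{L^1}$ (or $\wt{L^2}$), the resulting product is controlled by $\|(b,n)\|_{CL^{(p,\s)}_T}^2$, and the extra $I(b)-1$ factor—being $O(\|b\|)$—upgrades a piece of this to the cubic term $\|(b,n)\|_{CL^{(p,\s)}_T}^3$. The viscous correction $\mathcal{L}(I(b)n)$ is two derivatives on $I(b)n$, so $\|\mathcal{L}(I(b)n)\|_{\fB{_{p,\s}^{-3+d/p}}}\lesssim\|I(b)n\|_{\fB{_{p,\s}^{-1+d/p}}}$, and Lemma~\ref{lem;prod_est}(2) (product estimate at regularity $-1+d/p$, in range for $1\le p<d$) bounds this by $\|n\|_{\fB{_{p,\s}^{-1+d/p}}}\|I(b)\|_{\wh{L}^\infty\cap\fB{_{p,\infty}^{d/p}}}$; in space-time, $n\in\wt{L^1_t}(\fB{_{p,\s}^{-1+d/p}})$ and $I(b)\in L^\infty_t(\wh{L}^\infty\cap\fB{_{p,\infty}^{d/p}})$, giving again $O(\|\cdot\|^2+\|\cdot\|^3)$. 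The Korteweg term $\div\mathcal{K}(b)$: after the algebra in \eqref{eqn;Korteweg}, each summand is (schematically) $\N(b\N b)$ or $\N\!\cdot(\N b\otimes\N b)$, i.e. $\N b$ times $\N^2 b$, equivalently $|\N|b$ times $|\N|^2 b$ in terms of the variables $CL^{(p,\s)}_T$ actually controls; so one needs $\||\N|b\cdot|\N|^2 b\|_{\fB{_{p,\s}^{-3+d/p}}}$, which by Lemma~\ref{lem;bil2} or \ref{lem;A-P} is $\lesssim\||\N|b\|_{\fB{_{p,\s}^{-2+d/p}}}\||\N|b\|_{\fB{_{p,\infty}^{d/p}}}$ after moving the remaining derivative onto the low-regularity factor—and both norms of $|\N|b$ are dominated by $CL^{(p,\s)}_T$ with an $L^\infty_t$–$L^1_t$ time split, yielding a clean quadratic bound with no smallness needed. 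For the pressure term $I_P(b)\N b = b\,\wt{I}_P(b)\,\N b$, the extra factor of $b$ is what makes it at least quadratic; one writes $\|b\,\wt{I}_P(b)\,\N b\|_{\fB{_{p,\s}^{-3+d/p}}}\lesssim\|b\,\wt{I}_P(b)\,|\N|b\|_{\fB{_{p,\s}^{-2+d/p}}}$ (shifting the derivative, $\N b\sim|\N|b$ up to a Riesz multiplier, harmless on $\wh L^p$), then Lemma~\ref{lem;bil2} pairs $|\N|b$ at $-2+d/p$ with $b\,\wt I_P(b)$ at $d/p$, the latter controlled by $\|b\|_{\fB{_{p,1}^{d/p}}}\|\wt I_P(b)\|_{\wh L^\infty\cap\fB{_{p,1}^{d/p}}}$ via the Banach ring, and $b\in\wt{L^2_t}(\fB{_{p,1}^{d/p}})$ by interpolation. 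The second norm, $L^1_t(\fB{_{p,1}^{-1+d/p}})$, is handled completely in parallel: each differentiated nonlinearity is estimated at regularity $-1+d/p$ (so one derivative is spent) using Lemma~\ref{lem;prod_est}(2) and \eqref{est;Banach_ring}, distributing the factors between the high-regularity piece $L^1_t(\fB{_{p,1}^{1+d/p}})$ and the $L^\infty_t$ pieces of $CL^{(p,\s)}_T$, and the composition functions are again bounded by smallness; since the two target norms differ only by the index shift $-3+d/p\leftrightarrow-1+d/p$ and $\s\leftrightarrow1$, essentially the same inequalities apply with indices relabeled. Summing the five contributions gives \eqref{est;nonlin_all}.

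The main obstacle is the convective and Korteweg terms at regularity $-2+d/p$: this is exactly the threshold at which the naive paraproduct estimate fails, and it is the reason the hypothesis $d\ge3$, $1\le p<d$ appears. One cannot use a plain product estimate there (the regularity $-2+d/p$ can be negative and the usual condition $s>0$ or $|s|<d/p$ may be violated depending on $p$), so one is forced to route through the specially tailored Lemma~\ref{lem;bil2}, which is precisely engineered to hold in this range—I would invoke it as a black box. A secondary subtlety is making sure the analytic composition $b\mapsto I(b),I_P(b),\wt{I}_P(b)$ is controlled in the $\wh{L}^\infty\cap\fB{_{p,1}^{d/p}}$ (or $\cap\,\fB{_{p,\infty}^{d/p}}$) norm with the correct factor structure, i.e. that $I_P(b)=b\,\wt I_P(b)$ and that $\|\wt I_P(b)\|$ is $O(1)$ while $\|I(b)\|,\|I(b)-1-(\text{const})\|$ are $O(\|b\|)$; this follows from the standard composition lemma for analytic functions in Banach algebras (here the Fourier–Besov algebra $\fB{_{p,1}^{d/p}}$, by \eqref{est;Banach_ring}), provided $\|b\|_{\fB{_{p,1}^{d/p}}}$ is small, which the fixed-point setup ensures. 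Finally, one must be a little careful that the Bochner space $L^1_t(\fB{_{p,1}^{-1+d/p}})$ (rather than the Chemin–Lerner space) appears on the left: since $\s=1$ there, the embedding $\wt{L^1_t}(\fB{_{p,1}^{-1+d/p}})=L^1_t(\fB{_{p,1}^{-1+d/p}})$ makes the two coincide, so no extra argument is needed.
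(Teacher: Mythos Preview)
Your overall plan matches the paper's: decompose $\mathcal{N}(b,n)$ into its four pieces and bound each one using Lemma~\ref{lem;bil2} at regularity $-2+\tfrac{d}{p}$ (together with the algebra property \eqref{est;Banach_ring} for the composition functions). Your treatment of the convective, viscous, and Korteweg terms is essentially correct; for $\mathcal{L}(I(b)n)$ you take the slightly shorter route via Lemma~\ref{lem;prod_est}(2) at regularity $-1+\tfrac{d}{p}$, whereas the paper rewrites $\Delta(I(b)n)=\div(\N I(b)\otimes n+I(b)\N n)$ and uses Lemma~\ref{lem;bil2} --- both work.

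There is, however, a genuine gap in your pressure estimate. You write
\[
\|b\,\wt I_P(b)\,\N b\|_{\fB{_{p,\s}^{-3+d/p}}}\lesssim\|b\,\wt I_P(b)\,|\N|b\|_{\fB{_{p,\s}^{-2+d/p}}}
\]
and justify this by ``shifting the derivative, $\N b\sim|\N|b$ up to a Riesz multiplier''. A Riesz transform is bounded on $\fB{_{p,\s}^{s}}$, but it does \emph{not} change the regularity index: replacing $\N b$ by $|\N|b$ inside the product cannot raise $-3+\tfrac{d}{p}$ to $-2+\tfrac{d}{p}$. Nor can you fall back on a direct product estimate at $s=-3+\tfrac{d}{p}$: Lemma~\ref{lem;prod_est}(2) requires $s>-\tfrac{d}{p'}$ (for $p<2$) or $|s|<\tfrac{d}{p}$ (for $p\ge2$), and at $d=3$ neither condition is met for any $p\in[1,3)$. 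What makes the argument go through is the algebraic identity the paper uses: since $I_P(b)=P'(1+b)$ and $P'(1)=0$,
\[
I_P(b)\,\N b \;=\; \N\bigl(P(1+b)-P(1)\bigr) \;=\; \N\bigl(b^2\,\wt I_P(b)\bigr),\qquad \wt I_P(b)=\sum_{n\ge2}\tfrac{P^{(n)}(1)}{n!}b^{n-2},
\]
so the gradient sits \emph{outside} the product. Then Bernstein gives $\|\N(b^2\wt I_P(b))\|_{\fB{_{p,\s}^{-3+d/p}}}\lesssim\|b^2\wt I_P(b)\|_{\fB{_{p,\s}^{-2+d/p}}}$, and now Lemma~\ref{lem;bil2} applies at $-2+\tfrac{d}{p}$, pairing one factor $b$ at $-2+\tfrac{d}{p}$ with $b\,\wt I_P(b)$ at $\tfrac{d}{p}$ (the paper takes the time split $L^\infty_t\times L^1_t$). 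Without this total-gradient rewriting, the low-regularity estimate for the pressure term fails in the full range $d\ge3$, $1\le p<d$.
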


\begin{proof}[The proof of Lemma \ref{lem;nonlin_est}] 
The following estimate for $\|\mathcal{N}(b,n)\|_{L^1_t(\fB{_{p,1}^{-1+\frac{d}{p}}})}$ 
is already obtained by Lemma 5.1 in \cite{Ko-Na}:
\begin{align*}
\|\mathcal{N}(b,n)\|_{L^1_t(\fB{_{p,1}^{-1+\frac{d}{p}}})} \les \|(b,n)\|_{CL^p_T}^2+\|(b,n)\|_{CL^p_T}^3, 
\end{align*}
where $\|(b,n)\|_{CL^p_T}:=\|(\langle\N\rangle b,n)\|_{\wt{L^\infty_t(}\fB{_{p,1}^{-1+d/p}}) 
\cap L^1_t(\fB{_{p,1}^{1+d/p}})}$. Because $\|(b,m)\|_{CL^p_T} \les \|(b,m)\|_{CL^{(p,\s)}_T}$, 
we could obtain the desired estimate for the high frequencies of solution as follows: 
\begin{equation} \label{est;nonlin_h}
\|\mathcal{N}(b,n)\|_{L^1_t(\cfB^{-1+\frac{d}{p}})} 
\les \|(b,n)\|_{CL^{(p,\s)}_T}^2+\|(b,n)\|_{CL^{(p,\s)}_T}^3. 
\end{equation}

Let us consider the estimate for $\|\mathcal{N}(b,n)\|_{\wt{L^1_t(}\fB{_{p,\s}^{-3+\frac{d}{p}}})}$. 

\noindent
{\it The estimate for $\div ((I(b)-1)\,n \otimes n)$}: By Lemma \ref{lem;Bern} and Lemma \ref{lem;bil2}, 
we obtain   
\begin{align} \label{est;nonlin1}
\|\div (n \otimes n)\|_{\wt{L^1_t(}\fB{_{p,\s}^{-3+\frac{d}{p}}})}
\les \|n\otimes n\|_{\wt{L^1_t(}\fB{_{p,\s}^{-2+\frac{d}{p}}})}
\les \|n\|_{\wt{L^2_t(}\fB{_{p,\s}^{-2+\frac{d}{p}}})}
\|n\|_{\wt{L^2_t(}\fB{_{p,\infty}^{\frac{d}{p}}})}.  
\end{align}  

Thanks to the Taylor expansion, we see $(1+b)^{-1}=\sum_{n=0}^\infty (-1)^n b^n$ if $|b|<1$. 
By using Lemma \ref{lem;bil2}, \eqref{est;Banach_ring} and $\|b\|_{\wt{L^\infty_t(}\fB{_{p,1}^\frac{d}{p}})} \le \ve_0 \ll 1$, 
we see that for all $1 \le r,\s \le \infty$, 
\begin{equation} 
\begin{aligned} \label{est;1/1+r}
\left\|\frac{b}{1+b}\right\|_{\wt{L^r_t(}\fB{_{p,\s}^{-2+\frac{d}{p}}})} 
\le& \|b\|_{\wt{L^r_t(}\fB{_{p,\s}^{-2+\frac{d}{p}}})} \sum_{n=0}^\infty \bigg(C\|b\|_{L^\infty_t(\fB_{p,\infty}^\frac{d}{p})}\bigg)^n \\
\le& \|b\|_{\wt{L^r_t(}\fB{_{p,\s}^{-2+\frac{d}{p}}})} \sum_{n=0}^\infty \left(C\ve_0\right)^n
=\frac{1}{1-C\ve_0} \|b\|_{\wt{L^r_t(}\fB{_{p,\s}^{-2+\frac{d}{p}}})}.  
\end{aligned}
\end{equation}
It follows from a similar argument in \eqref{est;1/1+r} and \eqref{est;Banach_ring} that for all 
$1 \le r,\s \le \infty$, 
\begin{equation} \label{est;1/1+r2}
\left\|\frac{b}{1+b}\right\|_{\wt{L^r_t(}\fB{_{p,\s}^{\frac{d}{p}}})}
\les \|b\|_{\wt{L^r_t(}\fB{_{p,1}^{\frac{d}{p}}})}. 
\end{equation}

Lemma \ref{lem;bil2}, \eqref{est;Banach_ring}, \eqref{est;nonlin1}, \eqref{est;1/1+r} and \eqref{est;1/1+r2} gives us that 
\begin{equation} \label{est;nonlin2} 
\begin{aligned}
\|\div ((I(b)n \otimes n)\|_{\wt{L^1_t(}\fB{_{p,\s}^{-3+\frac{d}{p}}})}
\les& \left\|\frac{b}{1+b}\right\|_{\wt{L^\infty_t(}\fB{_{p,\s}^{-2+\frac{d}{p}}})}
      \|n\otimes n\|_{\wt{L^1_t(}\fB{_{p,\infty}^{\frac{d}{p}}})} \\
    &+\left\|\frac{b}{1+b}\right\|_{L^\infty_t(\fB{_{p,\infty}^{\frac{d}{p}}})}
      \|n\otimes n\|_{\wt{L^1_t(}\fB{_{p,\s}^{-2+\frac{d}{p}}})} \\
\les& 
\|b\|_{\wt{L^\infty_t(}\fB{_{p,\s}^{-2+\frac{d}{p}}})} \|n\|_{L^2_t(\fB{_{p,1}^{\frac{d}{p}}})}^2 \\
&+\|b\|_{L^\infty_t(\fB{_{p,1}^{\frac{d}{p}}})}
\|n\|_{\wt{L^2_t(}\fB{_{p,\s}^{-2+\frac{d}{p}}})}
\|n\|_{\wt{L^2_t(}\fB{_{p,1}^{\frac{d}{p}}})}.  
\end{aligned}
\end{equation}
Since it follows from the interpolation inequality that 
\begin{align*}
&\wt{L^2(I;}\fB{_{p,\s}^{-2+\frac{d}{p}}}) \hr 
\wt{L^\infty(I};\fB{_{p,\s}^{-3+\frac{d}{p}}}) \cap \wt{L^1(I;}\fB{_{p,\s}^{-1+\frac{d}{p}}}), \\
&\wt{L^2(I;}\fB{_{p,1}^{\frac{d}{p}}}) \hr 
\wt{L^\infty(I};\fB{_{p,1}^{-1+\frac{d}{p}}}) \cap L^1(I;\fB{_{p,1}^{1+\frac{d}{p}}}) 
\end{align*}
with $I=(0,t)$, 
we obtain by combining \eqref{est;nonlin1} and \eqref{est;nonlin2} that 
$$
\|\div ((I(b)-1)\,n \otimes n)\|_{\wt{L^1_t(}\fB{_{p,\s}^{-3+\frac{d}{p}}})} 
\les \|(b,n)\|_{CL^{(p,\s)}_T}^2+\|(b,n)\|_{CL^{(p,\s)}_T}^3.  
$$

\noindent
{\it The estimate for $I_P(b)\N b$}: 
Since $P$ is real analytic in a neighborhood of $1$ and $P'(1)=0$, 
there exists some constant $R_P>0$ such that 
if $|b| \le R_P$ then 
$$
  P(b+1)=P(1)+\sum_{n=2}^\infty a_n b^n \;\;\text{with}\;\;a_n=\frac{P^{(n)}(1)}{n!}. 
$$
Noting that $I_P(b)\N b=\N (b^2 \wt{I}_P(b))$ with $\wt{I}_P(b)=\sum_{n=2}^\infty a_n b^{n-2}$ 
and using \eqref{est;Banach_ring}, Lemma \ref{lem;bil2} and \eqref{est;1/1+r}, one can see that the following estimate for 
$\wt{I}_P(b)$ holds true:   
\begin{equation} \label{est;tlI_P}
\begin{aligned}
\|\wt{I}_P(b) b\|_{\wt{L^\infty_t(}\fB{_{p,\s}^{-2+\frac{d}{p}}})}
&=\left\|\sum_{n=2}^\infty a_n b^{n-1}\right\|_{\wt{L^\infty_t(}\fB{_{p,\s}^{-2+\frac{d}{p}}})} 
\les \bar{P}\left(C\|b\|_{\wt{L^\infty_t(}\cfB^\frac{d}{p})}\right) 
\|b\|_{\wt{L^\infty_t(}\fB{_{p,\s}^{-2+\frac{d}{p}}})}, 
\end{aligned}
\end{equation}
where $\bar{P}(z):=\sum_{n=2}^\infty |a_n|z^n$. If $\|b\|_{\wt{L^\infty_t(}\cfB^\frac{d}{p})} \le \frac{R_P}{2C}$, 
it follows that  
\begin{align} \label{est;I_P}
\|\wt{I}_P(b) b\|_{\wt{L^\infty_t(}\fB{_{p,\s}^{-2+\frac{d}{p}}})} 
\le D \|b\|_{\wt{L^\infty_t(}\fB{_{p,\s}^{-2+\frac{d}{p}}})} 
\;\;\text{with}\;\;D=1+\sup_{|z|\le\frac{R_P}{2}}|\bar{P}(z)|.
\end{align}
In a similar way to obtaining \eqref{est;I_P}, we also obtain by Lemma \ref{lem;prod_est} (1) that 
\begin{align} \label{est;I_P2}
\|\wt{I}_P(b)b\|_{\wt{L^1_t(}\fB{_{p,\s}^\frac{d}{p}})} \le D\|b\|_{\wt{L^1_t(}\fB{_{p,\s}^\frac{d}{p}})}.
\end{align}
By Lemma \ref{lem;bil2}, \eqref{est;I_P} and \eqref{est;I_P2}, we immediately obtain that 
\begin{align*}
\|\N(b^2 \wt{I}_P(b))\|_{\wt{L^1_t(}\fB{_{p,\s}^{-3+\frac{d}{p}}})}
\les& \|\wt{I}_P(b)b\|_{\wt{L^\infty_t(}\fB{_{p,\s}^{-2+\frac{d}{p}}})}
     \|b\|_{\wt{L^1_t(}\fB{_{p,\infty}^{\frac{d}{p}}})} \\
     &+\|\wt{I}_P(b)b\|_{\wt{L^1_t(}\fB{_{p,\infty}^{\frac{d}{p}}})}
      \|b\|_{\wt{L^\infty_t(}\fB{_{p,\s}^{-2+\frac{d}{p}}})} \\
\les& \|b\|_{\wt{L^\infty_t(}\fB{_{p,\s}^{-2+\frac{d}{p}}})} \|b\|_{\wt{L^1_t(}\fB_{p,\infty}^{\frac{d}{p}})}. 
\end{align*}
Therefore, we obtain by gathering all that  
$$
\|I_P(b) \N b\|_{\wt{L^1_t(}\fB{_{p,\s}^{-3+\frac{d}{p}}})}
\les \|(b,n)\|_{CL_T^{(p,\s)}}^2. 
$$
\noindent
{\it The estimate for $\mathcal{L}(I(b)n)$}: Since 
$\Delta (I(b)n)=\div (\N I(b)\otimes n+I(b)\N n)$, it holds that 
\begin{align*}
\|\Delta (I(b)n)\|_{\wt{L^1_t(}\fB{_{p,\s}^{-3+\frac{d}{p}}})}
\les& \|\N I(b)\otimes n\|_{\wt{L^1_t(}\fB{_{p,\s}^{-2+\frac{d}{p}}})}
+\|I(b) \N n\|_{\wt{L^1_t(}\fB{_{p,\s}^{-2+\frac{d}{p}}})}\\
\les& \|\N I(b)\|_{\wt{L^2_t(}\fB{_{p,\s}^{-2+\frac{d}{p}}})}
       \|n\|_{\wt{L^2_t(}\fB{_{p,\infty}^{\frac{d}{p}}})}
      +\|\N I(b)\|_{\wt{L^2_t(}\fB{_{p,\infty}^{\frac{d}{p}}})} 
        \|n\|_{\wt{L^2_t(}\fB{_{p,\s}^{-2+\frac{d}{p}}})} \\
     &+\|I(b)\|_{\wt{L^\infty_t(}\fB{_{p,\s}^{-2+\frac{d}{p}}})}
       \|\N n\|_{\wt{L^1_t(}\fB{_{p,\infty}^{\frac{d}{p}}})}
      +\|I(b)\|_{\wt{L^\infty_t(}\fB{_{p,\infty}^{\frac{d}{p}}})} 
        \|\N n\|_{\wt{L^1_t(}\fB{_{p,\s}^{-2+\frac{d}{p}}})}. 
\end{align*}
For $\N I(b)$, it follows from $\N I(b)=-(1+b)^{-2} \N b$ and \eqref{est;1/1+r} with $r=2$ that 
\begin{equation} \label{est;Na}
\begin{aligned}
\|\N I(b)\|_{\wt{L^2_t(}\fB{_{p,\s}^{-2+\frac{d}{p}}})}
= \left\|\frac{\N b}{(1+b)^2}\right\|_{\wt{L^2_t(}\fB{_{p,\s}^{-2+\frac{d}{p}}})} 
\les \left\|\frac{\N b}{1+b}\right\|_{\wt{L^2_t(}\fB{_{p,\s}^{-2+\frac{d}{p}}})}
\les \|\N b\|_{\wt{L^2_t(}\fB{_{p,\s}^{-2+\frac{d}{p}}})},  
\end{aligned}
\end{equation}
and similar estimate; 
\begin{equation} \label{est;Na2}
\begin{aligned}
\|\N I(b)\|_{\wt{L^2_t(}\fB{_{p,\infty}^{\frac{d}{p}}})}
\les \|\N b\|_{L^2_t(\fB_{p,1}^{\frac{d}{p}})}.   
\end{aligned}
\end{equation}
The above estimates \eqref{est;Na} and \eqref{est;Na2} gives us that 
\begin{align*}
\|\Delta (I(b)n)\|_{\wt{L^1_t(}\fB{_{p,\s}^{-3+\frac{d}{p}}})}
\les& \|b\|_{\wt{L^2_t(}\fB{_{p,\s}^{-1+\frac{d}{p}}})}
       \|n\|_{\wt{L^2_t(}\fB{_{p,1}^{\frac{d}{p}}})}
      +\|b\|_{L^2_t(\fB{_{p,1}^{1+\frac{d}{p}}})} 
        \|n\|_{\wt{L^2_t(}\fB_{p,\infty}^{-2+\frac{d}{p}})} \\
     &+\|b\|_{\wt{L^\infty_t(}\fB{_{p,\s}^{-2+\frac{d}{p}}})}
       \|n\|_{L^1_t(\fB{_{p,1}^{1+\frac{d}{p}}})}
      +\|b\|_{L^\infty_t(\fB{_{p,1}^{\frac{d}{p}}})} 
        \|n\|_{\wt{L^1_t(}\fB{_{p,\s}^{-1+\frac{d}{p}}})} \\
\les& \|(b,n)\|_{CL_T^{(p,\s)}}^2. 
\end{align*}
Performing the same calculation as the above estimate, one can obtain that 
\begin{align*}
\|\N \div (I(b)n)\|_{\wt{L^1_t(}\fB{_{p,\s}^{-3+\frac{d}{p}}})} \les \|(b,n)\|_{CL_T^{(p,\s)}}^2. 
\end{align*}

\noindent
{\it The estimate for $\div \mathcal{K}(b)$}: 
By the definition of $\mathcal{K}(b)$ in \eqref{eqn;Korteweg}, 
Lemma \ref{lem;bil2} and \eqref{est;1/1+r}, we have by noting 
$\Delta b^2=2b \Delta b+2|\N b|^2$ that 
\begin{align*}
\|\div \mathcal{K}(b)\|_{\wt{L^1_t(}\fB{_{p,\s}^{-3+\frac{d}{p}}})}
\les& \|\Delta b^2\|_{\wt{L^1_t(}\fB{_{p,\s}^{-2+\frac{d}{p}}})}
+\||\N b|^2\|_{\wt{L^1_t(}\fB{_{p,\s}^{-2+\frac{d}{p}}})}
      +\|\N b \otimes \N b\|_{\wt{L^1_t(}\fB{_{p,\s}^{-2+\frac{d}{p}}})} \\
\les& \|b\|_{\wt{L^\infty_t(}\fB{_{p,\s}^{-2+\frac{d}{p}}})} 
\|\Delta b\|_{\wt{L^1_t(}\fB{_{p,\infty}^{\frac{d}{p}}})}
+ \|b\|_{\wt{L^\infty_t(}\fB{_{p,\infty}^{\frac{d}{p}}})} 
\|\Delta b\|_{\wt{L^1_t(}\fB{_{p,\s}^{-2+\frac{d}{p}}})} \\
&+\|\N b\|_{\wt{L^2_t(}\fB{_{p,\s}^{-2+\frac{d}{p}}})}
\|\N b\|_{\wt{L^2_t(}\fB{_{p,\infty}^{\frac{d}{p}}})} \\
\les& \|b\|_{\wt{L^\infty_t(}\fB{_{p,\s}^{-2+\frac{d}{p}}})} 
\|b\|_{L^1_t(\fB{_{p,1}^{2+\frac{d}{p}}})}
+ \|b\|_{L^\infty_t(\fB{_{p,1}^{\frac{d}{p}}})} 
\|b\|_{\wt{L^1_t(}\fB{_{p,\s}^{\frac{d}{p}}})} \\
&+\|b\|_{\wt{L^2_t(}\fB{_{p,\s}^{-1+\frac{d}{p}}})}
\|b\|_{L^2_t(\fB{_{p,1}^{1+\frac{d}{p}}})} \\
\les& \|(b,n)\|_{CL_T^{(p,\s)}}^2. 
\end{align*}
Gathering all, we thus obtain the desired estimate \eqref{est;nonlin_all}. 
\end{proof}

\begin{proof}[The proof of Theorem \ref{thm;GWP_FLp}]  
The proof readily follows from Propositon \ref{prop;MR3}, Lemma \ref{lem;nonlin_est} and the following auxiliary lemma 
(for the proof, see e.g., \cite{Ch-Ko}):
\begin{lem}[\cite{Ch-Ko}] \label{lem;Banach_fixed_pt}
Let $(X,\|\cdot\|_X)$ be a Banach space. 
Let $B:X \times X \to X$ be a bilinear continuous operator with norm $K_2$ and 
$T:X \times X \times X \to X$ be a trilinear operators with norm $K_3$. 
Let further $L:X \to X$ be a continuous linear operator with norm $N<1$. 
Then for all $y \in X$ such that 
$$
\|y\|_X <\min \left(\frac{1-N}{2},\frac{(1-N)^2}{2(2K_2+3K_3)}\right), 
$$
the equation $x=y+L(x)+B(x,x)+T(x,x,x)$ has a unique solution $x$ in the ball $B_{\tilde{R}}^X(0)$ of center $0$ and 
radius $\tilde{R}=\min (1,\frac{1-N}{2K_2+3K_3})$. In addition, $x$ satisfies 
$$
\|x\|_X \le \frac{2}{1-N}\|y\|_X. 
$$
\end{lem}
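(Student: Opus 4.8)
The plan is to realize the solution as a fixed point of the affine-polynomial map $\Psi(x):=y+L(x)+B(x,x)+T(x,x,x)$ and to run Banach's contraction principle on a small closed ball whose radius is tuned to $\|y\|_X$. The only structural inputs are the multilinearity of $B$ and $T$ together with the operator bounds $\|L\|\le N$, $\|B\|\le K_2$, $\|T\|\le K_3$; no property of the underlying space beyond completeness is used.

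First I would record the two elementary consequences of multilinearity. For every $x\in X$,
\[
\|\Psi(x)\|_X\le\|y\|_X+N\|x\|_X+K_2\|x\|_X^2+K_3\|x\|_X^3,
\]
and, decomposing $B(x,x)-B(x',x')=B(x-x',x)+B(x',x-x')$ and splitting $T(x,x,x)-T(x',x',x')$ into the three telescoping terms $T(x-x',x,x)+T(x',x-x',x)+T(x',x',x-x')$, for all $x,x'$ with $\|x\|_X,\|x'\|_X\le R$,
\[
\|\Psi(x)-\Psi(x')\|_X\le\bigl(N+2K_2R+3K_3R^2\bigr)\|x-x'\|_X.
\]

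Next I would set $R_*:=\dfrac{2\|y\|_X}{1-N}$. From $\|y\|_X<\frac{1-N}{2}$ one gets $R_*<1$; from $\|y\|_X<\frac{(1-N)^2}{2(2K_2+3K_3)}$ together with $2K_2+3K_3\ge 2(K_2+K_3)$ one gets $R_*<\frac{1-N}{2K_2+3K_3}$, hence $R_*<\tilde R$. On the closed ball $\overline B{}^X_{R_*}(0)$, using $R_*\le 1$ to absorb the cubic term, $K_2R_*^2+K_3R_*^3\le(K_2+K_3)R_*^2\le\|y\|_X$, the last inequality being exactly the smallness hypothesis on $\|y\|_X$; so the first estimate gives $\|\Psi(x)\|_X\le\|y\|_X+NR_*+\|y\|_X=(1-N)R_*+NR_*=R_*$. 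Likewise, using $R_*^2\le R_*$, the second estimate produces the Lipschitz constant $N+2K_2R_*+3K_3R_*^2\le N+(2K_2+3K_3)R_*<N+(1-N)=1$. Thus $\Psi$ contracts the complete metric space $\overline B{}^X_{R_*}(0)$ into itself, and Banach's theorem yields a unique fixed point $x$ there with $\|x\|_X\le R_*=\frac{2}{1-N}\|y\|_X$; since $R_*<\tilde R$, this $x$ lies in $B^X_{\tilde R}(0)$.

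Finally I would upgrade uniqueness from $\overline B{}^X_{R_*}(0)$ to the ball $B^X_{\tilde R}(0)$. If $x$ is any fixed point with $\|x\|_X<\tilde R\le 1$, then from $x=\Psi(x)$ and $\|x\|_X^2\le\|x\|_X$,
\[
\|x\|_X\bigl(1-N-(K_2+K_3)\|x\|_X\bigr)\le\|x\|_X\bigl(1-N-K_2\|x\|_X-K_3\|x\|_X^2\bigr)\le\|y\|_X;
\]
since $(K_2+K_3)\|x\|_X<(K_2+K_3)\tilde R\le\tfrac12(1-N)$ (again by $2K_2+3K_3\ge2(K_2+K_3)$ and $\tilde R\le\frac{1-N}{2K_2+3K_3}$), the parenthesis exceeds $\frac{1-N}{2}>0$, whence $\|x\|_X\le R_*$; so every fixed point in $B^X_{\tilde R}(0)$ already lies in $\overline B{}^X_{R_*}(0)$, where uniqueness is established, and it satisfies $\|x\|_X\le\frac{2}{1-N}\|y\|_X$. (When $K_2=K_3=0$ one reads $\tilde R=1$ and the statement reduces to solving $x=y+L(x)$, uniquely solvable because $N<1$.) The main — essentially the only — subtlety is that one cannot contract directly on the ball of radius $\tilde R$, since there the Lipschitz constant is merely $\le 1$ in the worst case; this forces the contraction to be set up on the strictly smaller ball $\overline B{}^X_{R_*}(0)$ and the uniqueness in $B^X_{\tilde R}(0)$ to be recovered afterwards from the a priori bound above.
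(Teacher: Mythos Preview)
Your proof is correct. The paper does not give its own proof of this lemma but defers to the cited reference \cite{Ch-Ko}; your argument --- set up the contraction on the closed ball of radius $R_*=\frac{2\|y\|_X}{1-N}$, verify self-mapping and strict contraction there via the multilinear telescoping identities, and then recover uniqueness in the larger ball $B^X_{\tilde R}(0)$ by an a priori bound --- is exactly the standard route and matches what one finds in such references. The one point worth being explicit about, which you handle correctly, is that the contraction constant on the full ball of radius $\tilde R$ is only $\le 1$, so the two-step structure (contract on $\overline B^X_{R_*}(0)$, then pull any fixed point in $B^X_{\tilde R}(0)$ back into it) is genuinely needed.
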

By Proposition \ref{prop;MR3}, we see that $(a,m)$ satisfies 
\begin{equation} \label{est;pre_unif}
\begin{aligned}
\|(a,m)\|_{CL^{(p,\s)}_T} 
\les& \|(|\N|a_0,m_0)\|_{\fB{_{p,\s}^{-3+\frac{d}{p}}} \cap \fB{_{p,1}^{-1+\frac{d}{p}}}}  
      +\|\mathcal{N}(b,n)\|_{\wt{L^1_t(}\fB{_{p,\s}^{-3+\frac{d}{p}}}) \cap L^1_t(\fB{_{p,1}^{-1+\frac{d}{p}}})}.  
\end{aligned}
\end{equation}
Lemma \ref{lem;nonlin_est} ensures that $\mathcal{N}(b,n)$ can be regarded as a combination of 
bi-and-trilinear continuous operators in $CL^{(p,\s)}_T$ for any $T>0$. We define a ball in $CL^{(p,\s)}_T$ centered at the origin by 
$$
B_R^{CL^{(p,\s)}_T}(0):=\{(a,m) \in CL^{(p,\s)}_T;\|(a,m)\|_{CL^{(p,\s)}_T} \le R\}, 
$$ 
where $R>0$. Lemma \ref{lem;Banach_fixed_pt} shows that the existence of a unique solution in $B_R^{CL^{(p,\s)}_T}(0)$ for a sufficiently small data. 
Moreover, we are able to take $T=\infty$, thanks to the uniform estimate with respect to $t$ in Lemma \ref{lem;nonlin_est}. 
Therefore, we obtain a global solution satisfying \eqref{est;CL^p_T}. 
\end{proof}

\subsection{Analyticity of solutions} 
In this section, let us consider the proof of the analyticity for the global solution 
$u$ as constructed in the previous section. 
We now set $A=A(t,x):=e^{\sqrt{c_0t}|\N|}a(t,x)$ and $M=M(t,x):=e^{\sqrt{c_0t}|\N|}m(t,x)$. 
Here $c_0>0$ is the constant appeared in \eqref{est;ptw} of Lemma \ref{lem;pw_L}. 
Multiplying the both side of \eqref{est;ptw} by $\wh{\phi}_j e^{\sqrt{c_0 t}|\xi|}$, we have 
\begin{equation} \label{est;AN}
\begin{aligned}
\big|\big(|\xi|\wh{A}_j,\wh{M}_j\big)\big| 
\les& e^{\sqrt{c_0 t}|\xi|-tc_0|\xi|^2}\big|\wh{\phi}_j\big(|\xi|\wh{a}_0,\wh{m}_0\big)\big| 
+\int_0^t e^{\sqrt{c_0 t}|\xi|-(t-\t)c_0|\xi|^2}\big|\wh{\phi}_j\wh{\mathcal{N}}(a,m)\big|d\t \\
\les& e^{-\frac{c_0}{2}t|\xi|^2}\big|\wh{\phi}_j\big(|\xi|\wh{a}_0,\wh{m}_0\big)\big| 
+\int_0^t e^{-(t-\t)\frac{c_0}{2}|\xi|^2}\big|\wh{\phi}_j e^{\sqrt{c_0 \t}|\xi|}\wh{\mathcal{N}}(a,m)\big|d\t,
\end{aligned}
\end{equation}
where $(A_j,M_j):=(\Dj A, \Dj M)$ and  we have used the following uniform estimates (cf. Lemma 24.4 in \cite{Le}): 
$$
e^{\sqrt{c_0 t}|\xi|-\frac{c_0}{2} t|\xi|^2} \le \sqrt{e}, 
\quad e^{\sqrt{c_0 t}|\xi|-\sqrt{c_0 \t}|\xi|-\frac{c_0}{2}(t-\t) |\xi|^2} \le e^2. 
$$
Therefore we obtain by taking $L^{p'}_\xi$-norm of \eqref{est;AN} that 
\begin{equation} \label{est;AN2}
\begin{aligned}
\|(|\xi|\wh{A}_j,\wh{M}_j)\|_{L^{p'}_\xi}  
\les& e^{-\tilde{c}_0 t 2^{2j}}\|\wh{\phi}_j(|\xi|\wh{a}_0,\wh{m}_0)\|_{L^{p'}_\xi} \\
&+\int_0^t e^{-(t-\t)\tilde{c}_02^{2j}}
\|\wh{\phi}_j e^{\sqrt{c_0 \t}|\xi|}\wh{\mathcal{N}}(a,m)\|_{L^{p'}_\xi}d\t,
\end{aligned}
\end{equation}
where $\tilde{c}_0=c_0/8$. 
By the direct calculation, we immediately obtain 
\begin{equation} \label{est;FC}
\|e^{-\tilde{c}_0 2^{2j} \cdot}\|_{L^r(\R_+)} 
\les (2^{2j})^{-\frac{1}{r}} 
\end{equation}
and thus, we obtain by taking $L^r(\R_+)$-norm with respect to $t$ that 
\begin{equation}
\begin{aligned} \label{est;hom}
\|(|\xi|\wh{A}_j,\wh{M}_j)\|_{L^r(I;L^{p'}_\xi)}
\le& \|e^{-\tilde{c}_0 2^{2j} \cdot}\|_{L^r(\R_+)} \|\wh{\phi}_j(|\xi|\wh{a}_0,\wh{m}_0)\|_{L^{p'}_\xi} \\
&+ \|e^{-\tilde{c}_02^{2j}\cdot}\|_{L^\gm(\R_+)} 
   \|\wh{\phi}_j e^{\sqrt{c_0 t}|\xi|}\wh{\mathcal{N}}(a,m)\|_{L^{r_1}(I;L^{p'}_\xi)} \\
\les&  2^{-\frac{2}{r}j} \|\wh{\phi}_j(|\xi|\wh{a}_0,\wh{m}_0)\|_{L^{p'}_\xi} \\
&+2^{-\frac{2}{r}j}2^{(-2+\frac{2}{r_1})j}
 \|\wh{\phi}_j e^{\sqrt{c_0 t}|\xi|}\wh{\mathcal{N}}(a,m)\|_{L^{r_1}(I;L^{p'}_\xi)},   
\end{aligned}
\end{equation}
where $1 \le r_1 \le r$ and $\gm$ satisfies $\frac{1}{\gm}=1+\frac{1}{r}-\frac{1}{r_1}$. 
Multiplying the both sides by $2^{sj}$ and taking $\ell^\s$-norm, we obtain 
\begin{align} \label{est;GV}
\|(|\N|A,M)\|_{\wt{L^r(I;}\fB{_{p,\s}^{s+\frac{2}{r}}})} 
\les \|(|\N|a_0,m_0)\|_{\fB{_{p,\s}^s}}+\|e^{\sqrt{c_0 t}|\N|}\mathcal{N}(a,m)\|_{\wt{L^{r_1}(I};\fB{_{p,\s}^{s-2+\frac{2}{r_1}}})}. 
\end{align}
By the above estimate with $(r,r_1,s,\s)=(\infty,1,-3+\frac{d}{p},\s)$ and $(1,1,-3+\frac{d}{p},\s)$, 
we obtain that 
\begin{equation} \label{est;GV2}
\begin{aligned}
\|(|\N|A,M)\|_{L^\infty(I;\fB{_{p,\s}^{-3+\frac{d}{p}}}) \cap \wt{L^1(I;}\fB{_{p,\s}^{-1+\frac{d}{p}}}) } 
\les& \|(|\N|a_0,m_0)\|_{\fB{_{p,\s}^{-3+\frac{d}{p}}}} \\
&+\|e^{\sqrt{c_0 t}|\N|}\mathcal{N}(a,m)\|_{\wt{L^1(I;}\fB{_{p,\s}^{-3+\frac{d}{p}}})}.  
\end{aligned}
\end{equation}
Analogously, 
it follows from the above estimate with $(r,r_1,s,\s)=(\infty,1,-1+d/p,1)$ and 
$(1,1,-1+d/p,1)$ that 
\begin{equation} \label{est;GV3}
\begin{aligned}
\|(|\N|A,M)\|_{\wt{L^\infty(I};\fB{_{p,1}^{-1+\frac{d}{p}}}) \cap L^1(I;\fB{_{p,1}^{1+\frac{d}{p}}}) } 
\les& \|(|\N|a_0,m_0)\|_{\fB{_{p,1}^{-1+\frac{d}{p}}}} \\
&+\|e^{\sqrt{c_0 t}|\N|}\mathcal{N}(a,m)\|_{L^1(I;\fB{_{p,1}^{-1+\frac{d}{p}}})}. 
\end{aligned}
\end{equation}

Nextly, let us consider the estimation of the nonlinear terms. 
In order to complete the proof of Theorem \ref{thm;GWP_FLp}, we need to show that 
\begin{align} \label{est;nonlin_anal}
&\|e^{\sqrt{c_0 t}|\N|}\mathcal{N}(a,m)\|_{\wt{L^1(I;}\fB{_{p,\s}^{-3+\frac{d}{p}}})} 
\les \|(A,M)\|_{CL_T^{(p,\s)}}^2+\|(A,M)\|_{CL_T^{(p,\s)}}^3, \\
&\|e^{\sqrt{c_0 t}|\N|}\mathcal{N}(a,m)\|_{L^1(I;\fB{_{p,1}^{-1+\frac{d}{p}}})}
\les \|(A,M)\|_{CL_T^{(p,\s)}}^2+\|(A,M)\|_{CL_T^{(p,\s)}}^3. \label{est;nonlin_anal2}
\end{align}
In this paper, we only perform the estimate on $\div (I(a)m \otimes m)$ and $\N (a^2 \wt{I}_P(a))$ 
because the estimates on the other nonlinear terms are almost same as Lemma \ref{lem;nonlin_est}. 
Firstly, we give the key estimates as follows: 
\begin{lem}[{\it Bilinear estimates I\hspace{-0.4mm}I\hspace{-0.4mm}I}\hspace{1mm}] \label{lem;bil_anal}
Let $d \ge 3$, $1 \le p <d$, $1 \le \s \le \infty$ and $1 \le r,r_i \le \infty$ $(i=1,2,3,4)$ satisfy 
$\frac{1}{r}=\frac{1}{r_1}+\frac{1}{r_2}=\frac{1}{r_3}+\frac{1}{r_4}$. 
There exists some constant $C>0$ such that 
\begin{align*}
\|e^{\sqrt{c_0 t}|\N|}fg\|_{\wt{L^r_T(}\fB{_{p,\s}^{-2+\frac{d}{p}}})}
\le C\left(\|F\|_{\wt{L^{r_1}_T(}\fB{_{p,\s}^{-2+\frac{d}{p}}})} 
           \|G\|_{\wt{L^{r_2}_T(}\fB{_{p,\infty}^{\frac{d}{p}}})}
         +\|F\|_{\wt{L^{r_3}_T(}\fB{_{p,\infty}^{\frac{d}{p}}})} 
          \|G\|_{\wt{L^{r_4}_T(}\fB{_{p,\s}^{-2+\frac{d}{p}}})}\right),  
\end{align*}
where $F:=e^{\sqrt{c_0 t}|\N|}f$, $G:=e^{\sqrt{c_0 t}|\N|}g$. 
\end{lem}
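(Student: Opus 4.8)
The plan is to reduce Lemma~\ref{lem;bil_anal} to the already-established Fourier--Besov bilinear estimate Lemma~\ref{lem;bil2} by exploiting the subadditivity of the weight $e^{\sqrt{c_0t}|\xi|}$ on the Fourier side. Write $F:=e^{\sqrt{c_0t}|\N|}f$ and $G:=e^{\sqrt{c_0t}|\N|}g$, so that $f=e^{-\sqrt{c_0t}|\N|}F$ and $g=e^{-\sqrt{c_0t}|\N|}G$, and note that $e^{\sqrt{c_0t}|\N|}(fg)=\B_t(F,G)$ in the notation of Lemma~\ref{lem;B_t}. The heart of the matter is the elementary inequality $|\xi|\le|\xi-\eta|+|\eta|$, which gives $e^{\sqrt{c_0t}|\xi|}\le e^{\sqrt{c_0t}|\xi-\eta|}e^{\sqrt{c_0t}|\eta|}$ pointwise under the convolution defining $\widehat{fg}$; hence the operator $\B_t$ acts on frequency-localized pieces exactly as a pointwise product would, with the weights distributed onto the two factors.

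The key steps, in order, are as follows. First, recall from the proof of Lemma~\ref{lem;bil2} (to be given in \S\ref{sect;appendix}) that the estimate there is obtained through a Bony-type paraproduct decomposition $fg=T_fg+T_gf+R(f,g)$, and that each dyadic block $\Dj(fg)$ is controlled in $\wh{L}^p$ by a finite sum over nearby frequencies of products $\|\Delta_k f\|_{\wh{L}^{p_1}}\|\Delta_{k'}g\|_{\wh{L}^{p_2}}$. Second, insert the weight: for each such block, apply $e^{\sqrt{c_0t}|\N|}$ and use the subadditivity of the exponent together with Lemma~\ref{lem;B_t} (the $\B_t$-estimate, which is precisely the statement that $\|\B_t(h,k)\|_{\wh{L}^p}\le C\|h\|_{\wh{L}^{p_1}}\|k\|_{\wh{L}^{p_2}}$ with $1/p=1/p_1+1/p_2$) to replace $\|\Delta_k f\|_{\wh{L}^{p_1}}\|\Delta_{k'}g\|_{\wh{L}^{p_2}}$ by $\|\Delta_k F\|_{\wh{L}^{p_1}}\|\Delta_{k'}G\|_{\wh{L}^{p_2}}$ up to a constant. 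Third, since the frequency bookkeeping (which $k$, $k'$ contribute to block $j$, and the Bernstein gains from the $d\ge3$, $1\le p<d$ restriction) is untouched by this substitution, the remaining summation over $j$ in $\ell^\s$ and integration in time reproduce verbatim the estimate of Lemma~\ref{lem;bil2} with $f,g$ replaced by $F,G$, yielding the claimed bound. One then chooses the interpolation/product indices $(r_i)$ exactly as in Lemma~\ref{lem;bil2}.

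The main obstacle — and the only genuinely new point beyond Lemma~\ref{lem;bil2} — is making the weight insertion rigorous blockwise rather than globally: one cannot simply write $e^{\sqrt{c_0t}|\N|}(fg)=(e^{\sqrt{c_0t}|\N|}f)(e^{\sqrt{c_0t}|\N|}g)$, since $e^{\sqrt{c_0t}|\N|}$ is not a multiplicative operator, so the argument must be carried out at the level of each paraproduct piece where the two factors have comparable-or-separated frequencies and the triangle inequality $|\xi|\le|\xi-\eta|+|\eta|$ is genuinely sharp enough. This is exactly the role of Lemma~\ref{lem;B_t}: it encapsulates that the operator $\B_t$ obeys the same H\"older-type product bound as ordinary multiplication, uniformly in $t\ge0$, and once it is invoked, the proof is a mechanical transcription of the proof of Lemma~\ref{lem;bil2}. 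A secondary, purely cosmetic, point is checking that the spaces $\wt{L^{r_i}_T}(\fB{_{p,\s}^{\cdot}})$ appearing on the right are the images under $e^{\sqrt{c_0t}|\N|}$ of the corresponding spaces for $f,g$, which is immediate from the definitions since $\Dj$ and $e^{\sqrt{c_0t}|\N|}$ commute.
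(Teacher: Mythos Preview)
Your proposal is correct and follows essentially the same approach as the paper. The paper packages the weighted paraproduct and remainder estimates as separate lemmas (Lemmas~\ref{lem;offd2} and~\ref{lem;diagonal2}, the weighted analogues of Lemmas~\ref{lem;offd1} and~\ref{lem;diagonal1}), each proved by writing the relevant piece as $\sum_k \B_t(\cdot,\cdot)$ and invoking Lemma~\ref{lem;B_t} blockwise, and then combines them with $(s_1,s_2)=(-2+d/p,\,d/p)$ exactly as in the unweighted Lemma~\ref{lem;bil2}; your description is an inline version of the same argument.
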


\begin{lem}[{\it Bilinear estimates I\hspace{-0.4mm}V}\hspace{1mm}] \label{lem;bil_anal2}
Let $d \ge 1$, $s>0$, $1 \le p, \s \le \infty$ and $1 \le r,r_i \le \infty$ $(i=1,2,3,4)$ satisfying 
$\frac{1}{r}=\frac{1}{r_1}+\frac{1}{r_2}=\frac{1}{r_3}+\frac{1}{r_4}$. 
There exists some constant $C>0$ such that 
\begin{align*}
\|e^{\sqrt{c_0 t}|\N|}fg\|_{\wt{L^r_T(}\fB{_{p,\s}^{s}})} 
\le C\left(\|F\|_{\wt{L^{r_1}_T(}\fB{_{p,\s}^{s}})} 
           \|G\|_{L^{r_2}_T(\wh{L}^\infty)}
         +\|F\|_{L^{r_3}_T(\wh{L}^\infty)} 
          \|G\|_{\wt{L^{r_4}_T(}\fB{_{p,\s}^{s}})} \right). 
\end{align*}
\end{lem}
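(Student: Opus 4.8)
\textbf{Proof strategy for Lemma \ref{lem;bil_anal2}.} The plan is to mimic the standard paraproduct proof of the bilinear estimate in Lemma \ref{lem;prod_est}~(1), but to carry the Gevrey weight $e^{\sqrt{c_0t}|\N|}$ through each frequency interaction using the operator $\B_t$ from Lemma \ref{lem;B_t}. First I would decompose the product via Bony's paraproduct, $fg = T_fg + T_gf + R(f,g)$, and apply $\dot\Delta_j$ to each piece. For the paraproduct term $T_fg = \sum_k S_{k-1}f\,\dot\Delta_k g$, only frequencies $k$ with $|k-j|\le N_0$ contribute to $\dot\Delta_j(T_fg)$, so I would write
\begin{align*}
\dot\Delta_j\big(e^{\sqrt{c_0t}|\N|}T_fg\big)
= \sum_{|k-j|\le N_0}\dot\Delta_j\Big(e^{\sqrt{c_0t}|\N|}\big(S_{k-1}f\,\dot\Delta_k g\big)\Big).
\end{align*}
The key algebraic point is that for a product of two functions whose frequencies are $\lesssim 2^j$ (up to the fixed constant coming from $N_0$ and the support of $\wh\phi$), one has
\begin{align*}
e^{\sqrt{c_0t}|\N|}(uv) = \B_t\big(e^{\sqrt{c_0t}|\N|}u,\,e^{\sqrt{c_0t}|\N|}v\big)
\end{align*}
on that frequency range, where $\B_t$ is exactly the operator estimated in Lemma \ref{lem;B_t}; this is the identity $e^{\sqrt{c_0t}|\xi|} = e^{\sqrt{c_0t}|\xi-\eta|}e^{\sqrt{c_0t}|\eta|}e^{\sqrt{c_0t}(|\xi|-|\xi-\eta|-|\eta|)}$ combined with the triangle inequality $|\xi|\le|\xi-\eta|+|\eta|$, which makes the last exponential a bounded multiplier (in fact $\le 1$ in modulus on the Fourier side).

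The next step is to apply the $\wh L^p$-boundedness of $\B_t$ from Lemma \ref{lem;B_t} with the Hölder splitting $1/p = 1/\infty + 1/p$: this gives, for each relevant $k$,
\begin{align*}
\big\|\dot\Delta_j\big(e^{\sqrt{c_0t}|\N|}(S_{k-1}f\,\dot\Delta_k g)\big)\big\|_{\wh L^p}
\lesssim \|e^{\sqrt{c_0t}|\N|}S_{k-1}f\|_{\wh L^\infty}\,\|e^{\sqrt{c_0t}|\N|}\dot\Delta_k g\|_{\wh L^p}.
\end{align*}
Since $\|e^{\sqrt{c_0t}|\N|}S_{k-1}f\|_{\wh L^\infty}\le \sum_{\ell\le k-2}\|e^{\sqrt{c_0t}|\N|}\dot\Delta_\ell f\|_{\wh L^\infty} \lesssim \|F\|_{\wh L^\infty}$ (here I use that $e^{\sqrt{c_0t}|\N|}$ commutes with $\dot\Delta_\ell$, so $e^{\sqrt{c_0t}|\N|}\dot\Delta_\ell f = \dot\Delta_\ell F$), and $\|e^{\sqrt{c_0t}|\N|}\dot\Delta_k g\|_{\wh L^p} = \|\dot\Delta_k G\|_{\wh L^p}$, I obtain
\begin{align*}
2^{sj}\big\|\dot\Delta_j\big(e^{\sqrt{c_0t}|\N|}T_fg\big)\big\|_{\wh L^p}
\lesssim \sum_{|k-j|\le N_0} \|F\|_{\wh L^\infty}\,2^{sk}\|\dot\Delta_k G\|_{\wh L^p}.
\end{align*}
Taking $L^r_T$ in time with Hölder ($1/r = 1/r_3 + 1/r_4$), then $\ell^\s$ in $j$ using Young's inequality on the finite convolution in $k$, produces the term $\|F\|_{L^{r_3}_T(\wh L^\infty)}\|G\|_{\wt{L^{r_4}_T}(\fB{_{p,\s}^s})}$. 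The term $T_gf$ is handled symmetrically, yielding $\|F\|_{\wt{L^{r_1}_T}(\fB{_{p,\s}^s})}\|G\|_{L^{r_2}_T(\wh L^\infty)}$.

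For the remainder term $R(f,g) = \sum_k \dot\Delta_k f\,\widetilde{\dot\Delta}_k g$, only $k \gtrsim j$ contributes to $\dot\Delta_j$, and here I would use the hypothesis $s>0$ to sum the high frequencies: after the same $\B_t$ trick (now both factors have comparable frequency $\sim 2^k \gtrsim 2^j$, so the frequency-support condition for the $\B_t$ identity is again met) and the Hölder/$\B_t$ bound in $\wh L^p$ with exponents $\infty$ and $p$, one gets $2^{sj}\|\dot\Delta_j(e^{\sqrt{c_0t}|\N|}R(f,g))\|_{\wh L^p} \lesssim \sum_{k\ge j-N_1} 2^{s(j-k)}\,\|\dot\Delta_k F\|_{\wh L^\infty}\,2^{sk}\|\dot\Delta_k G\|_{\wh L^p}$ (or the same with $F,G$ swapped), and since $\sum_{k\ge j-N_1}2^{s(j-k)} < \infty$ precisely because $s>0$, Young's inequality in $\ell^\s$ closes the estimate with the same two product-type bounds. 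I expect the main obstacle to be purely bookkeeping: making the frequency-localization thresholds in the $\B_t$ identity precise (one must enlarge the support constant so that $S_{k-1}f\,\dot\Delta_k g$ really is supported where $|\xi| \lesssim 2^j$), and verifying that the "defect" multiplier $e^{\sqrt{c_0t}(|\xi|-|\xi-\eta|-|\eta|)}$ is absorbed correctly into the definition of $\B_t$ so that Lemma \ref{lem;B_t} applies verbatim — once that is set up, everything reduces to the classical paraproduct counting already used for Lemma \ref{lem;prod_est}.
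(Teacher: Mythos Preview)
Your proposal is correct and follows essentially the same route as the paper: Bony paraproduct decomposition, rewriting each piece via $\B_t$, then Lemma \ref{lem;B_t} with the H\"older splitting $\frac{1}{p}=\frac{1}{\infty}+\frac{1}{p}$, and finally the $\ell^\s$ summation using $s>0$ for the remainder. One clarification: the identity $e^{\sqrt{c_0t}|\N|}(uv)=\B_t(e^{\sqrt{c_0t}|\N|}u,\,e^{\sqrt{c_0t}|\N|}v)$ is \emph{exact by the definition} of $\B_t$ and requires no frequency-localization hypothesis, so the ``obstacle'' you anticipate about enlarging support constants does not arise --- the defect multiplier is already built into $\B_t$ and controlled by Lemma \ref{lem;B_t} for arbitrary inputs.
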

As for the proof of Lemmas \ref{lem;bil_anal} and \ref{lem;bil_anal2}, you can see in \S \ref{sect;appendix}. 

In this paper, we omit the proof of the following product law \eqref{est;BA_anal} because 
it is straightforward that \eqref{est;BA_anal} follows from Lemma \ref{lem;bil_anal2} in the case of $p<\infty$ and 
Lemmas \ref{lem;equi} with \ref{lem;B_t} in the case of $p=\infty$: 
\begin{cor}[{\it Product estimates I\hspace{-0.4mm}I\hspace{-0.4mm}I}\hspace{1mm}] \label{cor;prod_anal3}
Let $d \ge 1$, $1 \le p <\infty$ and $1 \le r,r_i \le \infty$ $(i=1,2)$ satisfying 
$\frac{1}{r}=\frac{1}{r_1}+\frac{1}{r_2}$. 
There exists some positive constant $C>0$ such that 
\begin{equation} \label{est;BA_anal}
\|e^{\sqrt{c_0 t}|\N|}fg\|_{\wt{L^r_T(}\fB_{p,1}^{\frac{d}{p}})}
\le C\|F\|_{\wt{L^{r_1}_T(}\fB_{p,1}^{\frac{d}{p}})} 
     \|G\|_{\wt{L^{r_2}_T(}\fB_{p,1}^{\frac{d}{p}})}. 
\end{equation}
\end{cor}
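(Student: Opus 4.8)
\textbf{Proof proposal for Corollary \ref{cor;prod_anal3}.}

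The plan is to reduce the claimed product law to the bilinear estimate already available in Lemma \ref{lem;bil_anal2}, after dealing with the endpoint Besov index $s=d/p$ which is excluded there (Lemma \ref{lem;bil_anal2} requires $s>0$; when $p<\infty$ we have $d/p>0$ so this is fine, but the $\ell^1$ summation in $\s=1$ is the delicate point). Concretely, for $p<\infty$ apply Lemma \ref{lem;bil_anal2} with $s=d/p$, $\s=1$ and the given exponents $r,r_1,r_2$ to obtain
\begin{equation*}
\|e^{\sqrt{c_0 t}|\N|}fg\|_{\wt{L^r_T(}\fB_{p,1}^{\frac{d}{p}})}
\le C\Big(\|F\|_{\wt{L^{r_1}_T(}\fB_{p,1}^{\frac{d}{p}})}\|G\|_{L^{r_2}_T(\wh{L}^\infty)}
+\|F\|_{L^{r_3}_T(\wh{L}^\infty)}\|G\|_{\wt{L^{r_4}_T(}\fB_{p,1}^{\frac{d}{p}})}\Big),
\end{equation*}
and then absorb the $\wh{L}^\infty$ factors using the embedding $\fB_{p,1}^{d/p}(\R^d)\hookrightarrow\fB_{\infty,1}^0(\R^d)\simeq\wh{L}^\infty(\R^d)$ (recorded in the Remark following Lemma \ref{lem;prod_est}), so that $\|G\|_{L^{r_2}_T(\wh{L}^\infty)}\lesssim\|G\|_{\wt{L^{r_2}_T(}\fB_{p,1}^{d/p})}=\|e^{\sqrt{c_0 t}|\N|}g\|_{\wt{L^{r_2}_T(}\fB_{p,1}^{d/p})}$, and similarly for the other factor. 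Relabelling $r_3=r_1$, $r_4=r_2$ if needed, both terms on the right collapse to $\|F\|_{\wt{L^{r_1}_T(}\fB_{p,1}^{d/p})}\|G\|_{\wt{L^{r_2}_T(}\fB_{p,1}^{d/p})}$, which is exactly \eqref{est;BA_anal}.

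For the excluded case $p=\infty$, Lemma \ref{lem;bil_anal2} cannot be invoked at the endpoint, so instead I would pass through the Fourier--Sobolev description of Lemma \ref{lem;equi}: $\fB_{\infty,1}^0(\R^d)\hookrightarrow\wh{L}^\infty(\R^d)$ and $\wh{L}^\infty$ plays the role of a multiplication algebra at the level of the Fourier transform, since $\|\wh{uv}\|_{L^1}\le\|\wh u\|_{L^1}\|\wh v\|_{L^1}$. The point is that $\B_t(f,g)=e^{\sqrt{c_0 t}|\N|}\big(e^{-\sqrt{c_0 t}|\N|}F\,e^{-\sqrt{c_0 t}|\N|}G\big)$ and Lemma \ref{lem;B_t} with $p=p_1=p_2=\infty$ yields the pointwise-in-$t$ bound $\|\B_t(f,g)\|_{\wh{L}^\infty}\le C\|F\|_{\wh{L}^\infty}\|G\|_{\wh{L}^\infty}$; integrating in $t$ with Hölder ($1/r=1/r_1+1/r_2$) and using the norm equivalence $\fB_{\infty,1}^0\simeq\wh{L}^\infty$ gives \eqref{est;BA_anal} for $p=\infty$ directly. (Here one uses that $fg=e^{-\sqrt{c_0 t}|\N|}\B_t(f,g)$, so $e^{\sqrt{c_0 t}|\N|}fg=\B_t(f,g)$, which is precisely the object Lemma \ref{lem;B_t} controls.)

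The only genuine subtlety—and the step I expect to require the most care—is the $\ell^1$ summation over dyadic blocks at the critical index $d/p$ hidden inside the invocation of Lemma \ref{lem;bil_anal2}: the Bony paracomposition splits $fg$ into paraproducts and a resonant term, and while the paraproduct pieces are controlled by the weighted Bernstein/heat-smoothing argument underlying that lemma (the factor $e^{\sqrt{c_0 t}|\N|}$ is tamed exactly as in the proof of Lemma \ref{lem;bil_anal}, using $e^{\sqrt{c_0\t}|\xi|}\le e^{\sqrt{c_0 t}|\eta|}e^{\sqrt{c_0 t}|\xi-\eta|}$ on the support of the relevant Littlewood--Paley pieces), the remainder term at $s=d/p$ sits exactly at the borderline of summability and needs the $\s=1$ hypothesis to close. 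Since Lemma \ref{lem;bil_anal2} is stated for general $s>0$ and $1\le\s\le\infty$, and $d/p>0$ whenever $p<\infty$, this is already absorbed into that lemma's statement; hence no new estimate is needed and the corollary is, as the text says, straightforward. Everything else is bookkeeping: choosing which of the two symmetric terms to keep, relabelling $r_i$, and invoking the fixed embeddings.
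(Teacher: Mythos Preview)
Your proposal is correct and follows essentially the same approach the paper indicates: for $1\le p<\infty$ you apply Lemma \ref{lem;bil_anal2} with $s=d/p>0$, $\sigma=1$, $r_3=r_1$, $r_4=r_2$, and then absorb the $\wh{L}^\infty$ factors via the embedding $\fB_{p,1}^{d/p}\hookrightarrow\fB_{\infty,1}^0\simeq\wh{L}^\infty$ (together with Minkowski to pass from $L^{r_i}_T(\wh{L}^\infty)$ to $\wt{L^{r_i}_T}(\fB_{p,1}^{d/p})$), which is exactly what the paper's one-line remark before the corollary prescribes. Your discussion of the $p=\infty$ case via Lemmas \ref{lem;equi} and \ref{lem;B_t} also matches the paper's hint, though note that the corollary as stated restricts to $p<\infty$, so that paragraph is extraneous here (and your identification of $\wt{L^r_T}(\fB_{\infty,1}^0)$ with $L^r_T(\wh{L}^\infty)$ would need a little more care, since the Chemin--Lerner norm places the $\ell^1$ sum outside the time integral).
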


\noindent
{\it The estimate for $e^{\sqrt{c_0 t}|\N|}\div (I(a)\,m \otimes m)$}: 
It follows from Lemma \ref{lem;bil_anal} that 
\begin{align*}
\|e^{\sqrt{c_0 t}|\N|}\div (I(a)\,m \otimes m)\|_{\wt{L^1_T(}\fB{_{p,\s}^{-3+\frac{d}{p}}})} 
\les& \|e^{\sqrt{c_0 t}|\N|} (I(a)\,m \otimes m)\|_{\wt{L^1_T(}\fB{_{p,\s}^{-2+\frac{d}{p}}})} \\
\les& \|e^{\sqrt{c_0 t}|\N|} I(a)\|_{\wt{L^\infty_T(}\fB{_{p,\s}^{-2+\frac{d}{p}}})}
      \|e^{\sqrt{c_0 t}|\N|} (m \otimes m)\|_{\wt{L^1_T(}\fB{_{p,\infty}^{\frac{d}{p}}})} \\
    &+\|e^{\sqrt{c_0 t}|\N|} I(a)\|_{L^\infty_T(\fB{_{p,\infty}^{\frac{d}{p}}})}
      \|e^{\sqrt{c_0 t}|\N|} (m \otimes m)\|_{\wt{L^1_T(}\fB{_{p,\s}^{-2+\frac{d}{p}}})}. 
\end{align*}
Combining Lemmas \ref{lem;bil_anal} and \ref{lem;bil_anal2} with 
the similar arguments as obtaining \eqref{est;I_P} and \eqref{est;I_P2} gives us the following 
estimate on the conposition function: 
\begin{align*}
\|e^{\sqrt{c_0 t}|\N|} I(a)\|_{\wt{L^\infty_T(}\fB{_{p,\s}^{-2+\frac{d}{p}}})}
\les \|A\|_{\wt{L^\infty_T(}\fB{_{p,\s}^{-2+\frac{d}{p}}})}, \quad 
\|e^{\sqrt{c_0 t}|\N|} I(a)\|_{L^\infty_T(\fB{_{p,\infty}^{\frac{d}{p}}})}
\les \|A\|_{L^\infty_T(\fB{_{p,1}^{\frac{d}{p}}})}. 
\end{align*}
On the other hand, we obtain by Lemmas \ref{lem;bil_anal} and \ref{lem;bil_anal2} that 
\begin{align*} 
&
\begin{aligned}
\|e^{\sqrt{c_0 t}|\N|} (m \otimes m)\|_{\wt{L^1_T(}\fB{_{p,\infty}^{\frac{d}{p}}})}
\les \|e^{\sqrt{c_0 t}|\N|} (m \otimes m)\|_{L^1_T(\fB{_{p,1}^{\frac{d}{p}}})}
\les \|M\|_{\wt{L^2_T(}\fB{_{p,1}^{\frac{d}{p}}})}^2, 
\end{aligned} \\
&
\begin{aligned}
\|e^{\sqrt{c_0 t}|\N|} (m \otimes m)\|_{\wt{L^1_T(}\fB{_{p,\s}^{-2+\frac{d}{p}}})}
\les \|M\|_{\wt{L^2_T(}\fB{_{p,\s}^{-2+\frac{d}{p}}})} 
     \|M\|_{\wt{L^2_T(}\fB{_{p,1}^{\frac{d}{p}}})}. 
\end{aligned}
\end{align*}
Therefore, we get 
\begin{align*}
\|e^{\sqrt{c_0 t}|\N|}\div (I(a)\,m \otimes m)\|_{\wt{L^1_T(}\fB{_{p,\s}^{-3+\frac{d}{p}}})}
\les& \|A\|_{\wt{L^\infty_T(}\fB{_{p,\s}^{-2+\frac{d}{p}}})} \|M\|_{L^2_T(\fB{_{p,1}^{\frac{d}{p}}})}^2 \\
    &+\|A\|_{L^\infty_T(\fB{_{p,1}^{\frac{d}{p}}})} \|M\|_{\wt{L^2_T(}\fB{_{p,\s}^{-2+\frac{d}{p}}})} 
      \|M\|_{\wt{L^2_T(}\fB{_{p,1}^{\frac{d}{p}}})} \\
 \les& \|(A,M)\|_{CL_T^p}^3. 
\end{align*}

As for the estimate regarding to $L^1_T(\fB{_{p,1}^{-1+\frac{d}{p}}})$-norm, we see from 
Corollary \ref{cor;prod_anal3} that 
\begin{align*}
\|e^{\sqrt{c_0 t}|\N|}\div (I(a)\,m \otimes m)\|_{L^1_T(\fB{_{p,1}^{-1+\frac{d}{p}}})} 
\les& \|e^{\sqrt{c_0 t}|\N|}(I(a)\,m \otimes m)\|_{L^1_T(\fB{_{p,1}^{\frac{d}{p}}})} \\
\les& \|e^{\sqrt{c_0 t}|\N|}I(a)\|_{\wt{L^\infty_T(}\fB{_{p,1}^{\frac{d}{p}}})}
      \|e^{\sqrt{c_0 t}|\N|}(m \otimes m)\|_{L^1_T(\fB{_{p,1}^{\frac{d}{p}}})} \\
\les& \|A\|_{\wt{L^\infty_T(}\fB{_{p,1}^{\frac{d}{p}}})} 
      \|M\|_{\wt{L^2_T(}\fB{_{p,1}^\frac{d}{p}})}^2
\les \|(A,M)\|_{CL_T^p}^3, 
\end{align*}
where we used the following estimate which is given by a similar arguments as in \eqref{est;1/1+r} with 
Corollary \ref{cor;prod_anal3}: 
$$
\|e^{\sqrt{c_0 t}|\N|}I(a)\|_{\wt{L^\infty_T(}\fB{_{p,1}^{\frac{d}{p}}})} 
\les \|A\|_{\wt{L^\infty_T(}\fB{_{p,1}^{\frac{d}{p}}})}. 
$$
Gathering all, we arrive at 
\begin{equation} \label{est;nonlin_anal3}
\|e^{\sqrt{c_0 t}|\N|}\div (I(a)\,m \otimes m)\|
_{\wt{L^1_T(}\fB{_{p,\s}^{-3+\frac{d}{p}}}) \cap L^1_T(\fB{_{p,1}^{-1+\frac{d}{p}}})}
\les \|(A,M)\|_{CL_T^{(p,\s)}}^3. 
\end{equation}

\noindent
{\it The estimate for $e^{\sqrt{c_0 t}|\N|} \N (a^2 \wt{I}_P(a))$}: 
It follows from Lemma \ref{lem;bil_anal} that 
\begin{align*}
\|e^{\sqrt{c_0 t}|\N|} \N (a^2 \wt{I}_P(a))\|_{\wt{L^1_T(}\fB{_{p,\s}^{-3+\frac{d}{p}}})} 
\les& \|e^{\sqrt{c_0 t}|\N|} (a^2 \wt{I}_P(a))\|_{\wt{L^1_T(}\fB{_{p,\s}^{-2+\frac{d}{p}}})} \\
\les& \|A\|_{\wt{L^\infty_T(}\fB{_{p,\s}^{-2+\frac{d}{p}}})}
      \|e^{\sqrt{c_0 t}|\N|} (a \wt{I}_P(a))\|_{\wt{L^1_T(}\fB{_{p,\infty}^{\frac{d}{p}}})} \\
    &+\|A\|_{\wt{L^1_T(}\fB{_{p,\infty}^{\frac{d}{p}}})}
      \|e^{\sqrt{c_0 t}|\N|} (a \wt{I}_P(a))\|_{\wt{L^\infty_T(}\fB{_{p,\s}^{-2+\frac{d}{p}}})}. 
\end{align*}
Since we can obtain by similar argument as obtaining \eqref{est;I_P} and \eqref{est;I_P2} 
with Lemmas \ref{lem;bil_anal} and \ref{lem;bil_anal2} that 
\begin{align*}
\|e^{\sqrt{c_0 t}|\N|} (a \wt{I}_P(a))\|_{\wt{L^1_T(}\fB{_{p,\infty}^{\frac{d}{p}}})} 
\les \|A\|_{\wt{L^1_T(}\fB{_{p,\infty}^{\frac{d}{p}}})}, \quad 
\|e^{\sqrt{c_0 t}|\N|} (a \wt{I}_P(a))\|_{\wt{L^\infty_T(}\fB{_{p,\s}^{-2+\frac{d}{p}}})} \les \|A\|_{\wt{L^\infty_T(}\fB{_{p,\s}^{-2+\frac{d}{p}}})},  
\end{align*}
we thus obtain by gathering all that  
\begin{align*}
\|e^{\sqrt{c_0 t}|\N|} \N (a^2 \wt{I}_P(a))\|_{\wt{L^1_T(}\fB{_{p,\s}^{-3+\frac{d}{p}}})} 
\les& \|e^{\sqrt{c_0 t}|\N|} (a^2 \wt{I}_P(a))\|_{\wt{L^1_T(}\fB{_{p,\s}^{-2+\frac{d}{p}}})} \\
\les& \|A\|_{\wt{L^\infty_T(}\fB{_{p,\s}^{-2+\frac{d}{p}}})}
      \|A\|_{\wt{L^1_T(}\fB{_{p,\infty}^{\frac{d}{p}}})} 
\les \|(A,M)\|_{CL^{(p,\s)}_T}^2.   
\end{align*}

On the other hand, Lemma \ref{lem;Bern} and Corollary \ref{cor;prod_anal3} gives us that 
\begin{align*}
\|e^{\sqrt{c_0 t}|\N|}\N (a^2 \wt{I}_P(a)) \|_{L^1_T(\fB{_{p,1}^{-1+\frac{d}{p}}})} 
\les& \|e^{\sqrt{c_0 t}|\N|}(a^2 \wt{I}_P(a)) \|_{L^1_T(\fB{_{p,1}^{\frac{d}{p}}})} \\
\les& \|A\|_{\wt{L^2_T(}\fB{_{p,1}^\frac{d}{p}})} 
      \|e^{\sqrt{c_0 t}|\N|}(a\wt{I}_P(a))\|_{\wt{L^2_T(}\fB{_{p,1}^\frac{d}{p}})} \\
\les& \|A\|_{\wt{L^2_T(}\fB{_{p,1}^\frac{d}{p}})}^2. 
\end{align*} 
Since it holds true that for some $j_0 \in \Z$, 
\begin{align*}
\|A\|_{\wt{L^2_T(}\fB{_{p,1}^\frac{d}{p}})} 
=& \sum_{j \le j_0-1} 2^{\frac{d}{p}j}\|\Dj A\|_{L^2(I;\wh{L}^p)} 
  + \sum_{j \ge j_0} 2^{\frac{d}{p}j}\|\Dj A\|_{L^2(I;\wh{L}^p)} \\
\les& \|A\|_{\wt{L^2_T(}\fB{_{p,\infty}^{-1+\frac{d}{p}}})} \sum_{j \le j_0-1} 2^{j}
      + \|A\|_{\wt{L^2_T(}\fB{_{p,1}^{1+\frac{d}{p}}})} \sup_{j \ge j_0} 2^{-j} 
\les \|(A,M)\|_{CL_T^{(p,\s)}}^2, 
\end{align*}
we thus obtain by gathering all that 
\begin{equation} \label{est;nonlin_anal4}
\|e^{\sqrt{c_0 t}|\N|} \N (a^2 \wt{I}_P(a))\|
_{\wt{L^1_T(}\fB{_{p,\s}^{-3+\frac{d}{p}}}) \cap L^1_T(\fB{_{p,1}^{-1+\frac{d}{p}}})}
\les \|(A,M)\|_{CL_T^{(p,\s)}}^2. 
\end{equation}

By similar arguments, one can obtain the desired estimates \eqref{est;nonlin_anal} and \eqref{est;nonlin_anal2}. 
Therefore, gathering all, we arrive at 
\begin{align*}
\|(A,M)\|_{CL_T^{(p,\s)}} \les \|(|\N|a_0,m_0)\|_{\fB{_{p,\s}^{-3+\frac{d}{p}}} \cap \fB{_{p,1}^{-1+\frac{d}{p}}}}+ \|(A,M)\|_{CL_T^{(p,\s)}}^2+\|(A,M)\|_{CL_T^{(p,\s)}}^3. 
\end{align*}
Since the smallness assumption $\|(|\N|a_0,m_0)\|_{\fB{_{p,\s}^{-3+\frac{d}{p}}} \cap \fB{_{p,1}^{-1+\frac{d}{p}}}} \ll 1$, we obtain the uniform estimate of $(A,M)$ 
in $CL_T^{(p,\s)}$. We thus complete the proof of Theorem \ref{thm;GWP_FLp}.

\sect{Time Decay Estimates} \label{sect;Decay}


\begin{prop} \label{prop;negative} 
Let $1 \le p \le 2$ and suppose that the initial data $(a_0,m_0)$ satisfies 
the same assumption as in Theorem \ref{thm;Lp-L1}. There exists some positive constant $C>0$ such that 
global-in-time solution $(a,m)$ obtained by Theorem \ref{thm;GWP_FLp} satisfies
\begin{align*}
\|(|\N|A, M)\|
_{L^\infty(I;\fB{_{p,\infty}^{-1-\frac{d}{p'}}}) \cap \wt{L^1(I;}\fB_{p,\infty}^{1-\frac{d}{p'}})}
\le C \|(a_0,\tilde{m}_0)\|_{\fB{_{p,\infty}^{-\frac{d}{p'}}}}. 
\end{align*}

\end{prop}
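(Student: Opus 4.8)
The plan is a linear bootstrap based on the pointwise bound \eqref{est;ptw}, run exactly as the analyticity estimate of Section~\ref{sect;GWP} but with the source measured in the negative-index space $\fB{_{p,\infty}^{-1-\frac{d}{p'}}}$; the only genuinely new difficulty is the nonlinear estimate at that endpoint.

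\medskip\noindent\textbf{Linear part and reduction.} As in \eqref{est;AN}--\eqref{est;GV}, one multiplies \eqref{est;ptw} by $\wh\phi_j e^{\sqrt{c_0 t}|\xi|}$, absorbs the Gevrey weight into the heat kernel, and takes $L^{p'}_\xi$- then $L^r_t$-norms for $r=\infty$ and $r=1$ (for $\s=\infty$ the Chemin--Lerner $L^\infty$-norm agrees with the Bochner one). Writing $\mathcal{E}_T$ for the left-hand side of the proposition, this gives
\begin{align*}
\mathcal{E}_T \les \|(|\N|a_0,m_0)\|_{\fB{_{p,\infty}^{-1-\frac{d}{p'}}}}
 + \|e^{\sqrt{c_0 t}|\N|}\mathcal{N}(a,m)\|_{\wt{L^1_T}(\fB{_{p,\infty}^{-1-\frac{d}{p'}}})}.
\end{align*}
Since $m_0=\N\tilde m_0$, Lemma~\ref{lem;Bern} trades the derivative in both components of the homogeneous term, so it is $\simeq\|(a_0,\tilde m_0)\|_{\fB{_{p,\infty}^{-\frac{d}{p'}}}}$, the asserted bound (finite by \eqref{cond;add_small}). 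Thus it remains to control $\|e^{\sqrt{c_0 t}|\N|}\mathcal{N}(a,m)\|_{\wt{L^1_T}(\fB{_{p,\infty}^{-1-\frac{d}{p'}}})}$ by $\big(\|(A,M)\|_{CL_T^{(p,1)}}+\|(A,M)\|_{CL_T^{(p,1)}}^2\big)\mathcal{E}_T$, which then closes by absorption since $\|(A,M)\|_{CL_T^{(p,1)}}\les\ve_0\ll1$ uniformly in $T$ (finiteness of $\mathcal{E}_T$ for each $T$ being obtained either by folding this estimate into the Banach fixed-point space of Theorem~\ref{thm;GWP_FLp} or by a continuity argument in $T$).

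\medskip\noindent\textbf{Nonlinear estimate.} Every term of $\mathcal{N}$ in \eqref{eqn;nonlin} carries at least one explicit derivative acting on an at-least-quadratic expression. Pulling that derivative out (Lemma~\ref{lem;Bern}), one factor is placed in a negative Fourier--Besov space built on $(|\N|A,M)$ -- controlled by $\mathcal{E}_T$ directly in $\wt{L^\infty_T}$, or after interpolating the two $\mathcal{E}_T$-scales (which yields, for instance, $(|\N|A,M)\in\wt{L^2_T}(\fB{_{p,\infty}^{-\frac{d}{p'}}})$ and $A\in\wt{L^1_T}(\fB{_{p,\infty}^{2-\frac{d}{p'}}})$) -- while the remaining factors, being products of analytic composites of $a$ with $a$ or $m$, go into the critical algebra $\fB{_{p,1}^{\frac{d}{p}}}$ and/or the parabolic-gain scales of $CL_T^{(p,1)}$, hence carry the smallness. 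The weight $e^{\sqrt{c_0 t}|\N|}$ is distributed by Lemmas~\ref{lem;bil_anal}, \ref{lem;bil_anal2} and Corollary~\ref{cor;prod_anal3}, and the composite functions $I(a),\,\wt{I}_P(a),\,(1+a)^{-2}$ as in \eqref{est;1/1+r}--\eqref{est;I_P2}. For the contributions of $\div((I(a)-1)m\otimes m)$, $\mathcal{L}(I(a)m)$ and the quadratic pieces of $\div\mathcal{K}(a)$, the inner factor lies at an index strictly inside $(-\frac{d}{p'},\frac{d}{p})$ (here $d\ge3$ is used), so the product is covered by Lemma~\ref{lem;A-P}/\ref{lem;prod_est}.

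\medskip\noindent\textbf{Main obstacle.} The only term forcing the boundary is the pressure term $\N(a^2\wt{I}_P(a))$. Removing the gradient, one needs $a^2\wt{I}_P(a)=a\cdot\big(a\wt{I}_P(a)\big)\in\wt{L^1_T}(\fB{_{p,\infty}^{-\frac{d}{p'}}})$, and since $a$ is available at regularity $-\frac{d}{p'}$ only in $\wt{L^\infty_T}$ (from $\mathcal{E}_T$), the companion factor $a\wt{I}_P(a)$ must be taken in $\wt{L^1_T}(\fB{_{p,1}^{\frac{d}{p}}})$ -- this is exactly why the data index must be $\s=1$, so that $CL_T^{(p,1)}$ supplies $a$, hence $a\wt{I}_P(a)$, in $\wt{L^1_T}(\fB{_{p,1}^{\frac{d}{p}}})$. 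One is thus reduced to the endpoint product estimate $\fB{_{p,\infty}^{-\frac{d}{p'}}}\cdot\fB{_{p,1}^{\frac{d}{p}}}\hr\fB{_{p,\infty}^{-\frac{d}{p'}}}$: the Bony paraproduct $T_{a\wt{I}_P(a)}a$ is harmless for every index, but the remainder has total regularity $-\frac{d}{p'}+\frac{d}{p}=d(\tfrac{2}{p}-1)$, which is $\ge0$ precisely when $p\le2$ -- the source of the restriction $1\le p\le2$ -- and the $\ell^1$-summability of the multiplier $a\wt{I}_P(a)$ is what keeps the remainder summable in the borderline case $p=2$. Concretely one estimates the remainder in the $\wh L^{p/2}$-based space $\fB{_{p/2,\infty}^{d(\frac{2}{p}-1)}}$ by Young's inequality and descends to $\fB{_{p,\infty}^{-\frac{d}{p'}}}$ through the Sobolev embedding of Lemma~\ref{lem;sm}. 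Together with the linear part this yields $\mathcal{E}_T\les\|(a_0,\tilde m_0)\|_{\fB{_{p,\infty}^{-\frac{d}{p'}}}}+(\ve_0+\ve_0^2)\mathcal{E}_T$, and absorption finishes the proof uniformly in $T>0$.
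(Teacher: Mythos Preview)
Your overall strategy and the identification of the endpoint product law $\fB{_{p,1}^{d/p}}\cdot\fB{_{p,\infty}^{-d/p'}}\hookrightarrow\fB{_{p,\infty}^{-d/p'}}$ as the crux match the paper exactly; the paper packages this law as \eqref{est;p-1}, a corollary of Lemma~\ref{lem;prod_anal4}, and then applies it uniformly to every piece of $\mathcal{N}(a,m)$.

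Two points in your write-up need correction, though neither is fatal. First, your claim that the non-pressure terms sit ``strictly inside $(-\tfrac{d}{p'},\tfrac{d}{p})$'' and are therefore covered by Lemma~\ref{lem;A-P}/\ref{lem;prod_est} is not right: for $\div(m\otimes m)$ and for the $|\N a|^2$, $\N a\otimes\N a$ pieces of $\div\mathcal{K}(a)$, after stripping one derivative you must place one factor in $\fB{_{p,\infty}^{-d/p'}}$ (coming from $\mathcal{E}_T$, possibly after interpolation to $\wt L^2_T$) and the other in $\fB{_{p,1}^{d/p}}$ (from $CL_T^{(p,1)}$), which is exactly the endpoint; Lemmas~\ref{lem;A-P} and~\ref{lem;prod_est} require the strict inequality $s>-d/p'$ and do not cover it. The paper simply uses \eqref{est;p-1} for all terms, and since you do establish that endpoint law in your ``Main obstacle'' paragraph, the fix is just to invoke it there too. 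Second, your justification of the remainder ``in the $\wh L^{p/2}$-based space $\fB{_{p/2,\infty}^{d(2/p-1)}}$'' only makes sense for $p\ge2$; for $1\le p<2$ one has $p/2<1$. The correct argument (cf.\ the proof of Lemma~\ref{lem;diagonal2}, case $1/p_1+1/p_2>1$) puts the diagonal block in $\wh L^1$ and returns to $\wh L^p$ via $\|\wh\phi_j\|_{L^{p'}}\sim 2^{dj/p'}$, which is precisely what yields the constraint $s_1+s_2+d\min(0,1-2/p)\ge0$ and hence the restriction $p\le2$ at the indices $(s_1,s_2)=(d/p,-d/p')$.
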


Lemmas \ref{lem;diagonal3} and \ref{lem;offd3} give us the following Lemma: 
\begin{lem}[{\it Product estimates I\hspace{-0.4mm}V}\hspace{1mm}]  \label{lem;prod_anal4}
Let $s_i \in \R$, $1 \le p,p_i,r,r_i \le \infty$ $(i=1,2)$ satisfying 
\begin{gather*}
\frac{1}{p}\le\frac{1}{p_1}+\frac{1}{p_2}, \quad s_1 \le \frac{d}{p_1}, \quad s_2 <\frac{d}{p_2}, \quad 
s_1+s_2+d\min\left(0,1-\frac{1}{p_1}-\frac{1}{p_2}\right) \ge 0, \\
\frac{1}{r}=\frac{1}{r_1}+\frac{1}{r_2}. 
\end{gather*}
Then there exists some constant $C>0$ such that 
\begin{align} \label{est;prod_diag}
\left\|e^{\sqrt{c_0 t}|\N|} fg\right\|_{\wt{L^r_T(}\fB{_{p,\infty}^{s_1+s_2+\frac{d}{p}-\frac{d}{p_1}-\frac{d}{p_2}}})}
\le C\|F\|_{\wt{L^{r_1}_T(}\fB{_{p_1,1}^{s_1}})}\|G\|_{\wt{L^{r_2}_T(}\fB{_{p_2,\infty}^{s_2}})}. 
\end{align}
\end{lem}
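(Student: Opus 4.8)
The plan is to combine Bony's paraproduct decomposition with the two weighted paraproduct estimates isolated in Lemmas \ref{lem;diagonal3} and \ref{lem;offd3}, the Gevrey weight being absorbed through the subadditivity of $\xi\mapsto e^{\sqrt{c_0 t}|\xi|}$ exactly as in Lemma \ref{lem;B_t}. Write Bony's decomposition
$$
fg=T_fg+T_gf+R(f,g),\qquad T_fg:=\sum_j S_{j-1}f\,\dot\Delta_jg,\qquad R(f,g):=\sum_j\dot\Delta_jf\,\widetilde{\dot\Delta}_jg,
$$
and, following the asymmetry of the target space ($\ell^1$ summation on the $f$-factor, $\ell^\infty$ on the $g$-factor), regroup it as $fg=T_fg+\big(T_gf+R(f,g)\big)$, noting that $T_gf+R(f,g)=\sum_j\dot\Delta_jf\,S_{j+2}g$ up to finite index shifts, i.e. it is of ``diagonal'' type (output frequency $\lesssim 2^j$), whereas $T_fg$ is of ``off-diagonal'' type (output frequency $\sim 2^j$). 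Applying $e^{\sqrt{c_0 t}|\N|}$ to each Littlewood--Paley block and using on the Fourier side $e^{\sqrt{c_0 t}|\xi|}\le e^{\sqrt{c_0 t}|\xi-\eta|}e^{\sqrt{c_0 t}|\eta|}$ together with the fact that the Fourier supports entering the relevant convolutions genuinely add up, one re-expresses $\dot\Delta_k\big(e^{\sqrt{c_0 t}|\N|}(\text{block})\big)$ in terms of the blocks of $F=e^{\sqrt{c_0 t}|\N|}f$ and $G=e^{\sqrt{c_0 t}|\N|}g$, the cost being one application of the bilinear operator $\B_t$, whose $\wh{L}^{p_1}\times\wh{L}^{p_2}\to\wh{L}^p$ bound is uniform in $t\ge0$ by Lemma \ref{lem;B_t}.

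It then suffices to bound the two pieces in $\wt{L^r_T(}\fB{_{p,\infty}^{s_1+s_2+\frac{d}{p}-\frac{d}{p_1}-\frac{d}{p_2}}})$, which is exactly the content of Lemmas \ref{lem;offd3} and \ref{lem;diagonal3}. For the off-diagonal paraproduct $T_FG$, only $j\sim k$ contributes to the $k$-th block, and one estimates $\|\dot\Delta_k(S_{j-1}F\,\dot\Delta_jG)\|_{\wh{L}^p}$ by putting $S_{j-1}F$ into $\wh{L}^\infty$; this is where the low-frequency condition $s_1\le d/p_1$ (equality permitted thanks to the $\ell^1$ summation in $j$, via $\fB{_{p_1,1}^{d/p_1}}\hr\wh{L}^\infty$) enters, and the target $\ell^\infty$-norm is then inherited from the $\ell^\infty$-summability of $G$. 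For the diagonal piece $\sum_j\dot\Delta_jF\,S_{j+2}G$ one sums over $j\gtrsim k$ for fixed $k$; the strict condition $s_2<d/p_2$ makes $\|S_{j+2}G\|_{\wh{L}^\infty}$ bounded by $\|G\|_{\fB{_{p_2,\infty}^{s_2}}}$ up to a convergent geometric factor, and after Young's inequality on the Fourier side (combined with Bernstein, Lemma \ref{lem;Bern}, to redistribute Lebesgue exponents on frequency-localised pieces, all losses of the form $2^{jd(\frac{1}{p_1}+\frac{1}{p_2}-\frac{1}{p})}$ being absorbed into the regularity shift $\frac{d}{p}-\frac{d}{p_1}-\frac{d}{p_2}$) the summation over $j\gtrsim k$ converges precisely under $s_1+s_2+d\min(0,1-\frac{1}{p_1}-\frac{1}{p_2})\ge0$. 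Throughout, the time-integrability is handled by H\"older's inequality in $t$ with $\frac1r=\frac1{r_1}+\frac1{r_2}$ and the definition of the Chemin--Lerner norms, and summing the two pieces yields the claim.

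The main obstacle I anticipate is twofold. First, making the absorption of the Gevrey weight rigorous: the subadditivity trick must be invoked only after frequency localisation, so that the Fourier supports entering the convolution genuinely add, and one should appeal to the $t$-uniform bound of $\B_t$ (Lemma \ref{lem;B_t}) rather than a crude exponential estimate. Second, the borderline case $s_1+s_2+d\min(0,1-\frac1{p_1}-\frac1{p_2})=0$ of the diagonal term, where no loss is affordable: there one genuinely needs $f\in\fB{_{p_1,1}^{s_1}}$ with $\ell^1$ (not merely $\ell^\infty$) summation in $j$ for the series over $j\gtrsim k$ to be summable into the target $\ell^\infty$-norm. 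The rest is routine Littlewood--Paley bookkeeping.
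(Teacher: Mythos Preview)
Your proposal is correct and follows essentially the same approach as the paper: the paper's entire proof is the single sentence ``Lemmas \ref{lem;diagonal3} and \ref{lem;offd3} give us the following Lemma,'' i.e.\ Bony decomposition into paraproducts and remainder, Gevrey weight pushed through via Lemma \ref{lem;B_t}, then the two cited block estimates. Your only deviation is cosmetic: instead of treating the three Bony pieces separately (Lemma \ref{lem;offd3} applied to $T_fg$ using $s_1\le d/p_1$, Lemma \ref{lem;offd3} with roles reversed applied to $T_gf$ using $s_2<d/p_2$, and Lemma \ref{lem;diagonal3} applied to $R(f,g)$ using the sum condition), you merge $T_gf+R(f,g)=\sum_j\dot\Delta_jf\,S_{j+2}g$ and estimate it directly, which consumes the hypotheses $s_2<d/p_2$ and $s_1+s_2+d\min(0,1-\tfrac{1}{p_1}-\tfrac{1}{p_2})\ge 0$ simultaneously. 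That is fine, but note that this merged term is \emph{not} literally what Lemma \ref{lem;diagonal3} covers (that lemma treats $\sum_k f_k\tilde g_k$, not $\sum_j f_j S_{j+2}g$), so your sentence ``which is exactly the content of Lemmas \ref{lem;offd3} and \ref{lem;diagonal3}'' is slightly imprecise; your subsequent direct argument, however, is what actually does the work and is correct.
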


\begin{cor}
In the case $1 \le p \le 2$, the following estimates are obtained from Lemma \ref{lem;prod_anal4} with $d \ge 1$:   
\begin{align}
&\|e^{\sqrt{c_0 t}|\N|} fg\|_{\wt{L^r_T(}\fB_{p,\infty}^{-\frac{d}{p'}})}
\le C \|F\|_{\wt{L^{r_1}_T(}\fB{_{p,1}^\frac{d}{p}})}
\|G\|_{\wt{L^{r_2}_T(}\fB{^{-\frac{d}{p'}}_{p,\infty}})}. \label{est;p-1} 
\end{align}
\end{cor}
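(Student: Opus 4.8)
The plan is to obtain \eqref{est;p-1} as a direct specialization of Lemma \ref{lem;prod_anal4}: one takes
$$
p_1 = p_2 = p, \qquad s_1 = \frac{d}{p}, \qquad s_2 = -\frac{d}{p'},
$$
keeps $r, r_1, r_2$ unchanged, and then checks that every hypothesis of that lemma holds under the restriction $1 \le p \le 2$ and $d \ge 1$.

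First I would verify that the left-hand side of Lemma \ref{lem;prod_anal4} becomes the one in \eqref{est;p-1}. With the above choices,
$$
s_1 + s_2 + \frac{d}{p} - \frac{d}{p_1} - \frac{d}{p_2}
= \frac{d}{p} - \frac{d}{p'} + \frac{d}{p} - \frac{2d}{p}
= -\frac{d}{p'},
$$
so the target space is $\wt{L^r_T(}\fB_{p,\infty}^{-d/p'})$, exactly as in \eqref{est;p-1}; likewise the two factors on the right become $\wt{L^{r_1}_T(}\fB_{p,1}^{d/p})$ and $\wt{L^{r_2}_T(}\fB_{p,\infty}^{-d/p'})$, and the relation $\frac{1}{r} = \frac{1}{r_1} + \frac{1}{r_2}$ is carried over verbatim.

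Next I would run through the structural conditions of Lemma \ref{lem;prod_anal4} one by one. The condition $\frac{1}{p} \le \frac{1}{p_1} + \frac{1}{p_2}$ reads $\frac{1}{p} \le \frac{2}{p}$ and is always true; $s_1 \le \frac{d}{p_1}$ holds with equality; and $s_2 < \frac{d}{p_2}$ reads $-\frac{d}{p'} < \frac{d}{p}$, which holds for all $d \ge 1$ and $1 \le p \le 2$, since then $-\frac{d}{p'} \le 0 < \frac{d}{p}$. The only point where the assumption $p \le 2$ is genuinely used is the borderline condition
$$
s_1 + s_2 + d \min\left(0,\, 1 - \frac{1}{p_1} - \frac{1}{p_2}\right) \ge 0.
$$
Since $1 - \frac{2}{p} \le 0$ for $p \le 2$, the minimum equals $1 - \frac{2}{p}$; using $\frac{d}{p'} = d - \frac{d}{p}$ one computes
$$
s_1 + s_2 + d\left(1 - \frac{2}{p}\right)
= \frac{d}{p} - \Big(d - \frac{d}{p}\Big) + d - \frac{2d}{p} = 0,
$$
so the condition is satisfied, with equality.

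With all hypotheses verified, the conclusion of Lemma \ref{lem;prod_anal4} reduces verbatim to \eqref{est;p-1}, which completes the proof. There is no real obstacle beyond the bookkeeping; the only subtlety is that both $s_1 \le d/p_1$ and the borderline summation inequality are saturated, so \eqref{est;p-1} is of endpoint type — this is exactly why the range is limited to $1 \le p \le 2$ and why the low-regularity factor on the right carries $\ell^\infty$ (not $\ell^1$) summability.
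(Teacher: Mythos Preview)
Your proof is correct and is precisely the intended derivation: the paper states the corollary as a direct consequence of Lemma \ref{lem;prod_anal4}, and your parameter choice $p_1=p_2=p$, $s_1=d/p$, $s_2=-d/p'$ together with the verification of the hypotheses (including the endpoint equalities) is exactly what is meant. One very minor remark: the condition $s_2<d/p_2$ actually holds for all $1\le p\le\infty$ since $-d/p'<d/p$ is equivalent to $d>0$; the restriction $1\le p\le2$ enters only through the borderline summation condition, as you correctly identify.
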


\begin{proof}[The proof of Poroposion \ref{prop;negative}]
First of all, we see from the estimate \eqref{est;GV} that 
\begin{equation} \label{est;GV_neg}
\begin{aligned}
\|(|\N|A,M)\|_{L^\infty(I;\fB{_{p,\infty}^{-1-\frac{d}{p'}}}) \cap \wt{L^1(I;}\fB{_{p,\infty}^{1-\frac{d}{p'}}}) } 
\les& \|(|\N|a_0,m_0)\|_{\fB{_{p,\infty}^{-1-\frac{d}{p'}}}} \\
&+\|e^{\sqrt{c_0 t}|\N|}\mathcal{N}(a,m)\|_{\wt{L^1(I;}\fB{_{p,\infty}^{-1-\frac{d}{p'}}})}. 
\end{aligned}
\end{equation}

In this paper, we give the estimate for the nonlinear terms 
$\div (I(a) m \otimes m)$ and $\N (a^2 \wt{I}_P(a))$ only. 
The product law \eqref{est;p-1} gives us that 
\begin{align*}
\|e^{\sqrt{c_0 t}|\N|} \div (I(a) m \otimes m)\|_{\wt{L^1_T(}\fB{_{p,\infty}^{-1-\frac{d}{p'}}})}
\les& \|e^{\sqrt{c_0 t}|\N|}I(a)\|_{\wt{L^\infty_T(}\fB{_{p,1}^{\frac{d}{p}}})}
     \|e^{\sqrt{c_0 t}|\N|}m \otimes m\|_{\wt{L^1_T(}\fB{_{p,\infty}^{-\frac{d}{p'}}})} \\
\les& \|A\|_{\wt{L^\infty_T(}\fB{_{p,1}^{\frac{d}{p}}})}
      \|M\|_{\wt{L^2_T(}\fB{_{p,\infty}^{-\frac{d}{p'}}})}
      \|M\|_{\wt{L^2_T(}\fB{_{p,1}^{\frac{d}{p}}})} \\
\les& \|(A,M)\|_{CL_T^{(p,1)}}^2 \|M\|_{\wt{L^2_T(}\fB{_{p,\infty}^{-\frac{d}{p'}}})}. 
\end{align*}
Analogously, we obtain from \eqref{est;p-1} that 
\begin{align*}
\|e^{\sqrt{c_0 t}|\N|}\N(a^2 \wt{I}_P(a))\|_{\wt{L^1_T(}\fB{_{p,\infty}^{-1-\frac{d}{p'}}})}
\les \|e^{\sqrt{c_0 t}|\N|} a \wt{I}_P(a)\|_{\wt{L^\infty_T(}\fB{_{p,\infty}^{-\frac{d}{p'}}})}
     \|A\|_{\wt{L^1_T(}\fB{_{p,1}^{\frac{d}{p}}})}. 
\end{align*}
Since \eqref{est;p-1} holds true, we easily see that 
\begin{align*}
\|e^{\sqrt{c_0 t}|\N|} a \wt{I}_P(a)\|_{\wt{L^\infty_T(}\fB{_{p,\infty}^{-\frac{d}{p'}}})} 
\les \|A\|_{\wt{L^\infty_T(}\fB{_{p,\infty}^{-\frac{d}{p'}}})} 
\end{align*}
and thus, 
$$
\|e^{\sqrt{c_0 t}|\N|}\N (a^2 \wt{I}_P(a))\|_{\wt{L^1_T(}\fB{_{p,\infty}^{-1-\frac{d}{p'}}})}
\les \|(A,M)\|_{CL_T^{(p,1)}} \|A\|_{\wt{L^\infty_T(}\fB{_{p,\infty}^{-\frac{d}{p'}}})}. 
$$

In a similar way to the above arguments, we are able to obtain that 
\begin{align*}
&\|e^{\sqrt{c_0 t}|\N|}\mathcal{L}(I(a) m)\|_{\wt{L^1_T(}\fB{_{p,\infty}^{-1-\frac{d}{p'}}})} 
\les \|(A,M)\|_{CL_T^{(p,1)}} \|M\|_{\wt{L^2_T(}\fB{_{p,\infty}^{-\frac{d}{p'}}}) \cap \wt{L^1_T(}\fB{_{p,\infty}^{1-\frac{d}{p'}}})}, \\
&\|e^{\sqrt{c_0 t}|\N|}\div \mathcal{K}(a)\|_{\wt{L^1_T(}\fB{_{p,\infty}^{-1-\frac{d}{p'}}})} 
\les \|(A,M)\|_{CL_T^{(p,1)}} \|A\|_{\wt{L^\infty_T(}\fB{_{p,\infty}^{-\frac{d}{p'}}}) \cap \wt{L^2_T(}\fB{_{p,\infty}^{1-\frac{d}{p'}}})}, 
\end{align*}
Therefore, we obtain by gathering the above estimates and $m_0=\N \tilde{m}_0$ that 
\begin{align*}
\|(|\N|A,M)\|_{L^\infty(I;\fB{_{p,\infty}^{-1-\frac{d}{p'}}}) \cap \wt{L^1(I;}\fB{_{p,\infty}^{1-\frac{d}{p'}}}) } 
\les \|(a_0,\tilde{m}_0)\|_{\fB{_{p,\infty}^{-\frac{d}{p'}}}}.  
\end{align*}
This is our desired estimate. 
\end{proof}

\begin{proof}[The proof of Theorem \ref{thm;Lp-L1}] 
Thanks to Proposition \ref{prop;negative}, we obtain that for all $s>-1-d/p'$, 
\begin{align*}
t^{\frac{d}{2p'}+\frac{s+1}{2}}\|(|\N|a,m)(t)\|_{\fB{_{p,1}^s}} 
=&t^{\frac{d}{2p'}+\frac{s+1}{2}} 
  \sum_{j \in \Z} 2^{sj}\|e^{-\sqrt{c_0 t}|\cdot|} e^{\sqrt{c_0 t}|\cdot|} \wh{\phi}_j(|\xi|\wh{a},\wh{m})\|_{L_\xi^{p'}} \\
\les& \|(|\N|A,M)(t)\|_{\fB{_{p,\infty}^{-1-\frac{d}{p'}}}}
      \sum_{j \in \Z} (\sqrt{t}2^{j})^{\frac{d}{p'}+s+1} e^{-\frac{1}{4} \sqrt{c_0 t}2^j} \\
\les& \|(a_0,\tilde{m}_0)\|_{\fB{_{p,\infty}^{-\frac{d}{p'}}}}. 
\end{align*}
Here we used the convergence of the summation; 
$\sup_{t > 0}\sum_{j \in \Z} 
       (\sqrt{t}2^j)^{\alpha} e^{-c\sqrt{t}2^j}$ for all $c,\al>0$ 
(for the proof of this fact is completely same as the one of Lemma \ref{lem;cineq}).  
Therefore, we obtain the desired decay properties 
$\|(|\N|a,m)(t)\|_{\fB{_{p,1}^s}}=O(t^{-\frac{d}{2p'}-\frac{s+1}{2}})$ $(t \to \infty)$. 
\end{proof}

\sect{Asymptotic behavior of solutions} \label{sect;AP}

\subsection{Linear estimates} 

\begin{prop} \label{prop;AP1}
Let the initial data $(a_0,m_0)$ satisfy $m_0=\N \tilde{m}_0$ with $(a_0,\tilde{m}_0) \in L^1(\R^d)$ and $1< p \le \infty$.  
Then it holds that for all $s>-d/p'$, 
\begin{align}
&\lim_{t \to \infty} t^{\frac{d}{2}(1-\frac{1}{p})+\frac{s}{2}}
\left\||\N|^s \left(G^{1,1}(t,\cdot)*a_0-G_1(t) \int_{\R^d}a_0(y) dy\right)\right\|_{\wh{L}^p}=0, \label{est;G_1}\\
&\lim_{t \to \infty} t^{\frac{d}{2}(1-\frac{1}{p})+\frac{s}{2}}
\left\||\N|^s \left(G^{1,2}(t,\cdot)*m_0-G_2(t) \int_{\R^d}\tilde{m}_0(y) dy\right)\right\|_{\wh{L}^p}=0.  \label{est;G_2}
\end{align}
\end{prop}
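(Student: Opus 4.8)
The plan is to reduce both limits to a single dominated‑convergence statement after a parabolic rescaling on the Fourier side. The first observation is that the two Green‑matrix entries are \emph{exactly} the profiles appearing in the theorem: one checks directly from \eqref{eqn;neq}--\eqref{eqn;neq2}, using $\lam_+(\xi)\lam_-(\xi)=\kappa|\xi|^4$ and $\lam_+(\xi)+\lam_-(\xi)=-\nu|\xi|^2$, that $\mG^{1,1}(t,\xi)=\F[G_1(t)](\xi)$ (hence $G^{1,1}(t,\cdot)=G_1(t)$) and, since $m_0=\N\tilde m_0$, that $G^{1,2}(t,\cdot)*m_0=G_2(t)*\tilde m_0$ with $\F[G_2(t)](\xi)=(\nu^2-4\kappa)^{-1/2}\bigl(e^{\lam_-(\xi)t}-e^{\lam_+(\xi)t}\bigr)$. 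Writing $(i,g_0)\in\{(1,a_0),(2,\tilde m_0)\}$ and using $\int_{\R^d}g_0\,dy=c_d\,\wh{g_0}(0)$ together with $\tfrac d2(1-\tfrac1p)=\tfrac{d}{2p'}$, both \eqref{est;G_1} and \eqref{est;G_2} become the single claim
\begin{equation*}
t^{\frac{d}{2p'}+\frac s2}\,\bigl\|\,|\xi|^s\,\F[G_i(t)](\xi)\,\bigl(\wh{g_0}(\xi)-\wh{g_0}(0)\bigr)\bigr\|_{L^{p'}_\xi}\longrightarrow 0\qquad(t\to\infty).
\end{equation*}

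The key structural point is that the kernels $\mG^{1,1}(t,\cdot)$ and $\F[G_2(t)]$ are parabolically self‑similar. Since $\lam_\pm(\xi)=c_\pm|\xi|^2$ with $c_\pm=-\tfrac\nu2\bigl(1\pm\sqrt{1-4\kappa/\nu^2}\bigr)$, the coefficients $\lam_\pm(\xi)/(\lam_+(\xi)-\lam_-(\xi))=c_\pm/(c_+-c_-)$ are pure constants, so that $\mG^{1,1}(t,\xi)=\Psi_1(\sqrt t\,\xi)$ and $\F[G_2(t)](\xi)=\Phi_2(\sqrt t\,\xi)$ for the fixed functions $\Psi_1:=\mG^{1,1}(1,\cdot)$ and $\Phi_2:=\F[G_2(1)]$; the degenerate case $\nu^2=4\kappa$ gives the same self‑similar form with $\Psi_1(\eta)=(1+\tfrac\nu2|\eta|^2)e^{-\frac\nu2|\eta|^2}$ and $\Phi_2(\eta)=|\eta|^2e^{-\frac\nu2|\eta|^2}$ by \eqref{eqn;neq2}. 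In every regime $\Psi_1,\Phi_2$ are smooth and bounded with $\Psi_1(0)=1$ and $\Phi_2(\eta)=O(|\eta|^2)$ as $\eta\to0$, and they obey a Gaussian bound $|\Psi_1(\eta)|+|\Phi_2(\eta)|\le C e^{-c|\eta|^2}$ (when $\nu^2<4\kappa$ the two exponents are complex conjugate with real part $-\tfrac\nu2|\eta|^2$). Substituting $\xi=t^{-1/2}\eta$ and using $\lam_\pm(t^{-1/2}\eta)\,t=\lam_\pm(\eta)$ absorbs the prefactor exactly, turning the displayed quantity into
\begin{equation*}
\bigl\|\,|\eta|^s\,\Psi_1(\eta)\,\bigl(\wh{g_0}(t^{-1/2}\eta)-\wh{g_0}(0)\bigr)\bigr\|_{L^{p'}_\eta}\qquad(\text{and the analogue with }\Phi_2).
\end{equation*}

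It then remains to let $t\to\infty$ inside this $L^{p'}_\eta$‑norm. Because $g_0\in L^1(\R^d)$, its Fourier transform is bounded and continuous, so the integrand tends to $0$ pointwise in $\eta$, while $|\wh{g_0}(t^{-1/2}\eta)-\wh{g_0}(0)|\le 2\|\wh{g_0}\|_{L^\infty}\le C\|g_0\|_{L^1}$ uniformly in $t$ and $\eta$; hence it is dominated by $C\|g_0\|_{L^1}\,|\eta|^s|\Psi_1(\eta)|$ (resp.\ $C\|g_0\|_{L^1}\,|\eta|^s|\Phi_2(\eta)|$), which lies in $L^{p'}(\R^d)$ precisely when $s>-d/p'$: near the origin the weight is $\sim|\eta|^s$ (resp.\ $O(|\eta|^{s+2})$), which is $L^{p'}$‑integrable exactly under this threshold (with the reading $d/p'=d$ when $p=\infty$), and at infinity it is tamed by the Gaussian factor. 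The dominated convergence theorem then yields the result. The main obstacle I anticipate is the bookkeeping in the second step: confirming that $\mG^{1,1}$ and $\F[G_2(t)]$ are genuinely of the self‑similar form $\Psi(\sqrt t\,\xi)$ uniformly across the three regimes $\nu^2\gtrless4\kappa$, and checking that the parabolic substitution reproduces exactly the prefactor $t^{\frac{d}{2p'}+\frac s2}$ while the integrability condition at the origin comes out exactly as $s>-d/p'$; since the statement is purely linear, everything else is routine.
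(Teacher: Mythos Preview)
Your proof is correct and follows essentially the same route as the paper's: both arguments identify $\mG^{1,1}=\F[G_1]$ and $G^{1,2}*m_0=G_2*\tilde m_0$, perform the parabolic rescaling $\xi=t^{-1/2}\eta$ to absorb the prefactor $t^{\frac{d}{2p'}+\frac s2}$, and then invoke continuity of $\wh{g_0}$ (from $g_0\in L^1$) together with the Gaussian bound $|\Psi_i(\eta)|\le Ce^{-c|\eta|^2}$ and dominated convergence in $L^{p'}_\eta$. Your presentation is slightly more streamlined in that you exploit the exact self-similarity $\F[G_i(t)](\xi)=\Psi_i(\sqrt t\,\xi)$ to treat \eqref{est;G_1} and \eqref{est;G_2} simultaneously, whereas the paper first replaces $\F[G_i(t)]$ by the Gaussian majorant $e^{-c_0|\xi|^2t}$ and then rescales; but this is only a cosmetic difference.
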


\begin{proof}[The proof of Theorem \ref{prop;AP1}] 
We only prove \eqref{est;G_2} because the proof of \eqref{est;G_1} and \eqref{est;G_2} are almost same. 
First, we would like to treat the case of $\nu^2 \neq 4 \kappa$.  
Since $m_0=\N \tilde{m}_0$, it holds that 
\begin{align*}
\mathcal{F}[G^{1,2}(t,\cdot)*m_0]
=&-c_d \frac{e^{\lam_+(\xi)t}-e^{\lam_-(\xi)t}}{\lam_+-\lam_-}i \xi \cdot ( i\xi \wh{\tilde{m}}_0) \\
=& c_d \frac{e^{\lam_+(\xi)t}-e^{\lam_-(\xi)t}}{\sqrt{\nu^2-4 \kappa}} \wh{\tilde{m}}_0 
= c_d \left(\wh{G}_+(t)-\wh{G}_-(t)\right)\wh{\tilde{m}}_0, 
\end{align*}
where $c_d=(2 \pi)^{d/2}$. Remembering the definition of $G_2(t)$ under $\nu^2 \neq 4 \kappa$ 
(see \S \ref{subsect;Thm2.3}), we obtain 
\begin{align*}
\mathcal{F}\left[\left(G^{1,2}(t,\cdot)*m_0-G_2(t) \int_{\R^d}\tilde{m}_0(y) dy\right)\right]
=c_d \left(\wh{G}_+(t)-\wh{G}_-(t)\right) \cdot (\wh{\tilde{m}}_0(\xi)-\wh{\tilde{m}}_0(0)). 
\end{align*}
Since there exists some constant $c_0>0$ depending only on $\nu$ and $\kappa$ such that 
$|e^{\lam_\pm(\xi)t}| \les e^{-c_0 |\xi|^2 t}$, it is straightforward that we obtain 
\begin{align*}
\left\||\xi|^s \mathcal{F}\left[\left(G^{1,2}(t,\cdot)*m_0-G_2(t) \int_{\R^d}m_0(y) dy\right)\right]\right\|_{L^{p'}_\xi} 
\les 
\||\xi|^s e^{-c_0|\xi|^2t}(\wh{\tilde{m}}_0(\xi)-\wh{\tilde{m}}_0(0))\|_{L^{p'}_\xi}. 
\end{align*}
We easily obtain by performing $\xi \mapsto \eta/\sqrt{t}$ that 
\begin{align*}
\||\xi|^s e^{-c_0|\xi|^2t}(\wh{\tilde{m}}_0(\xi)-\wh{\tilde{m}}_0(0))\|_{L^{p'}_\xi}
=t^{-\frac{d}{2p'}-\frac{s}{2}}
\left\||\eta|^s e^{-c_0 |\eta|^2} \left(\wh{\tilde{m}}_0\left(\frac{\eta}{\sqrt{t}}\right)-\wh{\tilde{m}}_0(0)\right)\right\|_{L^{p'}_\eta}.  
\end{align*} 
Thanks to Riemann-Lebesgue's theorem and the boundedness $\||\eta|^s e^{-c_0 |\eta|^2}\|_{L^{p'}_\eta} < +\infty$ for all $s>-d/p'$, 
we obtain the desired estimate \eqref{est;G_2} with $\nu^2 \neq 4 \kappa$. 

In the case of $\nu^2=4\kappa$, $G^{1,2}(t,x)$ is given by 
$G^{1,2}(t,x)=\mathcal{F}^{-1}[-t e^{-\frac{\nu}{2}|\xi|^2 t} i {}^t\xi]$ (see \eqref{eqn;neq2}). 
Since $m_0=\N \tilde{m}_0$, it holds that 
\begin{align*}
\mathcal{F}[G^{1,2}(t,\cdot)*m_0]
=-c_d t e^{-\frac{\nu}{2}|\xi|^2 t} i \xi \cdot ( i\xi \wh{\tilde{m}}_0)
=c_d t |\xi|^2 e^{-\frac{\nu}{2}|\xi|^2 t} \wh{\tilde{m}}_0
=c_d \wh{G}_2(t) \wh{\tilde{m}}_0
\end{align*}
and thus, 
\begin{align*}
\mathcal{F}\left[\left(G^{1,2}(t,\cdot)*m_0-G_2(t) \int_{\R^d}m_0(y) dy\right)\right]
= c_d \wh{G}_2(t) \cdot (\wh{\tilde{m}}_0(\xi)-\wh{\tilde{m}}_0(0)). 
\end{align*}
Since $|\wh{G}_2(t)|=|t |\xi|^2e^{-\frac{\nu}{2}|\xi|^2 t}| \les e^{-\frac{\nu}{4} |\xi|^2 t}$, we easily obtain that 
\begin{align*}
\left\||\xi|^s \mathcal{F}\left[\left(G^{1,2}(t,\cdot)*m_0-G_2(t) \int_{\R^d}m_0(y) dy\right)\right]\right\|_{L^{p'}_\xi} 
\les 
\||\xi|^s e^{-\frac{\nu}{4}|\xi|^2t}(\wh{\tilde{m}}_0(\xi)-\wh{\tilde{m}}_0(0))\|_{L^{p'}_\xi}. 
\end{align*}
In a similar argument as in the case of $\nu^2 \neq 4 \kappa$, 
we obtain the desired estimate \eqref{est;G_2} with $\nu^2 = 4 \kappa$. 
Gathering all, we thus complete the proof of Proposition \ref{prop;AP1}. 
\end{proof}

Analogously, one can obtain the following estimates: 
\begin{prop} \label{prop;AP2}
Let $(a_0,\tilde{m}_0)$ satisfy the same assumption as in Proposition \ref{prop;AP1} and $1 < p \le \infty$.  
Then it holds that for all $s>-1-d/p'$, 
\begin{align}
&\lim_{t \to \infty} t^{\frac{d}{2}(1-\frac{1}{p})+\frac{s+1}{2}}
\left\||\N|^s \left(G^{2,1}(t,\cdot)*a_0-\N G_2(t) \int_{\R^d}a_0(y) dy\right)\right\|_{\wh{L}^p}=0, \label{est;NG_2}\\
&\lim_{t \to \infty} t^{\frac{d}{2}(1-\frac{1}{p})+\frac{s+1}{2}}
\left\||\N|^s \left(G^{2,2}(t,\cdot)*m_0-\N G_3(t) \int_{\R^d}\tilde{m}_0(y) dy\right)\right\|_{\wh{L}^p}=0.  \label{est;G_3}
\end{align}
\end{prop}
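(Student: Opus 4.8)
The plan is to follow the same template as the proof of Proposition \ref{prop;AP1}, since the structure of $G^{2,1}$ and $G^{2,2}$ differs from that of $G^{1,1}$ and $G^{1,2}$ only by an extra factor of $i\xi$ (i.e.\ a spatial gradient) together with a relabelling of the $\mG^{i,j}$ symbols. First I would record, directly from \eqref{eqn;neq} and the definitions in \S\ref{subsect;Thm2.3}, the Fourier-side identities
\begin{align*}
\mathcal{F}[G^{2,1}(t,\cdot)*a_0]
&=-i\kappa\xi|\xi|^2\frac{e^{\lam_+(\xi)t}-e^{\lam_-(\xi)t}}{\lam_+-\lam_-}\,c_d\wh{a}_0(\xi)
=i\xi\,c_d\big(\wh{G}_+(t)-\wh{G}_-(t)\big)\wh{a}_0(\xi),
\end{align*}
using $\lam_+-\lam_-=\nu|\xi|^2\sqrt{1-4\kappa/\nu^2}$ and $\lam_+\lam_-=\kappa|\xi|^4$ so that $\kappa|\xi|^2/(\lam_+-\lam_-)=\lam_+\lam_-/(|\xi|^2(\lam_+-\lam_-))$; the key point is that the singular-looking prefactor is in fact $\mathcal{F}[\N G_2(t)]$ up to the constant $c_d$. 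Likewise, splitting $m_0=\N\tilde m_0$ and decomposing $\mG^{2,2}$ into its solenoidal and potential parts, the potential part produces exactly $i\xi\,c_d\,\wh{G}_3(t)\wh{\tilde m}_0(\xi)$ while the solenoidal part, carrying the factor $i\xi\cdot(\mathrm{Id}-\xi\otimes\xi/|\xi|^2)=0$ against $m_0=\N\tilde m_0$, drops out entirely; so $\mathcal{F}[G^{2,2}(t,\cdot)*m_0]=i\xi\,c_d\,\wh{G}_3(t)\wh{\tilde m}_0(\xi)$.

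Next I would subtract the claimed profiles. Exactly as in Proposition \ref{prop;AP1} this yields
\begin{align*}
\mathcal{F}\Big[G^{2,1}(t,\cdot)*a_0-\N G_2(t)\!\int_{\R^d}\!a_0(y)\,dy\Big]
&=i\xi\,c_d\big(\wh{G}_+(t)-\wh{G}_-(t)\big)\big(\wh{a}_0(\xi)-\wh{a}_0(0)\big),
\end{align*}
and similarly with $(\wh{G}_+-\wh{G}_-)$ replaced by $\wh{G}_3(t)$, $a_0$ by $\tilde m_0$. The extra factor $|\xi|$ coming from $i\xi$ just shifts the admissible range of $s$ from $s>-d/p'$ down to $s>-1-d/p'$, which matches the statement. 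Using the Gaussian pointwise bound $|e^{\lam_\pm(\xi)t}|\lesssim e^{-c_0|\xi|^2t}$ (and $|\wh G_3(t)|\lesssim e^{-c_0|\xi|^2 t}$, resp.\ $|t|\xi|^2e^{-\frac{\nu}{2}|\xi|^2 t}|\lesssim e^{-\frac{\nu}{4}|\xi|^2 t}$ in the degenerate case $\nu^2=4\kappa$), I bound the $L^{p'}_\xi$ norm of $|\xi|^s$ times the above by
\begin{align*}
\big\||\xi|^{s+1}e^{-c_0|\xi|^2t}\big(\wh{a}_0(\xi)-\wh{a}_0(0)\big)\big\|_{L^{p'}_\xi},
\end{align*}
then substitute $\xi=\eta/\sqrt t$ to extract the scaling factor $t^{-\frac{d}{2p'}-\frac{s+1}{2}}=t^{-\frac{d}{2}(1-1/p)-\frac{s+1}{2}}$ and an integral of the form $\big\||\eta|^{s+1}e^{-c_0|\eta|^2}\big(\wh{a}_0(\eta/\sqrt t)-\wh{a}_0(0)\big)\big\|_{L^{p'}_\eta}$. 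The weight $|\eta|^{s+1}e^{-c_0|\eta|^2}$ is in $L^{p'}_\eta$ precisely for $s+1>-d/p'$, i.e.\ $s>-1-d/p'$, and the bracketed factor is bounded by $2\|\wh{a}_0\|_{L^\infty}\le 2\|a_0\|_{L^1}$ and tends to $0$ pointwise in $\eta$ as $t\to\infty$ by continuity of $\wh{a}_0$ (Riemann--Lebesgue giving $a_0\in L^1\Rightarrow\wh a_0$ continuous and bounded); dominated convergence then forces the limit to be $0$.

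The degenerate case $\nu^2=4\kappa$ is handled identically: there $\mG^{2,1}(t,\xi)=-i\kappa\xi t|\xi|^2e^{-\frac{\nu}{2}t|\xi|^2}=i\xi\,\mathcal{F}[\N G_2(t)]$ reading off \eqref{eqn;neq2} and the definition $G_3(t)=(1-\frac{\nu}{2}t\Delta)G_{\nu/2}(t)$, $G_2(t)=-t\Delta G_{\nu/2}(t)$, while $\mG^{2,2}(t,\xi)$ again splits so that only the potential part survives against $m_0=\N\tilde m_0$, producing $i\xi\,\mathcal{F}[\N G_3(t)]\wh{\tilde m}_0$. The only mild obstacle — and it is genuinely mild — is the bookkeeping that identifies the singular symbols $\kappa|\xi|^2/(\lam_+-\lam_-)$, $\lam_\pm/(\lam_+-\lam_-)$ with the (non-singular, after one cancellation) Fourier multipliers of $\N G_2(t)$ and $\N G_3(t)$, and checking that the solenoidal component of $\mG^{2,2}$ indeed annihilates a gradient field; once that is in place the decay argument is verbatim that of Proposition \ref{prop;AP1}. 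I would therefore present the proof tersely: "The proof is parallel to that of Proposition \ref{prop;AP1}; we only indicate the changes," then give the two Fourier identities above and the scaling/dominated-convergence step.
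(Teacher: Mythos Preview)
Your approach is correct and is exactly what the paper has in mind: the paper gives no explicit proof of this proposition, only the line ``Analogously, one can obtain the following estimates,'' and your reduction to the template of Proposition~\ref{prop;AP1} via the extra factor $i\xi$ plus dominated convergence is precisely that analogy. One minor algebraic point: since $\lam_+-\lam_-=-|\xi|^2\sqrt{\nu^2-4\kappa}$ (note the sign), the simplification of $\mG^{2,1}$ actually gives $\mG^{2,1}=-i\kappa\xi\,\wh G_2$ rather than $i\xi(\wh G_+-\wh G_-)$, so a constant $-\kappa$ is floating around (quite possibly a typo in the stated profile of \eqref{est;NG_2}); this does not affect your scaling and dominated-convergence argument in the slightest.
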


\subsection{The estimates on the $\widehat{L}^1$-norm}  

\begin{prop}[{\it $\wh{L}^1$-Uniform estimate}] \label{prop;FL^1}
Suppose the same assumption as in Theorem \ref{thm;Asympt}.  
Then the global-in-time solution $(a,m)$ obtained by Theorem \ref{thm;GWP_FLp} satisfies the following estimate: 
$$
\|(|\N|A,M)\|_{L^\infty(I;\fB{_{1,\infty}^{-1}}) \cap \wt{L^1(I};\fB{_{1,\infty}^1})} 
\le C\|(a_0,\tilde{m}_0)\|_{\fB{_{1,\infty}^0}}.  
$$
\end{prop}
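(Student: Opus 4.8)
The plan is to argue exactly as in the proof of Proposition~\ref{prop;negative}, specialized to the endpoint Fourier--Besov index $p=1$ (so that $d/p'=0$), and using the already-established bound $\|(A,M)\|_{CL_T^{(p,1)}}\les\ve_0\ll1$ coming from Theorem~\ref{thm;GWP_FLp} as the small factor that lets one close the estimate. First, applying the analyticity-weighted linear estimate \eqref{est;GV} to the integral representation of $(a,m)$ with $(r,r_1,s,\s)=(\infty,1,-1,\infty)$ and with $(r,r_1,s,\s)=(1,1,-1,\infty)$ (and using $\wt{L^\infty(I;}\fB{_{1,\infty}^{-1}})=L^\infty(I;\fB{_{1,\infty}^{-1}})$) gives
\begin{align*}
\|(|\N|A,M)\|_{L^\infty(I;\fB{_{1,\infty}^{-1}})\cap\wt{L^1(I;}\fB{_{1,\infty}^{1}})}
&\les \|(|\N|a_0,m_0)\|_{\fB{_{1,\infty}^{-1}}} \\
&\quad +\|e^{\sqrt{c_0t}|\N|}\mathcal{N}(a,m)\|_{\wt{L^1(I;}\fB{_{1,\infty}^{-1}})}.
\end{align*}
For the data term, since $m_0=\N\tilde{m}_0$ and $L^1(\R^d)\hr\wh{L}^1(\R^d)\simeq\fB{_{1,\infty}^0}(\R^d)$ by Lemmas~\ref{lem;sm} and \ref{lem;equi}, one has $\|(|\N|a_0,m_0)\|_{\fB{_{1,\infty}^{-1}}}\les\|(a_0,\tilde{m}_0)\|_{\fB{_{1,\infty}^0}}$, which is already the right-hand side of the assertion.

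It then remains to estimate $\|e^{\sqrt{c_0t}|\N|}\mathcal{N}(a,m)\|_{\wt{L^1(I;}\fB{_{1,\infty}^{-1}})}$. As in Lemma~\ref{lem;nonlin_est} one splits $\mathcal{N}(a,m)$ into the four pieces $\div((I(a)-1)\,m\otimes m)$, $\N(a^2\wt{I}_P(a))$, $\mathcal{L}(I(a)m)$ and $\div\mathcal{K}(a)$. In each case the outermost derivative is pulled out by Bernstein's inequality (Lemma~\ref{lem;Bern}), reducing matters to bounding the underlying bilinear/trilinear expression in $\fB{_{1,\infty}^0}$ --- and in $\fB{_{1,\infty}^1}$ for $\mathcal{L}(I(a)m)$, which carries two derivatives. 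For these products I would invoke the product estimate of Lemma~\ref{lem;prod_anal4} with the first Lebesgue exponent taken to be $p$, placing one factor in $\fB{_{p,1}^{\frac{d}{p}}}$ or $\fB{_{p,1}^{1+\frac{d}{p}}}$ (a quantity dominated by $\|(A,M)\|_{CL_T^{(p,1)}}$ after suitable time interpolation within $CL_T^{(p,1)}$), and the second Lebesgue exponent taken to be $1$, keeping the remaining factor on the scale $\fB{_{1,\infty}^{s_2}}$ with $s_2\in\{0,1\}$. The composition factors $I(a)$ and $\wt{I}_P(a)$ are handled by the Banach-algebra property \eqref{est;Banach_ring} of $\fB{_{p,1}^{\frac{d}{p}}}$ together with the analyticity-compatible composition bounds obtained as in the derivation of \eqref{est;I_P}--\eqref{est;I_P2} via Lemmas~\ref{lem;bil_anal} and \ref{lem;bil_anal2}, yielding for instance $\|e^{\sqrt{c_0t}|\N|}I(a)\|_{\wt{L^r_T(}\fB{_{1,\infty}^{s_2}})}\les\|A\|_{\wt{L^r_T(}\fB{_{1,\infty}^{s_2}})}$. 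Finally, the $\fB{_{1,\infty}}$-scaled factors are controlled by the left-hand side through the interpolation embeddings
\begin{align*}
L^\infty(I;\fB{_{1,\infty}^{-1}})\cap\wt{L^1(I;}\fB{_{1,\infty}^1})&\hr\wt{L^2(I;}\fB{_{1,\infty}^0}), \\
L^\infty(I;\fB{_{1,\infty}^0})\cap\wt{L^1(I;}\fB{_{1,\infty}^2})&\hr\wt{L^2(I;}\fB{_{1,\infty}^1}),
\end{align*}
so that, for example for the trilinear term,
\begin{align*}
\|e^{\sqrt{c_0t}|\N|}\div(I(a)\,m\otimes m)\|_{\wt{L^1_T(}\fB{_{1,\infty}^{-1}})}
&\les \|A\|_{\wt{L^\infty_T(}\fB{_{p,1}^{\frac{d}{p}}})}\|M\|_{\wt{L^2_T(}\fB{_{p,1}^{\frac{d}{p}}})}\|M\|_{\wt{L^2_T(}\fB{_{1,\infty}^0})} \\
&\les \|(A,M)\|_{CL_T^{(p,1)}}^2\,\|(|\N|A,M)\|_{L^\infty(I;\fB{_{1,\infty}^{-1}})\cap\wt{L^1(I;}\fB{_{1,\infty}^1})},
\end{align*}
and the remaining three terms are treated analogously, each producing a factor $\|(A,M)\|_{CL_T^{(p,1)}}$ (or its square) together with one factor of the left-hand side.

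Collecting everything one arrives at
\begin{align*}
\|(|\N|A,M)\|_{L^\infty(I;\fB{_{1,\infty}^{-1}})\cap\wt{L^1(I;}\fB{_{1,\infty}^1})}
&\les \|(a_0,\tilde{m}_0)\|_{\fB{_{1,\infty}^0}} \\
&\quad +\big(\|(A,M)\|_{CL_T^{(p,1)}}+\|(A,M)\|_{CL_T^{(p,1)}}^2\big)\,\|(|\N|A,M)\|_{L^\infty(I;\fB{_{1,\infty}^{-1}})\cap\wt{L^1(I;}\fB{_{1,\infty}^1})},
\end{align*}
and since $\|(A,M)\|_{CL_T^{(p,1)}}\les\ve_0\ll1$ the nonlinear contribution is absorbed into the left-hand side, with the constant independent of $T$; this gives the claim for every $T>0$. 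The main obstacle is the exponent bookkeeping hidden behind the phrase ``apply Lemma~\ref{lem;prod_anal4}'': in each nonlinear term one must split the product so that exactly one factor lands in a space subordinate to the small $CL_T^{(p,1)}$-norm while the other factor stays on the $\fB{_{1,\infty}}$-scale controlled by the quantity being estimated, and simultaneously all the constraints of Lemma~\ref{lem;prod_anal4} ($\tfrac{1}{p}\le\tfrac{1}{p_1}+\tfrac{1}{p_2}$, $s_1\le\tfrac{d}{p_1}$, $s_2<\tfrac{d}{p_2}$, $s_1+s_2+d\min(0,1-\tfrac{1}{p_1}-\tfrac{1}{p_2})\ge0$) have to be met. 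This is tightest for the Lam\'e term $\mathcal{L}(I(a)m)$ and the Korteweg term $\div\mathcal{K}(a)$, which carry the most derivatives; there one must exploit the full structure of the $CL_T^{(p,1)}$-norm, which simultaneously encodes low-frequency decay and critical-regularity parabolic smoothing, together with the standing assumptions $d\ge3$ and $p<d$.
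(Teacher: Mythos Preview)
Your argument is correct and essentially the same as the paper's: start from the linear estimate \eqref{est;GV} at regularity $\fB{_{1,\infty}^{-1}}$, bound each nonlinear term by a product of a small $CL_T^{(p,1)}$-controlled factor and the quantity being estimated, then absorb. The only cosmetic difference is that the paper packages the key product law as a dedicated lemma (Lemma~\ref{lem;bil5}, giving $\fB{_{\infty,1}^0}\times\fB{_{1,\infty}^0}\to\fB{_{1,\infty}^0}$ and then invoking $\fB{_{p,1}^{d/p}}\hr\fB{_{\infty,1}^0}$), whereas you obtain the equivalent estimate $\fB{_{p,1}^{d/p}}\times\fB{_{1,\infty}^0}\to\fB{_{1,\infty}^0}$ directly from Lemma~\ref{lem;prod_anal4} with $(p,p_1,p_2,s_1,s_2)=(1,p,1,d/p,0)$; the exponent constraints you flagged are indeed satisfied for all four nonlinear pieces under $d\ge3$.
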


\begin{lem}[{\it Bilinear estimates} V\hspace{0mm}] \label{lem;bil5}
Let $1 \le r_i \le \infty$ $(i=1,2,3,4)$ satisfying 
$$
\frac{1}{r}=\frac{1}{r_1}+\frac{1}{r_2}=\frac{1}{r_3}+\frac{1}{r_4}. 
$$ 
There exists some constant $C > 0$ such that the following estimate holds: 
\begin{equation} \label{est;prod_bl}
\|e^{\sqrt{c_0 t}|\N|}fg\|_{\wt{L^r_T(}\fB{_{1,\infty}^0})}
\le 
C\left(\|F\|_{\wt{L^{r_1}_T(}\fB{_{1,\infty}^0})} \|G\|_{L^{r_2}_T(\fB{^0_{\infty,1}})} 
+ \|F\|_{L^{r_3}_T(\fB{^0_{\infty,1}})}
\|G\|_{\wt{L^{r_4}_T(}\fB{^0_{1,\infty}})}\right). 
\end{equation}
\end{lem}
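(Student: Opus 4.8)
The plan is to combine Bony's paraproduct decomposition with the elementary inequality $|\xi|\le|\xi-\eta|+|\eta|$ that underlies Lemma~\ref{lem;B_t}, exactly as in the proofs of Lemmas~\ref{lem;bil_anal} and \ref{lem;bil_anal2}; the only new ingredient is the index bookkeeping that makes the endpoint regularity $s=0$ admissible. Writing $F:=e^{\sqrt{c_0 t}|\N|}f$ and $G:=e^{\sqrt{c_0 t}|\N|}g$, I would split
\[
fg=\sum_{j\in\Z}S_{j-1}g\,\Dj f+\sum_{j\in\Z}S_{j-1}f\,\Dj g+\sum_{j\in\Z}\Dj f\,\tilde g_j=:\mathrm{I}+\mathrm{II}+\mathrm{III}.
\]
Since $e^{\sqrt{c_0 t}|\N|}$ is a Fourier multiplier it commutes with $\Dj$ and with $S_{j-1}$, and for any $h_1,h_2$ with compactly supported spectra one has, pointwise in $\xi$,
\[
\big|\mathcal{F}[e^{\sqrt{c_0 t}|\N|}(h_1h_2)](\xi)\big|\le C\big(|\mathcal{F}[e^{\sqrt{c_0 t}|\N|}h_1]|\ast|\mathcal{F}[e^{\sqrt{c_0 t}|\N|}h_2]|\big)(\xi),
\]
which follows from $\widehat{h_1h_2}=C\,\wh h_1\ast\wh h_2$ and $e^{\sqrt{c_0 t}|\xi|}\le e^{\sqrt{c_0 t}|\xi-\eta|}e^{\sqrt{c_0 t}|\eta|}$. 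This transfers the exponential weight from the product onto the two factors, after which the remaining scalar convolution at the level of each dyadic block is absorbed by Young's inequality $\wh L^1\cdot\wh L^\infty\hookrightarrow\wh L^1$ (i.e.\ $L^\infty\ast L^1\hookrightarrow L^\infty$ on the Fourier side).

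For the low--high paraproduct $\mathrm{I}$, only the indices with $|j-k|\le 2$ contribute to $\dot\Delta_k\mathrm{I}$ since $S_{j-1}g\,\Dj f$ is spectrally supported at frequency $\sim 2^j$; the displayed bound, Young's inequality and $\|S_{j-1}G(t)\|_{\wh L^\infty}\le\|G(t)\|_{\fB{_{\infty,1}^0}}$ then give $\|\dot\Delta_k(e^{\sqrt{c_0 t}|\N|}\mathrm I)(t)\|_{\wh L^1}\les\sum_{|j-k|\le2}\|G(t)\|_{\fB{_{\infty,1}^0}}\|\Dj F(t)\|_{\wh L^1}$, and H\"older in $t$ with $\tfrac1r=\tfrac1{r_1}+\tfrac1{r_2}$ followed by $\sup_k$ yields the first term on the right-hand side of the claim. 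The high--low paraproduct $\mathrm{II}$ is handled identically with the roles of $F$ and $G$ exchanged (using $\|S_{j-1}F(t)\|_{\wh L^\infty}\le\|F(t)\|_{\fB{_{\infty,1}^0}}$ and H\"older with $\tfrac1r=\tfrac1{r_3}+\tfrac1{r_4}$), producing the second term.

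I expect the remainder $\mathrm{III}$ to be the main obstacle and the place where the choice of spaces is essential. Here $\Dj f\,\tilde g_j$ is only spectrally supported in a ball of radius $\sim 2^{j}$, so $\dot\Delta_k\mathrm{III}$ receives contributions from all $j\ge k-C$ and a careless estimate would cost a logarithmic factor. After the displayed transfer and Young's inequality one is left with $\sum_{j\ge k-C}\|\Dj F(t)\|_{\wh L^1}\|\tilde G_j(t)\|_{\wh L^\infty}$, where $\tilde G_j:=e^{\sqrt{c_0 t}|\N|}\tilde g_j=\tilde\phi_j\ast G$, and the summability index $1$ of the space carrying $G$ gives $\sum_{j}\|\tilde G_j(t)\|_{\wh L^\infty}\les\|G(t)\|_{\fB{_{\infty,1}^0}}$; hence $\|\dot\Delta_k(e^{\sqrt{c_0 t}|\N|}\mathrm{III})(t)\|_{\wh L^1}\les(\sup_j\|\Dj F(t)\|_{\wh L^1})\|G(t)\|_{\fB{_{\infty,1}^0}}$ uniformly in $k$, and H\"older in $t$ with $\tfrac1r=\tfrac1{r_1}+\tfrac1{r_2}$ again gives a contribution of the size of the first term. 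This is exactly the endpoint mechanism: a generic Besov product law needs $s_1+s_2>0$ for the remainder, while here $s_1=s_2=0$ is compensated by pairing the third indices as $\infty$ for the output and for $F$ against $1$ for $G$. Summing $\mathrm{I}$, $\mathrm{II}$, $\mathrm{III}$ and observing that the only constraints used were $\tfrac1r=\tfrac1{r_1}+\tfrac1{r_2}=\tfrac1{r_3}+\tfrac1{r_4}$ finishes the argument; all $j$-series converge because $F$ and $G$ belong to the finite norms appearing on the right.
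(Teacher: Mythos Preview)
Your argument follows the same strategy as the paper's proof: Bony's paraproduct decomposition, the Gevrey transfer via $|\xi|\le|\xi-\eta|+|\eta|$ (Lemma~\ref{lem;B_t}), Young's inequality $\wh L^1\cdot\wh L^\infty\hookrightarrow\wh L^1$ on each block, and the endpoint pairing $(\ell^\infty,\ell^1)$ of third indices to close the remainder at $s=0$. The only substantive difference is symmetric: in the diagonal piece you place $F$ in $\wh L^1$ and $G$ in $\wh L^\infty$ (so the sum over $j$ is absorbed by $\|G\|_{\fB_{\infty,1}^0}$ and the remainder contributes to the first term of the right-hand side), whereas the paper swaps the roles and makes it contribute to the second term. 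Either choice works.

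There is, however, a small ordering slip in your treatment of $\mathrm{III}$. You estimate pointwise in $t$ by $(\sup_j\|\Dj F(t)\|_{\wh L^1})\|G(t)\|_{\fB_{\infty,1}^0}$ and \emph{then} apply H\"older in $t$. This produces the Bochner norm $\|F\|_{L^{r_1}_T(\fB_{1,\infty}^0)}=\big\|\sup_j\|\Dj F(t)\|_{\wh L^1}\big\|_{L^{r_1}_t}$, which for $\sigma=\infty$ dominates (and is in general strictly larger than) the Chemin--Lerner norm $\|F\|_{\wt{L^{r_1}_T}(\fB_{1,\infty}^0)}=\sup_j\|\Dj F\|_{L^{r_1}_t\wh L^1}$ that appears in the statement. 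The fix is to reverse the order, as the paper does in its estimate for the diagonal part: apply H\"older in $t$ to each summand $\|\Dj F(t)\|_{\wh L^1}\|\tilde G_j(t)\|_{\wh L^\infty}$ first, obtaining $\sum_{j\ge k-C}\|\Dj F\|_{L^{r_1}_t\wh L^1}\|\tilde G_j\|_{L^{r_2}_t\wh L^\infty}$, and only then pull out $\sup_j\|\Dj F\|_{L^{r_1}_t\wh L^1}=\|F\|_{\wt{L^{r_1}_T}(\fB_{1,\infty}^0)}$. With this correction your argument is complete and matches the paper's.
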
   

\begin{proof}[The proof of Theorem \ref{thm;Asympt}] 
By using the estimate \eqref{est;GV}, we easily see that 
\begin{equation} \label{est;GV_L1hat}
\begin{aligned}
\|(|\N|A,M)\|_{L^\infty(I;\fB{_{1,\infty}^{-1}}) \cap \wt{L^1(I;}\fB{_{1,\infty}^1}) } 
\les& \|(|\N|a_0,m_0)\|_{\fB{_{1,\infty}^{-1}}} \\
&+\|e^{\sqrt{c_0 t}|\N|}\mathcal{N}(a,m)\|_{\wt{L^1(I;}\fB{_{1,\infty}^{-1}})}. 
\end{aligned}
\end{equation}
It follows from Lemma \ref{lem;Bern} and Lemma \ref{lem;bil5} that 
\begin{align*}
\|e^{\sqrt{c_0 t}|\N|} \div (I(a) m \otimes m)\|_{\wt{L^1_T(}\fB{_{1,\infty}^{-1}})}
\les& \|e^{\sqrt{c_0 t}|\N|} I(a) m \otimes m\|_{\wt{L^1_T(}\fB{_{1,\infty}^{0}})} \\
\les& \|e^{\sqrt{c_0 t}|\N|} I(a)\|_{L^\infty_T(\fB{_{1,\infty}^0})} \|e^{\sqrt{c_0 t}|\N|} m \otimes m\|_{L^1_T(\fB{_{\infty,1}^0})} \\
&+ \|e^{\sqrt{c_0 t}|\N|} I(a)\|_{L^\infty_T(\fB{_{\infty,1}^0})} \|e^{\sqrt{c_0 t}|\N|} m \otimes m\|_{\wt{L^1_T(}\fB{_{1,\infty}^0})}. 
\end{align*}
Since Lemma \ref{lem;sm} gives us that 
$\fB{_{p,1}^\frac{d}{p}}(\R^d) \hr \wh{\dot{B}}{_{\infty,1}^0}(\R^d)$ for all $1 \le p \le \infty$, 
it follows from Corollary \ref{cor;prod_anal3} and H\"older inequality that 
\begin{equation*}
\|e^{\sqrt{c_0 t}|\N|} m \otimes m\|_{L^1_T(\fB{_{\infty,1}^0})} 
\les \|M\|_{\wt{L^2_T(}\fB{_{p,1}^\frac{d}{p}})}^2, \quad 
\|e^{\sqrt{c_0 t}|\N|} I(a)\|_{L^\infty_T(\fB{_{\infty,1}^0})} 
\les \|A\|_{\wt{L^\infty_T(}\fB{_{p,1}^\frac{d}{p}})}. 
\end{equation*}
On the other hand, we obtain by Lemma \ref{lem;bil5} that 
\begin{equation*}
 \|e^{\sqrt{c_0 t}|\N|} I(a)\|_{L^\infty_T(\fB{_{1,\infty}^0})} 
 \les \|A\|_{L^\infty_T(\fB{_{1,\infty}^0})}, \;\;
 \|e^{\sqrt{c_0 t}|\N|} m \otimes m\|_{\wt{L^1_T(}\fB{_{1,\infty}^0})} 
 \les \|M\|_{\wt{L^2_T(}\fB{_{1,\infty}^0})} \|M\|_{L^2_T(\fB{_{p,1}^\frac{d}{p}})}. 
 \end{equation*}
 Gathering all, we then obtain that 
 \begin{equation*}
 \|e^{\sqrt{c_0 t}|\N|} \div (I(a) m \otimes m)\|_{\wt{L^1_T(}\fB{_{1,\infty}^{-1}})}
 \les \|(A,M)\|_{CL_T^{(p,1)}}^2 \|(|\N|A,M)\|_{\wt{L^\infty_T(}\fB{_{1,\infty}^{-1}}) \cap \wt{L^1_T(}\fB{_{1,\infty}^1})}. 
 \end{equation*}
 
It follows from Lemma \ref{lem;Bern} and Lemma \ref{lem;bil5} that 
\begin{align*}
\|e^{\sqrt{c_0 t}|\N|} \N (a^2 \wt{I}_P(a))\|_{\wt{L^1_T(}\fB{_{1,\infty}^{-1}})}
\les& \|A\|_{L^\infty_T(\fB{_{1,\infty}^0})} \|e^{\sqrt{c_0 t}|\N|} a \wt{I}_P(a)\|_{L^1_T(\fB{_{\infty,1}^0})} \\
&+ \|A\|_{L^1_T(\fB{_{\infty,1}^0})} \|e^{\sqrt{c_0 t}|\N|} a \wt{I}_P(a)\|_{L^\infty_T(\fB{_{1,\infty}^0})}. 
\end{align*}
Since it follows from Corollary \ref{cor;prod_anal3} and Lemma \ref{lem;bil5} that 
\begin{equation*}
\|e^{\sqrt{c_0 t}|\N|} a \wt{I}_P(a)\|_{L^1_T(\fB{_{\infty,1}^0})} 
\les \|A\|_{L^1_T(\fB{_{\infty,1}^0})}, \quad 
\|e^{\sqrt{c_0 t}|\N|} a \wt{I}_P(a)\|_{L^\infty_T(\fB{_{1,\infty}^0})}
\les \|A\|_{L^\infty_T(\fB{_{1,\infty}^0})}. 
\end{equation*}
Therefore, we obtain by gathering the above estimates and the embeddings 
$ \fB{_{p,1}^\frac{d}{p}}(\R^d) \hr \wh{\dot{B}}{_{\infty,1}^0}(\R^d)$ for all $1 \le p \le \infty$ that 
$$
\|e^{\sqrt{c_0 t}|\N|} \N (a^2 \wt{I}_P(a))\|_{\wt{L^1_T(}\fB{_{1,\infty}^{-1}})}
\les \|(A,M)\|_{CL_T^{(p,1)}} \|(|\N|A,M)\|_{\wt{L^\infty_T(}\fB{_{1,\infty}^{-1}}) \cap \wt{L^1_T(}\fB{_{1,\infty}^1})}. 
$$

By similar procedure, we easily obtain 
\begin{align*}
&\|e^{\sqrt{c_0 t}|\N|} \mathcal{L}(I(a)m)\|_{\wt{L^1_T(}\fB{_{1,\infty}^{-1}})}
+\|e^{\sqrt{c_0 t}|\N|} \div \mathcal{K}(a)\|_{\wt{L^1_T(}\fB{_{1,\infty}^{-1}})} \\
&\les \|(A,M)\|_{CL_T^{(p,1)}} \|(|\N|A,M)\|_{\wt{L^\infty_T(}\fB{_{1,\infty}^{-1}}) \cap \wt{L^1_T(}\fB{_{1,\infty}^1})}. 
\end{align*}
Combining the above estimate and Proposition \ref{prop;negative}, we have the desired estimate. 
\end{proof}

\begin{rem}
Thanks to the conculusion of Lemma \ref{prop;FL^1}, 
we are able to obtain that 
\begin{equation} \label{est;L1hat}
\|(|\N|a,m)\|_{L^\infty(\R_+;\fB{_{1,\infty}^{-1}}) \cap \wt{L^1(\R}{}_+;\fB{_{1,\infty}^1})} 
\le C\|(a_0,\tilde{m}_0)\|_{\fB{_{1,\infty}^0}}.  
\end{equation}
This is immediately obtained by Lemma \ref{prop;FL^1} and the following estimate: 
\begin{align*}
\|\Dj u\|_{L^r(I;\wh{L}^p)} 
= \|e^{-\sqrt{c_0 t}|\xi|} e^{\sqrt{c_0 t}|\xi|} \wh{\Dj u}\|_{L^r(I;\wh{L}^p)} 
\les \|\Dj U\|_{L^r(I;\wh{L}^p)}, 
\end{align*}
for some $u \in \mathcal{S}'$. Here $U:=e^{\sqrt{c_0 t}|\N|} u$. 
\end{rem}

The following statement can be obtained by gathering the same arguments as the proof of Theorem \ref{thm;Lp-L1} and 
the uniform estimate in Proposition \ref{prop;FL^1}: 
\begin{cor} 
Suppose that the same assumptions as in Proposition \ref{prop;FL^1}. 
Then it holds true that global solution $(a,m)$ satisfies 
\begin{align*}
&\||\N|^{s_1} a(t)\|_{\wh{L}^1}=O(t^{-\frac{s_1}{2}}) \;(t \to \infty) 
\quad \text{for all } s_1>0, \\
&\||\N|^{s_2} m(t)\|_{\wh{L}^1}=O(t^{-\frac{s_2+1}{2}}) \;(t \to \infty) 
\quad \text{for all } s_2>-1. 
\end{align*}
\end{cor}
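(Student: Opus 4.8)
\emph{Proof proposal.} The plan is to run the argument of the proof of Theorem~\ref{thm;Lp-L1}, with the $\wh{L}^1$–uniform bound of Proposition~\ref{prop;FL^1} playing the role there occupied by the negative Fourier--Besov bound of Proposition~\ref{prop;negative}. The first step is to reduce the $\wh{L}^1$–norm to its Littlewood--Paley pieces: by the norm equivalence $\wh{L}^1(\R^d)\simeq\fB{_{1,\infty}^0}(\R^d)$ (Lemma~\ref{lem;equi} with $p=1$) together with Bernstein's lemma (Lemma~\ref{lem;Bern}),
\[
\||\N|^{s_1}a(t)\|_{\wh{L}^1}\simeq\sup_{j\in\Z}2^{s_1 j}\|\Dj a(t)\|_{\wh{L}^1},
\qquad
\||\N|^{s_2}m(t)\|_{\wh{L}^1}\simeq\sup_{j\in\Z}2^{s_2 j}\|\Dj m(t)\|_{\wh{L}^1},
\]
so that it suffices to bound each dyadic block uniformly in $j\in\Z$.

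Next I would strip off the analytic weight. With $A:=e^{\sqrt{c_0t}|\N|}a$ and $M:=e^{\sqrt{c_0t}|\N|}m$ as in Theorem~\ref{thm;GWP_FLp}, and using $|\xi|\gtrsim 2^{j}$ on $\supp\wh{\phi}_j$, one gets $\|\Dj a(t)\|_{\wh{L}^1}\les e^{-c\sqrt t\,2^{j}}\|\Dj A(t)\|_{\wh{L}^1}$ and $\|\Dj m(t)\|_{\wh{L}^1}\les e^{-c\sqrt t\,2^{j}}\|\Dj M(t)\|_{\wh{L}^1}$ for a constant $c>0$ depending only on $c_0$. By Bernstein's lemma and the very definition of the $\fB{_{1,\infty}^{-1}}$–norm one then has $\|\Dj A(t)\|_{\wh{L}^1}\les\|(|\N|A)(t)\|_{\fB{_{1,\infty}^{-1}}}$ and $\|\Dj M(t)\|_{\wh{L}^1}\le 2^{j}\|M(t)\|_{\fB{_{1,\infty}^{-1}}}$. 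Proposition~\ref{prop;FL^1}, whose constant is independent of $T$ and hence valid for all $t\ge 0$, combined with the embedding $L^1\hr\wh{L}^1\simeq\fB{_{1,\infty}^0}$, bounds both $\|(|\N|A)(t)\|_{\fB{_{1,\infty}^{-1}}}$ and $\|M(t)\|_{\fB{_{1,\infty}^{-1}}}$ by $C\|(a_0,\tilde m_0)\|_{L^1}$, uniformly in $t$.

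It then only remains to estimate $\sup_{j\in\Z}2^{s_1 j}e^{-c\sqrt t\,2^{j}}$ and $\sup_{j\in\Z}2^{(s_2+1)j}e^{-c\sqrt t\,2^{j}}$. The substitution $x=\sqrt t\,2^{j}$ turns the first into $t^{-s_1/2}\sup_{x>0}x^{s_1}e^{-cx}$ and the second into $t^{-(s_2+1)/2}\sup_{x>0}x^{s_2+1}e^{-cx}$, and these suprema are finite precisely when $s_1>0$, respectively $s_2>-1$ (an elementary calculus fact of the same type as Lemma~\ref{lem;cineq}). This yields $\||\N|^{s_1}a(t)\|_{\wh{L}^1}\les t^{-s_1/2}\|(a_0,\tilde m_0)\|_{L^1}$ and $\||\N|^{s_2}m(t)\|_{\wh{L}^1}\les t^{-(s_2+1)/2}\|(a_0,\tilde m_0)\|_{L^1}$, which is the assertion. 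Since only $O(\cdot)$ statements are claimed, no low/high–frequency splitting nor dominated–convergence argument is required; there is no real analytic difficulty here, the only point needing a little care being the bookkeeping of the frequency–localized factor $e^{-c\sqrt t\,2^{j}}$ and the identification $\wh{L}^1\simeq\fB{_{1,\infty}^0}$ — the substantive work having already been carried out in Proposition~\ref{prop;FL^1}.
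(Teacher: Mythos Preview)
Your proposal is correct and follows essentially the same route the paper indicates: replace Proposition~\ref{prop;negative} by Proposition~\ref{prop;FL^1} in the proof of Theorem~\ref{thm;Lp-L1}, strip the analytic weight $e^{-\sqrt{c_0t}|\xi|}$ on each dyadic block, and cash in the resulting factor $(\sqrt t\,2^j)^{\alpha}e^{-c\sqrt t\,2^j}$. The only cosmetic difference is that the paper (as in the proof of Theorem~\ref{thm;Lp-L1}) would naturally control the $\fB{_{1,1}^{s}}$--norm via the \emph{sum} $\sum_j(\sqrt t\,2^j)^{\alpha}e^{-c\sqrt t\,2^j}$, whereas you use the identification $\wh L^1\simeq\fB{_{1,\infty}^0}$ and take a supremum; both are valid and yield the same rates.
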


\subsection{The proof of Theorem \ref{thm;Asympt}}  
By the Duhamel formula \eqref{eqn;IE}, $a(t)$ is written by 
\begin{align*}
&\begin{aligned}
a(t)-&G_1(t) \int_{\R^d} a_0(y) dy-G_2(t) \int_{\R^d} \tilde{m}_0(y) dy
-G_2(t) \int_0^\infty \int_{\R^d} \left(\wt{I}_{P}(a)a^2\right) dy d\t \\
-&\wt{G}_2^{(j,k)}(t) \int_0^\infty \int_{\R^d} \left( \frac{m_j m_k}{1+a}+\wt{K}^{(j,k)}(a)\right) dy d\t
\end{aligned} \\
&=: \sum_{i=1}^2 L_i(t)+\sum_{j=1}^4 N_j(t), 
\end{align*}
where $L_i(t)$ $(i=1,2)$, $N_j(t)$ $(j=1,2,3,4)$ are given by  
\begin{align*}
&L_1(t)=G^{1,1}(t)*a_0-G_1(t) \int_{\R^d} a_0(y) dy, \\
&L_2(t)=G^{1,2}(t)*m_0-G_2(t) \int_{\R^d} \tilde{m}_0(y) dy, \\
&N_1(t)=\int_0^t G^{1,2}(t-\t)*\left(\N (\wt{I}_P(a) a^2)\right) d\t
-G_2(t) \int_0^\infty \int_{\R^d} \left(\wt{I}_{P}(a)a^2\right) dy d\t, \\
&N_2(t)=\int_0^t G^{1,2}(t-\t)*\div((I(a)-1)m \otimes m) d\t-\wt{G}_2^{(j,k)}(t) \int_0^\infty \int_{\R^d} \left( \frac{m_j m_k}{1+a}\right) dy d\t, \\
&N_3(t)=\int_0^t G^{1,2}(t-\t)*\div \left(\mathcal{K}(a)-\frac{\kappa}{2} \Delta a^2 \text{Id}\right) d\t-\wt{G}_2^{(j,k)}(t) \int_0^\infty \int_{\R^d} \wt{K}^{(j,k)}(a) dy d\t, \\
&N_4(t)=\int_0^t G^{1,2}(t-\t)*\left(-\L(I(a)m)+\frac{\kappa}{2}\N \Delta a^2\right)d\t. 
\end{align*}
Thanks to Proposition \ref{prop;AP1}, we already obtain that for all $s>-d/p'$, 
$\dsp \lim_{t \to \infty} t^{\frac{d}{2p'}+\frac{s}{2}} \||\N|^s L_i(t)\|_{\wh{L}^p}=0$ with $i=1,2$. 
We now claim that 
${\dsp \lim_{t \to \infty} t^{\frac{d}{2p'}+\frac{s}{2}} \||\N|^s N_j(t)\|_{\wh{L}^p}=0}$ with $j=1,2,3,4$. 

\noindent
\underline{\it The estimate for $N_1(t)$}:  Now, we split $\wh{N}_1(t)$ which means the Fourier side of $N_1(t)$ as 
\begin{align*}
\wh{N}_1(t)=& -\wh{G}_2(t) \int_{t/2}^\infty c_d \mathcal{F}[a^2 \wt{I}_P(a)](\t,0) d\t \\
&+\int_0^{t/2} \mathcal{G}^{1,2}(t-\t) c_d i\xi \left(\mathcal{F}[a^2 \wt{I}_P(a)](\t,\xi)-\mathcal{F}[a^2 \wt{I}_P(a)](\t,0)\right) d\t \\
&+\int_0^{t/2} c_d \left(\mathcal{G}^{1,2}(t-\t) i\xi-\wh{G}_2(t)\right) \mathcal{F}[a^2 \wt{I}_P(a)](\t,0) d\t \\
&+\int_{t/2}^t c_d \mathcal{G}^{1,2}(t-\t) i\xi \mathcal{F}[a^2 \wt{I}_P(a)](\t,\xi) d\t \\
=&:\wh{N}_{1,1}(t)+\wh{N}_{1,2}(t)+\wh{N}_{1,3}(t)+\wh{N}_{1,4}(t). 
\end{align*}

Since 
$|e^{\lam_\pm(\xi)t}| \les e^{-c_0 |\xi|^2 t}$ if $\nu^2 \neq 4 \kappa$ and 
$|t |\xi|^2 e^{\frac{\nu}{2}|\xi|^2 t}| \les e^{-c_0 |\xi|^2 t}$ if $\nu^2 = 4 \kappa$, 
 it follows from  H\"older's inequality, \eqref{est;L1hat}, 
 $\||\xi|^\al e^{-c_0 t|\xi|^2}\|_{L^{p'}_\xi} \les t^{-\frac{d}{2p'}-\frac{\al}{2}}$ for all $\al>-d/p'$, and 
Theorem \ref{thm;Lp-L1} that for all $t>2$, 
\begin{align*}
t^{\frac{d}{2p'}+\frac{s}{2}}\||\N|^s N_{1,1}(t)\|_{\wh{L}^p}
\les& t^{\frac{d}{2p'}+\frac{s}{2}} \||\xi|^s e^{-c_0 |\xi|^2 t}\|_{L^{p'}_\xi} 
\left|\int_{t/2}^\infty \mathcal{F}[a^2 \wt{I}_P(a)](\t,0) d\t\right| \\
\les& \int_{t/2}^\infty \|(a^2 \wt{I}_P(a))(\t)\|_{L^1} d\t
\les  \int_{t/2}^\infty \|a(\t)\|_{L^2}^2 d\t \\
\les& \int_{t/2}^\infty \|a(\t)\|_{\wh{L}^1} \|a(\t)\|_{\wh{L}^\infty} d\t  
\les \int_{t/2}^\infty \t^{-\frac{d}{2}} d\t 
\les t^{-\frac{d}{2}+1}. 
\end{align*}
Therefore, we see $\||\N|^s N_{1,1}(t)\|_{\wh{L}^p}=o(t^{-\frac{d}{2}(1-\frac{1}{p})-\frac{s}{2}})$ $(t \to \infty)$. 

Performing the change of variables $\xi \mapsto \frac{\eta}{\sqrt{t-\t}}$ and using 
$\||\xi|^\al e^{-c_0 |\xi|^2}\|_{L^{p'}_\xi} \les 1$ if $\al>-d/p'$, 
it follows that 
\begin{align*}
\||\N|^s N_{1,2}(t)\|_{\wh{L}^p}
\les& \int_0^{t/2} \left\||\xi|^{s} e^{-c_0 (t-\t) |\xi|^2}
\left(\mathcal{F}[a^2 \wt{I}_P(a)](\t,\xi)-\mathcal{F}[a^2 \wt{I}_P(a)](\t,0)\right)\right\|_{L^{p'}_\xi} d\t \\
\les& \int_0^{t/2} (t-\t)^{-\frac{d}{2p'}-\frac{s}{2}} \\
& \times  \left\| |\eta|^s e^{-c_0 |\eta|^2} \left(\mathcal{F}[a^2 \wt{I}_P(a)]\left(\t,\frac{\eta}{\sqrt{t-\t}}\right)-\mathcal{F}[a^2 \wt{I}_P(a)](\t,0)\right) \right\|_{L^{p'}_\eta} d\t \\
\les& t^{-\frac{d}{2p'}-\frac{s}{2}} \int_0^{t/2} \psi_t (\t) d\t, 
\end{align*}
where $\psi_t(\t):=
\||\eta|^s e^{-c_0 |\eta|^2} (\mathcal{F}[a^2 \wt{I}_P(a)](\t,\frac{\eta}{\sqrt{t-\t}})-\mathcal{F}[a^2 \wt{I}_P(a)](\t,0))\|_{L^{p'}_\eta}$.  To show 
$\||\N|^s N_{1,2}(t)\|_{\wh{L}^p}=o(t^{-\frac{d}{2}(1-\frac{1}{p})-\frac{s}{2}})$, 
we need to confirm that 
\begin{equation} \label{est;psi} 
\lim_{t \to \infty} \int_0^{t/2} \psi_t(\t) d\t=0. 
\end{equation}
First of all, for fixed $M>0$, we easily show that $\lim_{t \to \infty} \int_0^M \psi_t (\t) d\t=0$. 
Actually, it follows from the definition of $\psi_t(\t)$ that 
\begin{align*}
  \int_0^M \psi_t (\t) d\t
 \les &\int_0^M \left\| |\eta|^s e^{-c_0 |\eta|^2} \int_{\R^d} (1-e^{-i y \cdot \frac{\eta}{\sqrt{t-\t}}}) a^2(\t,y) \wt{I}_P(a)(\t,y) dy\right\|_{L^{p'}_\eta} d\t. 
 \end{align*}
 By using the dominated convergence theorem, we obtain $\lim_{t \to \infty} \int_0^M \psi_t (\t) d\t=0$ as $t \to \infty$ 
 because it follows from Plancherel's theorem and  $a \in L^\infty(\R_+;\wh{L}^1) \cap L^2(\R_+;\fB{_{p,1}^{d/p}})$ 
 that 
 \begin{align*}
 &\left\| |\eta|^s e^{-c_0 |\eta|^2} \int_{\R^d} (1-e^{-i y \cdot \frac{\eta}{\sqrt{t-\t}}}) a^2(\t,y) \wt{I}_P(a)(\t,y) dy\right\|_{L^{p'}_\eta} \\
 &\les \left\| \int_{\R^d} (1-e^{-i y \cdot \frac{\eta}{\sqrt{t-\t}}}) a^2(\t,y) \wt{I}_P(a)(\t,y) dy\right\|_{L^\infty_\eta} \\
 &\les \|(a \wt{I}_P(a))(\t)\|_{\wh{L}^\infty} \|a(\t)\|_{\wh{L}^1} 
 \les \|a(\t)\|_{\wh{L}^\infty} \|a(\t)\|_{\wh{L}^1} \in L^1(0,M). 
 \end{align*} 

and $\lim_{t \to \infty} \psi_t(\t)=0$ a.e. $\t \in (0,M)$. On the other hand, for all $\ve>0$, 
we can choose $M>0$ such that $\int_M^\infty \psi_t(\t) d\t < \ve$ because it follows from Hausdorff-Young's inequality 
and Theorem \ref{thm;Lp-L1} that 
\begin{align*}
\int_M^\infty \psi_t(\t) d\t
\les \int_M^\infty \|(a \wt{I}_P(a))(\t)\|_{\wh{L}^\infty} \|a(\t)\|_{\wh{L}^1} d\t  
\les \int_M^\infty \t^{-\frac{d}{2}} d\t \les M^{1-\frac{d}{2}}. 
\end{align*}
Due to $d \ge 3$, it holds that $M^{1-\frac{d}{2}} < \ve$ if $M \gg 1$. 
Gathering the above arguments, we thus obtain that for all $\ve>0$, there exists some $M>0$ such that 
\begin{align*}
\int_0^{t/2} \psi_t(\t) d\t \les \int_0^M \psi_t(\t) d\t+\ve. 
\end{align*}
Therefore, we could confirm that \eqref{est;psi} holds true. And also, we can obtain 
$\||\N|^s N_{1,2}\|_{\wh{L}^p}=o(t^{-\frac{d}{2}(1-\frac{1}{p})-\frac{s}{2}})$ $(t \to \infty)$. 

As for the estimate on $N_{1,3}(t)$, we first consider the case of $\nu^2 \neq 4 \kappa$. 
Applying the mean value theorem to  $\wh{G}_\pm(t-\t)-\wh{G}_\pm(t)$ in $\wh{N}_{1,3}(t)$, we see that 
\begin{align*}
\||\N|^s N_{1,3}(t)\|_{\wh{L}^p} 
\les& \sum_{\s=\pm} \int_0^{t/2} 
\left\||\xi|^s \left(\int_0^1 \frac{\t \lam_\s(\xi)}{\sqrt{\nu^2-4 \kappa}} e^{(t-\theta \t)\lam_\s(\xi)} d\theta\right) 
\mathcal{F}[a^2 \wt{I}_P(a)](\t,0)\right\|_{L^{p'}_\xi} d\t \\
\les& \int_0^{t/2} \int_0^1 \|\t |\xi|^{s+2} e^{-c_0 (t-\theta \t)|\xi|^2} \mathcal{F}[a^2 \wt{I}_P(a)](\t,0)\|_{L^{p'}_\xi} d\theta d\t \\
\les& 
\int_0^{t/2} \int_0^1 \t (t-\theta \t)^{-\frac{d}{2p'}-\frac{s+2}{2}} \|\mathcal{F}[a^2 \wt{I}_P(a)](\t,0)\|_{L^{\infty}_\xi} d\theta d\t \\
\les& t^{-\frac{d}{2p'}-\frac{s}{2}-1} \int_0^{t/2} \t \|a(\t)\|_{L^2}^2 d\t. 
\end{align*}
On the other hand, in the case of $\nu^2 = 4 \kappa$, we obtain by applying 
the mean value theorem to $(t-\t)e^{-\frac{\nu}{2}|\xi|^2(t-\t)}-te^{-\frac{\nu}{2}|\xi|^2t}$, we also see that 
\begin{align*}
&\||\N|^s N_{1,3}(t)\|_{\wh{L}^p} \\
&\les \int_0^{t/2} 
\left\||\xi|^{s} \left\{\int_0^1 \t |\xi|^2 \left(1-\frac{\nu}{2}|\xi|^2 (t-\theta \t) \right) e^{-\frac{\nu}{2}(t-\theta \t)|\xi|^2} d\theta\right\}
\mathcal{F}[a^2 \wt{I}_P(a)](\t,0)\right\|_{L^{p'}_\xi} d\t \\
&\les \int_0^{t/2} \int_0^1 \|\t |\xi|^{s+2} e^{-c_0 (t-\theta \t)|\xi|^2} \mathcal{F}[a^2 \wt{I}_P(a)](\t,0)\|_{L^{p'}_\xi} d\theta d\t \\
&\les 
\int_0^{t/2} \int_0^1 \t (t-\theta \t)^{-\frac{d}{2p'}-\frac{s+2}{2}} \|\mathcal{F}[a^2 \wt{I}_P(a)](\t,0)\|_{L^{\infty}_\xi} d\theta d\t \\
&\les t^{-\frac{d}{2p'}-\frac{s}{2}-1} \int_0^{t/2} \t \|a(\t)\|_{L^2}^2 d\t. 
\end{align*}

Since $a \in L^\infty(\R_+;\wh{L}^1) \cap L^2(\R_+;\fB{_{p,1}^\frac{d}{p}})$, it follows from Hausdorff-Young's inequality and 
Theorem \ref{thm;Lp-L1} that for all $t>2$, 
\begin{align*}
t^{-1} \int_0^{t/2} \t \|a(\t)\|_{L^2}^2 d\t
\les& t^{-1} \int_0^{1} \t \|a(\t)\|_{L^2}^2 d\t+t^{-1} \int_1^{t/2} \t \|a(\t)\|_{L^2}^2 d\t \\
\les& t^{-1} \|a\|_{L^\infty(0,1;\wh{L}^1)} \|a\|_{L^2(0,1;\wh{L}^\infty)} 
+t^{-1} \int_1^t \t^{1-\frac{d}{2}} d\t. 
\end{align*}
As for the last integral term, it is straightforward to obtain 
\begin{equation*}
t^{-1} \int_1^t \t^{1-\frac{d}{2}} d\t 
\le 
\left\{
\begin{aligned}
&2 t^{-\frac{1}{2}}, &d=3, \\
&t^{-1} \log t, &d=4, \\
&\frac{2}{d-4} t^{-1}, &d \ge 5, 
\end{aligned}
\right. 
\end{equation*}
and thus, we can obtain that for  all $t>2$, 
$$
\lim_{t \to \infty} t^{-1} \int_0^{t/2} \t \|a(\t)\|_{L^2}^2 d\t=0. 
$$
Therefore, we arrive at $\||\N|^s N_{1,3}\|_{\wh{L}^p}=o(t^{-\frac{d}{2}(1-\frac{1}{p})-\frac{s}{2}})$ $(t \to \infty)$. 

Let us consider the estimate of $N_{1,4}(t)$. First of all, we obtain by the boundedness of $\mathcal{G}^{1,2}(t-\t) i \xi$ 
and Lemma \ref{lem;equi} that for all $s>-d/p$ and $1 \le p \le 2$, 
\begin{align*}
\||\N|^s N_{1,4}(t)\|_{\wh{L}^p}
\les& \int_{t/2}^t \||\xi|^s \mathcal{G}^{1,2}(t-\t) i\xi \F[a^2 \wt{I}_P(a)](\t,\xi)\|_{L^{p'}_\xi} d\t \\
\les& \int_{t/2}^t \|(a^2 \wt{I}_P(a))(\t)\|_{\wh{\dot{B}}{^s_{p,p'}}} d\t. 
\end{align*}

In the case of $-d/p'<s<d/p$, we obtain by using Lemma \ref{lem;A-P} that 
\begin{align*}
\int_{t/2}^t \|(a^2 \wt{I}_P(a))(\t)\|_{\wh{\dot{B}}{^s_{p,p'}}} d\t
\les& \int_{t/2}^t \|(a \wt{I}_P(a))(\t)\|_{\fB{_{p,1}^\frac{d}{p}}} \|a(\t)\|_{\fB{_{p,p'}^s}} d\t \\
\les& \int_{t/2}^t  \|a(\t)\|_{\fB{_{p,1}^\frac{d}{p}}} \|a(\t)\|_{\fB{_{p,p'}^s}} d\t \\
\les& \int_{t/2}^t  \t^{-\frac{d}{2}} \t^{-\frac{d}{2p'}-\frac{s}{2}} d\t
\les  t^{-\frac{d}{2p'}-\frac{s}{2}} t^{1-\frac{d}{2}}, 
\end{align*}
and thus, we have $\||\N|^s N_{1,4}(t)\|_{\wh{L}^p}=o(t^{-\frac{d}{2}(1-\frac{1}{p})-\frac{s}{2}})$ $(t \to \infty)$ because of 
$d \ge 3$. 

On the other hand, in the case of $s \ge d/p>0$, it follows from Lemma \ref{lem;bil} that 
\begin{align*}
\int_{t/2}^t \|(a^2 \wt{I}_P(a))(\t)\|_{\wh{\dot{B}}{^s_{p,p'}}} d\t
\les \int_{t/2}^t \|(a^2)(\t)\|_{\wh{\dot{B}}{^s_{p,p'}}} d\t 
\les \int_{t/2}^t  \|a(\t)\|_{\fB{_{p,1}^\frac{d}{p}}} \|a(\t)\|_{\fB{_{p,p'}^s}} d\t. 
\end{align*}
In a similar argument as in the case of $-d/p'<s<d/p$, we thus obtain 
$\||\N|^s N_{1,4}(t)\|_{\wh{L}^p}=o(t^{-\frac{d}{2}(1-\frac{1}{p})-\frac{s}{2}})$ $(t \to \infty)$. 
Gathering the above arguments, we complete the estimate of $N_1(t)$. 

\noindent
\underline{\it The estimate for $N_2(t)$, $N_3(t)$}: 
Mimicking the same procedure for the estimate of $N_1(t)$, 
the estimates of $N_2(t)$ and $N_3(t)$ can be obtained. 
We leave the details to the reader. 

\noindent
\underline{\it The estimate for $N_4(t)$}: 
Let us consider the estimate of $N_{4}(t)$. In this paper, we only consider the estimate for $\N \Delta a^2$. 
It follows from the same arguments as in the proof of Theorem \ref{thm;Lp-L1} and Lemma \ref{lem;equi} that 
\begin{align*}
\left\||\N|^s \int_0^t G^{1,2}(t-\t)*\left(\frac{\kappa}{2} \N \Delta a^2\right)(\t) d\t \right\|_{\wh{L}^p} 
\les& \int_0^t \||\xi|^s |\xi|^{-1} e^{-(t-\t)c_0|\xi|^2} \mathcal{F}[\N \Delta a^2](\t,\xi)\|_{L^{p'}_\xi} d\t \\
\les& \int_0^{t/2} (t-\t)^{-\frac{d}{2p'}-\frac{s+2}{2}} \|(a^2)(\t)\|_{\fB{_{p,\infty}^{-\frac{d}{p'}}}} d\t \\
&+ \int_{t/2}^t \|(\Delta a^2)(\t)\|_{\fB{_{p,p'}^s}} d\t. 
\end{align*}
Thanks to the product estimate obtained by \cite[Lemma 5.2]{Na2}, we see that 
\begin{align*}
 \int_0^{t/2} (t-\t)^{-\frac{d}{2p'}-\frac{s+2}{2}} \|(a^2)(\t)\|_{\fB{_{p,\infty}^{-\frac{d}{p'}}}} d\t 
 \les& t^{-\frac{d}{2p'}-\frac{s+2}{2}} \int_0^{t/2} \|a(\t)\|_{\fB{_{p,\infty}^{-\frac{d}{p'}}}} \|a(\t)\|_{\fB{_{p,1}^\frac{d}{p}}} d\t \\
 \les& t^{-\frac{d}{2p'}-\frac{s+2}{2}} \int_0^{t/2} \langle \t\rangle^{-\frac{d}{2}} d\t 
 \les t^{-\frac{d}{2p'}-\frac{s+2}{2}}. 
\end{align*}

On the other hand, noting that  $\Delta a^2=2|\N a|^2+2 a \Delta a$, 
we obtain from the same arguments as obtaining the estimate for $N_{1,4}(t)$ that 
\begin{align*}
\int_{t/2}^t \|(\Delta a^2)(\t)\|_{\fB{_{p,p'}^s}} d\t
\les \int_{t/2}^t \t^{-\frac{d}{2p'}-\frac{s+d+2}{2}} d\t 
\les t^{-\frac{d}{2p'}-\frac{s+d}{2}}. 
\end{align*}

Analogously, one can obtain the estimate for $\mathcal{L}(I(a)m)$. 
Therefore, we arrive at $\||\N|^s N_4(t)\|_{\wh{L}^p}=o(t^{-\frac{d}{2}(1-\frac{1}{p})-\frac{s}{2}})$ $(t \to \infty)$. 

Combining the above estimates, we obtain the desired estimate \eqref{est;AE}. \qed 

\begin{rem}
By performing the similar calculations, we are able to obtain \eqref{est;AE2} and \eqref{est;AE3}. 
The detals are left to the readers. 
\end{rem}

\vskip2mm
\noindent
{\bf Acknowledgments}. 
The first author is supported by Grant-in-Aid for Scientific Research (C) JP22K03374. 
The second author is supported by JSPS Grant-in-Aid for Early-Career Scientists JP22K13936.

\sect{Appendix: The product estimates} \label{sect;appendix}

In this section, we give the proof of product estimates and bilinear estimates as used in \S \ref{sect;GWP}-\S \ref{sect;AP}. 

\begin{lem}[{\it Lemma 2.6 in} \cite{Na}] \label{lem;offd1}
Let $s_1$,$s_2 \in \R$, $1 \le p,\s \le \infty$, $T \in \R_+$ and 
$1 \le r,r_1,r_2 \le \infty$ satisfy 
$\frac{1}{r}=\frac{1}{r_1}+\frac{1}{r_2}$.  
If $s_1 < \frac{d}{p}$, then 
there exists some constant $C>0$ such that 
\begin{align} \label{est;offd2}
\left\|\sum_{k \in \Z}S_{k-1}f g_k \right\|_{\wt{L^{r}_T(}\fB{_{p,\s}^{s_1+s_2-\frac{d}{p}}})}
\le C \|f\|_{\wt{L^{r_1}_T(}\fB{_{p,\s}^{s_1}})}
      \|g\|_{\wt{L^{r_2}_T(}\fB{_{p,\infty}^{s_2}})}. 
\end{align}
\end{lem}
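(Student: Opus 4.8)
The plan is to run Bony's low--high paraproduct calculus, transplanted to the Fourier--Herz / Chemin--Lerner setting. The decisive structural fact is spectral localization: since $\widehat{S_{k-1}f}$ is supported in a ball $\{|\xi|\lesssim 2^{k}\}$ that is strictly contained in the dyadic annulus carrying $\wh{g_k}$, the product $S_{k-1}f\,g_k$ has Fourier transform supported in an annulus $\{C^{-1}2^{k}\le|\xi|\le C2^{k}\}$. Hence there is a fixed integer $N_0$ with
\[
\Dj\Big(\sum_{k\in\Z}S_{k-1}f\,g_k\Big)=\sum_{|k-j|\le N_0}\Dj\big(S_{k-1}f\,g_k\big),
\]
so for each $j$ only boundedly many terms contribute and the analysis reduces to a single block.

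For the single--block estimate, note first that $\Dj$ acts as a Fourier multiplier with symbol bounded by $1$, so $\|\Dj(S_{k-1}f\,g_k)\|_{\wh L^{p}}\le\|S_{k-1}f\,g_k\|_{\wh L^{p}}$; and since $\widehat{uv}$ is a fixed constant multiple of $\wh u*\wh v$, Young's inequality in the frequency variable gives $\|S_{k-1}f\,g_k\|_{\wh L^{p}}\lesssim\|S_{k-1}f\|_{\wh L^{\infty}}\|g_k\|_{\wh L^{p}}$. Inserting a Hölder estimate in time along $\tfrac1r=\tfrac1{r_1}+\tfrac1{r_2}$ yields
\[
\|\Dj(S_{k-1}f\,g_k)\|_{L^{r}(I;\wh L^{p})}\lesssim\|S_{k-1}f\|_{L^{r_1}(I;\wh L^{\infty})}\,\|g_k\|_{L^{r_2}(I;\wh L^{p})}.
\]
For the low--frequency factor, the first Bernstein inequality of Lemma~\ref{lem;Bern} (with $q=\infty$) gives $\|f_l\|_{\wh L^{\infty}}\lesssim 2^{ld/p}\|f_l\|_{\wh L^{p}}$, whence $\|S_{k-1}f\|_{L^{r_1}(I;\wh L^{\infty})}\lesssim\sum_{l\le k-2}2^{ld/p}\|f_l\|_{L^{r_1}(I;\wh L^{p})}$.

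Now reassemble. Put $c_l:=2^{ls_1}\|f_l\|_{L^{r_1}(I;\wh L^{p})}$ and $d_k:=2^{ks_2}\|g_k\|_{L^{r_2}(I;\wh L^{p})}$, so that $\|\{c_l\}\|_{\ell^{\s}}\sim\|f\|_{\wt{L^{r_1}_T}(\fB{_{p,\s}^{s_1}})}$ and $\sup_k d_k\sim\|g\|_{\wt{L^{r_2}_T}(\fB{_{p,\infty}^{s_2}})}$. For $|k-j|\le N_0$, using $j\sim k$ one obtains
\[
2^{j(s_1+s_2-\frac dp)}\,\|\Dj(S_{k-1}f\,g_k)\|_{L^{r}(I;\wh L^{p})}\lesssim d_k\sum_{l\le k-2}2^{-(k-l)(\frac dp-s_1)}c_l.
\]
Here the hypothesis $s_1<d/p$ enters at exactly one point: it makes the sequence $\big(2^{-n(\frac dp-s_1)}\big)_{n\ge2}$ summable, i.e.\ an $\ell^{1}(\Z)$ convolution kernel. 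Summing over the finitely many relevant $k$ and taking the $\ell^{\s}_j$--norm, Young's inequality for series ($\ell^{1}*\ell^{\s}\hookrightarrow\ell^{\s}$) produces the bound $\lesssim\big(\sup_k d_k\big)\,\|\{c_l\}\|_{\ell^{\s}}$, which is the claimed estimate \eqref{est;offd2}.

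The only genuinely technical point is the bookkeeping forced by the Chemin--Lerner structure, where the $\ell^{\s}$--summation sits outside the $L^{r_i}_T$--norms: one must perform the time Hölder estimate and the blockwise Bernstein bound at the level of each dyadic piece before collecting the $\ell^{\s}$--norm, and one must keep $g$ measured in $\ell^{\infty}$ throughout — this is why the hypothesis only asks $g\in\fB{_{p,\infty}^{s_2}}$ — since it is precisely the $\ell^{\infty}$--factor $\sup_k d_k$ that allows the discrete convolution against the $\ell^1$ kernel to close. Everything else is the routine paraproduct calculus.
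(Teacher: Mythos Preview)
Your argument is correct and is essentially the same as the paper's own proof: the paper omits the proof of Lemma~\ref{lem;offd1} and refers to that of Lemma~\ref{lem;offd2}, which carries out exactly the same spectral localization ($|j-k|\le 4$), the bound $\|\Dj(\,\cdot\,)\|_{\wh L^p}\le\|\,\cdot\,\|_{\wh L^p}$ via $\|\wh\phi_j\|_{L^\infty_\xi}\le 1$ together with Young's inequality in frequency, H\"older in time, the Bernstein step $\|S_{k-1}f\|_{L^{r_1}(I;\wh L^\infty)}\lesssim\sum_{\ell\le k-2}2^{\ell d/p}\|f_\ell\|_{L^{r_1}(I;\wh L^p)}$, and then the discrete $\ell^1*\ell^\s$ convolution closing thanks to $s_1<d/p$. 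Your presentation with the auxiliary sequences $c_l,d_k$ makes the final Young step slightly more explicit, but the route is identical.
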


\begin{lem}[{\it Lemma 2.5 in} \cite{Na}] \label{lem;diagonal1}
Let $s_i \in \R$, $1 \le p,p_i,r,r_i,\s \le \infty$ $(i=1,2)$ satisfying     
$$
  \frac{1}{p} \le \frac{1}{p_1} + \frac{1}{p_2}, \quad 
  \frac{1}{r}=\frac{1}{r_1}+\frac{1}{r_2}, \quad s_1+s_2+d\min \left(0,1-\frac{1}{p_1}-\frac{1}{p_2}\right)>0. 
$$
There exists some constant $C>0$ such that 
\begin{equation} \label{est;prod_B}
\Bigg\|\sum_{k\in\Z} f_k \tilde{g}_k\Bigg\|_{\wt{L^r_T(}\fB{_{p,\s}^{s_1+s_2+\frac{d}{p}-\frac{d}{p_1}-\frac{d}{p_2}}})}
\le C\|f\|_{\wt{L^{r_1}_T(}\fB{_{p_1,\s}^{s_1}})}
     \|g\|_{\wt{L^{r_2}_T(}\fB{_{p_2,\infty}^{s_2}})}. 
\end{equation}
\end{lem}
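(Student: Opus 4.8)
The statement isolates the remainder (diagonal) piece $R(f,g):=\sum_{k\in\Z}f_k\tilde g_k$ of Bony's decomposition, so the plan is the standard Littlewood--Paley argument, adapted to the feature that on $\wh L^p$ a product becomes a convolution on the Fourier side. First I would record the frequency-support fact: since $\supp\wh{f_k\tilde g_k}\subset\{|\xi|\lesssim 2^k\}$, one has $\Dj(f_k\tilde g_k)=0$ unless $k\ge j-N_0$ for a universal integer $N_0$, so that $\Dj R(f,g)=\sum_{k\ge j-N_0}\Dj(f_k\tilde g_k)$.

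The core is a pointwise bound for $\|\Dj(f_k\tilde g_k)\|_{\wh L^p}$. Set $\theta:=\tfrac1{p_1}+\tfrac1{p_2}$ (so $\theta\ge\tfrac1p$ by hypothesis) and use $\wh{f_k\tilde g_k}=c_d\,\wh{f_k}\ast\wh{\tilde g_k}$ together with Young's convolution inequality and Bernstein's inequality (Lemma~\ref{lem;Bern}) applied to $\Dj(f_k\tilde g_k)$, which is supported in frequency $\{|\xi|\sim 2^j\}$. If $\theta\le1$, choose $q$ with $1/q=\theta$ (so $1\le q\le p$); Young gives $\|f_k\tilde g_k\|_{\wh L^q}\lesssim\|f_k\|_{\wh L^{p_1}}\|\tilde g_k\|_{\wh L^{p_2}}$ and Bernstein yields $\|\Dj(f_k\tilde g_k)\|_{\wh L^p}\lesssim 2^{jd(\theta-1/p)}\|f_k\|_{\wh L^{p_1}}\|\tilde g_k\|_{\wh L^{p_2}}$. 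If $\theta>1$ the exponent $1/q=\theta$ is inadmissible, so instead I would bound $\|f_k\tilde g_k\|_{\wh L^1}=c_d\|\wh{f_k}\ast\wh{\tilde g_k}\|_{L^\infty}\lesssim 2^{kd(\theta-1)}\|f_k\|_{\wh L^{p_1}}\|\tilde g_k\|_{\wh L^{p_2}}$ (H\"older plus a Bernstein gain from the annular support of $\wh{\tilde g_k}$), then Bernstein from $\wh L^1$ to $\wh L^p$ to obtain $\|\Dj(f_k\tilde g_k)\|_{\wh L^p}\lesssim 2^{jd/p'}2^{kd(\theta-1)}\|f_k\|_{\wh L^{p_1}}\|\tilde g_k\|_{\wh L^{p_2}}$. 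In both cases, multiplying by $2^{j\s}$ with $\s:=s_1+s_2+\tfrac dp-\tfrac d{p_1}-\tfrac d{p_2}$ and writing $j=k+(j-k)$, the $k$-powers collapse to $2^{ks_1}2^{ks_2}$ and one is left with the surplus $2^{(j-k)\beta}$, $\beta:=s_1+s_2+d\min(0,1-\theta)$:
\[
2^{j\s}\,\|\Dj(f_k\tilde g_k)\|_{\wh L^p}
\lesssim 2^{(j-k)\beta}\big(2^{ks_1}\|f_k\|_{\wh L^{p_1}}\big)\big(2^{ks_2}\|\tilde g_k\|_{\wh L^{p_2}}\big).
\]

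Summing over $k\ge j-N_0$, the geometric factor $\sum_{j-k\le N_0}2^{(j-k)\beta}$ is finite exactly because the hypothesis forces $\beta>0$; then I would take the $\ell^\s$ norm in $j$ by viewing the right-hand side as the discrete convolution of the $\ell^1$ sequence $\{2^{n\beta}\mathbf 1_{n\le N_0}\}_n$ with the sequence $\{(2^{ks_1}\|f_k\|_{\wh L^{p_1}})(2^{ks_2}\|\tilde g_k\|_{\wh L^{p_2}})\}_k$, which lies in $\ell^\s$ with norm $\lesssim\|f\|_{\fB{_{p_1,\s}^{s_1}}}\|g\|_{\fB{_{p_2,\infty}^{s_2}}}$ by H\"older for sequences (one factor in $\ell^\s$, the other in $\ell^\infty$; the tilde costs only a bounded index shift), and conclude with Young's inequality $\ell^1\ast\ell^\s\hr\ell^\s$. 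For the Chemin--Lerner version one repeats the whole scheme after inserting H\"older in time, $\tfrac1r=\tfrac1{r_1}+\tfrac1{r_2}$, at the level of each $\Dj(f_k\tilde g_k)$ --- this is precisely where the Chemin--Lerner norm (time integration before $\ell^\s$) is convenient.

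The main obstacle is the bookkeeping in the regime $\theta=\tfrac1{p_1}+\tfrac1{p_2}>1$: there the naive ``H\"older for $\wh L^p$'' $\|uv\|_{\wh L^r}\lesssim\|u\|_{\wh L^{r_1}}\|v\|_{\wh L^{r_2}}$ with $\tfrac1r=\tfrac1{r_1}+\tfrac1{r_2}$ is unavailable (it would require $r<1$), and one must trade frequency localization for integrability through Bernstein, which is exactly what creates the $d\min(0,1-\tfrac1{p_1}-\tfrac1{p_2})$ correction in the positivity condition. Checking that all the exponents cancel as claimed and treating the endpoint values $p,p_i,r_i\in\{1,\infty\}$ is the only point requiring care.
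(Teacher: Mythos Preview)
Your argument is correct and is essentially the same as the paper's: the paper omits the proof of this lemma and refers to the analytic version (Lemma~\ref{lem;diagonal2}), whose outlined proof splits into the same two cases $\tfrac1{p_1}+\tfrac1{p_2}\le1$ and $\tfrac1{p_1}+\tfrac1{p_2}>1$, uses Young/H\"older on the Fourier side together with Bernstein on $\Dj$, and then sums the geometric series in $j-k$ exactly via the positivity condition $s_1+s_2+d\min(0,1-\tfrac1{p_1}-\tfrac1{p_2})>0$. One cosmetic point: you use $\s$ both for the target regularity index $s_1+s_2+\tfrac dp-\tfrac d{p_1}-\tfrac d{p_2}$ and for the summability parameter of the Besov space; rename one of them to avoid confusion.
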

We here omit the proof of Lemmas \ref{lem;offd1} and \ref{lem;diagonal1} because 
these are same as the proof of following Lemmas \ref{lem;offd2} and \ref{lem;diagonal2}, respectively. 

\begin{lem} \label{lem;offd2}
Let $s_1$,$s_2 \in \R$, $1 \le p,\s \le \infty$, $T \in \R_+$ and 
$1 \le r,r_1,r_2 \le \infty$ satisfy 
$\frac{1}{r}=\frac{1}{r_1}+\frac{1}{r_2}$.  
If $s_1 < \frac{d}{p}$, then 
there exists some constant $C>0$ such that 
\begin{align} \label{est;offd2}
\left\|e^{\sqrt{c_0 t}|\N|}\sum_{k \in \Z}S_{k-1}f g_k \right\|_{\wt{L^{r}_T(}\fB{_{p,\s}^{s_1+s_2-\frac{d}{p}}})}
\le C \|F\|_{\wt{L^{r_1}_T(}\fB{_{p,\s}^{s_1}})}
      \|G\|_{\wt{L^{r_2}_T(}\fB{_{p,\infty}^{s_2}})}. 
\end{align}
Here we recall that $F:=e^{\sqrt{c_0 t}|\N|}f$, $G:=e^{\sqrt{c_0 t}|\N|}g$. 
\end{lem}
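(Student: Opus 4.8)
The plan is to adapt the standard Bony paracalculus estimate for the low-high paraproduct term $T_fg=\sum_k S_{k-1}f\,g_k$ to the Gevrey-weighted setting, exploiting the fact that $e^{\sqrt{c_0t}|\N|}$ is essentially a Fourier multiplier whose effect on a product is controlled by Lemma \ref{lem;B_t} (the estimate for $\B_t$). First I would perform a Littlewood--Paley decomposition of the product: since $\supp\wh{S_{k-1}f\,g_k}\subset\{|\xi|\sim 2^k\}$ (up to a fixed dilation factor), one has $\Dj\big(\sum_k S_{k-1}f\,g_k\big)=\sum_{|k-j|\le N_0}\Dj\big(S_{k-1}f\,g_k\big)$ for some absolute $N_0$. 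The crux is then to estimate, for each fixed $k$,
\[
\big\|e^{\sqrt{c_0t}|\N|}\big(S_{k-1}f\,g_k\big)\big\|_{\wh L^p}.
\]

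The key manipulation is to factor the exponential through the product. Writing $S_{k-1}f=e^{-\sqrt{c_0t}|\N|}\big(e^{\sqrt{c_0t}|\N|}S_{k-1}f\big)$ and similarly for $g_k$, and noting $S_{k-1}F=e^{\sqrt{c_0t}|\N|}S_{k-1}f$ (the multiplier commutes with $\Dj$ and hence with $S_{k-1}$), I can write
\[
e^{\sqrt{c_0t}|\N|}\big(S_{k-1}f\,g_k\big)
=e^{\sqrt{c_0t}|\N|}\Big(\big(e^{-\sqrt{c_0t}|\N|}S_{k-1}F\big)\big(e^{-\sqrt{c_0t}|\N|}G_k\big)\Big)
=\B_t\big(S_{k-1}F,\,G_k\big).
\]
Then Lemma \ref{lem;B_t} with exponents $\tfrac1p=\tfrac1p+\tfrac1\infty$ gives
\[
\big\|\B_t(S_{k-1}F,G_k)\big\|_{\wh L^p}\le C\,\|S_{k-1}F\|_{\wh L^\infty}\,\|G_k\|_{\wh L^p}.
\]
Now Bernstein's inequality (Lemma \ref{lem;Bern}) yields $\|S_{k-1}F\|_{\wh L^\infty}\le\sum_{\ell\le k-2}\|F_\ell\|_{\wh L^\infty}\lesssim\sum_{\ell\le k-2}2^{d\ell/p}\|F_\ell\|_{\wh L^p}\lesssim\big(\sum_{\ell\le k-2}2^{(d/p-s_1)\ell}c_\ell\big)\|F\|_{\wt L^{r_1}_T(\fB{_{p,\infty}^{s_1}})}$ in the time-integrated sense, where $c_\ell$ is an $\ell^1$ (or bounded) sequence; since $s_1<d/p$, the geometric sum converges and is controlled by $2^{(d/p-s_1)k}$ times the norm of $F$ (here one works at the level of $L^{r_1}_t\wh L^p$-norms of the blocks and keeps the $\ell^\s$/ $\ell^\infty$ bookkeeping exactly as in the proof of Lemma \ref{lem;offd1}).

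Collecting: $\big\|e^{\sqrt{c_0t}|\N|}\Dj(S_{k-1}f\,g_k)\big\|_{L^r_t\wh L^p}\lesssim 2^{(d/p-s_1)k}\,b_k\,2^{-s_2 k}\,\|F\|_{\wt L^{r_1}_T(\fB{_{p,\infty}^{s_1}})}\|G\|_{\wt L^{r_2}_T(\fB{_{p,\infty}^{s_2}})}$ where $b_k=2^{s_2k}\|G_k\|_{L^{r_2}_t\wh L^p}\in\ell^\s$ (or the appropriate modification with $F$'s sequence), having used Hölder in time with $\tfrac1r=\tfrac1{r_1}+\tfrac1{r_2}$. Multiplying by $2^{(s_1+s_2-d/p)j}$, summing over $|k-j|\le N_0$, and taking $\ell^\s_j$ gives exactly \eqref{est;offd2}. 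The main obstacle I anticipate is purely bookkeeping: tracking the summability indices so that the $\ell^\s$ norm on the left is genuinely controlled (one must be careful that the $\ell^\infty$ on $g$'s side in $\fB{_{p,\infty}^{s_2}}$ is compatible with producing an $\ell^\s$ output, which works because the $\s$-summability is carried by $F$), and making sure the Gevrey weight is absorbed cleanly — which is precisely what the factorization through $\B_t$ and Lemma \ref{lem;B_t} achieves without any loss. Everything else is identical to the unweighted paraproduct estimate of Lemma \ref{lem;offd1}.
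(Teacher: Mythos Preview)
Your proposal is correct and follows essentially the same route as the paper: rewrite $e^{\sqrt{c_0t}|\N|}(S_{k-1}f\,g_k)=\B_t(S_{k-1}F,G_k)$, apply Lemma~\ref{lem;B_t} with $\tfrac1p=\tfrac1\infty+\tfrac1p$, bound $\|S_{k-1}F\|_{\wh L^\infty}$ via Bernstein and sum the resulting geometric series using $s_1<d/p$, with the $\ell^\s$-summability carried by $F$ and $G$ sitting in $\ell^\infty$. The only slip is the momentary claim that $b_k=2^{s_2k}\|G_k\|_{L^{r_2}_t\wh L^p}\in\ell^\s$ (it is only $\ell^\infty$), but you immediately correct yourself, and the final bookkeeping you describe matches the paper's.
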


\begin{proof}[The proof of Lemma \ref{lem;offd2}] 
The strategy of the proof is inspired by Song-Xu \cite{So-Xu}. 
By the definition of $\B_t$, we firstly see that  
\begin{align*}
e^{\sqrt{c_0 t}|\N|}\sum_{k \in \Z}S_{k-1}f g_k
= \sum_{k \in \Z} \B_t(S_{k-1}F, G_k). 
\end{align*}
Here we recall that $\B_t(f,g):=e^{\sqrt{c_0 t}|\N|}(e^{-\sqrt{c_0 t}|\N|}f e^{-\sqrt{c_0 t}|\N|}g)(x)$ 
with the constant $c_0>0$. 
Since $\|\wh{\phi}_j\|_{L^\infty_\xi} \le 1$, we obtain by Lemma \ref{lem;B_t} and H\"older's inequality 
that 
\begin{equation} \label{est;sss}
\begin{aligned}
\left\|\Dj \sum_{k \in \Z} \B_t(S_{k-1}F, G_k)\right\|_{L^r(I;\wh{L}^p)} 
\les& \sum_{|j-k| \le 4} \|\wh{\phi}_j\|_{L^\infty_\xi} \|\B_t(S_{k-1}F, G_k)\|_{L^r(I;\wh{L}^p)} \\
\les& \sum_{|j-k| \le 4} \|S_{k-1}F\|_{L^{r_1}(I;\wh{L}^\infty)} \|G_k\|_{L^{r_2}(I;\wh{L}^p)}. 
\end{aligned}
\end{equation}
Since it follows from  Lemma \ref{lem;Bern} that 
$$
\|S_{k-1}F\|_{L^{r_1}(I;\wh{L}^\infty)} 
\les \sum_{\ell \le k-2}\|F_\ell\|_{L^{r_1}(I;\wh{L}^\infty)} 
\les \sum_{\ell \le k-2} 2^{\frac{d}{p}\ell} \|F_\ell\|_{L^{r_1}(I;\wh{L}^p)}, 
$$
we obtain by multiplying the both sides $2^{(s_1+s_2-d/p)j}$ and combining the above estimate with \eqref{est;sss} that 
\begin{align*}
&2^{(s_1+s_2-\frac{d}{p})j}\left\|\Dj \sum_{k \in \Z} \B_t(S_{k-1}F, G_k)\right\|_{L^r(I;\wh{L}^p)} \\
&\les 2^{(s_1+s_2-\frac{d}{p})j} \sum_{|j-k| \le 4} 
\left(\sum_{\ell \le k-2} 2^{(\frac{d}{p}-s_1) \ell} 2^{s_1\ell} \|F_\ell\|_{L^{r_1}(I;\wh{L}^p)}\right)
\|G_k\|_{L^{r_2}(I;\wh{L}^p)} \\
&\les \|G\|_{\wt{L^{r_2}_T(}\fB{_{p,\infty}^{s_2}})}
\sum_{|j-k| \le 4} 2^{(s_1+s_2-\frac{d}{p})(j-k)}
\sum_{\ell \le k-2} 2^{(s_1-\frac{d}{p})(k-\ell)}  2^{s_1 \ell} 
\|F_\ell\|_{L^{r_1}(I;\wh{L}^p)} 
\end{align*}
Noting that $s_1<d/p$ and taking the $\ell^\s(\Z)$-norm of the above estimate, we obtain the desired estimate \eqref{est;offd2} 
because of $\sum_{m \ge 2} 2^{(s_1-d/p)m} \les 1$. 
\end{proof}

\begin{lem} \label{lem;diagonal2}
Let $s_i \in \R$, $1 \le p,p_i,r,r_i,\s \le \infty$ $(i=1,2)$ satisfying     
$$
  \frac{1}{p} \le \frac{1}{p_1} + \frac{1}{p_2}, \quad 
  \frac{1}{r}=\frac{1}{r_1}+\frac{1}{r_2}, \quad s_1+s_2+d\min \left(0,1-\frac{1}{p_1}-\frac{1}{p_2}\right)>0. 
$$
There exists some constant $C>0$ such that 
\begin{equation} \label{est;prod_B}
\Bigg\|e^{\sqrt{c_0 t}|\N|} \sum_{k\in\Z} f_k \tilde{g}_k\Bigg\|_{\wt{L^r_T(}\fB{_{p,\s}^{s_1+s_2+\frac{d}{p}-\frac{d}{p_1}-\frac{d}{p_2}}})}
\le C\|F\|_{\wt{L^{r_1}_T(}\fB{_{p_1,\s}^{s_1}})}
     \|G\|_{\wt{L^{r_2}_T(}\fB{_{p_2,\infty}^{s_2}})}. 
\end{equation}
\end{lem}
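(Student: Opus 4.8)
The plan is to prove Lemma \ref{lem;diagonal2} by following the classical Bony paraproduct scheme for the ``remainder'' term, but carried through in the Gevrey-modified setting via the operator $\B_t$. First I would rewrite the quantity at the Fourier side: since $e^{\sqrt{c_0 t}|\N|}$ distributes over products through $\B_t$, we have
\begin{align*}
e^{\sqrt{c_0 t}|\N|}\sum_{k\in\Z} f_k \wt{g}_k = \sum_{k\in\Z}\B_t(F_k,\wt{G}_k),
\end{align*}
where $F:=e^{\sqrt{c_0 t}|\N|}f$, $G:=e^{\sqrt{c_0 t}|\N|}g$, $F_k=\Del_k F$ and $\wt{G}_k=\wt{\phi}_k*G$. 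The Fourier support of $\B_t(F_k,\wt{G}_k)$ is contained in a ball of radius $\sim 2^{k}$ (it is the support of $\wh{F_k}*\wh{\wt{G}_k}$, which lives in $\{|\xi|\le C2^k\}$), so for fixed $j$ only the indices $k\ge j-N_0$ contribute to $\Del_j$ of the sum.

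Next I would apply $\Del_j$ and use Lemma \ref{lem;B_t} together with Bernstein's inequality (Lemma \ref{lem;Bern}) to absorb the frequency localization. For $\frac1p\le\frac1{p_1}+\frac1{p_2}$, Lemma \ref{lem;B_t} after Hölder in the frequency exponents gives $\|\B_t(F_k,\wt{G}_k)\|_{\wh L^p}\lesssim \|F_k\|_{\wh L^{p_1}}\|\wt{G}_k\|_{\wh L^{p_2}}$ when $\tfrac1{p_1}+\tfrac1{p_2}=\tfrac1p$; in the strict-inequality case one first uses Bernstein to move from $\wh L^{\tilde p}$ (with $\tfrac1{\tilde p}=\tfrac1{p_1}+\tfrac1{p_2}$) down to $\wh L^{p}$ at the cost of $2^{d(\frac1{p_1}+\frac1{p_2}-\frac1p)k}$, which exactly produces the shift in the target index $s_1+s_2+\tfrac dp-\tfrac d{p_1}-\tfrac d{p_2}$. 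Applying Bernstein once more to $\Del_j$ of a function supported in $|\xi|\lesssim 2^k$ loses nothing since $j\le k+N_0$. Combining in $L^r_T$ via Hölder in time ($\tfrac1r=\tfrac1{r_1}+\tfrac1{r_2}$), I would arrive at
\begin{align*}
2^{\sigma j}\bigg\|\Del_j\sum_{k}\B_t(F_k,\wt{G}_k)\bigg\|_{L^r(I;\wh L^p)}
\lesssim \sum_{k\ge j-N_0} 2^{\sigma(j-k)}\,\big(2^{s_1 k}\|F_k\|_{L^{r_1}(I;\wh L^{p_1})}\big)\big(2^{s_2 k}\|\wt{G}_k\|_{L^{r_2}(I;\wh L^{p_2})}\big),
\end{align*}
where $\sigma:=s_1+s_2+\tfrac dp-\tfrac d{p_1}-\tfrac d{p_2}$ after the Bernstein bookkeeping; the condition $s_1+s_2+d\min(0,1-\tfrac1{p_1}-\tfrac1{p_2})>0$ is precisely what makes the relevant exponent in $2^{\cdot(j-k)}$ positive, so that the sum over $k\ge j-N_0$ is a convolution of $\ell^\s$ sequences with an $\ell^1$ kernel.

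Finally I would take the $\ell^\s(\Z)$ norm in $j$ and invoke Young's inequality for series (convolution $\ell^1*\ell^\s\hookrightarrow\ell^\s$), estimating one factor in $\ell^\s$ and the other in $\ell^\infty$, which yields
\begin{align*}
\bigg\|e^{\sqrt{c_0 t}|\N|}\sum_{k}f_k\wt{g}_k\bigg\|_{\wt{L^r_T}(\fB{_{p,\s}^{\sigma}})}\lesssim \|F\|_{\wt{L^{r_1}_T}(\fB{_{p_1,\s}^{s_1}})}\|G\|_{\wt{L^{r_2}_T}(\fB{_{p_2,\infty}^{s_2}})},
\end{align*}
which is exactly \eqref{est;prod_B}. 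The main obstacle I anticipate is the careful tracking of the Bernstein exponents so that the $\min(0,\cdot)$ term in the hypothesis matches the summability threshold: the kernel $2^{\sigma(j-k)}$ must decay, and whether $\sigma$ or $\sigma+d(1-\tfrac1{p_1}-\tfrac1{p_2})$ governs decay depends on the sign of $1-\tfrac1{p_1}-\tfrac1{p_2}$, which is the origin of the $\min$. Everything else — distributing $e^{\sqrt{c_0 t}|\N|}$, the use of Lemma \ref{lem;B_t}, and Hölder/Young — is routine and mirrors the proof of Lemma \ref{lem;offd2} already given.
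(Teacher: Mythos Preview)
Your overall scheme --- rewrite as $\sum_k \B_t(F_k,\wt G_k)$, localize to $k\ge j-N_0$, then run a convolution argument in $\ell^\s$ --- is exactly the paper's approach. But the bookkeeping in your displayed estimate is off, and the gap is not cosmetic: as written your kernel is $2^{\sigma(j-k)}$ with $\sigma=s_1+s_2+\tfrac dp-\tfrac d{p_1}-\tfrac d{p_2}$, and summability over $k\ge j-N_0$ then forces $\sigma>0$. Under the lemma's hypothesis one only has $s_1+s_2+d\min(0,1-\tfrac1{p_1}-\tfrac1{p_2})>0$, which is strictly weaker than $\sigma>0$ whenever $\tfrac1p<\tfrac1{p_1}+\tfrac1{p_2}$. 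So your argument, as stated, proves a weaker lemma.

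The fix is to place the $\wh L^{\tilde p}\to\wh L^p$ gain at scale $2^j$, not $2^k$. Concretely, in the case $\tfrac1{p_1}+\tfrac1{p_2}\le 1$ set $\tfrac1q=\tfrac1{p_1}+\tfrac1{p_2}$ and use H\"older on the Fourier side,
\[
\big\|\Dj \B_t(F_k,\wt G_k)\big\|_{\wh L^p}
\le \|\wh\phi_j\|_{L^{\gm'}}\,\|\B_t(F_k,\wt G_k)\|_{\wh L^{q}}
\les 2^{d(\frac1{p_1}+\frac1{p_2}-\frac1p)j}\,\|F_k\|_{\wh L^{p_1}}\|\wt G_k\|_{\wh L^{p_2}},
\]
so that after multiplying by $2^{\sigma j}$ the kernel is $2^{(s_1+s_2)(j-k)}$, and $s_1+s_2>0$ suffices. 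When $\tfrac1{p_1}+\tfrac1{p_2}>1$ your $\tilde p$ is below $1$ and Lemma~\ref{lem;B_t} does not apply directly; here the paper routes through $\wh L^1$, using $\|\wh\phi_j\|_{L^{p'}}\sim 2^{dj/p'}$ together with $\|\B_t(F_k,\wt G_k)\|_{\wh L^1}\les \|F_k\|_{\wh L^{p_2'}}\|\wt G_k\|_{\wh L^{p_2}}$ and a Bernstein shift $\|F_k\|_{\wh L^{p_2'}}\les 2^{d(\frac1{p_1}-\frac1{p_2'})k}\|F_k\|_{\wh L^{p_1}}$, which produces the kernel $2^{(s_1+s_2+d(1-\frac1{p_1}-\frac1{p_2}))(j-k)}$ and explains the $\min$ in the hypothesis. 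Your final paragraph shows you sensed this dichotomy, but the mechanism --- gaining at the $j$-scale via $\|\wh\phi_j\|_{L^{\gm'}}$ rather than at the $k$-scale --- is the missing ingredient.
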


\begin{proof}[The outlined proof of Lemma \ref{lem;diagonal2}] 
In the case of $\frac{1}{p_1} + \frac{1}{p_2} \le 1$,   
we take $1 \le q \le \infty$ such that $\frac{1}{q} = \frac{1}{p_1} + \frac{1}{p_2}$.    
Since $\wh{\phi}_j(\wh{f}_k*\wh{\tilde{g}}_k)=0\;(k \le j-4)$, we have by using  
H\"older's inequality and Lemma \ref{lem;B_t} that 
\begin{equation*}
\begin{aligned}
\left\|\Dj e^{\sqrt{c_0 t}|\N|} \sum_{k\in\Z} f_k \tilde{g}_k\right\|_{L^r(I;\wh{L}^{p})}
\les& \sum_{j-k \le 3} \|\wh{\phi}_j\|_{L^{\gm'}} \|\B_t(F_k, \tilde{G}_k)\|_{L^r(I;\wh{L}^q)} \\
\les& 2^{-d(\frac{1}{p}-\frac{1}{p_1}-\frac{1}{p_2})j}
    \sum_{j-k\le 3} \|F_k\|_{L^{r_1}(I;\wh{L}^{p_1})} \|\tilde{G}_k\|_{L^{r_2}(I;\wh{L}^{p_2})},    
\end{aligned}
\end{equation*}
where $\gm$ satisfies $\frac{1}{p}= \frac{1}{\gm}+\frac{1}{q}-1$ and we have used 
$$
e^{\sqrt{c_0 t}|\N|}\sum_{k \in \Z}f_k \tilde{g}_k
= \sum_{k \in \Z} \B_t(F_k, \tilde{G}_k). 
$$ 
Analogously, in the case of $\frac{1}{p_1}+\frac{1}{p_2} > 1$, we have by noting $p_1<p_2'$ and using 
Lemma \ref{lem;B_t} that 
\begin{equation*} 
\begin{aligned}
\left\|\Dj e^{\sqrt{c_0 t}|\N|} \sum_{k\in\Z} f_k \tilde{g}_k\right\|_{L^r(I;\wh{L}^{p})}
\les& \sum_{j-k \le 3} \|\wh{\phi}_j\|_{L^{p'}} \|\B_t(F_k, \tilde{G}_k)\|_{L^r(I;\wh{L}^1)} \\ 
\les& 2^{\frac{d}{p'}j} 
      \sum_{j-k \le 3} \|F_k\|_{L^{r_1}(I;\wh{L}^{p_1})}\|\tilde{G}_k\|_{L^{r_2}(I;\wh{L}^{p_2})}. 
\end{aligned}
\end{equation*}
Because the remaining part is completely same as the proof of Lemma 2.5 in \cite{Na}, 
we leave the details to the reader. 
\end{proof}

\begin{lem} \label{lem;diagonal3}
Let $s_i \in \R$, $1 \le p,p_i,r,r_i \le \infty$ $(i=1,2)$ satisfy 
$$
\frac{1}{p}\le\frac{1}{p_1}+\frac{1}{p_2}, \quad
s_1+s_2+d\min\left(0,1-\frac{1}{p_1}-\frac{1}{p_2}\right) \ge 0, \quad 
\frac{1}{r}=\frac{1}{r_1}+\frac{1}{r_2}. 
$$
Then there exists some constant $C>0$ such that 
\begin{align} \label{est;prod_diag}
\left\|e^{\sqrt{c_0 t}|\N|}\sum_{k \in\Z}f_k\tilde{g}_k\right\|_{
\wt{L_T^r(}\fB{_{p,\infty}^{s_1+s_2+\frac{d}{p}-\frac{d}{p_1}-\frac{d}{p_2}}})}
\le C\|F\|_{\wt{L^{r_1}_T(}\fB{_{p_1,1}^{s_1}})}\|G\|_{\wt{L^{r_2}_T(}\fB{_{p_2,\infty}^{s_2}})}. 
\end{align}
\end{lem}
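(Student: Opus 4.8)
The plan is to follow the proof of Lemma~\ref{lem;diagonal2} closely; the only genuinely new feature is that the borderline hypothesis $s_1+s_2+d\min(0,1-\tfrac1{p_1}-\tfrac1{p_2})\ge 0$ (with equality allowed) is compensated by taking $\ell^\infty$ in the output index and $\ell^1$ in the index attached to $F$. Put $s_0:=s_1+s_2+\tfrac dp-\tfrac d{p_1}-\tfrac d{p_2}$ and $\tau:=s_1+s_2+d\min(0,1-\tfrac1{p_1}-\tfrac1{p_2})\ge 0$. Recalling $F=e^{\sqrt{c_0 t}|\N|}f$, $G=e^{\sqrt{c_0 t}|\N|}g$ and the definition of $\B_t$, I would first write
\[
e^{\sqrt{c_0 t}|\N|}\sum_{k\in\Z}f_k\tilde g_k=\sum_{k\in\Z}\B_t(F_k,\tilde G_k).
\]
Since $\widehat{f_k\tilde g_k}=\widehat{f_k}*\widehat{\tilde g_k}$ (hence also $\widehat{\B_t(F_k,\tilde G_k)}$, multiplication by $e^{\sqrt{c_0 t}|\xi|}$ not affecting supports) is supported in a ball $\{|\xi|\lesssim 2^k\}$, we get $\Dj\sum_k\B_t(F_k,\tilde G_k)=\Dj\sum_{k\ge j-4}\B_t(F_k,\tilde G_k)$.

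Next I would estimate $\|\Dj\B_t(F_k,\tilde G_k)\|_{\wh{L}^p}$ in two cases. If $\tfrac1{p_1}+\tfrac1{p_2}\le 1$, set $\tfrac1q=\tfrac1{p_1}+\tfrac1{p_2}\in[0,1]$: Lemma~\ref{lem;B_t} gives $\|\B_t(F_k,\tilde G_k)\|_{\wh{L}^q}\lesssim\|F_k\|_{\wh{L}^{p_1}}\|\tilde G_k\|_{\wh{L}^{p_2}}$, and writing $\|\Dj h\|_{\wh{L}^p}=\|\wh{\phi}_j\widehat h\|_{L^{p'}}\le\|\wh{\phi}_j\|_{L^{\gamma'}}\|h\|_{\wh{L}^q}$ with $\tfrac1{\gamma'}=\tfrac1{p_1}+\tfrac1{p_2}-\tfrac1p\in[0,1]$ and $\|\wh{\phi}_j\|_{L^{\gamma'}}\sim 2^{d(\frac1{p_1}+\frac1{p_2}-\frac1p)j}$ yields $\|\Dj\B_t(F_k,\tilde G_k)\|_{\wh{L}^p}\lesssim 2^{d(\frac1{p_1}+\frac1{p_2}-\frac1p)j}\|F_k\|_{\wh{L}^{p_1}}\|\tilde G_k\|_{\wh{L}^{p_2}}$. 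If $\tfrac1{p_1}+\tfrac1{p_2}>1$ (so $p_1\le p_2'$), I would instead use $\|\Dj h\|_{\wh{L}^p}\le\|\wh{\phi}_j\|_{L^{p'}}\|h\|_{\wh{L}^1}$ together with $\|\B_t(F_k,\tilde G_k)\|_{\wh{L}^1}\lesssim\|F_k\|_{\wh{L}^{p_1}}\|\tilde G_k\|_{\wh{L}^{p_1'}}$ (Lemma~\ref{lem;B_t}, legitimate since $\tfrac1{p_1}+\tfrac1{p_1'}=1$ and $p_1'\ge p_2$) and Bernstein (Lemma~\ref{lem;Bern}) $\|\tilde G_k\|_{\wh{L}^{p_1'}}\lesssim 2^{d(\frac1{p_2}-\frac1{p_1'})k}\|\tilde G_k\|_{\wh{L}^{p_2}}$, which gives $\|\Dj\B_t(F_k,\tilde G_k)\|_{\wh{L}^p}\lesssim 2^{d(1-\frac1p)j}2^{d(\frac1{p_1}+\frac1{p_2}-1)k}\|F_k\|_{\wh{L}^{p_1}}\|\tilde G_k\|_{\wh{L}^{p_2}}$.

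Then, applying H\"older in time ($\tfrac1r=\tfrac1{r_1}+\tfrac1{r_2}$), multiplying by $2^{s_0 j}$ and summing over $k\ge j-4$, in both cases the powers of $2^j$ and $2^k$ combine so that, with $a_k:=2^{s_1 k}\|F_k\|_{L^{r_1}(I;\wh{L}^{p_1})}$ and $b_k:=2^{s_2 k}\|\tilde G_k\|_{L^{r_2}(I;\wh{L}^{p_2})}$,
\[
2^{s_0 j}\Big\|\Dj\!\!\sum_{k\ge j-4}\!\!\B_t(F_k,\tilde G_k)\Big\|_{L^r(I;\wh{L}^p)}\lesssim\sum_{k\ge j-4}2^{\tau(j-k)}a_k b_k\lesssim\Big(\sup_{k\in\Z}b_k\Big)\sum_{k\in\Z}a_k,
\]
since $2^{\tau(j-k)}\lesssim 1$ for $k\ge j-4$ because $\tau\ge 0$. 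Because $\sup_k b_k\lesssim\|G\|_{\wt{L^{r_2}_T(}\fB{_{p_2,\infty}^{s_2}})}$, $\sum_k a_k\lesssim\|F\|_{\wt{L^{r_1}_T(}\fB{_{p_1,1}^{s_1}})}$, and the right-hand side is independent of $j$, taking the supremum over $j\in\Z$ gives the claimed estimate. The step needing care, rather than a real obstacle, is this exponent bookkeeping: one must check that the two branches ($\tfrac1{p_1}+\tfrac1{p_2}\le 1$ and $>1$) produce exactly the same decaying factor $2^{\tau(j-k)}$ with $\tau$ equal precisely to $s_1+s_2+d\min(0,1-\tfrac1{p_1}-\tfrac1{p_2})$; this is what makes the endpoint $\tau=0$ admissible, and it is exactly here that the $\ell^1$ on the $F$-factor (to sum $\sum_k a_k$) and the $\ell^\infty$ on the output (to convert the $j$-uniform bound into the norm) are indispensable, in contrast with Lemma~\ref{lem;diagonal2}, where the strict inequality $\tau>0$ permits a geometric sum and hence a general $\ell^\sigma$-output.
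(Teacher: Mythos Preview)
Your proof is correct and follows exactly the approach the paper has in mind: the paper omits the proof of Lemma~\ref{lem;diagonal3}, referring to \cite[Lemma 7.2]{Na2}, but its method is visibly the endpoint variant of the outlined proof of Lemma~\ref{lem;diagonal2}, which is precisely what you carry out. Your identification of the key point---that allowing $\tau=0$ forces the $\ell^1$ summation on $F$ and the $\ell^\infty$ output index in place of the geometric sum used when $\tau>0$---is exactly the distinction between the two lemmas.
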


\begin{lem}  \label{lem;offd3} 
Let $s_i \in \R$, $1 \le p,p_i,r,r_i\lam \le \infty$ $(i=1,2)$ satisfying 
$$
  \frac{1}{p}\le\frac{1}{p_1}+\frac{1}{p_2}, 
  \quad \frac{1}{p} \le \frac{1}{p_2}+\frac{1}{\lam} \le 1, \quad p_1 \le \lam, \quad 
  \frac{1}{r}=\frac{1}{r_1}+\frac{1}{r_2}. 
$$ 
Then there exists some constant $C>0$ such that the following estimates hold:
\begin{equation} \label{est;prod_C1}
\begin{aligned}
\left\|e^{\sqrt{c_0 t}|\N|}
\sum_{k\in\Z} S_{k-1}f g_k\right\|_{\wt{L^r_T(}\fB{_{p,\s}^{s_1+s_2+\frac{d}{p}-\frac{d}{p_1}-\frac{d}{p_2}}})}
\le C\|G\|_{\wt{L^{r_2}_T(}\fB{_{p_2,\s}^{s_2}})} 
\begin{cases}
\|F\|_{\wt{L^{r_1}_T(}\fB{_{p_1,\infty}^{s_1}})} 
   &if\; s_1+\frac{n}{\lam}<\frac{d}{p_1}, \\
\|F\|_{\wt{L^{r_1}_T(}\fB{_{p_1,1}^{s_1}})} 
   &if\; s_1+\frac{n}{\lam}=\frac{d}{p_1},   
\end{cases}
\end{aligned}
\end{equation}
\begin{equation} \label{est;prod_C2}
\begin{aligned}
\left\|e^{\sqrt{c_0 t}|\N|} \sum_{k \in \Z}S_{k-1}f g_k\right\|_{\wt{L^r_T(}\fB{_{p,\s}^{s_1+s_2+\frac{d}{p}-\frac{d}{p_1}-\frac{d}{p_2}}})}
\le C   
\begin{cases}
\|F\|_{\wt{L^{r_1}_T(}\fB{_{p_1,\s}^{s_1}})}
\|G\|_{\wt{L^{r_2}_T(}\fB{_{p_2,\infty}^{s_2}})}
         &if\; s_1+\frac{d}{\lam}<\frac{d}{p_1}, \\
\|F\|_{\wt{L^{r_1}_T(}\fB{_{p_1,1}^{s_1}})}
\|G\|_{\wt{L^{r_2}_T(}\fB{_{p_2,\s}^{s_2}})}
         &if\; s_1+\frac{d}{\lam}=\frac{d}{p_1}. 
\end{cases}
\end{aligned}
\end{equation}
\end{lem}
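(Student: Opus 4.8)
\textbf{Proof strategy for Lemma \ref{lem;offd3}.}
The plan is to follow the argument of Lemma \ref{lem;offd2} (and of Lemma \ref{lem;offd1}, i.e.\ Lemma 2.6 in \cite{Na}), the point being that the Gevrey multiplier $e^{\sqrt{c_0 t}|\N|}$ can be transferred into the bilinear operator $\B_t$ at no cost. Setting $F:=e^{\sqrt{c_0 t}|\N|}f$ and $G:=e^{\sqrt{c_0 t}|\N|}g$, the definition of $\B_t$ gives
\[
e^{\sqrt{c_0 t}|\N|}\sum_{k\in\Z}S_{k-1}f\,g_k=\sum_{k\in\Z}\B_t\big(S_{k-1}F,G_k\big),
\]
and for this paraproduct $\Dj\B_t(S_{k-1}F,G_k)$ vanishes unless $|j-k|\le 4$. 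Letting $\mu$ be defined by $\frac1\mu=\frac1\lambda+\frac1{p_2}$, the hypothesis $\frac1p\le\frac1{p_2}+\frac1\lambda\le1$ forces $1\le\mu\le p$, so Bernstein's inequality (Lemma \ref{lem;Bern}), Lemma \ref{lem;B_t} and H\"older's inequality in $t$ (with $\frac1r=\frac1{r_1}+\frac1{r_2}$) yield, for each admissible pair $(j,k)$,
\[
\big\|\Dj\B_t(S_{k-1}F,G_k)\big\|_{L^r(I;\wh L^p)}\les 2^{d(\frac1\lambda+\frac1{p_2}-\frac1p)j}\,\|S_{k-1}F\|_{L^{r_1}(I;\wh L^\lambda)}\,\|G_k\|_{L^{r_2}(I;\wh L^{p_2})}.
\]

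Next I would bound the low-frequency factor using $p_1\le\lambda$: Bernstein again gives
\[
\|S_{k-1}F\|_{L^{r_1}(I;\wh L^\lambda)}\les\sum_{\ell\le k-2}2^{(d(\frac1{p_1}-\frac1\lambda)-s_1)\ell}\,\big(2^{s_1\ell}\|F_\ell\|_{L^{r_1}(I;\wh L^{p_1})}\big).
\]
This is where the dichotomy in the statement enters: when $s_1+\frac d\lambda<\frac d{p_1}$ the exponent $d(\frac1{p_1}-\frac1\lambda)-s_1$ is strictly positive, so the sum is the convolution of the sequence $\{2^{s_1\ell}\|F_\ell\|_{L^{r_1}(I;\wh L^{p_1})}\}_\ell$ with a summable geometric tail; when $s_1+\frac d\lambda=\frac d{p_1}$ that tail is merely bounded, forcing one to put the $F$-sequence in $\ell^1_\ell$, i.e.\ $F\in\wt{L^{r_1}_T}(\fB{_{p_1,1}^{s_1}})$.

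Finally I would insert this bound into the previous display, multiply by $2^{(s_1+s_2+\frac dp-\frac d{p_1}-\frac d{p_2})j}$, use $|j-k|\le 4$ to replace $2^{(\cdot)j}$ by $2^{(\cdot)k}$, and check that the net power of $2^k$ collapses exactly to $s_2$ (this is why the hypothesis $\frac1p\le\frac1{p_1}+\frac1{p_2}$ is imposed, so that the various Bernstein losses add up consistently), the residual factor being $2^{s_2k}\|G_k\|_{L^{r_2}(I;\wh L^{p_2})}$. Taking the $\ell^\s_j$-norm, and noting that at most nine values of $k$ contribute to each $j$, Young's inequality for sequences lets me distribute the remaining summation index in two ways: placing $\ell^\s$ on the $G$-factor while using the $\ell^\infty_\ell$ (resp.\ $\ell^1_\ell$) norm of the $F$-sequence yields \eqref{est;prod_C1}, whereas placing $\ell^\s$ on the $F$-factor (via $\ell^\s_\ell*\ell^1_\ell\hr\ell^\s_\ell$ when $d(\frac1{p_1}-\frac1\lambda)-s_1>0$, and via $\ell^1_\ell*\ell^\s_\ell$ together with $\sup_k\|G_k\|$ in the borderline case) yields \eqref{est;prod_C2}, exactly as in Lemma \ref{lem;diagonal3}. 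I expect the only delicate point to be this bookkeeping of the three Lebesgue exponents $p_1,p_2,\lambda$ against the $\ell^\s$-summation; the analyticity is genuinely free, since Lemma \ref{lem;B_t} costs nothing over the ordinary product, so no estimate beyond those collected in \S \ref{sect;appendix} is required.
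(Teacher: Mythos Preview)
Your proposal is correct and follows essentially the same approach as the paper: the paper itself omits the proof and simply refers to Lemma 7.2 in \cite{Na2}, noting that the argument carries over verbatim, with the Gevrey multiplier absorbed via $\B_t$ exactly as you do (cf.\ the proof of Lemma \ref{lem;offd2}). One minor remark: your parenthetical that the hypothesis $\tfrac1p\le\tfrac1{p_1}+\tfrac1{p_2}$ is what makes the Bernstein losses add up is slightly off --- that condition is in fact automatic from $p_1\le\lambda$ and $\tfrac1p\le\tfrac1{p_2}+\tfrac1\lambda$, and it is the latter two conditions that are actually used (for $\wh L^{p_1}\to\wh L^\lambda$ and $\wh L^\mu\to\wh L^p$ respectively); the exponent bookkeeping $2^{(s_1+s_2+d/p-d/{p_1}-d/{p_2})j}\cdot 2^{d(1/\lambda+1/{p_2}-1/p)j}=2^{(s_1+s_2+d/\lambda-d/{p_1})k}$ then collapses to $2^{s_2 k}$ after pulling out the convolution in $\ell$, exactly as you describe.
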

The proof of Lemmas \ref{lem;diagonal3} and \ref{lem;offd3} is same as the one of Lemma 7.2 in \cite{Na2}. 
We here omit they. 

\begin{proof}[The proof of Lemmas \ref{lem;bil2} and \ref{lem;bil_anal}] 
Since $-2+d/p<d/p$, it follows from Lemma \ref{lem;offd2} with $(s_1,s_2)=(-2+d/p,d/p)$ that 
\begin{align*}
\left\|e^{\sqrt{c_0 t}|\N|}\sum_{k \in \Z}S_{k-1}f g_k \right\|_{\wt{L^{r}_T(}\fB{_{p,\s}^{-2+\frac{d}{p}}})}
\les \|F\|_{\wt{L^{r_1}_T(}\fB{_{p,\s}^{-2+\frac{d}{p}}})}
      \|G\|_{\wt{L^{r_2}_T(}\fB{_{p,\infty}^{\frac{d}{p}}})}. 
\end{align*}      
Analogously, we easily see that 
\begin{align*}
\left\|e^{\sqrt{c_0 t}|\N|}\sum_{k \in \Z}f_k S_{k-1} g\right\|_{\wt{L^{r}_T(}\fB{_{p,\s}^{-2+\frac{d}{p}}})}
\les \|F\|_{\wt{L^{r_3}_T(}\fB{_{p,\infty}^{\frac{d}{p}}})}
      \|G\|_{\wt{L^{r_4}_T(}\fB{_{p,\s}^{-2+\frac{d}{p}}})}. 
\end{align*}
As for the diagonal part, we obtain by using Lemma \ref{lem;diagonal2} with $p=p_1=p_2$ that for all $1 \le p <d$, 
\begin{equation*}
\Bigg\|e^{\sqrt{c_0 t}|\N|} \sum_{k\in\Z} f_k \tilde{g}_k\Bigg\|_{\wt{L^r_T(}\fB{_{p,\s}^{-2+\frac{d}{p}}})}
\le C\|F\|_{\wt{L^{r_1}_T(}\fB{_{p,\s}^{-2+\frac{d}{p}}})}
     \|G\|_{\wt{L^{r_2}_T(}\fB{_{p,\infty}^{\frac{d}{p}}})} 
\end{equation*}
because of $2(-1+d/p)+\min(0,1-2/p)>0$ if $1 \le p < d$.   
By similar arguments, the assertion of Lemma \eqref{lem;bil2} follows from Lemmas \ref{lem;offd1} and \ref{lem;diagonal1}. 
\end{proof} 

\begin{proof}[The proof of Lemma \ref{lem;bil_anal2}] 
By Bony's para-product formula (cf. \cite{B}), $ e^{\sqrt{c_0 t}|\N|} fg$ is broken down 
\begin{align*}
  e^{\sqrt{c_0 t}|\N|}fg 
    =& e^{\sqrt{c_0 t}|\N|}
        \left( \sum_{k \in \mathbb{Z}}S_{k-1}f g_k
      + \sum_{k \in \mathbb{Z}}f_k S_{k-1} g
      + \sum_{k \in \mathbb{Z}}f_k\tilde{g}_k\right) \\ 
    =& \sum_{k \in \Z} \B_t(S_{k-1}F, G_k) 
    +\sum_{k \in \Z} \B_t(F_k, S_{k-1} G)
    +\sum_{k \in \Z} \B_t(F_k, \tilde{G}_k). 
\end{align*}
Noting that $\wh{\phi}_j (\wh{S_{k-1}}f * \wh{g}_k)=0$ for $k \in \Z$ with $|j-k| \ge 5$, 
it follows from Lemma \ref{lem;B_t} and H\"older's inequality that  
\begin{equation} \label{est;bil1}
\begin{aligned}
\left\|\Dj \sum_{k \in \Z} \B_t(S_{k-1}F, G_k)\right\|_{L^r(I;\wh{L}^p)} 
  \les& \sum_{|j-k| \le 4}
         \|\wh{\phi}_j\|_{L^\infty_\xi} \|\B_t(S_{k-1}F, G_k)\|_{L^r(I;\wh{L}^p)} \\
  \les& \sum_{|j-k| \le 4}
         \|S_{k-1}F\|_{L^{r_3}(I;\wh{L}^\infty)} 
         \|G_k\|_{L^{r_4}(I;\wh{L}^p)} \\
  \les& \|F\|_{L^{r_3}(I;\wh{L}^\infty)} 
        \sum_{|j-k| \le 4}\|G_k\|_{L^{r_4}(I;\wh{L}^p)}. 
\end{aligned}
\end{equation}
Similarly as in the estimate \eqref{est;bil1}, we have by 
using Lemma \ref{lem;B_t} and H\"older's inequality that  
\begin{equation} \label{est;bil2}
\left\|\Dj \sum_{k \in \Z} \B_t(F_k, S_{k-1}G)\right\|_{L^r(I;\wh{L}^p)} 
  \les \|G\|_{L^{r_2}(I;\wh{L}^\infty)} 
        \sum_{|j-k| \le 4}\|F_k\|_{L^{r_1}(I;\wh{L}^p)}. 
\end{equation}
On the other hand, noting that $\wh{\phi}_j (\wh{f}_k*\wh{\tilde{g}}_k) =0\;(k \le j-4)$, 
we have by Lemma \ref{lem;B_t} that  
\begin{equation}  \label{est;bil3}
 \begin{aligned}
   \left\|\Dj \sum_{k\in\Z} \B_t(F_k, \tilde{G}_k)\right\|_{L^r(I;\wh{L}^{p})}
   \les& \sum_{k \ge j-3}
        \|\B_t(F_k, \tilde{G}_k)\|_{L^r(I;\wh{L}^{p})} \\ 
   \les&  \sum_{k \ge j-3}
          \|F_k\|_{L^{r_1}(I;\wh{L}^p)} \|\tilde{G}_k\|_{L^{r_2}(I;\wh{L}^{\infty})} \\
   \les& \|G\|_{L^{r_2}(I;\wh{L}^{\infty})}
         \sum_{k \ge j-3} \|F_k\|_{L^{r_1}(I;\wh{L}^p)}. 
\end{aligned}
\end{equation}
Gathering \eqref{est;bil1}-\eqref{est;bil3}, we obtain 
\begin{align*}
  \left\|\Dj e^{\sqrt{c_0 t}|\N|}fg \right\|_{L^r(I;\wh{L}^p)}  
  \les& \|G\|_{L^{r_2}(I;\wh{L}^\infty)} 
        \sum_{|j-k| \le 4}\|F_k\|_{L^{r_1}(I;\wh{L}^p)} \\
      &+\|F\|_{L^{r_3}(I;\wh{L}^\infty)} 
        \sum_{|j-k| \le 4}\|G_k\|_{L^{r_4}(I;\wh{L}^p)}. 
\end{align*}
Multiplying the both sides by $2^{sj}$ and taking $\ell^\s(\Z)$-norm, 
we see that  
\begin{align*}
\|e^{\sqrt{c_0 t}|\N|}fg\|_{L^r(I;\fB{_{p,\s}^s})}
 \les \|G\|_{L^{r_2}(I;\wh{L}^\infty)} S_{1,\s}(F)
    + \|F\|_{L^{r_3}(I;\wh{L}^\infty)} S_{2,\s}(G),                                                    
\end{align*}
where $S_{1,\s}(F)$ and $S_{2,\s}(G)$ are defined as 
\begin{align*}
&S_{1,\s}(F):=\bigg\|\bigg\{ 2^{sj}\sum_{|j-k|\le 4} \|F_k\|_{L^{r_1}(I;\wh{L}^p)} \bigg\}_{j \in \Z}\bigg\|_{\ell^\s},  \\
&S_{2,\s}(G):=\bigg\|\bigg\{2^{sj}\sum_{k \ge j-4} \|G_k\|_{L^{r_4}(I;\wh{L}^p)} \bigg\}_{j \in \Z} \bigg\|_{\ell^\s}. 
\end{align*}
In the case of $1 \le \s<\infty$, we obtain by using Minkowski's inequality that 
\begin{align*}
  S_{2,\s}(G) \les& \sum_{k \ge j-4} \bigg(\sum_{j\in\Z} 2^{sj\s} \|G_k\|_{L^{r_4}(I;\wh{L}^{p})}^\s \bigg)^{1/\s} \\
       =& \sum_{l \le 4} 2^{sl} \bigg(\sum_{j \in \Z} 2^{s(j-l)\s}\|G_{j-l}\|_{L^{r_4}(I;\wh{L}^{p})}^\s \bigg)^{1/\s}
       \les \|G\|_{\wt{L^{r_4}(I};\fB{_{p,\s}^s})},  
\end{align*}
where $l:=j-k$. On the other hand, it holds that 
\begin{align*}
       S_{2,\infty}(G) 
       = \sum_{l \le 4} 2^{s(j-l)} 2^{sl}\|G_{j-l}\|_{L^{r_4}(I;\wh{L}^{p})}
       \les \|G\|_{\wt{L^{r_4}(I};\fB{_{p,\infty}^s})}. 
\end{align*}
Similarly, we also have by using Minkowski's inequality that for all $1 \le \s \le \infty$, 
\begin{align*}
  S_{1,\s}(F) \les \|F\|_{\wt{L^{r_1}(I};\fB{_{p,\s}^s})}. 
\end{align*} 
Gathering the above estimates, we complete the proof of Lemma \ref{lem;bil_anal}. 
\end{proof}

\begin{proof}[The proof of Lemma \ref{lem;bil5}] 
In a similar way to the proof of Lemma \ref{lem;bil_anal2}, 
it follows from $\wh{\phi}_j (\wh{S_{k-1}}f * \wh{g}_k)=0$ for $k \in \Z$ with $|j-k| \ge 5$ 
, Lemma \ref{lem;B_t} and H\"older's inequality that  
\begin{align} \label{est;bil4}
&\left\|\Dj \sum_{k \in \Z} \B_t(S_{k-1}F, G_k)\right\|_{L^r(I;\wh{L}^1)}  
\les \|F\|_{L^{r_3}(I;\wh{L}^\infty)} 
        \sum_{|j-k| \le 4}\|G_k\|_{L^{r_4}(I;\wh{L}^1)}, \\
&\left\|\Dj \sum_{k \in \Z} \B_t(F_k, S_{k-1}G)\right\|_{L^r(I;\wh{L}^1)} \label{est;bil5}
  \les \|G\|_{L^{r_2}(I;\wh{L}^\infty)} 
        \sum_{|j-k| \le 4}\|F_k\|_{L^{r_1}(I;\wh{L}^1)}. 
\end{align}
Notice that $\wh{\phi}_j (\wh{f}_k*\wh{\tilde{g}}_k) =0\;(k \le j-4)$, 
we have by Lemma \ref{lem;B_t} that  
\begin{equation}  \label{est;bil6}
 \begin{aligned}
   \left\|\Dj \sum_{k\in\Z} \B_t(F_k, \tilde{G}_k)\right\|_{L^r(I;\wh{L}^{1})}
   \les& \sum_{k \ge j-3} 
        \|\B_t(F_k, \tilde{G}_k)\|_{L^r(I;\wh{L}^{1})} \\ 
   \les&  \sum_{k \ge j-3}
          \|F_k\|_{L^{r_3}(I;\wh{L}^\infty)} \|\tilde{G}_k\|_{L^{r_4}(I;\wh{L}^{1})} \\
   \les& \|G\|_{\wt{L^{r_4}(I};\wh{\dot{B}}{_{1,\infty}^0})} 
         \sum_{k \ge j-3} \|F_k\|_{L^{r_3}(I;\wh{L}^1)}. 
\end{aligned}
\end{equation}
Gathering \eqref{est;bil4}-\eqref{est;bil6} and using Lemma \ref{lem;equi}, we obtain that for all $j \in \Z$, 
\begin{align*}
  \left\|\Dj e^{\sqrt{c_0 t}|\N|}fg \right\|_{L^r(I;\wh{L}^1)}  
  \les& \|F\|_{\wt{L^{r_1}(I;}\wh{\dot{B}}{_{1,\infty}^0})}  \|G\|_{L^{r_2}(I;\fB{_{\infty,1}^0})} \\
      &+\|F\|_{L^{r_3}(I;\fB{_{\infty,1}^0})} 
        \|G\|_{\wt{L^{r_4}(I};\wh{\dot{B}}{_{1,\infty}^0})} 
\end{align*}
and thus, we have the desired estimate \eqref{est;prod_bl} 
to take the supremum of the left hand side of the above estimate with respect to $j \in \Z$. 
\end{proof}

%


\baselineskip 0mm

\end{document}